\newtheorem{theorem}{Theorem}
\newtheorem{remark}[theorem]{Remark}
\newtheorem{claim}[theorem]{Claim}
\newtheorem{definition}[theorem]{Definition}
\newtheorem{proposition}[theorem]{Proposition}
\newtheorem{corollary}[theorem]{Corollary}
\newtheorem{lemma}[theorem]{Lemma}
\newcommand{\N}{\mathbb{N}}
\newcommand{\R}{\mathbb{R}}
\newcommand{\Z}{\mathbb{Z}}
\newcommand{\E}{\mathbb{E}}
\newcommand{\bP}{\mathbb{P}}
\newcommand{\cL}{\mathcal{L}}
\newcommand{\Fext}{\mathcal{F}_{\textup{ext}}}
\newcommand{\free}{\textup{free}}
\numberwithin{theorem}{section}
\numberwithin{equation}{section}
\begin{document}
\title[]{The Bessel line ensemble}



\author[X. Wu]{Xuan Wu}
\address{Department of Mathematics, University of Chicago, 
Chicago IL, 60637} 
\email{xuanw@uchicago.edu}

\begin{abstract}
In this paper, we construct the Bessel line ensemble, a countable collection of continuous random curves. This line ensemble is stationary under horizontal shifts with the Bessel point process as its one-time marginal. Its finite dimensional distributions are given by the extended Bessel kernel. Furthermore, it enjoys a novel resampling invariance with respect to non-intersecting squared Bessel bridges. The Bessel line ensemble is constructed by extracting the hard edge scaling limit of a collection of independent squared Bessel processes starting at the origin and never being conditioned to intersect. This process is also known as the Dyson Bessel process, and it arises as the evolution of the eigenvalues of the Laguerre unitary ensemble with i.i.d. complex Brownian entries.
\end{abstract}

\maketitle

\vspace{1.5cm}

\section{Introduction}\label{sec:Introduction}
Over the past decades, there has been significant attention in non-intersecting paths formed by one-dimensional Markov processes conditioned never to intersect. Such path structures naturally arise in the study of random matrix theory, growth processes, directed polymers, interacting particle systems and tiling problems (see the surveys \cite{Fer,FS,Joh,OC,Spo}).  A famous example of non-intersecting paths is the collection of Brownian motions conditioned never to collide, known as the Dyson Brownian motion. In this paper, we study another model of non-intersecting random curves, called the {\em Dyson Bessel process}, and focus on its hard edge scaling limit.

\subsection{Non-intersecting squared Bessel process} The Bessel process is one of the most important one-dimensional diffusion processes. Let $d\geq 1$ be an integer. The $d$-dimensional Bessel process is defined as the distance to the origin of a $d$-dimensional Brownian motion \cite[Ch. XI]{RY}. The index $\alpha=\frac{d-2}{2}$ serves as another natural parametrization for the $d$-dimensional Bessel process. We will use the index $\alpha$ and call it the $\alpha$-Bessel process. See Section~\ref{sec:BB} for more details. Throughout this paper, we consider the parameter range $\alpha\geq 0$. Let $X^\alpha(t)$ be an $\alpha$-Bessel process.  Taking the square of $X^{\alpha}(t)$, one obtains the \textit{squared $\alpha$-Bessel process} (BESQ),\, $Y^{\alpha}(t)=\left(X^{\alpha}(t)\right)^2$. \\[-0.35cm]

In this paper, we are mainly interested in the non-intersecting squared Bessel process. This is also known as the Dyson (squared) Bessel process, analogous to the Dyson Brownian motion. Fix $N\in\N$ and $\alpha\geq 0 $. Let $Y^{N,\alpha}_1(t), Y^{N,\alpha}_2(t),\cdots, Y^{N,\alpha}_{N}(t)$ be $N$ independent squared $\alpha$-Bessel processes with zero initial values, i.e. $Y^{N,\alpha}_i(0)=0$ for all $1\leq i\leq N$. The non-intersecting squared Bessel process is obtained through conditioning on $\{Y^{N,\alpha}_1(t)< Y^{N,\alpha}_2(t)<\cdots< Y^{N,\alpha}_N(t)\}$ for all $t\in (0,\infty)$. This is a singular conditioning and could be made rigorous via the Doob's $h$-transform \cite{KO}.\\[-0.3cm]

The non-intersecting squared Bessel process enjoys a beautiful interpretation as the eigenvalue evolution of the Laguerre process \cite{KO}. Fix $N\in\mathbb{N}$ and $\alpha\geq 0 $. Take $A(t)$ to be an $N\times (N+\alpha)$ matrix with independent standard complex Brownian entries (so that the real and the imaginary part both have variance $t$) and set $M(t)=A(t)A(t)^*$. For $t=1/2$, $M(1/2)$ is known as the Wishart ensemble, one of the earliest random matrix ensembles, introduced by Wishart \cite{Wis} in 1928. The joint density function of the eigenvalues of $M(1/2)$ takes the following form
\begin{align}\label{equ:06261715}
C(N,\alpha)\prod_{j=1}^N (x_j)^\alpha e^{-x_j}\times \big(\Delta(\vec{x})\big)^2 \mathbbm{1}\{0\leq x_1<x_2<\dots <x_N\}  \prod_{j=1}^N dx_j.
\end{align}
Here $\Delta(\vec{x})=\prod_{1\leq i<j\leq N}(x_j-x_i)$ is the Vandermonde determinant and $C(N,\alpha)$ is an explicitly computable constant, see \cite[(1.5b)]{For93}. The Wishart ensemble is also referred to as the Laguerre unitary ensemble (LUE) since it is unitarily invariant and its joint eigenvalue distributions involve Laguerre polynomials. The study of the asymptotic behavior of the eigenvalues of $M(t)$ and its variants has been an important topic in the random matrix community, see for instance \cite{For93, For94, FT, JV, TW}. There are three natural asymptotic regimes, the {\em bulk} scaling, the {\em soft edge }scaling and the {\em hard edge} scaling. 

In this paper we investigate the non-intersecting squared $\alpha$-Bessel process, $Y^{N,\alpha}:=\{Y^{N,\alpha}_1(t)\leq \cdots Y^{N, \alpha}_N(t)| t\geq 0, 1\leq i\leq N\}$, under the hard edge scaling. The hard edge scaling zooms in near $Y^{N,\alpha}_1$ and it bears the name due to the fact that all of $Y^{N,\alpha}_j$ are non-negative and may not cross zero. More precisely, for $ 1\leq i\leq N $ and $t\in[-4N,\infty)$, define 
\begin{align}\label{eqn:edgeLE1}
\mathcal{L}_i^{N,\alpha}(t):=4N\cdot Y^{N,\alpha}_i\left(1+t/(4N)\right).
\end{align}
We refer to
\begin{align}\label{eqn:edgeLE2}
\mathcal{L}^{N,\alpha}:=\left\{\cL_i^{N,\alpha}(t)|\, t\in [-4N,\infty) ,\ 1\leq i\leq N\right\}.
\end{align}
as the {\em scaled non-intersecting squared $\alpha$-Bessel process}.  The finite dimensional convergence of $\cL^{N,\alpha}(t)$ has been well studied  by analyzing {\em determinantal formulas} \cite[section 11.7.3]{For10}. At time $t=0$, $\cL^{N,\alpha}(0)$ converges weakly to the $\alpha$-Bessel point process. This implies the convergence for any fixed time due to the following scaling invariance
\begin{align}\label{equ:06261716}
\left\{Y^{N,\alpha}_i(t),\ 1\leq i\leq N\right\} \, \overset{(d)}{=\joinrel=} \, \left\{ t\cdot Y^{N,\alpha}_i(1),\ 1\leq i\leq N\right\}.
\end{align}
Moreover, for any finite set $I\subset\mathbb{R} $, $\{\cL_i^{N,\alpha}(t), t\in I\}_{i=1}^N$ converge weakly to the \textit{extended $\alpha$-Bessel point process} with the correlation kernel (known as the extended Bessel kernel) $K^{\textup{ext}}:(\mathbb{R}\times[0,\infty) )^2\rightarrow\mathbb{R}$ given by
\begin{equation}\label{def:KEXT}
\begin{split}
K^{\textup{ext}}((t,x),(s,y))\coloneqq \left\{\begin{array}{cc}
 -\displaystyle\int_{1/8}^\infty e^{- 2(s-t) z}   J_\alpha ( 2\sqrt{zx})J_\alpha (2\sqrt{zy})\, dz, & t< s  ,\\[0.5cm]
 \displaystyle\int_0^{1/8} e^{- 2(s-t) z}   J_\alpha ( 2\sqrt{zx})J_\alpha (2\sqrt{zy})\, dz, & t\geq s.  
\end{array}\right.
\end{split}
\end{equation}
Here $J_\alpha$ is the Bessel function of the first kind.

\subsection{Gibbsian line ensembles}
Aside from the perspective of determinantal structures, the non-intersecting Bessel process is also worth investigating from the perspective of Gibbsian line ensembles.  A line ensemble is a countable collection of random discrete or continuous curves on some interval in $\mathbb{R}$ (all defined on the same probability space). The defining property of a Gibbsian line ensemble, the Gibbs property, is a resampling invariance. Let us illustrate the Gibbs property using the Dyson Brownian motion, which is the law of $N$ independent Brownian motions, $B_1, B_2, \cdots, B_N$, all starting at the origin at time zero and conditioned never to intersect. 

The Dyson Brownian motion enjoys the Brownian Gibbs property, introduced in \cite{CH14}, a resampling invariance under the following action. Select an index $1\leq k\leq N$ and erase $B_k$ on a fixed time interval $(a, b)$; then replace this erased curve with a new curve on $(a, b)$ according to the law of a Brownian bridge between the two existing endpoints $(a, B_k(a))$ and $(b, B_k(b))$, conditioned to intersect neither the curve above nor the one below. The invariance of the total law under this action is the Brownian Gibbs property. The precise definition of the Brownian Gibbs property is slightly more general than this and one may resample multiple neighboring paths simultaneously. It is convenient to think of a line ensemble that satisfies the Brownian Gibbs property as $N$ random curves which locally have the distribution of $N$ avoiding Brownian bridges.

Gibbsian line ensembles come in two different flavors where the underlying paths are continuous or discrete. The corresponding Gibbs properties are often named after the nature of the path measures, e.g. the Brownian Gibbs property, geometric Gibbs property and exponential Gibbs property
 
Initiated in \cite{CH14} for the construction of the Airy line ensemble, there has been a fruitful development of techniques \cite{CH16,CD,DW21,CIW,Wu19} which leverage the Gibbs property of Gibbsian line ensembles to prove their tightness under scalings to the Airy line ensemble and its closely related analogue, the KPZ line ensemble \cite{CH16}. The Gibbs property has also served as a powerful tool to establish path regularity for the Airy / KPZ line ensemble \cite{CHH,Ham,Wu21a,Wu21b}. 

\subsection{Main results}
To our best knowledge, the study of Gibbsian line ensembles has been focused in the Airy /KPZ regime. Moreover, the Gibbs property being investigated is either the Brownian Gibbs property or its variants (positive temperature or discrete analogue). 

The main object of study in this paper, the {\em non-intersecting squared Bessel process} enjoys a novel Gibbs property, the squared Bessel Gibbs property such that it locally resembles avoiding squared Bessel bridges. Through the squared Bessel Gibbs property, we study its asymptotic behavior under the hard edge limit and prove a functional limit theorem (Theorem \ref{thm:main}(i)) for $\cL^{N,\alpha}$.  This is in essence tightness for this family of curves. Furthermore, we prove that the squared Bessel Gibbs property is preserved under the subsequential limit.

\begin{theorem}\label{thm:main}
Fix $\alpha\geq 0$. Let $\cL^{N,\alpha}$ be defined as in \eqref{eqn:edgeLE2}. The following statements hold true.
\begin{enumerate}[label=(\roman*)]
\item $\cL^{N,\alpha}$ is tight as $N$ goes to infinity. \\[-0.3cm]
\item Any subsequential limit $\cL^{\infty,\alpha}$ enjoys the squared $\alpha$-Bessel Gibbs property.
\end{enumerate}
\end{theorem}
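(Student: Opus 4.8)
The plan is to adapt the Gibbsian line ensemble scheme pioneered in \cite{CH14}: part (i) is an equicontinuity estimate extracted from the one-point information already available together with the squared Bessel Gibbs property, and part (ii) is then a soft weak-limit argument.

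For part (i), in the topology of Definition~\ref{def:weakconvergence} tightness of $\cL^{N,\alpha}$ amounts to, for each $i\in\N$ and $T>0$, tightness of the $\R$-valued random variable $\cL_i^{N,\alpha}(-T)$ together with uniform-in-$N$ equicontinuity of $\cL_i^{N,\alpha}$ on $[-T,T]$. The first is contained in the finite-dimensional convergence recalled in the introduction: $(\cL_1^{N,\alpha}(t),\dots,\cL_K^{N,\alpha}(t))$ converges to the $K$ smallest points of the Bessel point process, which are almost surely distinct and strictly positive (use the stated scaling identity to move away from $t=0$). For the equicontinuity I would first record the monotonicity properties of the bridge measures $\PP$: the law of a family of non-intersecting squared Bessel bridges is stochastically monotone in its entrance and exit heights and in the boundaries $f$ and $g$, and is sandwiched between families of \emph{free} (unconstrained) squared Bessel bridges obtained by deleting the non-crossing constraints; these follow from the Gibbs property by standard monotone coupling arguments. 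Then, on a good event $E_N$ of probability $\ge 1-\epsilon$ uniformly in $N$ — on which the heights $\cL_j^{N,\alpha}(\pm(T+1))$ for $j$ in a suitable range are well separated and lie in a fixed compact subset of $(0,\infty)$, which is ensured by one-point tightness and simplicity of the Bessel point process — conditioning on $\Fext$ for an appropriate block identifies the conditional law of that block on $[-T-1,T+1]$ with a measure $\PP$ whose data lies in the good region. A Radon--Nikodym comparison against the corresponding free squared Bessel bridges, whose normalizing constant is bounded below uniformly over the good region, transfers to $\cL_i^{N,\alpha}$ the modulus-of-continuity estimate for free squared Bessel bridges on a compact interval, which is classical.

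The delicate point — which I expect to be the main obstacle — is the bookkeeping in the last step. To resample $\cL_i^{N,\alpha}$ one needs the neighbouring curves to remain on their side of its entrance and exit heights throughout the interval, which is a statement of exactly the kind being proven; this forces a careful bootstrap, over the curve index and over a fine grid of intermediate times so that the required comparisons are made on short subintervals, of the boundedness of $\cL_1^{N,\alpha},\dots,\cL_K^{N,\alpha}$ on compacts. The base case is clean: $\cL_1^{N,\alpha}$ is stochastically dominated above by a free squared Bessel bridge with its own one-point-controlled endpoints, since its only constraint — lying below $\cL_2^{N,\alpha}$ — can only push it down; the inductive step then passes the control upward while tracking the separation of endpoint heights. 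One must also verify that the normalizing constants of the relevant $\PP$ are bounded below on $E_N$; unlike the Brownian case the squared Bessel bridge is a genuine diffusion degenerating at the origin, so these estimates need some care, although on $E_N$ the relevant curves stay bounded away from $0$, which tames the degeneracy.

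For part (ii), pass to a subsequence along which $\cL^{N,\alpha}\Rightarrow\cL^{\infty,\alpha}$ (it exists by (i)) and, by Skorokhod's representation, assume the convergence is almost sure in the topology of Definition~\ref{def:weakconvergence}. The squared $\alpha$-Bessel Gibbs property of $\cL^{N,\alpha}$ is equivalent to a family of identities $\E[F(\cL^{N,\alpha})\,G]=\E[\,G\cdot\!\int F\,d\PP\,]$ ranging over bounded continuous $F$, bounded continuous $\Fext$-measurable $G$, and admissible $(k_1,k_2,(a,b))$, the inner integral using the boundary data read off from $\cL^{N,\alpha}$. The left-hand side converges by bounded convergence. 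For the right-hand side one needs the map sending boundary data $(\vec x,\vec y,f,g)$ to the measure $\PP$ to be weakly continuous at the (almost surely attained) limiting data, which reduces to the strict ordering $\cL_{k_1-1}^{\infty,\alpha}<\dots<\cL_{k_2+1}^{\infty,\alpha}$ and to continuity and strict positivity of the normalizing constant there; these follow from one-point convergence to the simple Bessel point process at the times $a,b$ and the path regularity from (i), first for a dense set of intervals $(a,b)$ and then in general by a limiting argument, while strict ordering of the limiting curves at all times is obtained by upgrading distinctness at rational times via the Gibbs property (or a direct comparison with free bridges).
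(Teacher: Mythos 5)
Your high-level plan — upper/lower bounds over intervals via Gibbs resampling, a lower bound on the normalizing constant, and then the tightness criterion, with Skorokhod representation for the limit Gibbs property — is the same as the paper's. But there are two genuine gaps, each corresponding to a place where the squared Bessel setting diverges essentially from the Brownian one and where the paper has to work.

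First and most seriously, your ``bootstrap ... of the boundedness of $\cL_1^{N,\alpha},\dots,\cL_K^{N,\alpha}$ on compacts'' treats lower bounds (staying away from $0$) as symmetric to upper bounds, but they are not, and your good event $E_N$ is defined only by endpoint control, which does \emph{not} imply that the curves stay away from $0$ over the whole interval. You claim on $E_N$ ``the relevant curves stay bounded away from $0$, which tames the degeneracy,'' but this is precisely the uniform infimum estimate one has to prove. If you try to propagate the one-point lower bound on $\cL_1^N$ to an interval by the obvious covering argument (length of a bad window scales like the lower bound $r$ by Bessel scaling, the one-point tail near $0$ is of order $r^{1+\alpha}$), you obtain a probability of order $r^{-1}\cdot r^{1+\alpha}=r^\alpha$, which vanishes as a useful bound exactly when $\alpha=0$. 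The paper's way around this is to prove the infimum bound for $\cL_2^N$ rather than $\cL_1^N$: the second Bessel particle has a much better lower tail ($\sim r^2$ regardless of $\alpha$, which requires a computation with the modified Bessel kernel), and crucially the normalizing-constant estimate only ever needs lower bounds on $f_j$ for $j\ge 2$, so control of $\cL_2^N$ suffices. Without that observation your program does not close for $\alpha=0$, and your proposal gives no alternative.

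Second, you invoke stochastic monotonicity of the non-intersecting squared Bessel bridge measures as following ``from the Gibbs property by standard monotone coupling arguments.'' For the Brownian case this is standard because one discretizes to Bernoulli random walk bridges. There is no such off-the-shelf discrete model converging to a squared Bessel bridge, and the monotone coupling of the Glauber dynamics depends on a convexity property of the Bessel transition density, $\partial_x\partial_y\log p_t(x,y)\ge 0$, which is not obvious (the paper reduces it to an inequality for modified Bessel functions proved by an ODE comparison). One then still has to verify convergence of the discretized non-intersecting bridges to the continuum ones, which requires uniform integrability estimates on the transition kernel. Treating this as standard is a gap.

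On part (ii), your formulation (weak continuity of $(\vec x,\vec y,f,g)\mapsto\PP$ at the almost-surely attained limiting data) is a valid reframing of the paper's coupling argument and would work, but one caveat: your plan to obtain strict ordering of the limiting curves ``via the Gibbs property'' should be phrased using the \emph{prelimit} Gibbs property plus non-tangency of Bessel bridges to the adjacent curves (Lemma~\ref{lem:not touching} in the paper), to avoid appearing to assume the limiting Gibbs property you are trying to establish.
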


Together with the finite dimensional convergence result (see Theorem~\ref{thm:extended BE}), we prove the existence of the Bessel line ensemble with index $\alpha$.
\begin{corollary}\label{cor:main}
Fix $\alpha\geq 0$. There exists a continuous non-intersecting Gibbsian line ensemble $\mathcal{B}^{\alpha}:=\{\mathcal{B}^{\alpha}_i(t), i\in\mathbb{N},\, t\in\mathbb{R}\}$ with $0<\mathcal{B}^{\alpha}_1(t)<\mathcal{B}^{\alpha}_2(t)<\cdots$ such that the following statements hold. For any finite set $I\subset\mathbb{R}$, the point process given by $\{\mathcal{B}^\alpha_i (t), i\in \mathbb{N},\, t\in I\}$ is a determinantal point process whose correlation kernel is given by $K^{\textup{ext}}$ defined in \eqref{def:KEXT}. Furthermore, $\mathcal{B}^\alpha$ enjoys the squared $\alpha$-Bessel Gibbs property.
\end{corollary}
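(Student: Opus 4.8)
The plan is to realize $\mathcal{B}^\alpha$ as a subsequential limit of $\mathcal{L}^{N,\alpha}$ and to deduce each asserted property from one of the two results already in hand. First I would apply Theorem~\ref{thm:main}(i): the family $\{\mathcal{L}^{N,\alpha}\}_{N\in\mathbb{N}}$ is tight in the topology of Definition~\ref{def:weakconvergence}, so there is a subsequence $N_k\to\infty$ along which $\mathcal{L}^{N_k,\alpha}$ converges in distribution to a continuous line ensemble, which I name $\mathcal{B}^\alpha=\{\mathcal{B}^\alpha_i(t),\,i\in\mathbb{N},\,t\in\mathbb{R}\}$ (passing to a Skorokhod coupling on a common space is convenient but not essential). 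Theorem~\ref{thm:main}(ii) applied to this subsequential limit then gives immediately that $\mathcal{B}^\alpha$ enjoys the squared $\alpha$-Bessel Gibbs property, settling the last assertion.

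For the determinantal structure, fix a finite set $I=\{t_1,\dots,t_m\}\subset\mathbb{R}$. Since the finite-dimensional distributions of a weak limit are the weak limits of the finite-dimensional distributions, the point process $\{\mathcal{B}^\alpha_i(t),\,i\in\mathbb{N},\,t\in I\}$ is the weak limit of $\{\mathcal{L}^{N_k,\alpha}_i(t),\,i\in[1,N_k]_{\mathbb{Z}},\,t\in I\}$, which by Theorem~\ref{thm:extended BE} converges to the extended Bessel point process with correlation kernel $K^{\textup{ext}}$. Hence $\{\mathcal{B}^\alpha_i(t),\,i\in\mathbb{N},\,t\in I\}$ is determinantal with kernel $K^{\textup{ext}}$, as required.

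The remaining point, which I expect to be the main obstacle, is the strict ordering with a strictly positive bottom curve, $0<\mathcal{B}^\alpha_1(t)<\mathcal{B}^\alpha_2(t)<\cdots$ for every $t\in\mathbb{R}$ simultaneously, almost surely. The weak versions $0\le\mathcal{B}^\alpha_1\le\mathcal{B}^\alpha_2\le\cdots$ are closed conditions and survive the limit. Because the Bessel point process is almost surely simple and supported on $(0,\infty)$, for each fixed $t$ one has $0<\mathcal{B}^\alpha_1(t)<\mathcal{B}^\alpha_2(t)<\cdots$ almost surely; taking the intersection of these events over a countable dense set $D\subset\mathbb{R}$ of times, the strict statement holds simultaneously over $D$. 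To upgrade it to all $t$ I would use the squared $\alpha$-Bessel Gibbs property: for each $k\in\mathbb{N}$ and each interval $[a,b]$ with $a,b\in D$, resample the curves $\mathcal{B}^\alpha_1,\dots,\mathcal{B}^\alpha_k$ on $(a,b)$; with the endpoints at $a$ and $b$ strictly positive and strictly ordered (and strictly below $\mathcal{B}^\alpha_{k+1}$), the resampled curves form, almost surely, a family of non-intersecting squared Bessel bridges, which (using $\alpha\ge 0$) stay strictly positive and mutually strictly ordered on the whole of $[a,b]$. Since resampling preserves the law of $\mathcal{B}^\alpha$, the ensemble itself is, almost surely, strictly ordered with positive bottom curve on each such $[a,b]$; taking the union over $k$ and over a countable family of rational intervals covering $\mathbb{R}$ then gives the claim for all $t$. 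The delicate aspect here is exactly this passage from almost sure strictness on a dense set of times to almost sure strictness at every time: it fails for generic weak limits and genuinely requires the local squared Bessel bridge description supplied by the Gibbs property, together with the non-collision and strict positivity of such bridges.
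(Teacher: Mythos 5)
The paper does not write out a separate proof of this corollary: it treats it as an immediate combination of Theorem~\ref{thm:main} (tightness plus preservation of the squared Bessel Gibbs property along subsequences) with Theorem~\ref{thm:extended BE} (locally uniform convergence of the correlation kernel under the hard-edge scaling), and your proposal reconstructs exactly that deduction. Your first two paragraphs are the intended argument and are correct.

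The extra care you spend on the strict ordering $0<\mathcal{B}^\alpha_1<\mathcal{B}^\alpha_2<\cdots$ is warranted — this is indeed glossed over in the paper — and your argument is sound, though it can be shortened. You do not really need the intermediate step with a countable dense time set $D$: once $\mathcal{B}^\alpha$ is known to satisfy the squared $\alpha$-Bessel Gibbs property, the conditional law on any $[1,k]_\Z\times(a,b)$ with $a,b$ rational is (almost surely) the measure $\bP^{1,k,(a,b),\vec{x},\vec{y},0,\mathcal{B}^\alpha_{k+1}}$ of Definition~\ref{def:Bessel bridge LE}, whose Radon--Nikodym density against the free bridge measure is proportional to the strict-inequality indicator $\mathbbm{1}\{0<\cL_1<\cdots<\cL_k<\mathcal{B}^\alpha_{k+1}\}$; this measure is therefore supported on configurations that are strictly positive and strictly ordered on all of $(a,b)$. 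Unconditioning and taking the countable intersection over rational $(a,b)$ and over $k\in\N$ gives the global statement. Your observation that the single-time marginal being the (simple, $(0,\infty)$-supported) Bessel point process supplies strict ordering at fixed times, and that $\alpha\ge 0$ is what keeps the bottom bridge from hitting zero, are both the right ingredients — they are just absorbed into the definition of the bridge measure via the strict indicator.
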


\subsection{Comparison between the Airy line ensemble and the Bessel line ensemble}
The Airy line ensemble $\mathcal{A}$ is well known as a universal limit in the KPZ universality class \cite{Cor, QS15}, particularly the {\em soft edge} scaling limit of the Dyson Brownian motion. The Bessel line ensemble $\mathcal{B}$ is constructed in this paper as a {\em hard edge} scaling limit of the Dyson (squared) Bessel process. These two Gibbsian line ensembles share many basic properties in common --- both are non-intersecting, determinantal, stationary under horizontal shifts. In this section, we make a comparison of their differences. In doing so, we aim to illustrate the new challenges we encounter when adapting the Brownian Gibbsian resampling techniques to the current setting.

One apparent difference is that the Airy line ensemble and the Bessel line ensembles have different Gibbs properties. The Gibbs property of the Airy line ensemble uses Brownian bridges to resample random curves. On the other hand, squared Bessel bridges play this role for the Bessel line ensembles. Brownian motions/bridges are the most well-studied stochastic processes and many exact formulas are available. In contrast, calculations involving squared Bessel processes/bridges are more difficult. We overcome this difficulty by viewing (squared) Bessel processes/bridges as solutions to stochastic differential equations. This point of view allows us to obtain certain basic controls of the squared Bessel bridges such as the modulus of continuity. This difference also leads to the requirement for extra effort to prove stochastic monotonicity which we explain in Section~\ref{sec:intromonotonicity} in more detail.

Another major difference is that the Bessel line ensembles are stationary while the parabolic Airy line ensemble has a parabolic shape. In the construction of the parabolic Airy line ensemble in \cite{CH14}, this parabolic shape plays a crucial role in providing a uniform lower bound for the random curves. Roughly speaking, if the $k$-th curve $\mathcal{A}_k$ drops too low over an interval, it does not provide enough support for the curve above it to configure parabolically. Without such a parabolic shape, we need another approach for the Bessel line ensembles which we explain below.

Recall the definition of the scaled non-intersecting squared Bessel process $\mathcal{L}^{N,\alpha}$ in \eqref{eqn:edgeLE2}. We want to show that for any $\varepsilon>0$ and $k\in\mathbb{N}$, there exists $r>0$ such that
\begin{align}\label{eqn:uniforminf_k}
\mathbb{P}\left( \inf_{t\in [0,1]}\mathcal{L}_k^{N,\alpha}(t)<r \right)<\varepsilon.
\end{align} 
Due to the ordering of the curves in $\mathcal{L}^{N,\alpha}$ , it suffices to prove \eqref{eqn:uniforminf_k} for the lowest curve, i.e. $k=1$. 

For fixed $t_0\in\mathbb{R}$, $\mathcal{L}^{N,\alpha}_1(t_0)$ is supported on $(0,\infty)$. Therefore, for any $\varepsilon>0$, there exists $r(\varepsilon)>0$ such that \\[-0.75cm]
\begin{align*}
\mathbb{P}\left(  \mathcal{L}^{N,\alpha}_1(t_0)<r(\varepsilon) \right)<\varepsilon.\\[-0.7cm]
\end{align*}
We want to use the squared Bessel Gibbs property to propagate the above bound to a small interval containing $t_0$. Let $d>0$ be a small number. Suppose there exists a $t_1\in [t_0-d,t_0+d]$ such that $\mathcal{L}^{N,\alpha}_1(t_1)<r(\varepsilon)/2$. By choosing $d$ small enough, through a Gibbs resampling argument (see Section \ref{sec:uniformbounds}) we obtain that with a high probability $\mathcal{L}^{N,\alpha}_1(t_0)<r(\varepsilon)$. This implies
\begin{align*}
\mathbb{P}\left( \inf_{t\in [t_0-d,t_0+d]} \mathcal{L}^{N,\alpha}_1(t)<r(\varepsilon)/2 \right) \lesssim \varepsilon.
\end{align*}
Covering the interval $[0,1]$ by intervals with length $2d$, we get
\begin{align*}
\mathbb{P}\left( \inf_{t\in [0,1]} \mathcal{L}^{N,\alpha}_1(t)<r(\varepsilon)/2 \right) \lesssim d^{-1} \varepsilon.
\end{align*}
Recall the scaling invariance of the squared Bessel process (space and time are of the same scale), we have $d\sim r(\varepsilon)$. Therefore,
\begin{align*}
\mathbb{P}\left( \inf_{t\in [0,1]} \mathcal{L}^{N,\alpha}_1(t)<r(\varepsilon)/2 \right) \lesssim r(\varepsilon)^{-1} \varepsilon.
\end{align*}
In order to obtain a meaningful estimate, we need the tail estimate of $\mathcal{L}^{N,\alpha}(t_0)$ near 0. Combining the one-time convergence and the asymptotics for the Bessel point process, we have $r(\varepsilon)\sim \varepsilon^{1/(1+\alpha)}$. We then arrive at
\begin{align}\label{equ:try}
\mathbb{P}\left( \inf_{t\in [0,1]} \mathcal{L}^{N,\alpha}_1(t)<r(\varepsilon)/2 \right) \lesssim \varepsilon^{\alpha/(1+\alpha)}.
\end{align}
For $\alpha>0$, \eqref{equ:try} is sufficient by picking a new $\varepsilon_1$ as $\varepsilon^{(1+\alpha)/\alpha}$. However, \eqref{equ:try} degenerates when $\alpha=0$. 

To deal with the degenerate case $\alpha=0$, a key observation we have is that for the tightness, it suffices to show \eqref{eqn:uniforminf_k} for \textbf{some} $k$. Because the curves are ordered $\mathcal{L}^{N,\alpha}_1 <\mathcal{L}^{N,\alpha}_2 <\dots$, a lower bound for $\mathcal{L}^{N,\alpha}_k$, $k\geq 2$ should decay faster than the one for $\mathcal{L}^{N,\alpha}_1$. This is indeed the case for $k=2$. It can be proved that 
$$\mathbb{P}(\mathcal{L}^{N,0}_2(t_0)\leq \varepsilon^{1/2})\lesssim \varepsilon.$$ 
Therefore, in estimating $\mathcal{L}^{N,0}_2$, we could replace $r(\varepsilon)$ above by $\varepsilon^{1/2}$ which is much larger than $\varepsilon$. 

We adapt the strategy above and sample both $\mathcal{L}^{N,0}_1$ and $\mathcal{L}^{N,0}_2$ together. The difficulty then translates to controlling the joint density of two \textbf{non-intersecting} squared Bessel bridges. The joint density is given in a determinantal form using the Karlin-Mcgregor formula. In order to estimate lower and upper bounds of the joint density, we derive a few inequalities regarding the modified Bessel functions (see Appendix \ref{sec:BE}, Corollary \ref{cor:qqqq} and Lemma \ref{lem:2sample}). These inequalities serve as part of the main ingredients for the desired uniform infimum bounds on a unit interval. 

We remark that once having the infimum bound for $\mathcal{L}^{N,0}_2$ on an interval, one can use the Gibbs property again to obtain the infimum bound for $\mathcal{L}^{N,0}_1$. However, we do not pursue it as the control on $\mathcal{L}^{N,0}_2$ is enough for proving tightness.

\subsection{The stochastic monotonicity Proposition \ref{lem:monotonicity}}\label{sec:intromonotonicity}
The stochastic monotonicity for non-intersecting Brownian bridges was first proved in \cite{CH14} and has played an important role in the construction of the Airy line ensemble. Roughly speaking, the stochastic monotonicity says that the bridges almost surely increase (or decrease) when the boundary values (endpoints, upper and lower barrier curves) increase (or decrease). Stochastic monotonicity has served as a crucial tool among the applications of Gibbsian line ensemble, mainly because it helps reduce the complexity of the system, e.g. possibly random boundary data to manageable and deterministic ones. The key idea of proving the monotonicity in \cite{CH14} is to construct the monotone coupling through invariant measures of two Monte-Carlo Markov chains (known as Glauber dynamics), which are monotonically coupled. The authors of \cite{CH14} achieve so by exploiting monotone coupling for non-intersecting Bernoulli random walk bridges and taking the diffusive limits to non-intersecting Brownian bridges. 

In our case, $\mathcal{L}^{N,\alpha}$ enjoys the squared $\alpha$-Bessel Gibbs property. It is natural to adapt the same general framework to prove the stochastic monotonicity for squared Bessel bridges. But unlike the convergence from simple random walk bridges to a Brownian bridge, there is no such obvious choice of discrete random walk bridges which converge to the squared Bessel bridge. We discretize the space $C(\{1,2,\dots, k\}\times [a,b],\mathbb{R})$ of $k$ continuous functions on the interval $[a,b]$ and construct random walk bridges on it. Then we run the same type of Markovian dynamics, i.e. the Glauber dynamics. 

In order to show that the ordering is preserved by the Markov chains, we reduce the desired result to a convexity condition (see \eqref{q:convex}) on the transition density function of the squared $\alpha$-Bessel processes and prove it through ODE comparison. Interestingly, this convexity was studied by Gronwall \cite{Gro} (for a slightly smaller parameter regime) motivated by a problem in wave mechanics. 

It remains to verify the random walk bridges we construct converge to the squared Bessel bridges. This further requires estimates (uniform supnorm and $L^1$ norm) on the transition density functions, which we establish in Appendix~\ref{sec:discrete}.

\subsection*{Outline}
This paper is organized as follows. In Section~\ref{sec:BB}, we introduce the (squared) Bessel process/bridge and some of their basic properties. Section \ref{sec:Basics of line ensembles} contains various definitions necessary to describe squared Bessel Gibbsian line ensembles. Section \ref{sec:Stochastic monotonicity} contains the stochastic monotonicity for non-intersecting (squared) Bessel line ensembles. Sections \ref{sec:uniformbounds} and \ref{sec:proofofZ} provide controls on the uniform upper/lower bound and on the normalizing constants respectively. These lead to the proof of the main Theorem \ref{thm:main} in Section \ref{sec:proofmainthm}. In Appendix \ref{sec:BE}, we prove several properties for modified Bessel functions. Appendix \ref{sec:discrete} records a technical step towards proving the stochastic monotonicity. We derive the correlation kernel for the non-intersecting squared Bessel process and prove its convergence in Appendix \ref{sec:BEkernel}.

\subsection*{Notation}
We would like to explain some notation here. We use $\mathbb{R}_+$ to denote non-negative real numbers $[0,\infty)$. The natural numbers are defined to be $\N = \{1, 2, . . .\}.$ For integers $k_1<k_2$, let $[k_1,k_2]_{\Z} := \{k_1,k_1+1,\ldots,k_2\}$. For a closed set $A\subset \mathbb{R}$ or $A\subset \mathbb{N}\times \mathbb{R}$, we denote by $C(A,\mathbb{R})$ the collection of continuous functions defined on $A$. We equip $C(A,\mathbb{R})$ with the topology of uniform convergence on compact subsets of $A$ and denote the sigma-field generated by Borel sets by $\mathcal{C}(A,\mathbb{R})$. For $f\in C([a,b],\mathbb{R})$ and $r>0$, the modulus of continuity is defined by
\begin{equation}\label{def:modu-1}
\omega_{[a,b]}\big(f ,r\big)\coloneqq \sup_{\substack{s,t\in [a,b]\\|s-t|\leq r}}\vert f (s)-f (t)\vert.
\end{equation}  
More generally, for $f\in C( [1,k]_{\mathbb{Z}}\times [a,b], \mathbb{R})$ and $r>0$, the modulus of continuity is defined as
\begin{equation}\label{def:modu-2}
\omega_{[a,b],k}\big(f ,r\big)\coloneqq \sup_{1\leq i \leq k} \sup_{\substack{s,t\in [a,b]\\|s-t|\leq r}}\vert f(i,s)-f(i,t)\vert.
\end{equation} 
We denote by $\mathbb{W}^N_+$ the Weyl chamber restricted on non-negative reals
\begin{align*}
\mathbb{W}^N_+\coloneqq \{(x_1,x_2,\dots, x_N)\in\mathbb{R}^N\, |\, 0\leq x_1<x_2<\dots <x_N\}.
\end{align*}
Events are denoted in a special font $\mathsf{E}$, their indicator functions are written as $\mathbbm{1}_{\mathsf{E}}$ and the complements are written as $\mathsf{E}^{\textup{c}}$. 

\subsection*{Acknowledgment}
The author extends thanks to Ivan Corwin for helpful comments on a draft of this paper and to Patrik Ferrari and Peter Forrester for pointing out many references. The author is very grateful to Greg Lawler for many valuable discussions and for his initial contributions to an earlier draft of this project. 

\section{Squared Bessel process}\label{sec:BB}
\subsection{Basic properties for squared Bessel processes}
In this section we introduce the squared Bessel processes and collect some of their basic properties. We fix $\alpha\geq 0$ throughout this section. For brevity, we often omit the dependence of $\alpha$. For instance, we call a squared $\alpha$-Bessel process simply a squared Bessel process. 

For $x\geq 0$, a squared Bessel process starting at $x^2$ is the solution to the following stochastic differential equation (SDE):
\begin{equation}\label{equ:SQBESDE}
dY(t)=(2\alpha+2)dt+2\sqrt{Y(t)} dB(t),\ Y(0)=x^2,
\end{equation} 
where $B(t)$ is a Brownian motion with diffusion parameter $1$. It is known that \eqref{equ:SQBESDE} has a unique strong solution which stays positive for all $t>0$ \cite[Ch. XI, \S 1]{RY}. The transition density of a squared $\alpha$-Bessel process is given by \cite[Ch. XI, Corollary 1.4]{RY}
\begin{equation}\label{def:q}
\begin{split}
q_t(x,y)=\left\{ \begin{array}{cc}
(2t)^{-1}(y/x)^{\alpha/2} e^{-(x+y)/(2t)}I_\alpha \left(  t^{-1}\sqrt{xy} \right), & x>0, y\geq 0,\\
\frac{2^{-\alpha-1}}{\Gamma(\alpha+1)}t^{-\alpha-1} y^{\alpha}e^{-y/(2t)}, & x=0, y\geq 0.
\end{array} \right.
\end{split}
\end{equation}
Here $I_\alpha(z)$ is the modified Bessel function of the first kind.
Note that $q_t(x,y)$ enjoys a scaling invariance,
\begin{equation}\label{q_scaling}
\begin{split}
q_t(x,y)=&t^{-1} q_1(t^{-1}x,t^{-1}y).
\end{split}
\end{equation}
To see $q_t(x,y)$ is continuous in $x$, define $h_\alpha(z)\coloneqq z^{-\alpha}I_\alpha(z)$. $h_\alpha(z)$ has the following power series expansion \cite[(9.6.10)]{AS}
\begin{align}\label{equ:hexpansion}
h_\alpha(z)=2^{-\alpha}\sum_{n=0}^\infty \frac{(z/2)^{2n}}{n!\Gamma(n+\alpha+1)}.
\end{align}
From this expansion, it is easy to check that $h_\alpha(z)$ is an entire function. Expressing $q_t(x,y)$ in terms of  $h_\alpha(z)$, we have
\begin{align}\label{def:qF}
q_t(x,y)= 2^{-1}t^{-\alpha-1}y^{\alpha}e^{-(x+y)/(2t)} h_\alpha\left( t^{-1}\sqrt{xy} \right)
\end{align}

Let $Y(t)$ be a squared Bessel process which solves \eqref{equ:SQBESDE}. $X(t)\coloneqq \sqrt{Y(t)}$ is called a Bessel process. From Ito's formula, $X(t)$ solves the SDE
\begin{equation}\label{equ:BESDE}
dX(t)= \frac{\alpha+1/2}{X(t)}dt+dB(t),\ X(0)=x.
\end{equation}
Let $p_t(x,y)$ be the transition probability for a Bessel process. From \eqref{def:qF}, we have
\begin{align}\label{def:pF}
p_t(x,y)=t^{-\alpha-1}y^{2\alpha+1}e^{-(x^2+y^2)/(2t)}h_\alpha \left( t^{-1}xy  \right).
\end{align}
It enjoys the Brownian scaling invariance $p_t(x,y)=t^{-1/2} p_1(t^{-1/2}x,t^{-1/2}y).$

We are interested in the \textit{bridge processes} induced from (squared) Bessel processes. Intuitively, a Bessel bridge on $[0,T]$ is a Bessel process $X(t)$ starting at $x$ conditioned on $X(T)=y$ for some $x,y\geq 0$. Precisely, a process $\mathcal{S}(t)$, $0\leq t\leq T$ is called a Bessel bridge on $[0,T]$ with entrance and exit data $(x,y)$ if given $0<t_1<t_2<\dots <t_k<T$, the joint density of $(\mathcal{S}(t_1),\mathcal{S}(t_2),\dots, \mathcal{S}(t_k))$ equals
\begin{equation}\label{def:Besslebridgedensity}
p_{t_1}(x,z_1)p_{t_2-t_1}(z_1,z_2)\cdots p_{T-t_k}(z_k,y)/p_{T}(x,y). 
\end{equation} 

A Bessel bridge can be obtained through the Doob's $h$-transform. Let $X(t)$ be a Bessel process starting at $x$ which solves \eqref{equ:BESDE}. It can be checked directly that $M(t)\coloneqq p_{T-t}(X(t),y)/p_T(x,y)$ (the case $y=0$ is understood by taking a limit) is a non-negative martingale for $t\in [0,T)$. Moreover, through tilting the measure by $M(t)$, $X(t)$ restricted on $t\in [0,T)$ is a Bessel bridge with entrance and exit data $(x,y)$. Next, we record the SDE for Bessel bridges. Define $r_t(x,y)\coloneqq \frac{\partial}{\partial x}\log p_t(x,y)$. Fix $T>0$ and $x,y\geq 0$. From the Girsanov theorem \cite[Ch. VIII]{RY}, a Bessel bridge on $[0,T]$ with entrance and exit data $(x,y)$ satisfies the following SDE:
\begin{equation}\label{equ:BEBSDE}
\begin{split}
dX(t)=\left( \frac{\alpha+1/2}{X(t)}+r_{T-t}(X(t),y) \right) dt+dB(t),\ X(0)=x.
\end{split}
\end{equation}
From \eqref{equ:BEBSDE}, we have the following comparison between Bessel bridges and Brownian motions. This is a special case of \cite[Theorem 1']{KadotaShepp}.

\begin{lemma}\label{lem:abscontinuous}
Fix $T>0$, $x> 0$ and $y\geq 0$. Let $\mathcal{S}(t)$ be a Bessel bridge defined on $[0,T]$ with entrance and exit data $(x,y)$ and let $B(t)$ be a Brownian motion starting at $x$. Then for any $T'\in (0,T)$, the following holds. The law of $\mathcal{S}(t)\big|_{t\in [0,T']}$, viewed as a Borel measure on $C([0,T'],\mathbb{R})$, is absolutely continuous with respect to the law of $B(t)\big|_{t\in [0,T']}$. 
\end{lemma}

Squaring a Bessel bridge with entrance and exit data $(x,y)$, we obtain a squared Bessel bridge with entrance and exit data $(x^2,y^2)$. 

\begin{lemma}\label{lem:not touching}
Fix $T>0$. Let $g:[0,T]\to (0,\infty]$ be a lower semi-continuous function and $f:[0,T]\to [0,\infty)$ be an upper semi-continuous function. Let $x,y>0$ satisfy $f(0)<x<g(0) $ and $f(T)<y<g(T)$. Let $\mathcal{Q}$ be a squared Bessel bridge on $[0,T]$ with entrance and exit data $(x,y)$. Then it holds that 
\begin{align*}
\mathbb{P} \left( \inf_{t\in [0,T]} (g(t)-\mathcal{Q} (t))=0 \right)=0\ \textup{and}\  \mathbb{P} \left( \inf_{t\in [0,T]} (  \mathcal{Q} (t)-f(t))=0 \right)=0.
\end{align*} 
\end{lemma}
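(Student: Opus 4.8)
The two displayed identities are symmetric, so I will concentrate on the first --- that the bridge stays strictly below the upper barrier $g$; the second is handled the same way, with the ``hard wall'' $\{0\}$ in the role of $g$, using that for $\alpha>0$ a squared $\alpha$-Bessel bridge with strictly positive endpoints stays strictly positive. Write $W:=Be^{x,y}$.

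\textbf{Reductions.} Since $g$ is lower semi-continuous on the compact interval $[0,L]$ it attains its minimum; if this minimum is $+\infty$ the claim is trivial, so assume $m:=\min_{[0,L]}g\in(0,\infty)$. Replacing $g$ by $g\wedge M$ only enlarges the event $\{\inf_t(g(t)-W(t))=0\}$ on the event $\{\sup_{[0,L]}W\le M\}$, and $\sup_{[0,L]}W<\infty$ almost surely, so on letting $M\to\infty$ we may assume $0<m\le g\le M<\infty$. Because $g-W$ is lower semi-continuous it attains its infimum, so the event in question equals $\mathsf{E}:=\{W\le g\text{ on }[0,L]\}\cap\{\exists\,t^\ast\in[0,L]:W(t^\ast)=g(t^\ast)\}$. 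By path continuity $W(t)\to x<g(0)$ as $t\to 0^+$ while $\liminf_{t\to 0^+}g(t)\ge g(0)$, so a.s.\ $W<g$ on a right-neighbourhood of $0$ and, symmetrically, on a left-neighbourhood of $L$; hence, up to a null set, $\mathsf{E}\subset\bigcup_n\mathsf{E}_n$ with $\mathsf{E}_n:=\mathsf{E}\cap\{\exists\,t^\ast\in[1/n,L-1/n]:W(t^\ast)=g(t^\ast)\}$, and it suffices to prove $\mathbb{P}(\mathsf{E}_n)=0$ for each fixed $n$; set $\delta=1/n$.

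\textbf{Passage to a squared Bessel process and a dichotomy.} On $[0,L-\delta/2]$ the law of $W$ is mutually absolutely continuous with that of a squared $\alpha$-Bessel process $Z$ started from $x$, with a bounded Radon--Nikodym derivative expressed through the squared Bessel transition density (the relevant bounds being in Appendix~\ref{sec:BE}); after enlarging the constraint ``$W\le g$ on $[0,L]$'' to ``$W\le g$ on $[0,L-\delta/2]$'' the event $\mathsf{E}_n$ becomes measurable with respect to the path on $[0,L-\delta/2]$, so it is enough to rule out that $Z$ stays $\le g$ on $[0,L-\delta/2]$ and touches $g$ on $[\delta,L-\delta]$. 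As $g$ is lower semi-continuous the region $\{(t,z):z\ge g(t)\}$ is closed, so $\sigma:=\inf\{t\ge 0:Z(t)\ge g(t)\}$ is a stopping time; on the bad event $\sigma\in(0,L-\delta)$, and $Z\le g$ with closedness forces $Z(\sigma)=g(\sigma)=:c\in[m,M]$, $Z<g$ on $[0,\sigma)$, and $\liminf_{u\to0^+}g(\sigma+u)\ge c$. Now split according to the behaviour of the \emph{deterministic} function $g$ at $\sigma$: \emph{(i)} if $\sigma$ lies in the set $\mathcal{D}$ of points at which $g(t)<\liminf_{s\to t}g(s)$ or at which $g$ has a strict local minimum, then $\mathcal{D}$ is countable (distinct points of the first kind cannot be mutually close, by a short separation argument; strict local minima of any real function are countable), and for each fixed $t_0\in\mathcal{D}$ one has $\mathbb{P}(Z(t_0)=g(t_0))=0$ because $Z(t_0)$ has a density, so summing over $\mathcal{D}$ disposes of this part; \emph{(ii)} if $\sigma\notin\mathcal{D}$ then $\liminf_{s\to\sigma}g(s)=c$, and by the strong Markov property $(Z(\sigma+u))_{u\ge0}$ is a squared $\alpha$-Bessel process started from $c>0$, which away from the origin is a non-degenerate diffusion and hence, by the law of the iterated logarithm for its martingale part, exceeds $c$ along times $u\downarrow0$ by an amount of order $\sqrt{u\log\log(1/u)}$; combined with the regularity of $g$ at $\sigma$ this produces a $u>0$ with $Z(\sigma+u)>g(\sigma+u)$, contradicting $Z\le g$, so this part has probability $0$ as well.

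\textbf{Main obstacle.} The delicate point is case (ii): since $g$ is only semi-continuous one cannot simply appeal to ``a non-degenerate diffusion crosses every barrier it meets'' (this would require, e.g., local H\"older-$\tfrac12$ control of the barrier). The purpose of the countable exceptional set $\mathcal{D}$ is precisely to excise the touch points at which $g$ is too irregular for the iterated-logarithm argument, after which one must check that the argument genuinely closes --- in particular the symmetric situation where $g$ is regular to the \emph{left} of $\sigma$ rather than to the right, which one reduces to the previous one by time reversal of the bridge (a squared $\alpha$-Bessel bridge reversed is again such a bridge). Propagating this dichotomy through the absolute-continuity transfer, and running the entirely parallel argument for the lower barrier $f$, is where the remaining bookkeeping lies.
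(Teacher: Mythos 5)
The paper's own proof is two lines: it invokes \cite[Corollary 2.9]{CH14}, which establishes the statement for Brownian bridges, and then transfers it by absolute continuity (of the (squared) Bessel bridge with respect to the Brownian bridge; more precisely, $\sqrt{Be^{x,y}}$ is a Bessel bridge, whose law is mutually absolutely continuous with a Brownian bridge for $\alpha>0$, and the touching events are preserved under $z\mapsto z^2$). Your proposal does not use this route at all; it attempts a direct, from-scratch argument via the first hitting time $\sigma$ of the epigraph of $g$, strong Markov, a countable exceptional set $\mathcal{D}$, and the law of the iterated logarithm.

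The direct route has a genuine gap in case (ii), and it is not merely ``remaining bookkeeping.'' The LIL for the squared Bessel process started at $c=Z(\sigma)$ produces a \emph{random} sequence of times $u\downarrow 0$ at which $Z(\sigma+u)-c$ is of order $\sqrt{u\log\log(1/u)}$; the lower semicontinuity hypothesis on $g$, together with $\sigma\notin\mathcal{D}$, only produces a \emph{deterministic-given-$\sigma$} sequence $u_n\downarrow 0$ with $g(\sigma+u_n)\le c$ (and these $u_n$ may even lie only to the left of $\sigma$). There is no reason the two sequences intersect, so ``combined with the regularity of $g$ at $\sigma$'' does not produce a $u$ with $Z(\sigma+u)>g(\sigma+u)$. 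What is actually needed, when the bad $u_n$ lie to the right, is a Blumenthal $0$–$1$ argument: for a fixed sequence $u_n\downarrow 0$, a.s.\ $Z(\sigma+u_n)>c$ infinitely often, because $\mathbb{P}(Z(\sigma+u_n)>c\mid\mathcal F_\sigma)$ is bounded away from $0$ and the event is in the germ $\sigma$-field. That is a different (and stronger) statement than the LIL, and it must be invoked conditionally on $\mathcal F_\sigma$ so that $u_n$ is indeed non-random.

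Even with that repair, the forward/backward dichotomy does not cover all configurations. Your plan is: at the first touch $\sigma$, use the forward $0$–$1$/LIL argument if the bad sequence is to the right; otherwise time-reverse the bridge, which turns the \emph{last} touch $\tau$ into a first touch of the reversed bridge, and argue forward there. But when $\sigma<\tau$, it is perfectly consistent with lower semicontinuity that the bad sequence at $\sigma$ lies only to its left (e.g.\ $g(\sigma-2^{-n})=g(\sigma)$ but $g(\sigma+u)=g(\sigma)+u^{1/3}$, which the LIL cannot cross) while simultaneously the bad sequence at $\tau$ lies only to its right. Then neither the forward argument at $\sigma$ nor the time-reversed forward argument at $\tau$ yields a contradiction, and there is no third stopping time in your scheme from which to launch one. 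The lemma is of course still true, but your outline does not establish it in this configuration; the concession in the ``main obstacle'' paragraph that ``one must check that the argument genuinely closes'' is warranted, and the remaining work is substantive, not bookkeeping. The absolute-continuity reduction to the known Brownian-bridge result avoids all of this.
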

\begin{proof}
Let $B(t)$ be a Brownian motion with $B(0)=\sqrt{x}$. We start by showing that for all $T'>0$,
\begin{align*}
\mathbb{P} \left( \inf_{t\in [0,T']} (\sqrt{g(t)}-B(t))=0 \right)=0\ \textup{and}\  \mathbb{P} \left( \inf_{t\in [0,T']} (B(t)-\sqrt{f(t)})=0 \right)=0.
\end{align*}
Fix $T'>0$. Denote $\mathsf{E}_g=\{\inf_{t\in [0,T']} (\sqrt{g(t)}-B(t))=0\}$ and $\mathsf{E}_f=\{\inf_{t\in [0,T']} (B(t)-\sqrt{f(t)})=0 \}.$ Note that $B(T')$ and the bridge part $B(t)-\frac{T'-t}{T'}B(0)-\frac{t}{T'}B(T')$ are independent. Conditioned on any realization of $B(t)-\frac{T'-t}{L'}B(0)-\frac{t}{T'}B(T')$, there exists a unique value of $B(T')$ such that $\mathsf{E}_g$ occurs. Since $B(T')$ is Gaussian, from the independence, we have $\mathbb{P}(\mathsf{E}_g)=0$. The proof of $\mathbb{P}(\mathsf{E}_f)=0$ is similar. In view of Lemma~\ref{lem:abscontinuous}, it holds that
\begin{align*}
\mathbb{P} \left( \inf_{t\in [0,T']} (g(t)-\mathcal{Q} (t))=0 \right)=0\ \textup{and}\  \mathbb{P} \left( \inf_{t\in [0,T']} (  \mathcal{Q} (t)-f(t))=0 \right)=0.
\end{align*}    
Since $f(T)<y<g(T)$, the assertion follows by taking $T'$ approach $T$.
\end{proof}
\begin{lemma}\label{lem:QQ'}
Fix $T>0$ and $x,x',y,y'> 0$. Let $\mathcal{Q}$ and $\mathcal{Q}'$ be independent squared Bessel bridges on $[0,T]$ with entrance and exit data $(x,y)$ and $(x',y')$ respectively. Then
\begin{align*}
\mathbb{P}\left( \inf_{t\in [0,T]}( \mathcal{Q}'(t)-\mathcal{Q}(t))=0 \right)=0. 
\end{align*}
\end{lemma}
\begin{proof}
Using Lemma~\ref{lem:abscontinuous} to compare the square root of $\mathcal{Q}$ and $\mathcal{Q}'$ with Brownian motions, we have $\mathbb{P}\left( \inf_{t\in [0,T/2]}( \mathcal{Q}'(t)-\mathcal{Q}(t))=0 \right)=0. $ From \eqref{def:q} and \eqref{def:Besslebridgedensity}, squared Bessel bridges are reversible. Therefore, we have $\mathbb{P}\left( \inf_{t\in [T/2,T]}( \mathcal{Q}'(t)-\mathcal{Q}(t))=0 \right)=0.$ Then the assertion follows.
\end{proof}

Through a time translation, we can easily define a Bessel bridge on an interval $[a,b]\subset\mathbb{R}$. The next proposition concerns a coupling of squared Bessel bridges. The proof is postponed to the next subsection.
\begin{proposition}\label{lem:BEcoupling}
Fix an interval $[a,b]\subset\mathbb{R}$. There exists a probability space $(\Omega,\mathbb{P},\mathcal{F})$ and a map $\mathcal{Q}$ from $[0,\infty)^2\times \Omega$ to $C([a,b],\mathbb{R})$ which satisfies the following properties. For each $x,y\in [0,\infty)$, $\mathcal{Q}(x,y,\cdot)$ is $\mathcal{F}$-measurable and is distributed as a Bessel bridge on $[a,b]$ with entrance and exit data $(x,y)$. Moreover, for any sequences $x_j\to x_0$, $y_j\to y_0$ and $\omega\in\Omega$, it holds that 
\begin{equation}\label{equ:Becoupling}
\mathcal{Q}(x_j,y_j,\omega)\ \textup{converges to }\ \mathcal{Q}(x_0,y_0,\omega)\ \textup{uniformly on}\ [a,b]. 
\end{equation}  
\end{proposition}
The next lemma concerns the modulus of continuity (defined in \eqref{def:modu-1}) of squared Bessel bridges.
\begin{lemma}\label{lem:Besselmodulus}
Fix $R,\eta,\rho>0$ and $[a,b]\subset\R$. There exists $r=r(\alpha,R,\eta,\rho,b-a)>0$ such that the following holds. Let $\mathcal{Q}$ be a squared Bessel bridge defined on $[a,b]$ with entrance and exit data $(x,y)\in [0,R]^2$. Then we have
\begin{align*}
\mathbb{P}\left(\omega_{[a,b]}(\mathcal{Q},r) <\rho\right)>1-\eta.
\end{align*}
\end{lemma}
\begin{proof}
Let $(\Omega,\mathbb{P},\mathcal{F})$ and $\mathcal{Q}$ be the probability space and the map given in Proposition~\ref{lem:BEcoupling}. Assume the assertion fails. There exists $R,\eta,\rho>0$ such that the following holds. For any $n\in\mathbb{N}$, there exists $x_n,y_n \in [0,R]$ such that $\mathbb{P}\left( \omega_{[a,b]}( \mathcal{Q}( x_n,y_n), n^{-1}) > 2^{-1}\rho \right)\geq \eta.$ 
This implies for any $m\leq n$, $\mathbb{P}\left( \omega_{[a,b]}(\mathcal{Q}( x_n,y_n), m^{-1}) > 2^{-1}\rho \right)\geq   \eta.$ Without loss of generality, we assume $(x_n,y_n)$ converges to $(x,y)$. From lemma~\ref{lem:uniform2}, $\mathcal{Q}( x_n,y_n)$ converges to $\mathcal{Q}( x,y)$ uniformly. In particular, $\mathcal{Q}( x_n,y_n)$ converges to $\mathcal{Q}( x,y)$ in distribution. This implies for all $m\in\mathbb{N}$,
\begin{align*}
\mathbb{P}\left( \omega_{[a,b]}(\mathcal{Q}( x ,y ), m^{-1}) > 2^{-1}\rho \right)\geq   \eta.
\end{align*}
It is impossible because $\mathcal{Q}( x ,y )$ is a continuous process.
\end{proof}

\begin{lemma}\label{lem:twoBessel}
Fix $R,\eta>0$ and $[a,b]\subset \mathbb{R}$. There exists $\rho=\rho(\alpha,R,\eta,b-a) $ such that the following holds. For any $x,x',y,y'\in [R^{-1},R]$, let $\mathcal{Q}$ and $\mathcal{Q}'$ be independent squared Bessel bridges on $[0,T]$ with entrance and exit data $(x,y)$ and $(x',y')$ respectively. Then
\begin{align*}
\mathbb{P}\left( \inf_{t\in [a,b]}( \mathcal{Q}'(t)-\mathcal{Q}(t))\in (-\rho,\rho)  \right)<\eta. 
\end{align*} 
\end{lemma}
\begin{proof}
Assume the assertion fails. Arguing as in the proof of Lemma~\ref{lem:Besselmodulus}, there exists  $\eta>0$ and independent squared Bessel bridges $\mathcal{Q}$, $\mathcal{Q}'$ with positive entrance and exit data such that $\mathbb{P}\left( \inf_{t\in [a,b]}( \mathcal{Q}'(t)-\mathcal{Q}(t))=0  \right)\geq \eta. $ In view of Lemma~\ref{lem:QQ'}, this is impossible.
\end{proof}
A similar contradiction argument yields the following general lemma.
\begin{lemma}\label{lem:compactcoupling}
Fix an interval $[a,b]\subset\mathbb{R}$. Let $U$ be an open subset of $C([a,b],\mathbb{R})$.  Suppose that $\mathbb{P}(\mathcal{Q}\in U)>0$ for all squared Bessel bridges defined on $[a,b]$. Then for all $R>0$ there exists $A=A(\alpha,R,U,b-a)>0$ such that $\mathbb{P}(\mathcal{Q}\in U)\geq A$ for all squared Bessel bridges defined on $[a,b]$ with entrance and exit data $(x,y)\in [0,R]^2$.
\end{lemma}
\subsection{Proof of Proposition~\ref{lem:BEcoupling}.}
In this section, we prove Proposition~\ref{lem:BEcoupling}. We begin by writing the SDE \eqref{equ:BEBSDE} in the integral form.  $(X,B)$ defined on a filtered probability space $(\Omega,\mathcal{F}_t,\mathbb{P})$ is a weak solution to \eqref{equ:BEBSDE} if the following holds.
\begin{enumerate}
\item $B(t)$ is an $\mathcal{F}_t$-adapted Brownian motion.
\item $X(t), t\in [0,T]$ is a continuous process which is adapted to $\mathcal{F}_t$.
\item Almost surely $X(t)>0$ for all $t\in (0,T)$, $X(T)=y$ and for all $t\in [0,T)$,
\begin{align}\label{equ:SDE_integral}
X(t)=x+ \int_0^t \frac{\alpha+1/2}{X(s)}+r_{T-s}(X(s),y)\, ds+B(t).
\end{align} 
\end{enumerate}

The next lemma concerns the monotonicity of the integral equation~\eqref{equ:SDE_integral}.
\begin{lemma}\label{lem:bridge_monotone}
Fix $x_2\geq x_1\geq 0$, $y_2\geq y_1\geq 0$ and a continuous function $f(t)$. Let $g_1(t)$, ${g}_2(t)$ be two continuous functions such that for $i=1,2$ $g_i(0)=x_i$, $g_i(T)=y_i$ and $g_i(t)>0$ for all $t\in (0,T)$. Suppose for $i=1,2$, $g_i$ satisfies the equation \eqref{equ:SDE_integral} with $(x,y,B(t))$ replaced by $(x_i,y_i,f(t))$. Then $g_2(t)\geq g_1(t)$ for all $t\in [0,T]$.   
\end{lemma} 
\begin{proof}
From the assumption, $h(t)=g_2(t)-g_1(t)$ is differentiable on $(0,T)$ and satisfies the equation
\begin{align*}
h'(t) =-(\alpha+1/2)\frac{h(t)}{g_1(t)g_2(t)}+r_{T-t}(g_2(t) ,y_2)-r_{T-t}(g_1(t) ,y_1).
\end{align*}
We aim to show $h(t)\geq 0$ for all $t\in [0,T]$. From \eqref{p:convex} and $y_2\geq y_1$, we have $ r_{T-t}(g_1(t),y_2)-r_{T-t}(g_1(t) ,y_1) \geq 0$. This implies
\begin{align}\label{equ:SDE_diff}
h'(t) \geq    -(\alpha+1/2)\frac{h(t)}{g_1(t)g_2(t)}+r_{T-t}(g_2(t) ,y_2)-r_{T-t}(g_1(t),y_2).
\end{align}
From \eqref{def:pF}, $r_t(x,y)=-t^{-1}x + t^{-1}y    (h'_\alpha/h_\alpha)\left(t^{-1}  {xy} \right).$ In view of \eqref{equ:hexpansion}, $r_t(x,y)$ can be extended as a smooth function for $(t,x,y)\in (0,\infty)\times\mathbb{R}^2$. 

We divide the discussion into two cases. In case $1$, we assume $x_1>0$. For any $T_0\in[0,T)$, in view of \eqref{equ:SDE_diff} and the smoothness of $r_t(x,y)$, there exists a constant $C(T_0)$ such that for $t\in [0, T_0]$, $h'(t)\geq -C(T_0)|h(t)|$. Together with $h(0)=x_2-x_1\geq 0 $, we have $h(t)\geq 0$ for all $t\in [0,T)$. Note that $h(T)=y_2-y_1\geq 0$. The proof for case $1$ is finished.

In case $2$, we assume $x_1=0$. By the continuity of $g_1(t)$ and the smoothness of $r_t(x,y)$, there exists $\varepsilon>0$ such that for all $t\in (0,\varepsilon)$, 
\begin{align}\label{equ:nearzero}
 (\alpha+1/2)\left|\frac{h(t)}{g_1(t)g_2(t) }\right|\geq 2\left| r_{T-t}(g_2(t) ,y_2)-r_{T-t}(g_1(t) ,y_2) \right|.
\end{align}
We now show that $h(t)\geq 0 $ for $t\in (0,\varepsilon)$. Suppose it fails. There exists some $t_0\in(0,\varepsilon)$ with $h(t_0)<0 $. From \eqref{equ:SDE_diff} and \eqref{equ:nearzero}, we have $h'(t_0)> 0$. It is then simple to show that $h(t)<0$ and $h'(t)> 0$ for all $t\in (0,t_0]$. This implies $x_2-x_1=h(0) \leq h(t_0)<0$, which contradicts the assumption. Having $h(t)\geq 0$ for $t\in (0,\varepsilon)$, an argument similar to case 1 ensures $h(t)\geq 0$ for $t\in [0,T]$. 
\end{proof}
A direct consequence of Lemma~\ref{lem:bridge_monotone} is that \eqref{equ:BEBSDE} enjoys \textit{pathwise uniqueness} (see \cite[Ch. IX, $\S$ 1]{RY} for the definition). From the Yadama-Watanabe theorem \cite[Ch. IX, Theorem 1.7]{RY}, every weak solution to \eqref{equ:BEBSDE} is also a strong solution. Moreover, there exists a Borel measurable map $\Phi^{x,y}$ from  $C([0,\infty),\mathbb{R})$ to $C([0,T],\mathbb{R} )$ such that for any Brownian motion $\tilde{B}$, $(\Phi^{x,y}(\tilde{B}),\tilde{B})$ is a strong solution to \eqref{equ:BEBSDE}.

From now on, we fix a filtered probability space $(\Omega,\mathcal{F}_t,\mathbb{P})$ and an adapted Brownian motion $B(t)$. For $x,y\in [0,\infty)\cap \mathbb{Q}$, we define $\mathcal{S}^{x,y}\coloneqq \Phi^{x,y}(B)$. Let $\Omega_0$ be a full measure subset of $\Omega$ such that for all $\omega\in\Omega_0$ and $x,y\in \mathbb{Q}\cap [0,\infty)$, \eqref{equ:SDE_integral} holds with $X(t)$ replaced by $\mathcal{S}^{x,y}(t)$.

\begin{lemma}\label{lem:uniform}
Let $x_n, y_n$ be two convergent sequences of non-negative rational numbers with $x=\lim_{n\to\infty} x_n$ and $y=\lim_{n\to\infty} y_n$. Then for all $\omega\in \Omega_0$, $\mathcal{S}^{x_n,y_n}(\omega)$ converges uniformly on $[0,T]$. The limit solves \eqref{equ:SDE_integral}. Moreover, the limit depends only on $x$ and $y$ but not on the sequence $x_n$ or $y_n$. 
\end{lemma}  
\begin{proof}
Throughout the proof, we fix $\omega\in\Omega_0$. We denote $\mathcal{S}^{x,y}(\omega)$ by $\mathcal{S}^{x,y}$ for brevity. Let $x^\pm_n,y^\pm_n,$ be sequences of non-negative rational numbers such that 
\begin{itemize}
\item $x^+_n$, $y^+_n$ are non-increasing and $x^-_n$, $y^-_n$ are non-decreasing.
\item $x^-_n\leq x_n\leq x^+_n$, $y^-_n\leq y_n\leq y^+_n$ for all $n$.
\item $\lim_{n\to\infty} x_n^\pm=x$ and $\lim_{n\to\infty} y_n^\pm=y$.
\end{itemize}
From Lemma~\ref{lem:bridge_monotone}, $\mathcal{S}^{x_n^+,y_n^+}$ is non-increasing in $n$ and $\mathcal{S}^{x_n^-,y_n^-}$ is non-decreasing in $n$. This implies for all $t\in [0,T],$ $\mathcal{S}^\pm(t) \coloneqq \lim_{n\to\infty}\mathcal{S}^{x_n^\pm, y_n^\pm}(t)$ exist. By the dominated convergence theorem, $\mathcal{S}^\pm(t) $ both satisfy \eqref{equ:SDE_integral}. This implies $\mathcal{S}^\pm(t)$ are continuous on $t\in (0,T)$. From $\mathcal{S}^{x_n^-,y_n^-}(t)\leq \mathcal{S}(t)^\pm\leq \mathcal{S}^{x_n^+,y_n^+}(t)$, we have $x_n^-\leq \liminf_{t\to 0}\mathcal{S}^\pm(t)$ and $ \limsup_{t\to 0}\mathcal{S}^\pm(t)\leq x_n^+$. This ensures $\mathcal{S}^\pm(t)$ are continuous at $t=0$. The continuity of $\mathcal{S}^\pm(t)$ at $t=T$ can be similarly derived. We can then apply Lemma~\ref{lem:bridge_monotone} to get $\mathcal{S}^+(t)=\mathcal{S}^-(t)=:\mathcal{S}(t)$.

From Dini's theorem, both $\mathcal{S}^{x_n^\pm, y_n^\pm} $ converge to $\mathcal{S} $ uniformly. Together with $\mathcal{S}^{x_n^-,y_n^-}(t)\leq \mathcal{S}^{x_n,y_n}(t)\leq \mathcal{S}^{x_n^+,y_n^+}(t)$, the uniform convergence of $\mathcal{S}^{x_n,y_n}$ follows. The fact that $\mathcal{S}(t)$ depends only on $x$ and $y$ but not on $x_n$ or $y_n$ follows Lemma~\ref{lem:bridge_monotone}.
 
\end{proof}

Next, we define $\overline{\mathcal{S}}^{x,y}(t)$ on $\Omega$ for all $x,y\geq 0$ through approximation. For any $x,y\geq 0 $, let $(x_n,y_n)$ be a sequence of non-negative rational pairs which converges to $(x,y)$. Define 
\begin{equation}\label{equ:approximation}
\overline{\mathcal{S}}^{x,y} (\omega)\coloneqq \left\{ \begin{array}{cc}
\lim_{n\to\infty} \mathcal{S}^{x_n,y_n} (\omega), & \omega\in\Omega_0,\\
\textup{linear interpolation between}\ x\ \textup{and}\ y  , & \omega\notin\Omega_0
\end{array} \right. 
\end{equation}
Lemma~\ref{lem:uniform} guarantees that the limit exists and that the limit does not depend on how we chose the sequence. In particular, $\overline{\mathcal{S}}^{x,y}(\omega)=\mathcal{S}^{x,y}(\omega)$ for all $\omega\in\Omega_0$ and $x,y\in\mathbb{Q}\cap [0,\infty) $.	 

The next lemma shows that for all $\omega\in\Omega$, $\overline{\mathcal{S}}^{x,y}$ depends continuously on $x,y$. The proof is similar to the one for Lemma~\ref{lem:uniform} and we omit the details.

\begin{lemma}\label{lem:uniform2}
Let $x_n, y_n$ be two convergent sequences of non-negative numbers with $x=\lim_{n\to\infty} x_n$ and $y=\lim_{n\to\infty} y_n$. Then for all $\omega\in \Omega$, $\overline{\mathcal{S}}^{x_n,y_n}(\omega)$ converges uniformly to $\overline{\mathcal{S}}^{x,y}(\omega)$ on $[0,T]$.
\end{lemma}  
\begin{proof}[Proof of Proposition~\ref{lem:BEcoupling}]
Without loss of generality, we may assume $a=0$ and $b=T$. Let $ \overline{\mathcal{S}}^{x,y}(t)$, $x,y\in [0,\infty)$ be a family of random curves coupled in the same probability space $(\Omega,\mathcal{F}_t,\mathbb{P})$ constructed above. From Lemma~\ref{lem:uniform}, $(\overline{\mathcal{S}}^{x,y},B)$ is a weak solution to \eqref{equ:BEBSDE} for all $x,y\geq 0$. In particular, $\overline{\mathcal{S}}^{x,y} $ is distributed as a Bessel bridge with entrance and exit data $(x,y)$. Therefore, defining $\mathcal{Q}( x,y)\coloneqq \left(\overline{\mathcal{S}}^{\sqrt{x},\sqrt{y}}\right)^2$ gives a coupling of squared Bessel bridges. The uniform convergence is ensured by Lemma~\ref{lem:uniform2}. 
\end{proof}

\section{Basics of line ensembles}\label{sec:Basics of line ensembles}
In this section we introduce basic notions necessary to define the squared Bessel Gibbs property.

\begin{definition}[Line ensembles]\label{def:line-ensemble}
Let $\Sigma\subset\mathbb{N}$ and $\Lambda\subset\mathbb{R}$ be intervals that are closed. A $\Sigma\times\Lambda $-indexed line ensemble $\mathcal{L}$ is a random variable on a probability space $(\Omega,\mathcal{F},\mathbb{P})$, taking values in $C(\Sigma\times\Lambda,\R)$ such that $\mathcal{L}$ is a measurable function from $\mathcal{F}$ to $\mathcal{C}(\Sigma\times\Lambda,\R)$.
\end{definition}

We think of such line ensembles as multi-layer random curves. We will generally write $\mathcal{L}:\Sigma\times\Lambda\to \R$ even though it is not $\mathcal{L}$, but rather $\mathcal{L}(\omega)$ for each $\omega\in \Omega$ which is such a function. We will also sometimes specify a line ensemble by only giving its law without reference to the underlying probability space.  We write $\mathcal{L}_i(\cdot):= \big(\mathcal{L}(\omega)\big)(i,\cdot)$ for the label $i\in \Sigma$ curve of the ensemble $\mathcal{L}$. 

\begin{definition}[Convergence of line ensembles]\label{def:weakconvergence} 
Given a $\Sigma\times\Lambda$-indexed line ensemble $\mathcal{L}$ and a sequence of such ensembles $\left\{\mathcal{L}^N\right\}_{N\geq 1}$, we will say that $\mathcal{L}^N$ converges to $\mathcal{L}$ weakly
as a line ensemble if for all bounded continuous functions $F:C(\Sigma\times\Lambda,\R)\to \R$, it holds that as $N\to \infty$,
	\begin{equation*}
		\int F\big(\mathcal{L}^N(\omega)\big)d\mathbb{P}^N(\omega) \to \int F\big(\mathcal{L}(\omega)\big) d\mathbb{P}(\omega).
	\end{equation*}
This is equivalent to weak-$*$ convergence in $C(\Sigma\times\Lambda ,\R)$ endowed with the topology of uniform convergence on compact subsets of $\Sigma\times\Lambda$.
\end{definition}

The following definition gives the class of functions $(f,g)$ which will serve as the upper and lower boundary data for non-intersecting squared Bessel bridges. Even though we allow $f$ and $g$ to have certain discontinuity below, in most cases $f$ and $g$ will be continuous functions.

\begin{definition}\label{def:continuityassumption}
A pair of functions $(f,g)$ defined on $[a,b]$ satisfies the continuity assumption if the following statements hold. First, $f:[a,b]\to [0,\infty)$ is upper semi-continuous and $g:[a,b]\to (0,\infty]$ is lower semi-continuous. Second, $f$ and $g$ are continuous at $a$ and $b$. Third, for all $t\in (a,b)$, $f$ and $g$ are one-sided continuous at $t$.
\end{definition}

We now start to formulate the squared $\alpha$-Bessel Gibbs property. 

\begin{definition}[Squared $\alpha$-Bessel bridge ensemble]\label{def:Bessel bridge LE}
Fix $k_1\leq k_2$ with $k_1,k_2 \in \mathbb{N}$, an interval $[a,b]\subset \mathbb{R}$ and two vectors $\vec{x},\vec{y}\in \mathbb{R}_+^{k_2-k_1+1}$. A $[k_1,k_2]_{\Z}\times [a,b]$-indexed line ensemble $\mathcal{L} = (\mathcal{L}_{k_1},\ldots,\mathcal{L}_{k_2})$ is called a free squared $\alpha$-Bessel bridge ensemble with entrance data $\vec{x}$ and exit data $\vec{y}$ if its law $\mathbb{P}^{k_1,k_2,[a,b],\vec{x},\vec{y}}_{\free}$ is that of $k_2-k_1+1$ independent squared $\alpha$-Bessel bridges defined on $[a,b]$ with entrance and exit data $(\vec{x},\vec{y})$. 

Let $(f,g)$ be a pair of functions defined on $[a,b]$ which satisfies the continuity assumption. The normalizing constant is the following non-intersecting probability 
\begin{equation}\label{eqn:normalcont_Bessel}
Z^{k_1,k_2,(a,b),\vec{x},\vec{y},f,g} :=\mathbb{E}^{k_1,k_2,(a,b),\vec{x},\vec{y}}_{\free}\Big[\mathbbm{1} \{f<\mathcal{J}_{k_1}<\cdots<\mathcal{J}_{k_2}<g \ \textup{on}\ [a,b]\}\Big],
\end{equation}
where $\mathcal{J}$ in the above expectation is distributed according to the measure $\mathbb{P}^{k_1,k_2,(a,b),\vec{x},\vec{y}}_{\free}$. If the normalizing constant is positive, we define the non-intersecting squared $\alpha$-Bessel bridge ensemble with entrance data $\vec{x}$, exit data $\vec{y}$ and boundary data $(f,g)$ to be a $[k_1,k_2]_{\Z}\times [a,b]$-indexed line ensemble with law $\mathbb{P}^{k_1,k_2,(a,b),\vec{x},\vec{y},f,g}$ given according to the following Radon-Nikodym derivative relation:
\begin{equation}\label{eqn:RN}
\frac{\textup{d}\mathbb{P}^{k_1,k_2,(a,b),\vec{x},\vec{y},f,g}}{\textup{d}\mathbb{P}_{\free}^{k_1,k_2,(a,b),\vec{x},\vec{y}}}(\mathcal{J}) := \frac{\mathbbm{1} {\{f<\mathcal{J}_{k_1}<\cdots<\mathcal{J}_{k_2}<g\ \textup{on}\ [a,b]\}}}{Z^{k_1,k_2,(a,b),\vec{x},\vec{y},f,g}}.
\end{equation}

Moreover, given $a'<b'$ contained in $(a,b)$, we define
\begin{equation}\label{def:normalcont_Brownian_t}
Z_{(a',b')}^{k_1,k_2,(a,b),\vec{x},\vec{y},f,g} :=\mathbb{E}^{k_1,k_2,(a,b),\vec{x},\vec{y}}_{\free}\Big[\mathbbm{1} \{f<\mathcal{J}_{k_1}<\cdots<\mathcal{J}_{k_2}<g \ \textup{on}\ [a,a']\cup [b',b]\}\big],
\end{equation}
where $\mathcal{J}$ in the above expectation is distributed according to the measure $\mathbb{P}^{k_1,k_2,(a,b),\vec{x},\vec{y}}_{\free}$.
That is, the non-intersecting property is only required on $[a,a']\cup [b',b]$ but not on $(a',b')$. We similarly define 
\begin{equation*}
\frac{\textup{d}\mathbb{P}^{k_1,k_2,(a,b),\vec{x},\vec{y},f,g}_{(a',b')}}{\textup{d}\mathbb{P}_{\free}^{k_1,k_2,(a,b),\vec{x},\vec{y}}}(\mathcal{J}) := \frac{\mathbbm{1} {\{f<\mathcal{J}_{k_1}<\cdots<\mathcal{J}_{k_2}<g\ \textup{on}\ [a,a']\cup [b',b]\}}}{Z_{(a',b')}^{k_1,k_2,(a,b),\vec{x},\vec{y},f,g}}.
\end{equation*}
Note that ${\textup{d}\mathbb{P}_{(a',b')}^{k_1,k_2,(a,b),\vec{x},\vec{y},f,g}}\big/{\textup{d}\mathbb{P}^{k_1,k_2,(a,b),\vec{x},\vec{y},f,g}}(\mathcal{J})$ is proportional to $\mathbbm{1} {\{f<\mathcal{J}_{k_1}<\cdots<\mathcal{J}_{k_2}<g\ \textup{on}\ [a',b']\}}.$ This feature will be used in Section~\ref{sec:proofofZ}. 
\end{definition}

\begin{remark}
The normalizing constant is positive when the boundary values are ordered. That is, $f(a)< {x}_{k_1}<\cdots <  {x}_{k_2}<g(a)$, $f(b)< {y}_{k_1} <\cdots<  {y}_{k_2}<g(b)$ and $f<g$ on $[a,b]$ implies $Z^{k_1,k_2,(a,b),\vec{x},\vec{y},f,g}>0.$
\end{remark}

The squared $\alpha$-Bessel Gibbs property could be viewed as a spatial Markov property. More specifically, it provides a description of the conditional law inside a compact set.
\begin{definition}[Squared $\alpha$-Bessel Gibbs property]\label{def:BesselGP}
Fix $\alpha\geq 0$. A $\Sigma \times \Lambda$-indexed line ensemble $\mathcal{L}$ satisfies the  squared $\alpha$-Bessel Gibbs property if for all $[k_1,k_2]_{\mathbb{Z}} \subset \Sigma$ and $[a,b]\subset \Lambda$, its conditional law inside $[k_1,k_2]_{\mathbb{Z}}\times [a,b]$ takes the following form,
\begin{equation}\label{Gibbs-condition}
\textrm{Law\ of}\left(\mathcal{L} \left\vert_{[k_1,k_2]_{\mathbb{Z}} \times [a,b]} \textrm{conditional on } \mathcal{L}\right\vert_{(\Sigma \times \Lambda) \setminus ( [k_1,k_2]_{\mathbb{Z}} \times (a,b) )} \right) =\bP^{k_1,k_2,(a,b),\vec{x},\vec{y},f,g}
\end{equation}
Here $f =\mathcal{L}_{k_1-1}$ and $g =\mathcal{L}_{k_2+1}$ with the convention that if $k_1-1\notin\Sigma$ then $f\equiv 0$ and likewise if $k_2+1\notin \Sigma$ then $g\equiv +\infty$; we have also set $\vec{x}=\big(\mathcal{L}_{k_1}(a),\cdots ,\mathcal{L}_{k_2}(a)\big)$ and $\vec{y}=\big(\mathcal{L}_{k_1}(b),\ldots ,\mathcal{L}_{k_2}(b)\big)$. 
\end{definition}
The following description of Gibbs property using conditional expectation is equivalent. We will make use of it as it is convenient for writing the arguments. A $\Sigma \times \Lambda$-indexed line ensemble $\mathcal{L}$ enjoys the squared $\alpha$-Bessel Gibbs property if and only if for any $[k_1,k_2]_{\mathbb{Z}}\subset \Sigma$, $(a,b)\subset \Lambda$, and bounded Borel function $F:C\left([k_1,k_2]_{\mathbb{Z}}\times[a,b], \R\right)\to \R$, it holds $\mathbb{P}$-almost surely that
\begin{equation}\label{Gibbs-algebra}
\E\left[F\left(\mathcal{L}|_{[k_1,k_2]_{\mathbb{Z}}\times [a,b]} \right) \right\vert \Fext\big([k_1,k_2]_{\mathbb{Z}}\times (a,b)\big)\Big] =  \mathbb{E}^{k_1,k_2,(a,b),\vec{x},\vec{y},f,g}\left[F(\mathcal{J}_{k_1},\ldots, \mathcal{J}_{k_2})\right],
\end{equation}
where $\vec{x}$,$\vec{y}$,$f$ and $g$ are defined as in Definition \ref{def:BesselGP} and where \glossary{$\Fext\left(K\times (a,b)\right)$, Sigma-field generated by a line ensemble outside $[k_1,k_2]_{\mathbb{Z}}\times (a,b)$}
\begin{equation}\label{eqn:exterior-signma}
\Fext\left([k_1,k_2]_{\mathbb{Z}}\times (a,b)\right) := \sigma\left(\mathcal{L}_{i}(s): (i,s)\in \Sigma\times \Lambda \setminus [k_1,k_2]_{\mathbb{Z}}\times (a,b)\right)
\end{equation}
is the exterior sigma-field generated by the line ensemble outside $[k_1,k_2]_{\mathbb{Z}}\times (a,b)$. On the left-hand side of the above equality  $\mathcal{L}|_{[k_1,k_2]_{\mathbb{Z}}\times [a,b]}$ is the restriction to $[k_1,k_2]_{\mathbb{Z}}\times[a,b]$ of curves distributed according to $\mathbb{P}$, while on the right-hand side $\mathcal{J}_{k_1},\ldots, \mathcal{J}_{k_2}$ are curves on $[a,b]$ distributed according to $\mathbb{P}^{k_1,k_2,(a,b),\vec{x},\vec{y},f,g}$. 

We finish this subsection by the squared $\alpha$-Bessel process of $\mathcal{L}^{N,\alpha}$.
\begin{proposition}\label{pro:GibbsforL}
For any $\alpha\geq 0$ and $N\in\mathbb{N}$, the line ensemble $\mathcal{L}^{N,\alpha}$ defined in \eqref{eqn:edgeLE2} satisfies the squared $\alpha$-Bessel Gibbs property.
\end{proposition}
\begin{proof}
Recall that $Y^{N,\alpha}=(Y^{N,\alpha}_1(t),\dots, Y^{N,\alpha}_N(t))$ is the non-intersecting squared $\alpha$-Bessel process. This implies $Y^{N,\alpha}$ satisfies the squared $\alpha$-Bessel Gibbs property. The assertion then follows by combining \eqref{eqn:edgeLE1} and the scaling property of Bessel processes.
\end{proof}

We finish this section with the strong Gibbs property, which enables us to resample the trajectory within a stopping domain as opposed to a deterministic interval. 

\begin{definition}\label{def:stopdm}
Let $\Sigma\subset\mathbb{N}$ and $\Lambda\subset\mathbb{R}$ be intervals that are closed. Consider a $\Sigma\times\Lambda$-indexed line ensemble $\mathcal{L}$. For $[k_1,k_2]_{\mathbb{Z}}\subset \Sigma$, the random variable $(\mathfrak{l},\mathfrak{r})$ \glossary{$(\mathfrak{l},\mathfrak{r})$, Stopping domain} is called a {\it $[k_1,k_2]_{\mathbb{Z}}$-stopping domain} if for all $\ell<r$,
\begin{equation*}
\big\{\mathfrak{l} \leq \ell , \mathfrak{r}\geq r\big\} \in \Fext\big([k_1,k_2]_{\mathbb{Z}}\times (\ell,r)\big).
\end{equation*}
\end{definition}

The strong Gibbs property for squared $\alpha$-Bessel Gibbsian line ensembles follows from the same argument of the strong Gibbs property for Brownian Gibbsian line ensemble. We omit the proof here and refer the readers to \cite[Lemma 2.5]{CH16}. Define
\begin{align*}
C(k_1,k_2):=\left\{ (\ell,r,f_{k_1},\dots,f_{k_2})\,:\, \ell<r,\ (f_{k_1},\dots,f_{k_2})\in C([k_1,k_2]_{\mathbb{Z}}\times [\ell,r],\mathbb{R}) \right\}.
\end{align*}
We equip $C(k_1,k_2)$ with the topology induced by the restriction map $\mathbb{R}\times\mathbb{R}\times C([k_1,k_2]_{\mathbb{Z}}\times\mathbb{R},\mathbb{R})$.
\begin{lemma}[{\bf Strong Gibbs property}]\label{lem:stronggibbs}
Consider a $\Sigma\times\Lambda$-indexed line ensemble $\mathcal{L}$ which has the squared $\alpha$-Bessel Gibbs property. Fix $[k_1,k_2]_{\mathbb{Z}}\subset \Sigma$. For all random variables $(\mathfrak{l},\mathfrak{r})$ which are $[k_1,k_2]_{\mathbb{Z}}$-stopping domains for $\mathcal{L}$, the following strong squared $\alpha$-Bessel Gibbs property holds: for any bounded Borel function $F: C(k_1,k_2)\to\mathbb{R}$, it holds $\bP$-almost surely that
\begin{equation}\label{eqn:stronggibbs}
\E\bigg[ F\Big(\mathfrak{l},\mathfrak{r}, \mathcal{L}|_{[k_1,k_2]_{\mathbb{Z}}\times (\mathfrak{l},\mathfrak{r})} \Big) \Big\vert \Fext\big(K\times [\mathfrak{l},\mathfrak{r}]\big) \bigg]
=\bP^{k_1,k_2,(\mathfrak{l},\mathfrak{r}),\vec{x},\vec{y},f,g}\Big[F\big(\mathfrak{l} ,\mathfrak{r}, \mathcal{J}_{k_1},\ldots, \mathcal{J}_{k_2}\big) \Big].
\end{equation}
Here $\vec{x} = \{\mathcal{L}_i(\mathfrak{l})\}_{i=k_1}^{k_2}$, $\vec{y} = \{\mathcal{L}_i(\mathfrak{r})\}_{i=k_1}^{k_2}$, $f =\mathcal{L}_{k_1-1} $ (or $0$ if $k_1-1\notin \Sigma$), $g =\mathcal{L}_{k_2+1} $ (or $+\infty$ if $k_2+1\notin \Sigma$). On the left-hand side $\ \mathcal{L}|_{[k_1,k_2]_{\mathbb{Z}}\times [\mathfrak{l},\mathfrak{r}]}$ is the restriction of curves distributed according to $\bP$ and on the right-hand side $\mathcal{J}_{k_1},\ldots, \mathcal{J}_{k_2}$ is distributed according to $\bP^{k_1,k_2,(\mathfrak{l},\mathfrak{r}),\vec{x},\vec{y},f,g}$.
\end{lemma}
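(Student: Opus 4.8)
The plan is to deduce the strong Gibbs property from the ordinary squared $\alpha$-Bessel Gibbs property \eqref{Gibbs-algebra} by approximating the stopping domain $(\mathfrak{l},\mathfrak{r})$ by countably valued ones, following the template of \cite[Lemma 2.5]{CH16} (see also \cite{CH14}); that argument is insensitive to the precise law of the resampled bridges, requiring only that the curves of $\mathcal{L}$ are continuous and that the family of laws $\bP^{k_1,k_2,(\ell,r),\vec{x},\vec{y},f,g}$ depends weakly continuously on the data $(\ell,r,\vec{x},\vec{y},f,g)$, both of which hold here. Write $K:=[k_1,k_2]_{\mathbb{Z}}$, and recall that for deterministic rectangles $\Fext(K\times(\ell',r'))\subseteq\Fext(K\times(\ell,r))$ whenever $(\ell,r)\subseteq(\ell',r')$.

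\emph{Step 1: countably valued stopping domains.} First I would prove \eqref{eqn:stronggibbs} when $(\mathfrak{l},\mathfrak{r})$ takes values in a countable set. Partition $\Omega$ into the disjoint events $\mathsf{E}_{\ell,r}:=\{\mathfrak{l}=\ell,\mathfrak{r}=r\}$. Writing a strict inequality on $\mathfrak{l}$ or $\mathfrak{r}$ as a countable union of non-strict ones and using the stopping-domain property together with the monotonicity of $\Fext$ recalled above, one checks that $\mathsf{E}_{\ell,r}\in\Fext(K\times(\ell,r))$. On $\mathsf{E}_{\ell,r}$ the boundary data in \eqref{eqn:stronggibbs} coincide with those attached to the deterministic interval $(\ell,r)$, so applying the ordinary Gibbs property \eqref{Gibbs-algebra} for the rectangle $K\times(\ell,r)$ and restricting to the $\Fext(K\times(\ell,r))$-measurable event $\mathsf{E}_{\ell,r}$ gives \eqref{eqn:stronggibbs} on $\mathsf{E}_{\ell,r}$; summing over $(\ell,r)$ finishes this case.

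\emph{Step 2: general stopping domains.} For a general $K$-stopping domain with $\mathfrak{l}<\mathfrak{r}$ a.s.\ (the event $\{\mathfrak{l}=\mathfrak{r}\}$ being vacuous), I would set $\mathfrak{l}^n:=2^{-n}\lceil 2^n\mathfrak{l}\rceil$ and $\mathfrak{r}^n:=2^{-n}\lfloor 2^n\mathfrak{r}\rfloor$. Since $\mathfrak{l}^n,\mathfrak{r}^n$ take values among level-$n$ dyadics and $\{\mathfrak{l}^n\le\ell\}=\{\mathfrak{l}\le\ell\}$, $\{\mathfrak{r}^n\ge r\}=\{\mathfrak{r}\ge r\}$ for level-$n$ dyadic $\ell,r$, one verifies that $(\mathfrak{l}^n,\mathfrak{r}^n)$ is again a (countably valued) $K$-stopping domain, with $(\mathfrak{l}^n,\mathfrak{r}^n)\subseteq(\mathfrak{l},\mathfrak{r})$, $\mathfrak{l}^n\downarrow\mathfrak{l}$, $\mathfrak{r}^n\uparrow\mathfrak{r}$, and $\mathfrak{l}^n<\mathfrak{r}^n$ for $n$ large. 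Rounding $\mathfrak{l}$ up and $\mathfrak{r}$ down (i.e.\ approximating from the inside) is forced here: rounding outward destroys the stopping-domain property. Because $(\mathfrak{l}^n,\mathfrak{r}^n)\subseteq(\mathfrak{l},\mathfrak{r})$ we have $\Fext(K\times(\mathfrak{l},\mathfrak{r}))\subseteq\Fext(K\times(\mathfrak{l}^n,\mathfrak{r}^n))$, so any bounded $\Fext(K\times(\mathfrak{l},\mathfrak{r}))$-measurable $G$ is admissible in Step~1 at level $n$. Taking a bounded continuous $F$ on $C(k_1,k_2)$, Step~1 gives
\begin{equation*}
\E\Big[ F\big(\mathfrak{l}^n,\mathfrak{r}^n,\mathcal{L}|_{K\times(\mathfrak{l}^n,\mathfrak{r}^n)}\big)\,G\Big] = \E\Big[ \bP^{k_1,k_2,(\mathfrak{l}^n,\mathfrak{r}^n),\vec{x}^n,\vec{y}^n,f^n,g^n}\big[F\big]\,G\Big],
\end{equation*}
where $\vec{x}^n=\{\mathcal{L}_i(\mathfrak{l}^n)\}_{i=k_1}^{k_2}$, $\vec{y}^n=\{\mathcal{L}_i(\mathfrak{r}^n)\}_{i=k_1}^{k_2}$, $f^n=\cL_{k_1-1}|_{(\mathfrak{l}^n,\mathfrak{r}^n)}$, $g^n=\cL_{k_2+1}|_{(\mathfrak{l}^n,\mathfrak{r}^n)}$ (with the usual conventions $f^n\equiv 0$, $g^n\equiv+\infty$ at the boundary of $\Sigma$). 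Letting $n\to\infty$: on the left, continuity of the (fixed) curves of $\mathcal{L}$ shows that the triples $(\mathfrak{l}^n,\mathfrak{r}^n,\mathcal{L}|_{K\times(\mathfrak{l}^n,\mathfrak{r}^n)})$ converge to $(\mathfrak{l},\mathfrak{r},\mathcal{L}|_{K\times(\mathfrak{l},\mathfrak{r})})$ in $C(k_1,k_2)$, so by continuity and boundedness of $F$ and dominated convergence the left side tends to $\E[F(\mathfrak{l},\mathfrak{r},\mathcal{L}|_{K\times(\mathfrak{l},\mathfrak{r})})\,G]$; on the right, the data converge, $(\mathfrak{l}^n,\mathfrak{r}^n,\vec{x}^n,\vec{y}^n,f^n,g^n)\to(\mathfrak{l},\mathfrak{r},\vec{x},\vec{y},f,g)$, and the weak continuity of the resampling law (Step~3) gives $\bP^{k_1,k_2,(\mathfrak{l}^n,\mathfrak{r}^n),\ldots}[F]\to\bP^{k_1,k_2,(\mathfrak{l},\mathfrak{r}),\vec{x},\vec{y},f,g}[F]$, whence the right side tends to $\E[\bP^{k_1,k_2,(\mathfrak{l},\mathfrak{r}),\vec{x},\vec{y},f,g}[F]\,G]$. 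Since $G$ was an arbitrary bounded $\Fext(K\times(\mathfrak{l},\mathfrak{r}))$-measurable function and $\bP^{k_1,k_2,(\mathfrak{l},\mathfrak{r}),\vec{x},\vec{y},f,g}[F]$ is $\Fext(K\times(\mathfrak{l},\mathfrak{r}))$-measurable, this is \eqref{eqn:stronggibbs} for continuous $F$; a monotone-class argument upgrades it to all bounded Borel $F$, and truncation to all Borel $F$ for which the identity is meaningful.

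\emph{Step 3: the main obstacle.} The one genuinely analytic input, and the step I expect to demand the most care, is the weak continuity claim used above: if $(\ell_n,r_n)\to(\ell,r)$ with $(\ell_n,r_n)\subseteq(\ell,r)$, $\vec{x}^n\to\vec{x}$, $\vec{y}^n\to\vec{y}$, and $f^n\to f$, $g^n\to g$ locally uniformly, all strictly ordered, then $\bP^{k_1,k_2,(\ell_n,r_n),\vec{x}^n,\vec{y}^n,f^n,g^n}\to\bP^{k_1,k_2,(\ell,r),\vec{x},\vec{y},f,g}$ weakly. The free squared $\alpha$-Bessel bridge measures converge weakly by continuity of the squared Bessel bridge in its endpoints and time horizon (a standard fact from the explicit transition densities in Appendix~\ref{sec:BE}); by the Radon--Nikodym description \eqref{eqn:RN} it then remains to show that the non-intersection event $\{f<\cL_{k_1}<\cdots<\cL_{k_2}<g\}$ is a continuity set for the limiting free measure and that $Z^{k_1,k_2,(\ell,r),\vec{x},\vec{y},f,g}$ stays bounded away from $0$ along the sequence. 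The former is exactly where Lemma~\ref{lem:not touching} enters (together with its routine variant covering the bottom barrier $f\equiv 0$ when $k_1=1$ and the a.s.\ strictness $\cL_{k_i}(\ell)<\cL_{k_{i+1}}(\ell)$ of the endpoint data), since it shows the free bridges a.s.\ do not touch $f$ or $g$ and a.s.\ do not lie on the mutual-crossing boundary; the uniform lower bound on $Z$ follows from the ordered strict inequalities of the boundary data and non-degeneracy of the bridge measure, exactly as for non-intersecting Brownian bridges. Everything else is bookkeeping: the measurability identities around stopping domains in Step~1, the verification that dyadic rounding preserves the stopping-domain property in Step~2, and the handling of the degenerate event $\{\mathfrak{l}=\mathfrak{r}\}$.
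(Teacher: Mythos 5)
Your proof is a correct reconstruction of the argument from \cite[Lemma 2.5]{CH16}, which the paper cites without reproducing. The key points---the inward dyadic discretization of the stopping domain (you are right that outward rounding would destroy the stopping-domain measurability), the countably-valued case via the partition $\mathsf{E}_{\ell,r}$, and the weak continuity of the squared $\alpha$-Bessel resampling law $\bP^{k_1,k_2,(\ell,r),\vec{x},\vec{y},f,g}$ in its data (where Lemma~\ref{lem:not touching} furnishes the continuity-set property of the non-intersection event)---match the intended approach, with Step~3 correctly identified as the one place where the Bessel-specific input is genuinely needed.
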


\section{Stochastic monotonicity}\label{sec:Stochastic monotonicity}
In this section we prove the stochastic monotonicity, Proposition~\ref{lem:monotonicity}. Stochastic monotonicity will play a crucial role in Sections~\ref{sec:uniformbounds} and \ref{sec:proofofZ} because it provides a method to bound the probability of certain events for Gibbsian line ensembles without the knowledge of the normalizing constant.
\begin{proposition}[{\bf Stochastic monotonicity}] \label{lem:monotonicity}
Fix $\alpha\geq 0$, two positive integers $k_1\leq k_2$ and $(a, b)\subset \R$. For $i\in \{1,2\}$, let $(\vec{x}^{(i)},\vec{y}^{(i)})\in \R^{k_2-k_1+1}_+\times\R_+^{k_2-k_1+1}$ be vectors which satisfy  
\begin{align*}
x^{(1)}_j\geq x^{(2)}_j,\ y^{(1)}_j\geq y^{(2)}_j\ \textup{for all}\ j\in [k_1,k_2]_{\mathbb{Z}}. 
\end{align*}
For $i\in \{1,2\}$, let $(f^{(i)},g^{(i)})$ be functions on $[a,b]$ that satisfy the continuity assumption in Definition \ref{def:continuityassumption}. Assume that 
\begin{align*}
f^{(1)}(t) \geq f^{(2)}(t),\ g^{(1)}(t)\geq g^{(2)}(t)\ \textup{for all}\ t\in [a,b] 
\end{align*} 
Further assume that for $i\in \{1,2\}$,
\begin{align}\label{assump:Z} 
Z^{k_1,k_2,(a,b),\vec{x}^{(i)},\vec{y}^{(i)},f^{(i)},g^{(i)}}>0.
\end{align}
See \eqref{eqn:normalcont_Bessel} for the definition of $Z^{k_1,k_2,(a,b),\vec{x}^{(i)},\vec{y}^{(i)},f^{(i)},g^{(i)}}.$ \\[-0.25cm]

Let $\mathcal{Q}^{(i)}=\{\mathcal{Q}^{(i)}_j\}_{j=k_1}^{k_2}$ be a $[k_1,k_2]_{\mathbb{Z}} \times [a,b]$-indexed line ensemble whose law $\mathbb{P}^{(i)}$ is given by the non-intersecting squared $\alpha$-Bessel bridge line ensemble $\mathbb{P}^{k_1,k_2,(a,b),\vec{x}^{(i)},\vec{y}^{(i)},f^{(i)},g^{(i)}}$ (See Definition \ref{def:Bessel bridge LE}). Then there exists a coupling of the probability measures $\mathbb{P}^{(1)}$ and $\mathbb{P}^{(2)}$ such that almost surely $$\mathcal{Q}^{(1)}_j(t)\geq \mathcal{Q}^{(2)}_j(t)\ \textup{for all}\ j\in [k_1,k_2]_{\mathbb{Z}}\ \textup{and}\ t\in [a,b].$$
\end{proposition}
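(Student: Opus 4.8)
The plan is to follow the strategy of \cite{CH14} (and its elaborations in subsequent Brownian Gibbs papers): build the monotone coupling as a limit of monotone couplings of discrete Markov chains. Concretely, fix the interval $(a,b)$ and the index range $[k_1,k_2]_{\mathbb Z}$ and discretize the path space $C([k_1,k_2]_{\mathbb Z}\times[a,b],\mathbb R)$ both in time (a fine mesh of $[a,b]$) and in space (a fine lattice in $[0,\infty)$), so that a configuration becomes a tuple of lattice paths. On this finite state space one defines a Glauber-type dynamics: at each step pick a label $j$ and an interior mesh point $t$, and resample $\mathcal J_j(t)$ from the one-site conditional distribution of the discretized non-intersecting squared Bessel bridge measure, i.e. proportional to $q_{\alpha}(\text{left neighbor})\,q_{\alpha}(\text{right neighbor})$ times the indicator that the ordering constraints $f<\mathcal J_{k_1}<\cdots<\mathcal J_{k_2}<g$ and the endpoint data are respected, where $q_\alpha$ is the transition density of the squared Bessel process with index $\alpha$. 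This chain is irreducible on the (connected) set of admissible configurations, so it has a unique invariant measure, which is exactly the discretized bridge ensemble with the given boundary data.

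The core algebraic step is to show this dynamics is \emph{monotone}: if we run two copies with boundary data $(\vec x^{(1)},\vec y^{(1)},f^{(1)},g^{(1)})\ge(\vec x^{(2)},\vec y^{(2)},f^{(2)},g^{(2)})$ and couple them by using the same uniform random variable in each one-site update (the standard monotone/Doeblin coupling of one-dimensional conditional laws via inverse CDFs), then a configuration that starts ordered stays ordered. By the usual reduction this comes down to a monotonicity property of the one-site conditional CDFs: for fixed label and site, the conditional law of $\mathcal J_j(t)$ is stochastically increasing in the values of its two time-neighbors $\mathcal J_j(t^-),\mathcal J_j(t^+)$ and in the two curve-barriers $\mathcal J_{j-1}(t),\mathcal J_{j+1}(t)$ (with $f,g$ playing the role of the outer barriers). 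The monotonicity in the barriers is immediate since raising an endpoint of the truncation interval only shifts mass upward. The monotonicity in the neighbor values is the substantive point and, as the introduction states, it reduces to a convexity/total-positivity condition on $q_\alpha$: one needs that for $x_1\le x_2$ the ratio $q_\alpha(\tau_1,x_1,z)q_\alpha(\tau_2,z,y_1)\big/\big(q_\alpha(\tau_1,x_1,z)q_\alpha(\tau_2,z,y_1)+q_\alpha(\tau_1,x_2,z)q_\alpha(\tau_2,z,y_2)\big)$, suitably normalized, is monotone in $z$ — equivalently a log-convexity (variation-diminishing / $\mathrm{TP}_2$) statement for $q_\alpha$. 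This is precisely the condition \eqref{q:convex} the paper isolates, and it is to be verified by an ODE comparison argument on the explicit Bessel transition density (see Appendix~\ref{sec:BE}); I would invoke that here as a black box.

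Given monotonicity of the coupled dynamics, one starts both chains from the \emph{same} admissible ordered configuration (for instance the lowest one compatible with the larger boundary data, which dominates any configuration compatible with the smaller data), runs them to stationarity, and obtains a coupling of the two discretized stationary ensembles in which $\mathcal J^{(1)}_j(t)\ge \mathcal J^{(2)}_j(t)$ everywhere on the mesh, almost surely. The final step is the passage to the continuum. Along a sequence of mesh refinements one must show the discretized bridge ensembles converge weakly (as line ensembles, in the sense of Definition~\ref{def:weakconvergence}) to the true non-intersecting squared Bessel bridge ensembles $\mathbb P^{(i)}$; this is where the assumption \eqref{assump:Z} that $Z^{k_1,k_2,(a,b),\vec x^{(i)},\vec y^{(i)},f^{(i)},g^{(i)}}>0$ enters, guaranteeing the conditioning is nondegenerate and no mass escapes, together with Lemma~\ref{lem:not touching} controlling the behavior near the barriers. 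Convergence of the discrete random-walk bridges to squared Bessel bridges requires uniform sup-norm and $L^1$ estimates on the discretized transition kernels, which the paper establishes in Appendix~\ref{sec:discrete}. Having both the coupled inequality on each mesh and tightness of the couplings, one extracts a subsequential limit of the coupling measures; the marginals converge to $\mathbb P^{(1)},\mathbb P^{(2)}$ and the closed condition $\mathcal J^{(1)}_j(t)\ge\mathcal J^{(2)}_j(t)$ for all $j,t$ is preserved in the limit, yielding the desired coupling. I expect the main obstacle to be exactly the two ``interface'' issues special to the squared Bessel setting: first, establishing the $\mathrm{TP}_2$/convexity inequality \eqref{q:convex} for $q_\alpha$ (the Brownian Gibbs proofs get this for free from the Gaussian kernel), and second, proving the discrete-to-continuum convergence of the random-walk bridges in the absence of a canonical discrete model, which is why a brute-force lattice discretization plus hands-on kernel estimates is needed.
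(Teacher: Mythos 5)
Your proposal follows essentially the same route as the paper: discretize the path space in time and space, run a coupled Glauber-type dynamics targeting the discretized bridge measures, deduce step-by-step domination from the log-convexity / $\mathrm{TP}_2$ property of the transition density, and pass to the continuum via the estimates of Appendix~\ref{sec:discrete}. The two implementation choices where you diverge are cosmetic: the paper first takes square roots and works with Bessel bridges $\sqrt{\mathcal J}$ and the kernel $p_t$ (rather than squared Bessel bridges and $q_t$), and its Glauber step is a unit-move Metropolis update (propose $z_j\mapsto z_j\pm M^{-1}$, accept with probability $\min\{1,W(z^\pm)/W(z)\}$) whose acceptance ratios are compared directly via Claim~\ref{clm:q-convex}, whereas you propose a full one-site resample coupled by a common uniform via inverse CDFs --- both chains share the same invariant measure and both monotonicity arguments reduce to the very same mixed-second-derivative inequality.
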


\begin{proof}
Fix $\alpha\geq 0$.  Without loss of generality, we assume $[a,b]=[0,1]$ and $k_1=1,\ k_2=k$. Take two sets of boundary data $(\vec{x}^{(i)},\vec{y}^{(i)}, f^{(i)}, g^{(i)}), i\in\{1,2\}$ as described in Proposition~\ref{lem:monotonicity}. For $i\in\{1,2\}$, let $\mathcal{Q}^{(i)} $ be a $[1,k]_{\mathbb{Z}} \times [0,1]$-indexed line ensemble of the law $\mathbb{P}^{(i)}=\mathbb{P}^{1,k,(0,1), \vec{x}^{(i)}, \vec{y}^{(i)},f^{(i)},g^{(i)}}.$ We seek to couple $\mathbb{P}^{(1)}$ and $\mathbb{P}^{(2)}$ in the same probability space such that almost surely, $\mathcal{Q}_j^{(1)}(t) \geq \mathcal{Q}_j^{(2)}(t)$ for all $j\in [1,k]_{\mathbb{Z}}$ and $t\in [0,1]$. For technical reasons, it is easier to couple the corresponding  \textbf{Bessel bridges}. We denote by $\mathbb{Q}^{(i)}$ the measures on $C([1,k]_{\mathbb{Z}}\times [0,1],\R)$ which are the push-forward measures of $\mathbb{P}^{(i)}$ under the map $h_j(t)\mapsto \sqrt{|h_j(t)|}$. We split the argument into four steps.\\[-0.3cm]

{\bf Step one}: we discretize the state space $C([1,k]_{\mathbb{Z}}\times [0,1],\mathbb{R})$. Fix $\ell\in\N$ and $M\geq 1$. Define 
\begin{align}\label{equ:OmegaMl}
\Omega_{M,\ell}\coloneqq \left\{  \mathbf{z}=(z_{j,n} )   \, \Big|\, j\in [1,k]_{\mathbb{Z}},\ n\in [1,2^{\ell}-1]_{\mathbb{Z}},\  z_{j,n}\in M^{-1}\mathbb{Z}\cap [0,M] \right\}.
\end{align}
We assign weights $W^{(i)}, i\in\{1,2\}$ to members of $\Omega_{M,\ell}$. Let $p(x,y)=p_{2^{-\ell} }(x,y)$ be the transition density function of the $\alpha$-{Bessel process} with time displacement $2^{-\ell} $. See \eqref{def:pF} for the explicit form of $p_t(x,y)$. We omit the subscript $ {2^{-\ell}}$ and set $K=2^{\ell}$ for simplicity.  For any $ \mathbf{z}^{(i)}=(z^{(i)}_{j,n})\in \Omega_{M,\ell}$, define 
\begin{align}\label{def:weightfree}
W_{\free}^{(i)}(\mathbf{z}^{(i)}):=\prod_{j=1}^{k}\prod_{n=1}^{K}p\left(   z^{(i)}_{j,n-1}, z^{(i)}_{j,n}\right),\ W^{(i)}(\mathbf{z}^{(i)}):=W_{\free}^{(i)}(\mathbf{z}^{(i)}) G^{(i)}(\mathbf{z}^{(i)}), 
\end{align}
where
\begin{align*}
G^{(i)}(\mathbf{z}^{(i)})\coloneqq  \mathbbm{1}\left\{\sqrt{f^{(i)}}(n/K)<z_{1,n}^{(i)}<z_{2,n}^{(i)}<\dots <z_{k,n}^{(i)}<\sqrt{g^{(i)}}(n/K)\ \textup{for all}\ n\in [1,K-1]_{\mathbb{Z}} \right\}.
\end{align*}
Here we adopt the convention $z^{(i)}_{j,0}=\sqrt{x^{(i)}_j}$ and $z^{(i)}_{j,K}=\sqrt{y^{(i)}_j}$.

Define $\mathbf{z}^{(i),\min} =({z}^{(i),\min}_{j,n}) $, $j\in [1,k]_{\mathbb{Z}}$ and $n\in [1,K-1]_{\mathbb{Z}}$ by
\begin{equation}\label{def:07080819}
\begin{split}
z^{(i),\min}_{1,n}\coloneqq &\min\{ mM^{-1}\in M^{-1}\mathbb{Z}\, |\, mM^{-1}>\sqrt{f^{(i)}}(n/K) \},\\
z^{(i),\min}_{j,n}\coloneqq &z^{(i),\min}_{1,n}+(j-1)M^{-1}
\end{split}
\end{equation}
From $f^{(1)}\geq f^{(2)}$, we have $z^{(1),\min}_{j,n}\geq z^{(2),\min}_{j,n}$. Because of the assumption \eqref{assump:Z}, there exists $M_0$ such that for all $M\geq M_0$, we have $\mathbf{z}^{(i),\min}\in \Omega_{M,\ell}$ and $W^{(i)}(\mathbf{z}^{(i),\min})>0$. From now on we always assume $M\geq M_0$. We write $\mathbb{Q}^{(i)}_{M,\ell}$, $i\in\{1,2\}$, for the probability measures on $\Omega_{M,\ell}$ such that
\begin{align*}
\mathbb{Q}^{(i)}_{M,\ell}(\mathbf{z} )\propto W^{(i)}(\mathbf{z}).
\end{align*} 
Note that $\Omega_{M,\ell}$ can be identified as a subset of $C([1,k]_{\mathbb{Z}}\times [0,1],\mathbb{R})$ through 
\begin{align}\label{OmegaMktoC}
h^{(i)}_j(t)=\left\{\begin{array}{cc}
z^{(i)}_{j,n}, & t=n/K, n\in [0,K]_{\mathbb{Z}}\\
\textup{linear interpolation}, &\ \textup{others}. 
\end{array} \right.
\end{align} 
Therefore, $\mathbb{Q}^{(i)}_{M,\ell}$ can be viewed as probability measures on $C([1,k]_{\mathbb{Z}}\times [0,1],\mathbb{R})$.\\

{\bf Step two}: we construct Markov chains to couple $\mathbb{Q}^{(1)}_{M,\ell}$ and $\mathbb{Q}^{(2)}_{M,\ell}$. We introduce the dynamics by assigning Poisson clocks at $[1,k]_{\mathbb{Z}}\times [1,K-1]_{\mathbb{Z}} \times\{+,-\}$. If the clock at $(j, n,+)$ rings, we attempt to increase $z^{(i)}_{j,n}$ to $\min\{z^{(i)}_{j,n}  + M^{-1},M\}$. Define $\mathbf{z}^{(i),+}$ by  replacing  $z^{(i)}_{j,n}$ with $\min\{z^{(i)}_{j,n}  + M^{-1},M\}$.
\begin{align}\label{def:z+}
z^{(i),+}_{j',n'}\coloneqq \left\{\begin{array}{cc}
\min\{z^{(i)}_{j,n}  + M^{-1},M\}, & (j',n')=(j,n),\\
z^{(i)}_{j',n'}, & \textup{others}.
\end{array} \right.
\end{align} 
Take a family of independent uniform random variables $U^{j,n,s}$ with support $(0,1)$. Here $(j,n,s)\in [1,k]_{\mathbb{Z}}\times [1,K-1]_{\mathbb{Z}}\times\{-,+\}$. If $ {W^{(i)}(\mathbf{z}^{(i),+})}/{W^{(i)}(\mathbf{z}^{(i)})} \geq U^{j,n,+}$, we update $\mathbf{z}^{(i)}$ to $\mathbf{z}^{(i),+}$; otherwise we make no change. Similarly, if the clock at $(j,n, -)$ rings, we attempt to decrease $z^{(i)}_{j,n}$ to $\max\{z^{(i)}_{j,n}  - M^{-1},0\}$. Define $\mathbf{z}^{(i),-}$ by replacing $z^{(i)}_{j,n}$ with $\max\{z^{(i)}_{j,n}  - M^{-1},0\}$. It follows that
\begin{align}\label{def:z-}
z^{(i),-}_{j',n'}\coloneqq \left\{\begin{array}{cc}
\max\{z^{(i)}_{j,n}  - M^{-1},0\}, & (j',n')=(j,n),\\
z^{(i)}_{j',n'}, & \textup{others}.
\end{array} \right.
\end{align}  
Such an update is accepted if ${W^{(i)}(\mathbf{z}^{(i),-})}/{W^{(i)}(\mathbf{z}^{(i)})}\geq U^{j,n,-}$.\\[-0.3cm]

We run two Markov chains ${\bf z}^{(1)}, {\bf z}^{(2)}$ on $\Omega_{M,\ell}$ as described above with initial configurations $z^{(i),\textup{int}}$, $i\in\{1,2\}$, respectively. These two Markov chains ${\bf z}^{(1)}, {\bf z}^{(2)}$ are coupled through the same Poisson clocks and the same collection of uniform random variables. It could be directly checked that $ \mathbb{Q}^{(1)}_{M,\ell}$ and $\mathbb{Q}^{(2)}_{M,\ell}$ are invariant measures for ${\bf z}^{(1)}, {\bf z}^{(2)}$ respectively. Moreover, $\mathbf{z}^{(i),\min}$ defined in \eqref{def:07080819} belongs to every communication class of $\mathbf{z}^{(i)}$. This implies ${\bf z}^{(1)}$ and ${\bf z}^{(2)}$ are irreducible. As a result, $\mathbb{Q}^{(1)}_{M,\ell}$ and $\mathbb{Q}^{(2)}_{M,\ell}$ are coupled by taking the limits of these two Markov processes.\\

{\bf Step three}: we show that $\mathbb{Q}^{(1)}_{M,\ell}$ dominates $\mathbb{Q}^{(2)}_{M,\ell}$ almost surely under the above coupling. We make the following Claim \ref{clm:q-convex}. In view of Claim \ref{clm:q-convex}, ${\bf z}^{(1)}$ always dominates ${\bf z}^{(2)}$. As a result, $\mathbb{Q}^{(1)}_{M,\ell}$ dominates $\mathbb{Q}^{(2)}_{M,\ell}$. 
\begin{claim}\label{clm:q-convex}
Fix $j\in [1,K-1]_{\mathbb{Z}}$. Let $\mathbf{z}^{(1)}$, $\mathbf{z}^{(2)}\in\Omega_{M,k}$ be two configurations which satisfy $W^{(i)}(\mathbf{z}^{(i)})>0$. We further assume that for all $(j',n')\in [1,k]_{\mathbb{Z}}\times [1,K-1]_{\mathbb{Z}}$, $z^{(1)}_{j',n'}\geq z^{(2)}_{j',n'}$ and that $z^{(1)}_{j,n}= z^{(2)}_{j,n}$. Let $\mathbf{z}^{(i),\pm}$ be defined as in \eqref{def:z+} and \eqref{def:z-}. Then it holds that
\begin{align}\label{equ:W+}
{W^{(1)}(\mathbf{z}^{(1),+})}/{W^{(1)}(\mathbf{z}^{(1)})}\geq {W^{(2)}(\mathbf{z}^{(2),+})}/{W^{(2)}(\mathbf{z}^{(2)})}
\end{align}
and that
\begin{align}\label{equ:W-}
{W^{(1)}(\mathbf{z}^{(1),-})}/{W^{(1)}(\mathbf{z}^{(1)})}\leq {W^{(2)}(\mathbf{z}^{(2),-})}/{W^{(2)}(\mathbf{z}^{(2)})}.
\end{align}
\end{claim}
Now it suffices to prove Claim~\ref{clm:q-convex}. We provide the argument for \eqref{equ:W+} and the one for \eqref{equ:W-} is similar. Let $\mathbf{z}^{(1)},\mathbf{z}^{(2)}\in\Omega_{M,\ell}$ be given as in Claim~\ref{clm:q-convex}. In particular $z^{(1)}_{j,n}=z^{(2)}_{j,n}=: z_{j,n}$. If $z_{j,n}+M^{-1}> M$, $\mathbf{z}^{(i),+}=\mathbf{z}^{(i)}$ and both sides of \eqref{equ:W+} equals$1$. If $z_{j,n}+M^{-1} \geq z^{(2)}_{j+1,n}$, then $W^{(2)}(\mathbf{z}^{(2),+})=0$ and the assertion holds. Hence we may assume $z_{j,n}+M^{-1}\leq M$ and $z_{j,n}+M^{-1} <z^{(2)}_{j+1,n} $. By a direct computation,
\begin{align*}
 {W^{(i)}(\mathbf{z}^{(i),+})}/{W^{(i)}(\mathbf{z}^{(i)})}=\frac{p(z^{(i)}_{j,n-1},z_{j,n}+M^{-1})}{p(z^{(i)}_{j,n-1},z_j)}\times \frac{p(z_{j,n}+M^{-1},z^{(i)}_{j,n+1})}{p(z_{j,n},z^{(i)}_{j,n+1})}.
\end{align*}
Apply twice the mean value theorem,
\begin{align*}
&\log\left( \frac{p(z^{(1)}_{j,n-1},z_{j,n}+M^{-1})}{p(z^{(1)}_{j,n-1},z_j)} \bigg/\frac{p(z^{(2)}_{j,n-1},z_{j,n}+M^{-1})}{p(z^{(2)}_{j,n-1},z_j)} \right)=\frac{\partial^2 }{\partial x\partial y} \log p( x,y) (z^{(1)}_{j,n-1}-z^{(2)}_{j,n-1})M^{-1}
\end{align*}
for some $(x,y)\in [z^{(2)}_{j,n-1},z^{(1)}_{j,n-1}]\times [z_{j,n},z_{j,n}+M^{-1}]$.
By Corollary~\ref{cor:q-convex}, $\frac{\partial^2 }{\partial x\partial y} \log p( x,y)\geq 0$. Therefore, the above is non-negative. Similarly, 
\begin{align*}
\log\left( \frac{p(z_{j,n}+M^{-1},z^{(1)}_{j,n+1})}{p(z_{j,n},z^{(1)}_{j,n+1})} \bigg/\frac{p(z_{j,n}+M^{-1},z^{(2)}_{j,n+1})}{p(z_{j,n},z^{(2)}_{j,n+1})}\right)\geq 0.
\end{align*}
The desired result \eqref{equ:W+} then follows.\\
 
{\bf Step four}, a coupling of $\mathbb{Q}^{(1)}$ and $\mathbb{Q}^{(2)}$ can be obtained by letting $M$ and $\ell$ go to infinity. Recall that we identify $\Omega_{M,\ell}$ as a subset of $C([1,k]_{\Z} \times [0,1],\mathbb{R})$ through linear interpolation \eqref{OmegaMktoC} hence $\mathbb{Q}^{(1)}$ and $\mathbb{Q}^{(2)}$ are probability measures on $C([1,k]_{\Z} \times [0,1],\mathbb{R})$.  From Proposition~\ref{lem:discrete} in Appendix~\ref{sec:discrete}, $\mathbb{Q}^{(i)}_{M,\ell}$ converges weakly to $\mathbb{Q}^{(i)}$. Since $\mathbb{Q}^{(1)}_{M,\ell}$ dominates $\mathbb{Q}^{(2)}_{M,\ell}$, we conclude that $\mathbb{Q}^{(1)}$ dominates $\mathbb{Q}^{(2)}$. The proof is completed.
\end{proof}

\section{Uniform upper and Lower bounds}\label{sec:uniformbounds}
The main goal of this section is to prove Propositions \ref{pro:sup} and \ref{pro:inf}, which establish uniform upper and lower bounds of the curves in $\mathcal{L}^{N,\alpha}$ over bounded intervals. The definition of the line ensemble $\mathcal{L}^{N,\alpha}(t)$ is in \eqref{eqn:edgeLE1}. 

\begin{proposition}\label{pro:sup}
Fix $\alpha\geq 0$. For any $a<b$, $\varepsilon>0$ and $k\in\mathbb{N}$, there exist  $R_1=R_1(\alpha,\varepsilon,k,b-a)$ and $N_1=N_1(\alpha,\varepsilon,k,\max\{|a|,|b|\})$ such that the following statement holds. For any $N\geq N_1$, it holds that
\begin{align}\label{equ:U}
\mathbb{P}\bigg( \sup_{t\in [a,b]}\mathcal{L}^{N,\alpha}_k(t)>R_1 \bigg)<\varepsilon.
\end{align}
\end{proposition}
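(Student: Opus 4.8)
\textbf{Strategy overview.} The plan is to reduce the supremum bound to a statement at finitely many fixed times plus a Gibbs-resampling argument that propagates a pointwise upper bound to an interval. By the ordering $\cL^N_1<\cdots<\cL^N_N$, it suffices to control the top curve $\cL^N_k$ by controlling $\cL^N_k$ at finitely many times and using the squared $\alpha$-Bessel Gibbs property on the top $k$ curves to bound the fluctuations in between. A convenient reduction: by the scaling invariance $\{Y^{N,\alpha}_i(t)\}\overset{(d)}{=}\{t\cdot Y^{N,\alpha}_i(1)\}$ and the definition \eqref{eqn:edgeLE1} of $\cL^N_i$, it is enough to treat a fixed reference interval (say $[-1,1]$ or a unit interval near $0$) and then rescale; the dependence of $N_1$ on $\max\{|a|,|b|\}$ comes from needing $1+t/(4N)$ to stay in a fixed compact subset of $(0,\infty)$ so that the prelimit diffusions are well-behaved.

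\textbf{Step 1: pointwise bound at the endpoints.} First I would show that for each fixed $t_0\in[a,b]$, the family $\{\cL^N_k(t_0)\}_{N\geq 1}$ is tight: this follows from the finite-dimensional convergence of $\cL^{N,\alpha}(t_0)$ to the (extended) Bessel point process with index $\alpha$, whose $k$-th smallest point is a.s.\ finite. Hence for any $\varepsilon>0$ there is $R_0=R_0(\varepsilon,k,t_0)$ and $N_0$ with $\bP(\cL^N_k(t_0)>R_0)<\varepsilon$ for $N\geq N_0$; picking a mesh of $O((b-a)/d)$ points in $[a,b]$ controls $\cL^N_k$ at all of them simultaneously with total error $\lesssim \varepsilon (b-a)/d$.

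\textbf{Step 2: propagating to the interval via the Gibbs property.} On each subinterval $[s_i,s_{i+1}]$ of length $d$ between consecutive mesh points, I would condition on $\Fext\big([1,k]_{\mathbb Z}\times(s_i,s_{i+1})\big)$ and use the squared $\alpha$-Bessel Gibbs property (Definition \ref{def:BesselGP}): on this window $(\cL^N_1,\dots,\cL^N_k)$ is a non-intersecting squared $\alpha$-Bessel bridge ensemble with lower boundary $f\equiv 0$ and upper boundary $g\equiv\cL^N_{k+1}$. Dropping the upper boundary $g$ and the non-intersection constraint only increases each curve stochastically (this is the content of Proposition \ref{lem:monotonicity}, applied with $g^{(1)}\equiv+\infty$ against $g^{(2)}=\cL^N_{k+1}$, after checking positivity of the relevant normalizing constants via the Remark following Definition \ref{def:Bessel bridge LE}), so $\cL^N_k$ restricted to $(s_i,s_{i+1})$ is stochastically dominated by a single free squared $\alpha$-Bessel bridge from $(s_i,\cL^N_k(s_i))$ to $(s_{i+1},\cL^N_k(s_{i+1}))$. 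A free squared Bessel bridge over an interval of length $d$ with endpoints bounded by $R_0$ has $\bP(\sup > 2R_0 + \lambda)$ small, uniformly, once $d$ is small and $\lambda$ large — this is a standard modulus-of-continuity estimate for the BESQ bridge, obtainable by comparison with a Brownian bridge (absolute continuity, as in Lemma \ref{lem:not touching}) plus a drift term that is controlled because the bridge stays above $0$ and the relevant denominators $X^\alpha$ are bounded below on the event we are on. Choosing $d$ first (depending on $\varepsilon, k, b-a$) to make the per-interval fluctuation small, then the mesh, then $R_1$ as a suitable multiple of $R_0$, and finally $N_1$ as the max of the finitely many $N_0$'s, yields \eqref{equ:U}.

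\textbf{Main obstacle.} The delicate point is Step 2: one must quantify the fluctuation of a squared Bessel bridge over a short interval \emph{uniformly in the (random but bounded) endpoint data and uniformly in $N$}, and justify the stochastic-domination step when the true upper boundary $\cL^N_{k+1}$ is itself random — this is exactly where Proposition \ref{lem:monotonicity} (dropping the ceiling) and the strong Gibbs property are needed, and where care is required that the normalizing constants $Z^{k_1,k_2,(a,b),\vec x,\vec y,f,g}$ are positive so that all conditional laws make sense. The BESQ-bridge modulus estimate near a low endpoint is where the index $\alpha$ and the singular drift $\tfrac{\alpha+1/2}{X^\alpha}$ enter; since we are bounding the \emph{supremum} (not the infimum) the singular drift is actually favorable (it pushes up, not down, only near $0$), so a crude Brownian comparison after a deterministic upward shift suffices, but this needs to be written carefully.
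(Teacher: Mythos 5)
Your proposed Step 2 has a genuine monotonicity error that makes the argument collapse for $k\geq 2$. You resample the top $k$ curves jointly with lower barrier $f\equiv 0$ and upper barrier $g=\cL^N_{k+1}$, send $g\to+\infty$ (this part is fine and is indeed covered by Proposition~\ref{lem:monotonicity}), and then assert that \emph{also dropping the non-intersection constraint} stochastically increases each curve, so that $\cL^N_k$ is dominated by a single free squared Bessel bridge with the same endpoints. That last move goes the wrong way: for the \emph{top} curve in a non-intersecting system, the $k-1$ curves beneath it push it up, so removing them decreases $\cL^N_k$ stochastically and the free bridge is a stochastic \emph{lower} bound, not an upper bound. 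Proposition~\ref{lem:monotonicity} compares ensembles with the same number of curves and varying boundary data; it says nothing about deleting interior curves, and that deletion is exactly the step you need. Thus the chain ``$\cL^N_k$ restricted to $(s_i,s_{i+1})$ is stochastically dominated by a single free bridge'' is false.

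The paper sidesteps this by an induction on $k$ that you did not anticipate and that is essential, not cosmetic. One first proves the uniform supremum bound for $\cL^N_{k-1}$ on $[a,b]$. Then, on the high-probability event $\{\cL^N_k(a),\cL^N_k(b)\leq 2\bar R\}\cap\{\sup_{[a,b]}\cL^N_{k-1}\leq\bar R\}$, one resamples \emph{only} the single curve $\cL^N_k$ on $(a,b)$ (with $k_1=k_2=k$), so the relevant boundary data is the random pair $(f,g)=(\cL^N_{k-1},\cL^N_{k+1})$. Monotonicity then legitimately replaces $g$ by $+\infty$, $f$ by the deterministic constant $\bar R$, and the endpoints by $2\bar R$; the dominating law is a single squared Bessel bridge with endpoints $2\bar R$ conditioned to stay above $\bar R$, whose supremum is controlled directly on the whole interval $(a,b)$. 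No mesh subdivision of $[a,b]$ and no short-interval modulus-of-continuity estimate for the BESQ bridge is needed, since the comparison bridge is controlled on the full interval once $R_1$ is taken large relative to $\bar R$ and $b-a$. Your Step 1 and the general ``propagate pointwise bounds via the Gibbs property plus monotonicity'' philosophy are consistent with the paper, but without the induction-in-$k$ device the domination you invoke is not available, and the proof as written does not close.
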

\begin{proposition}\label{pro:inf}
Fix $\alpha\geq 0$. For any $a<b$, $\varepsilon>0$, there exist  $r_1=r_1(\alpha,\varepsilon,b-a)$ and $N_2=N_2(\alpha,\varepsilon,\max\{|a|,|b|\})$ such that the following statement holds. For any $N\geq N_2$, it holds that
\begin{align}\label{equ:LL}
\mathbb{P}\left( \inf_{t\in [a,b]}\mathcal{L}^{N,\alpha}_2(t)<r_1 \right)<\varepsilon.
\end{align} 
\end{proposition}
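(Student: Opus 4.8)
\textbf{Proof proposal for Proposition \ref{pro:inf}.}

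The plan is to prove the uniform infimum bound for $\mathcal{L}^N_2$ in two stages: first a \emph{pointwise} (fixed-time) lower bound, then propagation of that bound to the whole interval $[a,b]$ via the squared $\alpha$-Bessel Gibbs property and stochastic monotonicity. For the pointwise estimate, by the scaling invariance $\{Y^{N,\alpha}_i(t)\}\overset{(d)}{=}\{t\cdot Y^{N,\alpha}_i(1)\}$ and the finite-dimensional convergence of $\mathcal{L}^N_2(t_0)$ to the second point of the (extended) Bessel point process, it suffices to cite the tail estimate near zero from \cite{TW}: for the Bessel point process with $\alpha\geq 0$, the second-lowest point $\mathcal{B}_2(t_0)$ satisfies $\mathbb{P}(\mathcal{B}_2(t_0)\le \varepsilon^{1/2})\lesssim \varepsilon$, and hence uniformly in $N$ large, $\mathbb{P}(\mathcal{L}^N_2(t_0)<\rho)\lesssim \rho$ for $\rho$ small (this is where $N_2$ depending on $\max\{|a|,|b|\}$ enters, to make the finite-dimensional convergence effective/uniform on the relevant time window after rescaling). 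First I would fix a grid $t_0<t_1<\cdots<t_m$ of mesh $d$ covering $[a,b]$, so $m\sim (b-a)/d$, and deduce $\mathbb{P}(\mathcal{L}^N_2(t_j)<\rho \text{ for some } j)\lesssim m\rho$.

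Next I would propagate the pointwise bound to a neighbourhood of each grid point. On the event that $\mathcal{L}^N_2(t_{j-1})\ge\rho$ and $\mathcal{L}^N_2(t_{j+1})\ge\rho$, I would resample the pair $(\mathcal{L}^N_1,\mathcal{L}^N_2)$ on the interval $(t_{j-1},t_{j+1})$ using the Gibbs property (Definition \ref{def:BesselGP}): the conditional law is that of two non-intersecting squared $\alpha$-Bessel bridges with lower barrier $f\equiv 0$ (since $k_1=1$) and upper barrier $g=\mathcal{L}^N_3$. By stochastic monotonicity (Proposition \ref{lem:monotonicity}), lowering the entrance/exit data to the constant $\rho$ and lowering $g$ to $+\infty$ only decreases the resampled curves, so I may bound from below by the infimum over $(t_{j-1},t_{j+1})$ of the \emph{lower} of two non-intersecting free squared $\alpha$-Bessel bridges (no upper barrier) started and ended at $\rho$. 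The Karlin--McGregor formula gives the joint density of such a pair explicitly in terms of the BESQ transition density $q_t(x,y)$ (equivalently $p_t$ from Appendix \ref{sec:BE}); using the modified-Bessel-function estimates recorded in Appendix \ref{sec:BE}, one checks that for $d$ of order $\rho$ (forced by the diffusive space-time scaling of BESQ), the probability that the lower bridge dips below $\rho/2$ on $(t_{j-1},t_{j+1})$ is at most a small universal constant $c<1$, so with the right choice of constants
\begin{align*}
\mathbb{P}\!\left(\inf_{t\in[t_{j-1},t_{j+1}]}\mathcal{L}^N_2(t)<\rho/2\ \Big|\ \mathcal{L}^N_2(t_{j-1})\ge\rho,\ \mathcal{L}^N_2(t_{j+1})\ge\rho\right)\le c.
\end{align*}
Combining with the grid estimate, $\mathbb{P}(\inf_{t\in[a,b]}\mathcal{L}^N_2(t)<\rho/2)\lesssim m\rho + m c$; choosing first $\rho$ small relative to $\varepsilon/m$ and then absorbing, one selects $d\sim\rho\sim\varepsilon^{?}$ to make the total $<\varepsilon$, which fixes $r_1=r_1(\varepsilon,b-a)$.

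The main obstacle I expect is the two-bridge density analysis: unlike the one-curve heuristic sketched in the introduction (which degenerates at $\alpha=0$), here one must control the \emph{lower} of two non-intersecting squared Bessel bridges near $0$, and the Karlin--McGregor determinant $\det[q_t(x_i,y_j)]$ mixes the near-zero behaviour of BESQ (governed by the index $\alpha$) with the repulsion between the two paths. The delicate point is to show this repulsion genuinely lifts the lower path away from $0$ — quantitatively, that the pair stays above $\rho/2$ with probability bounded below — uniformly over the (rescaled) endpoint locations and uniformly in $N$, using only the sup-norm and $L^1$ bounds on the BESQ transition density from Appendix \ref{sec:BE}. A secondary technical nuisance is making the finite-dimensional convergence $\mathcal{L}^N_2(t_0)\Rightarrow\mathcal{B}_2(t_0)$ quantitative enough (uniform over a bounded window of rescaled times, hence the $N_2$ dependence on $\max\{|a|,|b|\}$) so that the pointwise tail bound holds with $N$-independent constants; this should follow from the exact-formula analysis of the extended Bessel kernel in Appendix \ref{sec:BEkernel}.
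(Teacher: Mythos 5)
Your propagation step runs in the wrong direction, and the final combination does not close. You condition on the grid endpoints being high ($\geq\rho$), resample $(\mathcal{L}^N_1,\mathcal{L}^N_2)$ over a $d$-interval with $d\sim\rho$, and claim that the conditional probability the resampled curve dips below $\rho/2$ is ``a small universal constant $c<1$.'' It is not small: in Bessel coordinates the resampled curves start and end near $\sqrt{\rho}$ and fluctuate over a time interval of length $\sim\rho$ by an amount $\sim\sqrt{\rho}$, comparable to the level itself, and the non-intersection conditioning shifts the paths by an amount of the same order, so the dipping probability is $\Theta(1)$ as $\rho\downarrow 0$, not $o(1)$ --- especially at $\alpha=0$, the case the whole argument is designed to reach. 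Each window therefore contributes $\Theta(1)$ to your union bound, and summing over $m\sim(b-a)/\rho$ windows diverges; ``choosing $\rho$ small relative to $\varepsilon/m$'' is circular since $m$ itself grows like $1/\rho$. (Secondary slips, not the essential gap: the pointwise bound should read $\mathbb{P}(\mathcal{L}^N_2(t_0)<\rho)\lesssim\rho^2$, as in Lemma~\ref{lem:ppL}, not $\lesssim\rho$; and ``lowering $g$ to $+\infty$'' is the wrong monotonicity direction for the upper bound you need.)

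The paper's Lemma~\ref{lem:epsilond} runs the resampling in the \emph{reverse} direction, and that reversal is the idea you are missing. It starts from the bad event $\mathsf{F}_N=\{\inf_{[t_0-2d,t_0-d]}\mathcal{L}^N_2\le d^2\}$, lets $\mathfrak{l}$ be the first time the curve drops below $d^2$, applies the \emph{strong} Gibbs property (Lemma~\ref{lem:stronggibbs}) on the random domain $(\mathfrak{l},t_0+T)$ with a macroscopic buffer $T\sim R^{1/2}$, and uses stochastic monotonicity together with the Karlin--McGregor density \emph{lower} bound of Lemma~\ref{lem:2sample} to show that, conditionally on $\mathsf{F}_N\cap\mathsf{G}_N$, $\mathbb{P}\bigl(\mathcal{L}^N_2(t_0)<d\bigr)\geq 1/C_0$. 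This converts the interval-infimum event into the single-time event $\{\mathcal{L}^N_2(t_0)<d\}$, whose probability is $\lesssim d^2\sim\varepsilon$, giving a per-window bound of order $\varepsilon$; the union bound over $\sim(b-a)\varepsilon^{-1/2}$ windows then yields $\lesssim(b-a)\varepsilon^{1/2}$, which vanishes. In short, the only quantity you can make small is the one-time probability; what you need is a \emph{lower} bound on the conditional probability of the reference-time event given the bad window event, so that the window event inherits its smallness --- not an upper bound on dipping low given good endpoints, which is an uninformative constant.
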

The starting point of showing Propositions~\ref{pro:sup} and \ref{pro:inf} is the finite dimensional convergence of $\mathcal{L}^{N,\alpha}$ to the extended $\alpha$-Bessel point process $\{\xi^\alpha_i, i\in\mathbb{N}\}$ \cite[Section 11.7.3]{For10}. See Appendix~\ref{sec:BEkernel} for the convergence of the correlation kernel. 
\begin{theorem}\label{thm:1}
Fix $\alpha\geq 0$, $n,m\in\mathbb{N}$ and $t_1<t_2<\dots <t_m $. Then $\{\mathcal{L}^{N,\alpha}_i(t_j)\},\ i\in [1,n]_{\mathbb{Z}}, j\in[1,m]_{\mathbb{Z}} $ converges in probability as $N$ goes to infinity. Moreover, for any $t\in \mathbb{R}$, $\{\mathcal{L}^{N,\alpha}_i(t)\},\ i\in [1,n]_{\mathbb{Z}} $ converges weakly to $\{\xi^\alpha_i\},\ i\in [1,n]_{\mathbb{Z}}$.
\end{theorem}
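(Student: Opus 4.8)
The claim splits into a one-time statement (the ``Moreover'' clause, which I would handle by a soft self-similarity argument) and the multi-time statement (which I would handle through the determinantal structure). I outline both.

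\textbf{(i) The one-time limit.} I would isolate this first since it is essentially classical. For $t=0$ the vector $\{\mathcal{L}^{N,\alpha}_i(0)\}_{i\in[1,n]_{\Z}}=\{4N\,Y^{N,\alpha}_i(1)\}_{i\in[1,n]_{\Z}}$ converges weakly to the $\alpha$-Bessel point process --- this is the standard hard-edge limit of the LUE, obtained from the Laguerre-polynomial kernel and its $J_\alpha$-asymptotics --- and I take $\{\xi^\alpha_i\}$ to be the ordered atoms of this limit. For general $t\in\R$ the exact self-similarity $\{Y^{N,\alpha}_i(t)\}_i\overset{d}{=}\{t\,Y^{N,\alpha}_i(1)\}_i$ turns \eqref{eqn:edgeLE1} into the identity in law $\{\mathcal{L}^{N,\alpha}_i(t)\}_i\overset{d}{=}\tfrac{4N+t}{4N}\,\{\mathcal{L}^{N,\alpha}_i(0)\}_i$; since $\tfrac{4N+t}{4N}\to1$ and dilation by $1+o(1)$ is continuous for the $n$ lowest atoms (continuity discussed in (iii)), the time-$t$ marginal converges weakly to the same $\{\xi^\alpha_i\}$.

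\textbf{(ii) Determinantal structure and kernel asymptotics.} For $t_1<\dots<t_m$ the space-time configuration $\{Y^{N,\alpha}_i(t_j)\}_{i,j}$ on $[1,m]_{\Z}\times(0,\infty)$ is a determinantal point process, obtained from the Karlin--McGregor formula for the joint law of the non-intersecting squared Bessel process at several times followed by the Eynard--Mehta biorthogonalization, which produces an explicit extended Laguerre kernel; conjugating by $x\mapsto 4Nx$ in space gives a determinantal process with a rescaled kernel $\widetilde K^{N,\alpha}$ on $[1,m]_{\Z}\times(0,\infty)$. I would then prove $\widetilde K^{N,\alpha}\to K^{\mathrm{ext}}$, the extended Bessel kernel, uniformly on compact subsets of $([1,m]_{\Z}\times(0,\infty))^2$, together with an $N$-independent integrable domination $\widetilde K^{N,\alpha}(r,x;r,x)\le\phi(x)$ on $(0,R]$ and the corresponding trace-class bound, so that kernel convergence upgrades to convergence of all correlation functions and of the relevant Fredholm determinants on compacts. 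This asymptotic analysis --- replacing Laguerre polynomials and modified-Bessel integral representations by their $J_\alpha$ counterparts and controlling tails --- is the content of Appendix~\ref{sec:BEkernel} and is the step I expect to be the main obstacle.

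\textbf{(iii) From the point process to the finite vector.} Granting (ii), $\{\mathcal{L}^{N,\alpha}_i(t_j)\}_{i,j}$ converges weakly, in the vague topology on locally finite configurations on $[1,m]_{\Z}\times(0,\infty)$, to the extended Bessel point process, which is a.s.\ simple and, at each $t_j$, locally finite with a smallest atom and no accumulation except at $+\infty$. The domination near $0$ prevents atoms of the prelimit from escaping to $0$, and the limit having infinitely many atoms (with $\#\{\text{atoms in }(0,R]\}\ge n$ with probability close to $1$ uniformly in $N$ for $R$ large) prevents the $n$-th lowest atom from escaping to $+\infty$; hence the map ``ordered $n$-tuple of lowest atoms at each of $t_1,\dots,t_m$'' is a.s.\ continuous at the limit, so the $\R^{nm}$-valued vector $\{\mathcal{L}^{N,\alpha}_i(t_j)\}_{i,j}$ converges weakly, and $m=1$ re-derives (i) consistently. \textbf{(iv) Upgrading to convergence in probability.} The space of locally finite configurations on $[1,m]_{\Z}\times(0,\infty)$ is Polish, so Skorokhod's representation theorem realizes all the laws $\mathrm{Law}\big(\{\mathcal{L}^{N,\alpha}_i(t_j)\}_{i,j}\big)$ together with their weak limit on one probability space with vague convergence holding almost surely; by the a.s.\ continuity of the lowest-$n$-atoms functional the $\R^{nm}$-vectors then converge almost surely, hence in probability, on that common space. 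Apart from the kernel asymptotics, every step is routine once the kernel and its tail bounds are in hand; in the write-up one may simply deduce the theorem as the restriction of Theorem~\ref{thm:extended BE} to the $n$ lowest labels and the times $t_1,\dots,t_m$.
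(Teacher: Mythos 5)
Your proposal takes essentially the same route as the paper, which deduces this theorem directly from the extended kernel convergence in Theorem~\ref{thm:extended BE} (Appendix~\ref{sec:BEkernel}); your steps (ii)--(iii) supply the standard and correct chain --- Karlin--McGregor plus Eynard--Mehta to get the extended Laguerre kernel, uniform kernel convergence together with trace-class domination to upgrade to convergence of correlation functions and Fredholm determinants, hence vague convergence of the point process, and then a.s.\ simplicity and tightness to pass to the $n$ lowest labels --- while your step (i) is exactly the scaling-plus-Slutsky argument the paper records in Lemma~\ref{thm:onept}. The one thing worth flagging is step (iv): the phrase ``converges in probability'' in the statement is almost certainly a slip for ``converges in distribution,'' since the ensembles $\mathcal{L}^{N,\alpha}$ live on different probability spaces for different $N$ and no coupling is specified. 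Your Skorokhod construction, as you yourself note, gives convergence in probability only on the artificial coupled space, which is merely a reformulation of convergence in distribution rather than a genuine upgrade, so this step can be dropped without loss.
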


\begin{lemma}\label{thm:onept}
Fix $\alpha\geq 0$. For any $t\in\mathbb{R}$, $i\in\mathbb{N}$, $\mathcal{L}^{N,\alpha}_i(t)$ converges to $\xi^\alpha_i$ in distribution. Moreover, the convergence is locally uniformly in $t$ in the following sense. For any $L>0$, $r\geq 0$ and $i\in\mathbb{N}$, it holds that
\begin{equation}\label{equ:uniformconvergence}
\begin{split}
\limsup_{N\to\infty}\sup_{t\in [-L,L]} \bP(\mathcal{L}^{N,\alpha}_i(t)\leq r)\leq  \bP(\xi^{\alpha}_i \leq r),\\
\limsup_{N\to\infty}\sup_{t\in [-L,L]} \bP(\mathcal{L}^{N,\alpha}_i(t)\geq r)\leq  \bP(\xi^{\alpha}_i \geq r).
\end{split}
\end{equation}
\end{lemma}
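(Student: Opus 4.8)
The first assertion is immediate from Theorem~\ref{thm:1}, which already gives weak convergence of $\{\mathcal{L}^N_j(t)\}_{j\in[1,i]_{\mathbb{Z}}}$ to $\{\xi_j\}_{j\in[1,i]_{\mathbb{Z}}}$ for each fixed $t$, and in particular of the $i$-th marginal $\mathcal{L}^N_i(t)$ to $\xi_i$. The plan for the locally uniform statement \eqref{equ:uniformconvergence} is to use the Brownian scaling invariance of the non-intersecting squared Bessel process to transfer the problem from an arbitrary time $t$ to time $0$, where Theorem~\ref{thm:1} applies. Concretely, from \eqref{eqn:edgeLE1} and the single-time scaling identity $\{Y^{N,\alpha}_i(s)\}_i\overset{(d)}{=}\{s\cdot Y^{N,\alpha}_i(1)\}_i$ recorded in the introduction, applied with $s=1+\tfrac{t}{4N}$, one obtains for all $N$ large enough that $1+\tfrac{t}{4N}>0$ on $[-L,L]$ the distributional identity $\mathcal{L}^N_i(t)\overset{(d)}{=}\big(1+\tfrac{t}{4N}\big)\mathcal{L}^N_i(0)$. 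Writing $c_{N,t}:=\big(1+\tfrac{t}{4N}\big)^{-1}$, which satisfies $\sup_{t\in[-L,L]}|c_{N,t}-1|\to 0$, this gives $\bP(\mathcal{L}^N_i(t)\ge r)=\bP(\mathcal{L}^N_i(0)\ge c_{N,t}r)$, so it suffices to prove
\begin{equation*}
\sup_{t\in[-L,L]}\big|\bP(\mathcal{L}^N_i(0)\ge c_{N,t}r)-\bP(\xi_i\ge r)\big|\longrightarrow 0.
\end{equation*}

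To finish, I would dispose of the trivial case $r=0$ (both probabilities equal $1$, since squared Bessel processes and their hard-edge limit are positive), and for $r>0$ fix $\varepsilon\in(0,r)$. For $N$ large the threshold $c_{N,t}r$ lies in $[r-\varepsilon,r+\varepsilon]$ for every $t\in[-L,L]$, so by monotonicity of $u\mapsto\bP(\mathcal{L}^N_i(0)\ge u)$ one has the sandwich $\bP(\mathcal{L}^N_i(0)\ge r+\varepsilon)\le\bP(\mathcal{L}^N_i(0)\ge c_{N,t}r)\le\bP(\mathcal{L}^N_i(0)\ge r-\varepsilon)$ uniformly in $t$. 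The law of $\xi_i$ is atomless — it is the $i$-th smallest point of the Bessel point process, which lives on $(0,\infty)$ and is determinantal, so $\bP(\xi_i=u)=0$ for every $u>0$ — hence $r\pm\varepsilon$ are continuity points of $u\mapsto\bP(\xi_i\ge u)$ and Theorem~\ref{thm:1} gives $\bP(\mathcal{L}^N_i(0)\ge r\pm\varepsilon)\to\bP(\xi_i\ge r\pm\varepsilon)$. Taking $\limsup_N$ and $\liminf_N$ in the sandwich and then sending $\varepsilon\downarrow 0$, using continuity of $u\mapsto\bP(\xi_i\ge u)$ at $r$, yields the displayed limit and completes the proof.

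The argument is essentially routine; the only points requiring a little care are (i) making the scaling reduction rigorous — in particular checking that the single-time scaling identity indeed transfers to $\mathcal{L}^N_i(t)$ for $N$ large, which is where the "eventually" in $N$ enters — and (ii) justifying that the limit $\xi_i$ has no atom, so that the moving threshold $c_{N,t}r$ can be replaced by the fixed values $r\pm\varepsilon$ at which Theorem~\ref{thm:1} is directly applicable. I do not expect any substantive obstacle beyond these bookkeeping matters.
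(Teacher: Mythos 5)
Your proof is correct and uses exactly the same mechanism as the paper's: the first assertion is read off from Theorem~\ref{thm:1}, and the locally uniform statement is derived from the scaling identity $\mathcal{L}^N_i(t)\overset{(d)}{=}(1+t/(4N))\mathcal{L}^N_i(0)$. You simply spell out the sandwich/atomlessness bookkeeping that the paper's one-line proof leaves to the reader.
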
 
 
\begin{proof}
For fixed $t$, the convergence follows from Theorem \ref{thm:1}. From \eqref{equ:06261716} and \eqref{eqn:edgeLE1}, $\mathcal{L}^{N,\alpha}_i(t)$ has the same distribution as $\left(1+ {t}/{(4N)}\right)\mathcal{L}^{N,\alpha}_i(0).$ Locally uniform convergence \eqref{equ:uniformconvergence} then follows.
\end{proof}

Propositions~\ref{pro:sup} and \ref{pro:inf} are proved in Sections~\ref{sec:sup} and \ref{sec:inf} respectively. For brevity, we will denote $\mathcal{L}^{N,\alpha}$ by $\mathcal{L}^N$ in the rest of the section.

\subsection{Uniform upper bound}\label{sec:sup} We begin with the following one point upper bound for $\mathcal{L}^N$.
\begin{lemma}\label{lem:oneptU}
Fix $\alpha\geq 0$. For any $L>0,\varepsilon>0$ and $k\in\mathbb{N}$, there exist  $R_2=R_2(\alpha, \varepsilon,k)$ and $N_3=N_3(\alpha,\varepsilon,k,L)$ such that the following statement holds. For any $t\in [-L,L]$ and $N\geq N_3$, it holds that
\begin{align*}
\bP(\mathcal{L}^N_k(t)>R_2)<\varepsilon.
\end{align*}
\end{lemma}

\begin{proof}
Because $\mathbb{P}(\xi^{\alpha}_k<\infty)=1$, there exists $R_2=R_2(\alpha,\varepsilon,k)$ such that $$\bP(\xi^\alpha_k>R_2)<2^{-1}\varepsilon.$$
Apply Lemma \ref{thm:onept} with $r=R_2$, the existence of $N_3$ follows from \eqref{equ:uniformconvergence}.
\end{proof}

We are now ready to prove Proposition \ref{pro:sup}. The argument propagates the one-point upper bound in Lemma \ref{lem:oneptU} to uniform upper bound estimates through the Bessel Gibbs property of $\mathcal{L}^N$.
\begin{proof}[Proof of Proposition \ref{pro:sup}]
Let $L =\max\{|a|,|b|\}$.  
We argue by induction on $k$ and start with $k=1$. Take $R_2=R_2(\alpha,4^{-1}\varepsilon,1)$ as in Lemma \ref{lem:oneptU}. For $N\in\mathbb{N}$, define the event
\begin{align*}
\mathsf{A}_N\coloneqq \{ \mathcal{L}^N_1(a)\leq R_2, \mathcal{L}^N_1(b)\leq R_2 \}.
\end{align*}
From Lemma~\ref{lem:oneptU}, we have for $N\geq N_3(\alpha,4^{-1}\varepsilon,1,L)$, 
\begin{equation}\label{equ:07051557}
\mathbb{P}(\mathsf{A}_N^{\textup{c}})<2^{-1}\varepsilon.
\end{equation}
We proceed to bound $\bP \left(\left\{ \sup_{t\in [a,b]}\mathcal{L}_1^N(t)>R_1\right\}\cap\mathsf{A}_N \right)$ for suitable $R_1$. Let $R_1$ be large enough such that 
\begin{align*}
\mathbb{P}^{1,1,(a,b),R_2,R_2}_{\free} \left(\sup_{t\in [a,b]} \mathcal{J}(t)<R_1\right)<2^{-1}\varepsilon,
\end{align*}
where $\mathcal{J}(t)$ is distributed according to $\mathbb{P}^{1,1,(a,b),R_2,R_2}_{\free} $. By the tower property of conditional expectation, we have
\begin{align*}
\bP \left(\left\{ \sup_{t\in [a,b]}\mathcal{L}_1^N(t)>R_1\right\}\cap\mathsf{A}_N \right)=&\mathbb{E}\left[\mathbbm{1}_{\mathsf{A}_N}\cdot\mathbb{E}\left[ \mathbbm{1}\left\{\sup_{t\in [a,b]} \mathcal{L}^N_1(t)>R_1\right\} \,\bigg|\,\Fext (\{1\}\times (a,b))\right]\right]
\end{align*}
By the Gibbs property Proposition~\ref{pro:GibbsforL}, we have
\begin{align*}
\mathbb{E}\left[ \mathbbm{1}\left\{\sup_{t\in [a,b]} \mathcal{L}^N_1(t)>R_1\right\} \,\bigg|\,\Fext(\{1\}\times (a,b))\right]=  \mathbb{P}^{1,1,(a,b),\mathcal{L}^N_1(a),\mathcal{L}^N_1(b),0,\mathcal{L}^N_2 } \left(\sup_{t\in [a,b]} \mathcal{J}(t)>R_1 \right).
\end{align*}
When $\mathsf{A}_N$ occurs, $\mathcal{L}^N_1(a)\leq R_2$ and $\mathcal{L}^N_1(b)\leq R_2$. By the stochastic monotonicity Proposition~\ref{lem:monotonicity},  
\begin{align*}
\mathbbm{1}_{\mathsf{A}_N}\cdot\mathbb{E}\left[ \mathbbm{1}\left\{\sup_{t\in [a,b]} \mathcal{L}^N_1(t)>R_1\right\} \,\bigg|\,\Fext(\{1\}\times (a,b))\right]\leq & \mathbbm{1}_{\mathsf{A}_N}\cdot \mathbb{P}^{1,1,(a,b),R_2,R_2}_{\free} \left(\sup_{t\in [a,b]} \mathcal{J}(t)>R_1 \right)\\
<&\mathbbm{1}_{\mathsf{A}_N}\cdot 2^{-1}\varepsilon.
\end{align*}
Therefore,
\begin{equation}\label{equ:07051600}
\bP \left(\left\{ \sup_{t\in [a,b]}\mathcal{L}_1^N(t)>R_1\right\}\cap\mathsf{A}_N \right)<2^{-1}\varepsilon.
\end{equation} 
Combining \eqref{equ:07051557} and \eqref{equ:07051600}, we conclude \begin{align*}
\bP\left(\sup_{t\in [a,b]}\mathcal{L}_1^N(t)>R_1\right)<\varepsilon.
\end{align*}
We have established \eqref{equ:U} for $k=1$.\\[-0.3cm]

Next, we assume that $k\geq 2$ and that \eqref{equ:U} holds for $k-1$. Define
\begin{align*}
\bar{R}\coloneqq \max\{2^{-1}R_2(\alpha,4^{-1}\varepsilon,k),R_1(\alpha,4^{-1}\varepsilon,k-1,b-a)\}.
\end{align*} 
Here $R_2(\alpha, 4^{-1}\varepsilon,k)$ is the constant in Lemma \ref{lem:oneptU}. For $N\in\mathbb{N}$, consider the event
\begin{align*}
\mathsf{E}_N\coloneqq \left\{\mathcal{L}^N_k(a)\leq 2\bar{R},\mathcal{L}^N_k(b)\leq 2\bar{R}, \, \textup{and}\, \sup_{t\in[a,b]}\mathcal{L}^N_{k-1}(t)\leq \bar{R}\right\}.
\end{align*}
By Lemma \ref{lem:oneptU} and the induction hypothesis, there exists $N_1=N_1(\alpha,\varepsilon,k,L)$ such that for all $N\geq N_1$, 
\begin{equation}\label{equ:07051605}
\bP\left(\mathsf{E}_N^{\textup{c}}\right)<\frac{3}{4} \varepsilon.
\end{equation}

Take $R_1=R_1(\alpha,\varepsilon,k,b-a)$ such that
\begin{align*}
 \mathbb{P}^{1,1,(a,b),2\bar{R} ,2\bar{R},\bar{R},\infty  }\left(\sup_{t\in [a,b]} \mathcal{J}(t)>R_1 \right)< 4^{-1}\varepsilon.
\end{align*}
Here $\mathcal{J}$ is distributed according to $\mathbb{P}^{1,1,(a,b),2\bar{R} ,2\bar{R},\bar{R},\infty  }$. By the tower property of conditional expectation, we have
\begin{align*}
\bP \left(\left\{ \sup_{t\in [a,b]}\mathcal{L}_k^N(t)>R_1\right\}\cap\mathsf{E}_N \right)=&\mathbb{E}\left[\mathbbm{1}_{\mathsf{E}_N}\cdot\mathbb{E}\left[ \mathbbm{1}\left\{\sup_{t\in [a,b]} \mathcal{L}^N_k(t)>R_1\right\} \,\bigg|\,\Fext(\{k\}\times (a,b))\right]\right].
\end{align*}
By the Gibbs property Proposition~\ref{pro:GibbsforL}, we have
\begin{align*}
 \mathbb{E}\left[ \mathbbm{1}\left\{\sup_{t\in [a,b]} \mathcal{L}^N_k(t)>R_1\right\} \,\bigg|\,\Fext(\{k\}\times (a,b))\right]= \mathbb{P}^{1,1,(a,b),\mathcal{L}^N_k(a),\mathcal{L}^N_k(b),\mathcal{L}^N_{k-1},\mathcal{L}^N_{k+1} } \left(\sup_{t\in [a,b]} \mathcal{J}(t)>R_1 \right).
\end{align*}
When $\mathsf{E}_N$ occurs, $\mathcal{L}^N_k(a), \mathcal{L}^N_k(b)\leq 2\bar{R}$ and $\mathcal{L}^N_{k-1}\leq \bar{R} $. By the stochastic monotonicity Proposition~\ref{lem:monotonicity}, 
\begin{align*}
\mathbbm{1}_{\mathsf{E}_N}\cdot\mathbb{E}\left[ \mathbbm{1}\left\{\sup_{t\in [a,b]} \mathcal{L}^N_k(t)>R_1\right\} \,\bigg|\,\Fext(\{k\}\times (a,b))\right]\leq & \mathbbm{1}_{\mathsf{E}_N}\cdot \mathbb{P}^{1,1,(a,b),2\bar{R} ,2\bar{R}, \bar{R},\infty}\left(\sup_{t\in [a,b]} \mathcal{J}(t)>R_1 \right)\\
 <&\mathbbm{1}_{\mathsf{E}_N}\cdot  4^{-1}\varepsilon.
\end{align*}
Combining the above, we have 
\begin{equation}\label{equ:07051606}
\bP \left(\left\{ \sup_{t\in [a,b]}\mathcal{L}_k^N(t)>R_1\right\}\cap\mathsf{E}_N \right)<4^{-1}\varepsilon. 
\end{equation}
Combining \eqref{equ:07051605} and \eqref{equ:07051606}, we conclude
\begin{align*}
\bP\left(\sup_{t\in [a,b]}\mathcal{L}_k^N(t)>R_1\right)\leq \bP \left(\left\{ \sup_{t\in [a,b]}\mathcal{L}_k^N(t)>R_1\right\}\cap\mathsf{E}_N \right)+\bP\left(\mathsf{E}_N^{\textup{c}}\right)<\varepsilon.
\end{align*}
We have established \eqref{equ:U}. The proof is finished.
\end{proof}
\subsection{Uniform lower bound}\label{sec:inf}
In this section, we prove Proposition~\ref{pro:inf}. The main technical input is Proposition \ref{pro:epsilond}, which shows a uniform lower bound over a small interval.
\begin{proposition}\label{pro:epsilond}
Fix $\alpha\geq 0$. For any $\varepsilon\in (0,1]$ and $L>0$, there exist $N_4=N_4(\alpha,\varepsilon,L)$, $C=C(\alpha)\geq 1$ and $\nu=\nu(\alpha)>0$ such that the following statement holds. For all $N\geq N_4$, $|t_0|\leq L$ and  $d=C^{-1}\varepsilon^{1/(1+\nu)}$, it holds that
\begin{align}\label{equ:L}
\mathbb{P}\left( \inf_{t\in [t_0-2d,t_0-d]}\mathcal{L}^N_2(t)\leq d^2 \right)<\varepsilon.
\end{align}
\end{proposition}

By a union bound argument, we prove the uniform lower bound Proposition~\ref{pro:inf}.
\begin{proof}[Proof of Proposition \ref{pro:inf}]
Let $[a,b]\subset \mathbb{R}$. Take $d=C^{-1}\varepsilon^{1/(1+\nu)}$ as in Proposition \ref{pro:epsilond} and $t_0\in [a,b].$ Denote $L=\max\{|a|,|b|\}$. By Proposition \ref{pro:epsilond}, there exists $N_4=N_4(\alpha,\varepsilon,L)$ such that for all $N\geq N_4$,
\begin{align*}
\bP\left(\inf_{t\in [t_0-2d,t_0-d]}\mathcal{L}^N_2(t)\leq d^2 \right)<\varepsilon.
\end{align*}
Covering $[a,b]$ with $\lceil d^{-1}(b-a)\rceil$ intervals with length $d$ and applying the union bound estimate, we obtain that
\begin{align*}
\bP\left(\inf_{t\in [a,b]}\mathcal{L}^N_2(t)\leq d^2 \right)<\lceil d^{-1}(b-a)\rceil\varepsilon\leq \left(C (b-a)\varepsilon^{-1/(1+\nu)}+1\right)\varepsilon.
\end{align*} 
Because $\nu>0$, the right hand side could be made as small as possible. The assertion then follows by re-choosing $\varepsilon$.
\end{proof}

The rest of this section is devoted to proving Proposition~\ref{pro:epsilond}. We first set up a lemma which controls the lower tail of the second smallest particle in the $\alpha$-Bessel point process, i.e. $\xi^\alpha_2$. This can be found in \cite{TW}. We provide the proof for completeness.

\begin{lemma}\label{lem:ppL}
Fix $\alpha\geq 0$. There exists positive constant $c=c(\alpha)$ and $\nu=\nu(\alpha)$ such that 
\begin{align*}
\bP (\xi^\alpha_1\leq r)\leq  cr^{1+\alpha}+o_\alpha(r^{2}),
\end{align*} 
and
\begin{align*}
\bP (\xi^\alpha_2\leq r)\leq  cr^{1+\nu}+o_\alpha(r^{2}).
\end{align*}
\end{lemma}
 
\begin{proof}
First consider the case when $\alpha>0$. For $n\in\mathbb{N}_0$ and $s>0$, let $E(n,r)=\mathbb{P}(\xi^\alpha_n<r, \xi^\alpha_{n+1}\geq r).$ be the probability that the number of $\xi^\alpha_i$ in $(0,r)$ is $n$. We adopt the convention that $\xi_0=0$. From \cite[(1.22)]{TW}, one has the following asymptotic expression when $r$ goes to zero.
\begin{align*}
-r\frac{d}{dr}\log E(0,r)= c_\alpha r^{\alpha+1}+O(r^{\alpha+2}).
\end{align*}
Here $c_\alpha=\frac{1}{2^{2\alpha+2}}\frac{1}{\Gamma(\alpha+1)\Gamma(\alpha+2)} $. Then we deduce
\begin{align*}
E(0,r)=1- (\alpha+1)^{-1}{c_\alpha}r^{\alpha+1}+O(r^{\alpha+2}).
\end{align*}
Hence, 
\begin{align*}
\bP (\xi^\alpha_2\leq r)\leq  \bP (\xi^\alpha_1\leq r)=1-E(0,r)=(\alpha+1)^{-1}{c_\alpha}r^{\alpha+1}+O(r^{\alpha+2}).
\end{align*}
To conclude, for $\alpha>0$, the desired result holds with $c=\frac{c_\alpha}{\alpha+1}$ and $\nu=\alpha$.\\[-0.2cm]

Next, we consider the case $\alpha=0$. Evaluating $c_0$, we obtain $E(0,r)=1-4^{-1} r  +O(r^{2}).$ From \cite[(2.30)]{TW}, 
\begin{align*}
 \frac{E(1,r)}{E(0,r)} = 2^{-1} {r} \left(I^2_0(r^{1/2})-r^{-1/2}I_0(r^{1/2})I_1(r^{1/2})-I^2_1(r^{1/2})\right).
\end{align*} 
Here $I_0$ and $I_1$ are modified Bessel functions of the first kind. As $z$ goes to zero, we have \cite[(9.6.10)]{AS}
\begin{align*}
I_0(z)=1+O(z^2),\ I_1= 2^{-1}{z} +O(z^3).
\end{align*}
Hence 
\begin{align*}
  {E(1,r)} ={E(0,r)}\times  2^{-1} {r} \left(I^2_0(r^{1/2})-r^{-1/2}I_0(r^{1/2})I_1(r^{1/2})-I^2_1(r^{1/2})\right)= 4^{-1} {r} +O(r^2).
\end{align*}
As a result,
\begin{align*}
\bP (\xi^\alpha_2\leq r)=1-{E(0,r)}-{E(1,r)}=O(r^2).
\end{align*}
\end{proof}

From Lemma~\ref{lem:ppL}, we may derive the following one-point lower bound for $\mathcal{L}^N_1$ and $\mathcal{L}^N_2$.
\begin{lemma}\label{lem:oneptL}
Fix $\alpha\geq 0$ and let $c=c(\alpha)$, $\nu=\nu(\alpha)$ be the constants in Lemma~\ref{lem:ppL}. There exists $\varepsilon_0=\varepsilon_0(\alpha)$ such that for all $0<\varepsilon\leq \varepsilon_0$ and $L>0$, there exists $N_5=N_5(\alpha,\varepsilon,L)$ which makes the following statement true. Let $r_2=(\varepsilon/(2c))^{1/(1+\alpha)}$ and $r_3=  (\varepsilon/(2c))^{1/(1+\nu) } $. For all $N\geq N_5$ and $|t|\leq L$, it holds that
\begin{align*}
 \bP(\mathcal{L}^N_1(t)<r_2)<\varepsilon,  
\end{align*} 
and
\begin{align*}
\bP(\mathcal{L}^N_2(t)<r_3)<\varepsilon.
\end{align*}
\end{lemma}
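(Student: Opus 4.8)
The plan is to transfer the fixed-time tail bound on $\xi_2$ from Lemma~\ref{lem:ppL} to the prelimit curves $\mathcal{L}^N_2(t)$, uniformly over $t\in[-L,L]$, by combining the weak convergence of Lemma~\ref{thm:onept} with a careful choice of threshold. The subtle point is that Lemma~\ref{thm:onept} gives uniform convergence of $\bP(\mathcal{L}^N_i(t)\geq r)$ to $\bP(\xi_i\geq r)$ only for a \emph{fixed} $r$, whereas here the threshold $r_2(\varepsilon)=(2c)^{-1/2}\varepsilon^{1/2}$ is itself moving with $\varepsilon$. But $\varepsilon$ (hence $r_2$) is fixed before $N\to\infty$, so this causes no real difficulty: once $\varepsilon$ is chosen, $r_2(\varepsilon)$ is a fixed positive number and Lemma~\ref{thm:onept} applies with $r=r_2(\varepsilon)$.

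Concretely, I would first invoke Lemma~\ref{lem:ppL} to get $\bP(\xi_2\leq r)\leq c(\alpha)r^2 + o_\alpha(r^2)$ as $r\to 0$. Choose $\varepsilon_0>0$ small enough that for all $0<\varepsilon\leq\varepsilon_0$ the error term is controlled, i.e. so that with $r_2=r_2(\varepsilon)=(2c)^{-1/2}\varepsilon^{1/2}$ one has
\begin{align*}
\bP(\xi_2\leq r_2) \leq c\,r_2^2 + o_\alpha(r_2^2) = \frac{\varepsilon}{2} + o_\alpha(\varepsilon) \leq \frac{3\varepsilon}{4}.
\end{align*}
(Here I use that $r_2\to 0$ as $\varepsilon\to 0$, so smallness of $\varepsilon$ forces smallness of the remainder; one may need to shrink $\varepsilon_0$ depending on $\alpha$, which is allowed since $\varepsilon_0$ may depend on $\alpha$.) Next, apply Lemma~\ref{thm:onept} with $i=2$ and the now-fixed value $r=r_2(\varepsilon)$: there exists $N_4=N_4(\varepsilon,L)$ such that for all $N\geq N_4$,
\begin{align*}
\sup_{t\in[-L,L]}\left|\bP(\mathcal{L}^N_2(t)\geq r_2) - \bP(\xi_2\geq r_2)\right| < \frac{\varepsilon}{4}.
\end{align*}

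Combining these, for all $N\geq N_4$ and all $|t|\leq L$,
\begin{align*}
\bP(\mathcal{L}^N_2(t) < r_2) \leq \bP(\mathcal{L}^N_2(t)\leq r_2) \leq \bP(\xi_2\leq r_2) + \frac{\varepsilon}{4} \leq \frac{3\varepsilon}{4} + \frac{\varepsilon}{4} = \varepsilon,
\end{align*}
which is the desired bound. I expect no serious obstacle here; the only thing requiring a little care is the bookkeeping of $\varepsilon_0$ versus the $o_\alpha(r^2)$ remainder in Lemma~\ref{lem:ppL} and the harmless mismatch between $<r_2$ and $\leq r_2$ (absorbed since $\bP(\xi_2=r_2)=0$ for the absolutely continuous point process, or simply by the inequality $\bP(\mathcal{L}^N_2(t)<r_2)\leq\bP(\mathcal{L}^N_2(t)\leq r_2)$). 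One should also double-check that Lemma~\ref{thm:onept}'s uniform convergence statement \eqref{equ:uniformconvergence} is indeed stated for the relevant index $i=2$ and threshold placement, which it is.
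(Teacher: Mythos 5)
Your proof is correct and follows essentially the same route as the paper: combine the fixed-time tail bound on $\xi_2$ from Lemma~\ref{lem:ppL} with the locally uniform convergence of Lemma~\ref{thm:onept} at the (fixed, once $\varepsilon$ is chosen) threshold $r_2(\varepsilon)$. The only cosmetic difference is the split of $\varepsilon$ into $3\varepsilon/4 + \varepsilon/4$ versus the paper's $2\varepsilon/3$; your bookkeeping remark about $<$ versus $\leq$ and about fixing $r_2$ before sending $N\to\infty$ is exactly the right point to be careful about.
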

\begin{proof}
We present only the proof for the latter bound. The argument for the first one is similar.  From Lemma \ref{lem:ppL}, $\bP(\xi_2<r_3)=2^{-1}\varepsilon+o_\alpha(\varepsilon).$ Take $\varepsilon_0$ small enough such that for all $0<\varepsilon\leq \varepsilon_0$, $\bP(\xi_2<r_3)<\frac{2}{3}\varepsilon.$ Apply Lemma \ref{thm:onept} with $r=r_3$, the existence of $N_5$ follows from \eqref{equ:uniformconvergence}.
\end{proof}

We finish this section with the proof of Proposition~\ref{pro:epsilond}. The proof relies on estimates of transition densities for non-intersecting Bessel bridges.
\begin{proof}[Proof of Proposition~\ref{pro:epsilond}]
Let $c$, $\nu$ and $\varepsilon_0$ be the constants in Lemma \ref{lem:oneptL}. Fix $\varepsilon\leq \varepsilon_0$ and set $d=(\varepsilon/(2c))^{1/(1+\nu) }$. We may assume that $d\leq 2^{-1}$. For $|t_0|\leq L$, denote 
\begin{align*}
\mathsf{F}_N\coloneqq \left\{ \inf_{t\in [t_0-2d,t_0-d]}\mathcal{L}^N_2(t)\leq d^2 \right\}.
\end{align*}
It is sufficient to show the following.  For any $0<\varepsilon\leq \varepsilon_0$, there exists $N_4=N_4(\alpha, \varepsilon,L)$ and $C_0$ such that for all $N\geq N_4$ and $|t_0|\leq L$,
\begin{align}\label{equ:Lmid}
\mathbb{P}(\mathsf{F}_N)<C_0\varepsilon.
\end{align}

Let $R=R_2(\alpha, \varepsilon,2)$ be the constant in Lemma \ref{lem:oneptU} and $T=R^{1/2}$. Consider the event
\begin{align*}
\mathsf{G}_N\coloneqq \left\{ \mathcal{L}^N_2(t_0+T)\leq R  \right\}.
\end{align*}
From Lemma \ref{lem:oneptU}, there exists $N=N_3(\alpha, \varepsilon,L)$ such that for all $N\geq N_3$, $\bP(\mathsf{G}_N^{\textup{c}})<\varepsilon$. We aim to show that there exists $N_5(\alpha,\varepsilon,L)\geq N_3$ such that for all $N\geq N_5$,
\begin{align}\label{equ:FcapG}
\bP(\mathsf{F}_N\cap \mathsf{G}_N)<C_1\varepsilon.
\end{align}
This implies $\bP(\mathsf{F}_N)\leq \bP(\mathsf{F}_N\cap \mathsf{G}^N)+\bP(\mathsf{G}_N^{\textup{c}})<(C_1+1)\varepsilon$, which is sufficient for \eqref{equ:Lmid}.\\[-0.3cm] 

Now we turn to \eqref{equ:FcapG}. Consider the random variable
\begin{align*}
\mathfrak{l}\coloneqq \left\{ \begin{array}{cc}
\inf\left\{ t\in [t_0-2d,t_0-d]\,|\, \mathcal{L}^N_2(t)\leq d^2 \right\}, &\left\{ t\in [t_0-2d,t_0-d]\,|\, \mathcal{L}^N_2(t)\leq d^2 \right\}\neq \phi,\\
t_0-d, & \left\{ t\in [t_0-2d,t_0-d]\,|\, \mathcal{L}^N_2(t)\leq d^2 \right\}=\phi. 
\end{array} \right.
\end{align*}
This is the smallest time when $\mathcal{L}^N_2$ goes below $d^2$ if it does. By the tower property for conditional expectation, we have
\begin{align*}
\mathbb{P}\left(\left\{ \mathcal{L}^N_2(t_0)<d \right\}\cap \mathsf{F}_N\cap \mathsf{G}_N \right)
=\mathbb{E}\left[ \mathbbm{1}_{\mathsf{F}_N}\cdot \mathbbm{1}_{\mathsf{G}_N}\cdot \mathbb{E}\left[ \mathbbm{1}\left\{ \mathcal{L}^N_2(t_0)<d \right\}\,|\,\Fext(\{1,2\}\times (\mathfrak{l},t_0+T))  \right] \right]\\[-0.4cm]
\end{align*}

Denote $\vec{x}=(\mathcal{L}^N_1(\mathfrak{l}),\mathcal{L}^N_2(\mathfrak{l}))$, $\vec{y}=(\mathcal{L}^N_1(t_0+T),\mathcal{L}^N_2(t_0+T))$ and $g=\mathcal{L}^N_3|_{[\mathfrak{l},t_0+T]}.$ By the strong Gibbs property Lemma~\ref{lem:stronggibbs},
\begin{equation}\label{equ:GibbsFcapG}
\begin{split}
& \mathbb{E}\left[ \mathbbm{1}\left\{ \mathcal{L}^N_2(t_0)<d  \right\}\,|\,\Fext(\{1,2\}\times (\mathfrak{l},t_0+T))  \right]= \mathbb{P}^{1,2,(\mathfrak{l},t_0+T),\vec{x},\vec{y},0,g }( \mathcal{J}_2(t_0)<d ).
\end{split}
\end{equation}
Here $(\mathcal{J}_1,\mathcal{J}_2)$ is distributed according to $\mathbb{P}^{1,2,(\mathfrak{l},t_0+T),\vec{x},\vec{y},0,g }$. We claim there exists $C_1$ depending only on $\alpha$ such that
\begin{align}\label{equ:Be2_t_0}
\mathbbm{1}_{\mathsf{F}_N}\cdot \mathbbm{1}_{\mathsf{G}_N}\cdot\mathbb{P}^{1,2,(\mathfrak{l},t_0+T),\vec{x},\vec{y},0,g }( \mathcal{J}_2(t_0)<d )\geq 1/C_1\cdot \mathbbm{1}_{\mathsf{F}_N}\cdot \mathbbm{1}_{\mathsf{G}_N}.
\end{align}
Assuming for a moment this claim holds. Combining \eqref{equ:GibbsFcapG} and \eqref{equ:Be2_t_0}, we have
\begin{align*}
\mathbb{P}(\mathsf{F}_N\cap\mathsf{G}_N)&=\mathbb{E}[\mathbbm{1}_{\mathsf{F}_N}\cdot \mathbbm{1}_{\mathsf{G}_N}]\\
&\leq C_1 \mathbb{E}\left[\mathbbm{1}_{\mathsf{F}_N}\cdot \mathbbm{1}_{\mathsf{G}_N}\cdot \mathbb{E}\left[ \mathbbm{1}\left\{ \mathcal{L}^N_2(t_0)<d  \right\}\,|\,\Fext(\{1,2\}\times (\mathfrak{l},t_0+T))  \right]\right]\\
&= C_1  \mathbb{P}\left(\left\{ \mathcal{L}^N_2(t_0)<d \right\}\cap \mathsf{F}_N\cap \mathsf{G}_N \right)\\
&\leq C_1  	\mathbb{P}\left( \mathcal{L}^N_2(t_0)<d  \right).
\end{align*}
From Lemma \ref{lem:oneptL}, For $N\geq N_5(\alpha, \varepsilon,L)$, we have
$\mathbb{P}\left( \mathcal{L}^N_2(t_0)<d  \right)<\varepsilon$. This proves \eqref{equ:FcapG}.\\[-0.3cm]

It remains to prove \eqref{equ:Be2_t_0}. We use $C$ to denote a constant which depends only $\alpha$ and its value may vary from line to line.  Note that $\mathsf{F}_N=\left\{ \mathcal{L}^N_2(\mathfrak{l})\leq d^2 \right\}.$ By the stochastic monotonicity Lemma~\ref{lem:monotonicity}, the left hand side of \eqref{equ:Be2_t_0} is bounded from below by
\begin{align*}
\mathbbm{1}_{\mathsf{F}_N}\cdot \mathbbm{1}_{\mathsf{G}_N}\cdot\mathbb{P}^{1,2,(\mathfrak{l},t_0+T),\vec{x}_*,\vec{y}_*,0,+\infty }( \mathcal{J}_2(t_0)<d ).
\end{align*}
with $\vec{x}_*=(d^2,d^2)$ and $\vec{y}_*=(R,R)$. \\

We write $x_* =d^2$ and $y_*=R$. Let $\tau\coloneqq t_0-\mathfrak{l}$. Note that $d\leq  \tau\leq 2d$. For any $0\leq z_1\leq z_2\leq d$, we have
\begin{equation}\label{equ:xzy}
x_*  z_2\leq  d^2\leq \tau^2,\ y_*z_2  \leq  R=T^2\ \textup{and}\ x_* y_*\leq  R\leq (\tau+T)^2. 
\end{equation}
This verifies the assumption of Lemma~\ref{lem:2sample} with $L=1$. Let $( \mathcal{J}_1(t),\mathcal{J}_2(t))$ be distributed according to $\mathbb{P}^{1,2,(\mathfrak{l},t_0+T),\vec{x}_*,\vec{y}_*,0,+\infty }$. From Lemma~\ref{lem:2sample}, the p.d.f at $( \mathcal{J}_1(t_0),\mathcal{J}_2(t_0))=(z_1,z_2)$ has a lower bound
\begin{align*}
C^{-1} \left({\tau}^{-1}+ {T}^{-1} \right)^2 \frac{q_\tau(x_*,z_2)q_\tau(x_*,z_1)q_T(z_2,y_*)q_T(z_1,y_*)}{q_{\tau+T}(x_*,y_*)^2}(z_1-z_2)^2.
\end{align*}
Using \eqref{def:qF}, the above equals
\begin{equation}\label{equ:mid3}
\begin{split}
&2^{-2}C^{-1}(\tau^{-1}+T^{-1})^{2\alpha+4} z_1^\alpha z_2^\alpha \cdot (z_1-z_2)^2\\
\times &h_{\alpha}(\sqrt{x_*z_1}/\tau)h_{\alpha}(\sqrt{x_*z_2}/\tau)h_{\alpha}(\sqrt{z_1y_*}/T)h_{\alpha}(\sqrt{z_2y_*}/T)\times   h_{\alpha}(\sqrt{x_*y_*}/(\tau+T))^{-2} \\
\times &\exp\bigg( -x_*(\tau^{-1}-(\tau+T)^{-1})-y_*(T^{-1}-(\tau+T)^{-1})-2^{-1}(z_1+z_2)(\tau^{-1}+T^{-1}) \bigg).
\end{split}
\end{equation}
Here $h_\alpha$ is an entire function. See \eqref{equ:hexpansion} for the Taylor expansion of $h_\alpha$. From \eqref{equ:xzy}, the second line in \eqref{equ:mid3} has a lower bound. To bound the third line in \eqref{equ:mid3}, we use
\begin{align*}
 &x_*(\tau^{-1}-(\tau+T)^{-1})\leq \tau^{-1}x_*\leq d\leq 1,\  y_*(T^{-1}-(\tau+T)^{-1})\leq T^{-2}y_*\leq 1,\\
&2^{-1}(z_1+z_2)(\tau^{-1}+T^{-1})\leq (z_1+z_2)/\tau .
\end{align*}  
Combining the above, \eqref{equ:mid3} is bounded from below by
\begin{align*}
C^{-1} \tau^{-2\alpha-4} z_1^\alpha  z_2^\alpha e^{-(z_1+z_2)/ \tau  }(z_1 -z_2 )^2.
\end{align*}
It follows that $$\mathbb{P}^{1,2,(\mathfrak{l},t_0+T),\vec{x}_0,\vec{y}_0,0,+\infty }( \mathcal{J}_2(t_0)<d ) 
 \geq C^{-1} \tau^{-2\alpha-4} \int_0^{d} dz_2\int_0^{z_2} dz_1\,  z_1^\alpha  z_2^\alpha e^{-(z_1+z_2)/ \tau  }(z_1 -z_2 )^2.$$ 
Using the change of variables $(w_1,w_2)=(z_1/\tau,z_2\tau)$ and $d/\tau\geq 1$, the above is bounded from below by 
\begin{align*}
C^{-1} \int_0^{1/2} dw_2\int_0^{w_2} dw_1\,  w_1^\alpha w_2^\alpha e^{-(w_1+w_2)   }(w_1 -w_2)^2 =:C_1^{-1}.
\end{align*}
This establishes \eqref{equ:Be2_t_0}. The proof is finished.
\end{proof}

\begin{remark}
A similar and actually simpler argument based on the lower bound of $\mathcal{L}^N_1$ in Lemma~\ref{lem:oneptL} can show that $\mathbb{P}\left( \inf_{t\in [0,d]}\mathcal{L}^N_1\leq d^2\right)<\varepsilon$ for $d\sim \varepsilon^{1/(1+\alpha)}$. However, if $\alpha=0$, this estimate is not enough to extend to a unit interval as in Proposition~\ref{pro:inf}.  
\end{remark}

\section{Normalizing constant}\label{sec:proofofZ}

In this section, we give a lower bound for the normalizing constant. Let us recall its definition. Given $k\in\mathbb{N}$, $(a,b)\subset \mathbb{R}$, two vectors $\vec{x},\vec{y}\in\mathbb{R}_+^k$ and two semi-continuous functions $f,g:[a,b]\to\mathbb{R}$, the normalizing constants $Z^{1,k,(a,b),\vec{x},\vec{y},f,g}$ is defined in Definition~\ref{def:Bessel bridge LE}. It represents the probability that $k$ independent $\alpha$-Bessel bridges, starting at $\vec{x}$ at $t=a$ and terminating at $\vec{y}$ at $t=b$, avoid each other as well as $f(t)$ and $g(t)$. We are mainly concerned with the case in which $(\vec{x},\vec{y},f,g)$ are random variables. To be concrete, let $\mathcal{L}^{N,\alpha}$ be the scaled line ensembles defined in \eqref{eqn:edgeLE2}. For $N$ large enough such that $(a,b)\subset [-4N,\infty)$ and $N\geq k+1$, we consider the random variable 
$$Z^{1,k,(a,b),\vec{x},\vec{y},0,g},$$
where ${x}_i=\cL^{N,\alpha}_i(a), {y}_i=\cL^{N,\alpha}_i(b)$ and $g=\cL^{N,\alpha}_{k+1}\big|_{[-L,L]}$. The randomness of $Z^{1,k,(-L,L),\vec{x},\vec{y},0,g}$ inherits from the boundary data, $\vec{x},\vec{y}$ and $g$. This random normalizing constant appears naturally from the squared $\alpha$-Bessel Gibbs property of $\mathcal{L}^{N,\alpha}$. As long as $Z^{1,k,(a,b),\vec{x},\vec{y},0,g}$ has a lower bound, $\mathcal{L}^{N,\alpha}$ on $ [1,k]_{\mathbb{Z}}\times [a,b]$ behaves similarly to independent squared $\alpha$-Bessel bridges. See Definition~\ref{def:Bessel bridge LE}. The main goal of this section is to show that, under the law of $\mathcal{L}^{N,\alpha}$, the random normalizing constants are not small with a high probability. This is the content of Proposition~\ref{pro:Z} and is a key ingredient for proving the tightness of $\mathcal{L}^{N,\alpha}$ in the next section. 
 
\begin{proposition}\label{pro:Z}
Fix $\alpha\geq 0$, $k\in\mathbb{N}$ and $L\geq 1$. For any $\varepsilon>0$, there exists $\delta_1=\delta_1(\alpha,k,L,\varepsilon)>0$ and $N_6=N_6(\alpha,k,L,\varepsilon)>0$ such that for all $N\geq N_6$, it holds that
\begin{align*}
\mathbb{P}(Z^{1,k,(-L,L),\vec{x},\vec{y},0,g}<\delta_1) <\varepsilon.
\end{align*}
Here $x_i=\mathcal{L}^{N,\alpha}_i(-L)$, $y_i=\mathcal{L}^{N,\alpha}_i(L)$ and $g=\mathcal{L}^{N,\alpha}_{k+1}\big|_{[-L,L]}$.
\end{proposition}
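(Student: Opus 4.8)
\textbf{Proof proposal for Proposition~\ref{pro:Z}.}

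The plan is to bound the random normalizing constant from below by first replacing the random boundary data $(\vec{x},\vec{y},\cL^N_{k+1})$ by deterministic data using stochastic monotonicity, and then giving an explicit lower bound for the non-intersecting probability of $k$ independent squared Bessel bridges with that deterministic, well-separated data. First I would invoke Propositions~\ref{pro:sup} and \ref{pro:inf} together with the ordering $\cL^N_1<\cL^N_2<\cdots$ to produce a high-probability event $\mathsf{E}_N$ on which: (a) $\cL^N_i(\pm L)\le R$ for all $i\le k+1$ (from Proposition~\ref{pro:sup} applied at level $k+1$), and (b) $\cL^N_2(\pm L)\ge r$, hence $\cL^N_i(\pm L)\ge r$ for all $2\le i\le k+1$, for suitable constants $R=R(k,L,\varepsilon)$ and $r=r(k,L,\varepsilon)>0$. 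On $\mathsf{E}_N$ the lower boundary curve $f\equiv 0$ is already deterministic, the upper boundary curve $g=\cL^N_{k+1}$ satisfies $r\le g(\pm L)$ but is random in the interior; here I would use the stochastic monotonicity Proposition~\ref{lem:monotonicity} in the form that decreasing $g$ (and the endpoint data, within the constraints imposed on $\mathsf{E}_N$) decreases the acceptance probability, so that $Z^{1,k,(-L,L),\vec{x},\vec{y},0,\cL^N_{k+1}}$ is bounded below by the deterministic quantity $Z^{1,k,(-L,L),\vec{x}_{\min},\vec{y}_{\min},0,g_{\min}}$ for appropriately chosen worst-case endpoints $\vec{x}_{\min},\vec{y}_{\min}$ and constant upper barrier $g_{\min}$. (One must be slightly careful: the bottom curve of the $(k+1)$- indexed ensemble gives an \emph{upper} barrier for the top curve of the $[1,k]$-block, and monotonicity must be applied consistently; since $g=\cL^N_{k+1}\ge \cL^N_k\ge r$ on the endpoints but can be small in the interior, we do \emph{not} get to lower-bound $g$ uniformly.)

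This last point is the crux, so the main obstacle is handling the possibly-small interior values of the random upper barrier $g=\cL^N_{k+1}$. I would resolve it exactly as the argument in Lemma~\ref{lem:epsilond}: rather than resampling on $(-L,L)$ with the raw upper barrier, I would pass to a stopping domain. Using Proposition~\ref{pro:inf} applied to $\cL^N_2$ on a slightly enlarged interval (and the ordering $\cL^N_{k+1}\ge\cL^N_2$), we have with high probability that $\cL^N_{k+1}$ stays above some $r'>0$ on all of $[-L-1,L+1]$; alternatively, one introduces the hitting time $\mathfrak{r}$ of the level $r'$ by $\cL^N_{k+1}$ to the right of $L$, and $\mathfrak{l}$ symmetrically to the left, giving a $[1,k]_{\mathbb Z}$-stopping domain $(\mathfrak{l},\mathfrak{r})\supset[-L,L]$ on which $\cL^N_{k+1}\ge r'$ throughout. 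Then the strong Gibbs property (Lemma~\ref{lem:stronggibbs}) lets me resample the block $[1,k]_{\mathbb Z}\times(\mathfrak{l},\mathfrak{r})$, and on the good event the upper barrier for this larger block is uniformly $\ge r'$; combined with the endpoint bounds at $\mathfrak{l},\mathfrak{r}$ (which lie in $[-L-1,L+1]$, controlled by Propositions~\ref{pro:sup}, \ref{pro:inf}) this reduces everything to the deterministic estimate. Note that the event $\{Z^{1,k,(-L,L),\ldots}<\delta_1\}$ is $\Fext(\{1,\ldots,k\}\times(-L,L))$-measurable, so there is no circularity in conditioning on the exterior and using Gibbs/monotonicity to compare with deterministic data.

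It then remains to prove the deterministic lower bound: for fixed $R\ge r>0$ and the interval $(-L',L')$ with $L'\le L+1$, the probability that $k$ independent squared $\alpha$-Bessel bridges, with all entrance data in $[r,R]$, all exit data in $[r,R]$, and ordered appropriately (say starting and ending at the points $r, 2r, \ldots$ after rescaling — any fixed ordered configuration with gaps bounded below and above works), stay ordered and below the constant barrier $R'$ is at least some $\delta_1=\delta_1(k,L,\varepsilon)>0$. This is a standard but non-vacuous computation: one can, for instance, force each bridge $Be_j$ into a thin non-overlapping tube around a fixed affine function, using the explicit squared Bessel bridge density (equivalently, the Bessel bridge density via $Be_j=\mathcal{Q}_j^2$ as in the proof of Proposition~\ref{lem:monotonicity}) together with the modified Bessel function estimates of Appendix~\ref{sec:BE} (the function $h_\alpha$ and its expansion), exactly in the spirit of the density manipulations performed in the proof of Lemma~\ref{lem:epsilond}; the probability of the intersection of these $k$ tube events is a finite product of strictly positive quantities depending only on $k,L,r,R$, hence on $k,L,\varepsilon$. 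I expect the bookkeeping of the stopping-domain reduction (keeping track of which curve bounds which, and that all relevant endpoint times remain in a fixed compact set so that the earlier propositions apply with $N$-uniform constants) to be the fussiest part, while the final deterministic estimate is routine given the Appendix~\ref{sec:BE} bounds.
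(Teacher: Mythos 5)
There is a genuine gap in your argument, and it is exactly the issue the paper spends the most effort on. The stochastic monotonicity reduction to ``worst-case endpoints $\vec{x}_{\min},\vec{y}_{\min}$'' does not exist: the normalizing constant $Z^{1,k,(-L,L),\vec{x},\vec{y},0,g}$ is \emph{not} monotone in the intermediate entries of $\vec{x}$ and $\vec{y}$. Raising $x_j$ shrinks the gap $x_{j+1}-x_j$ (which hurts $Z$) while widening $x_j-x_{j-1}$ (which helps), so no direction of monotonicity gives a uniform deterministic lower bound. Proposition~\ref{lem:monotonicity} itself compares \emph{measures} under boundary ordering and even assumes $Z>0$ for both; it says nothing that lets you dominate $Z$ by its value at a fixed well-separated configuration. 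Concretely, on your proposed event $\mathsf{E}_N$ the entries of $\vec{x}=(\cL^N_i(-L))_{i\le k}$ all lie in $[0,R]$ and are ordered, but two consecutive ones can be arbitrarily close, forcing $Z$ arbitrarily close to $0$. Your stopping-domain device handles the interior dips of $g=\cL^N_{k+1}$, but it leaves the endpoint spacing problem untouched (it merely moves the endpoints to $\mathfrak{l},\mathfrak{r}$, where the gaps are still uncontrolled). Your closing deterministic computation is correct only for a fixed configuration with gaps bounded \emph{below}; you never supply a mechanism by which the random $\vec{x},\vec{y}$ inherit such a lower bound with the required uniformity in $N$.

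The paper's proof is built precisely around this difficulty, and takes a structurally different route. Rather than trying to bound $Z$ directly on a high-probability event, it resamples on a nested family of intervals $[\ell_j,r_j]\supset[-L,L]$ and compares two conditional laws $\mathbb{P}_{\mathcal{J}}$ and $\mathbb{P}_{\tilde{\mathcal{J}}}$ whose Radon--Nikodym derivative on the exterior is proportional to $Z^{1,k,(-L,L),\vec{x},\vec{y},0,f_{k+1}}$ itself (Proposition~\ref{pro:Z_small-k}). The lower bound on the normalizer $Z'$ of that Radon--Nikodym derivative is then obtained through a two-step resampling (Proposition~\ref{pro:spacing-k}, Lemmas~\ref{lem:lowerbound-k} and~\ref{lem:spacing-k}) which \emph{forces} the curves to become well-separated in disjoint slabs at $\pm L$ with probability bounded below --- this is the substitute for the endpoint-separation bound you are missing, and it is not a consequence of one-point estimates plus monotonicity. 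It then delivers the stronger small-tail estimate $\mathbb{P}_{\mathcal{J}}\bigl(Z\leq \varepsilon\delta_4\bigr)\leq\varepsilon$ for all $\varepsilon$, which is what propagates through the Gibbs property. If you want to salvage your more direct approach, you would need to add an argument (e.g., via the one-point convergence to the Bessel point process in Theorem~\ref{thm:1}, whose limit is a.s.\ simple) showing that all gaps $\cL^N_{j+1}(\pm L)-\cL^N_j(\pm L)$ for $j\leq k$ exceed some $\eta(\varepsilon)>0$ with probability $\geq 1-\varepsilon$ uniformly in $N$; as written, that step is absent and monotonicity cannot replace it.
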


In Section~\ref{sec:proofZ} we set up a resampling framework, state Proposition~\ref{pro:spacing-k} and discuss some of its consequences. These are used in Section~\ref{sec:proofZZ} to prove Proposition \ref{pro:Z}. Section~\ref{sec:spacing-k} adapts the two-step resampling method in \cite{Wu21a} to prove Proposition~\ref{pro:spacing-k}. For brevity, we will denote $\mathcal{L}^{N,\alpha}$ by $\mathcal{L}^N$ in the rest of the section. 


\subsection{A resampling framework}\label{sec:proofZ} We start by setting the resampling domain. We are interested in the behavior of $\mathcal{L}^N$ restricted on $[-L,L]$, but we are in need of buffer regions for the curves in $\mathcal{L}^N$ to configure themselves.  Fix $k\in\mathbb{N}, L\geq 1$.  For $j\in [1,k]_\mathbb{Z}$, let 
$$\ell_j\coloneqq -(k+2-j)L,\ r_j\coloneqq (k+2-j)L,$$
and
\begin{equation}\label{def:Pcal}
\begin{split}
\mathcal{P}\coloneqq &\cup_{j=1}^k \{j\}\times [\ell_j,r_j],\\ 
b\mathcal{P}\coloneqq &\big(\cup_{j=1}^k \{j\}\times ([\ell_1, \ell_j] \cup  [r_j,r_1])\big)\cup \{k+1\}\times [\ell_1,r_1] .
\end{split}
\end{equation}
We will run resampling arguments on $\mathcal{P}$ and $b\mathcal{P}$ will be viewed as the boundary of $\mathcal{P}$. In other words, we will sample $\mathcal{L}^N_j$ on $[\ell_i, r_j]$. \\[-0.2cm]

Now we introduce the favorable boundary conditions, which are typical under the law of $\mathcal{L}^N$. Let $f=( f_1, \cdots, f_{k+1})\in C(b\mathcal{P},\mathbb{R})$ be functions. For $R\geq 1$, we refer to the following conditions as the $R$-{\bf Good} conditions. Denote the collection of continuous functions $f\in C(b\mathcal{P},\mathbb{R})$ as $\mathcal{G}(k,L,R)$.
\begin{enumerate}
\item For all $1\leq j\leq k$, $f_j(t)\leq R$ for $t\in [\ell_{1}, \ell_j] \cup [r_j, r_1]$.\\[-0.4cm]
\item For all $2\leq j \leq k$, $f_j(t)\geq R^{-1}$ for $t\in [\ell_1, \ell_j] \cup [r_j, r_1]$. $f_{k+1}(t)\geq R^{-1}$ for $t\in [\ell_1,r_1]$.\\[-0.4cm]
\item For all $1\leq j\leq k$, $f_j(t)< f_{j+1}(x)$ for $t\in[\ell_1,\ell_j] \cup [r_j,r_1]$.
\end{enumerate}

Fix an element $f\in \mathcal{G}(k,L,R)$. We denote by $\mathfrak{X}_f\subset C(\mathcal{P},\mathbb{R})$ the collection of continuous functions $\mathcal{J}=(\mathcal{J}_1,\dots,\mathcal{J}_k)$ with $\mathcal{J}_{i}(\ell_i)=f_i(\ell_i)$ and $\mathcal{J}_{i}(r_i)=f_i(r_i)$ for all $1\leq i\leq k$. For any continuous function $\mathcal{J}\in \mathfrak{X}_f$, we may combine $\mathcal{J}$ and $f$ to form a continuous function in $[1,k]_{\mathbb{Z}}\times [\ell_1,r_1]$ as
\begin{align}\label{def:J_ext}
\mathcal{J}_{f,i}(t)\coloneqq \left\{ \begin{array}{cc}
 \mathcal{J}_i(t), & t\in [\ell_i,r_i],\\
 f_i(t), & t\in [\ell_1,\ell_i)\cup (r_i,r_1].
\end{array} \right.
\end{align} 

We start to set up the resampling argument. We consider line ensembles which take values in $\mathfrak{X}_f$. Let $\{\mathcal{Q}_j, 1\leq j\leq k\}$ be independent squared Bessel bridges defined on $[\ell_j,r_j]$ with $\mathcal{Q}_j(\ell_j)=f_j(\ell_j)$ and $\mathcal{Q}_j(r_j)=f_j(r_j)$. The law of $\mathcal{Q}_j$ is given by $\mathbb{P}_{\free}^{j,j,(\ell_j,r_j),f_j(\ell_j),f_j(r_j)}$. We denote by $\mathbb{P}_{\free}$ the joint law of $ (\mathcal{Q}_1,\mathcal{Q}_2,\dots ,\mathcal{Q}_k)$ and view it as a measure on $\mathfrak{X}_f$. Consider the events
\begin{align*}
\mathsf{NoInt}:=&\{\mathcal{J}_{f,1}(t)<\mathcal{J}_{f,2}(t)<\dots <\mathcal{J}_{f,k}(t)<f_{k+1}(t)\ \textup{for}\ t\in [\ell_1,r_1]\},\\
\widetilde{\mathsf{NoInt}}:=&\{\mathcal{J}_{f,1}(t)<\mathcal{J}_{f,2}(t)<\dots <\mathcal{J}_{f,k}(t)<f_{k+1}(t)\ \textup{for}\ t\in [\ell_1,-L]\cup[L ,r_1]\}.
\end{align*}
Denote by $\mathbb{P}$ and $ \tilde{\mathbb{P}}$ the laws of $\mathbb{P}_{\free}$ conditioned on $ \mathsf{NoInt} $ and $\widetilde{\mathsf{NoInt}}$ respectively. Equivalently, the Radon-Nikodym derivatives of $\mathbb{P}$ and $ \tilde{\mathbb{P}}$ with respect to $\mathbb{P}_\free$ are given by
\begin{align}
\frac{\textup{d}\mathbb{P}}{\textup{d}\mathbb{P}_\free}(\mathcal{J})&=\frac{\mathbbm{1}_{\mathsf{NoInt}}(\mathcal{J} ) }{\mathbb{P}_\free(\mathsf{NoInt})}, \label{def:J}\\ 
\frac{\textup{d} \tilde{\mathbb{P}} }{\textup{d}\mathbb{P}_\free}(\mathcal{J})&=\frac{\mathbbm{1}_{\widetilde{\mathsf{NoInt}}}(\mathcal{J}) }{\mathbb{P}_\free(\widetilde{\mathsf{NoInt}})}.\label{def:J'}
\end{align}
Note that
\begin{align}\label{equ:J/J'}
\frac{\textup{d}\mathbb{P} }{\textup{d}\tilde{\mathbb{P}} }(\mathcal{J} )\propto \mathbbm{1}\{\mathcal{J}_1(t)<\mathcal{J}_2(t)<\dots <\mathcal{J}_k(t)<f_{k+1}(t)\ \textup{for}\ t\in[-L,L]\}. 
\end{align}
This relation will be used in the proof of Proposition~\ref{pro:Z_small-k}.

\begin{remark}\label{rmk:P_J'}
For $k=1$, the measure $\mathbb{P}$ and $\tilde{\mathbb{P}}$ are the same as $\mathbb{P}^{1,1,(\ell_1,r_1),f_1(\ell_1),f_1(r_1),0, f_2}$ and $\mathbb{P}_{ (-L,L)}^{1,1,(\ell_1,r_1),f_1(\ell_1),f_1(r_1),0, f_2}$ defined in Definition~\ref{def:Bessel bridge LE} respectively. 
\end{remark}

Consider the event that curves are separated at the endpoints of the interval $[-L,L]$,
\begin{align}\label{eq:E}
\mathsf{E}:=\cap_{j=1}^k \left\{  {\mathcal{J}}_j(\pm L)  \in \left[ (2j-2)( 2k)^{-1}  R^{-1} ,(2j-1)( 2k)^{-1} R^{-1}\right] \right\}.
\end{align}

Proposition \ref{pro:spacing-k} below provides a lower bound of $\tilde{\mathbb{P}}(\mathsf{E})$ and will be proved in Section~\ref{sec:spacing-k}.

\begin{proposition}\label{pro:spacing-k}
Fix $\alpha\geq 0$, $k\in\mathbb{N}$, $L\geq 1$ and $R\geq 1$. There exists a constant $\delta_2=\delta_2(\alpha,k,L,R)>0$ such that the following holds. For any $(f_1,f_2,\dots f_{k+1})\in \mathcal{G}(k,L,R)$, let $\tilde{\mathbb{P}}$ be the probability measure given in \eqref{def:J'}. Then we have  
\begin{align*}
\tilde{\mathbb{P}}(\mathsf{E})\geq \delta_2.
\end{align*}
\end{proposition}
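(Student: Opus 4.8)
The plan is to bound the conditional probability directly in terms of the product measure $\mathbb{P}_{\free}$. First I would note that, since $\mathsf{E}\cap\widetilde{\mathsf{NoInt}}\subseteq\widetilde{\mathsf{NoInt}}$ and $\mathbb{P}_{\free}(\widetilde{\mathsf{NoInt}})\le 1$,
\[
\mathbb{P}_{\tilde{\mathcal{J}}}(\mathsf{E})=\frac{\mathbb{P}_{\free}(\mathsf{E}\cap\widetilde{\mathsf{NoInt}})}{\mathbb{P}_{\free}(\widetilde{\mathsf{NoInt}})}\ \ge\ \mathbb{P}_{\free}\big(\mathsf{E}\cap\widetilde{\mathsf{NoInt}}\big),
\]
so it suffices to lower bound $\mathbb{P}_{\free}(\mathsf{E}\cap\widetilde{\mathsf{NoInt}})$ uniformly over $(f_1,\dots,f_{k+1})\in\mathcal{G}(k,L,R)$. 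The structural fact I would exploit is that under $\mathbb{P}_{\free}$ the curves $Be_1,\dots,Be_k$ are \emph{independent} squared $\alpha$-Bessel bridges, so any event that is an intersection of single-curve events factorizes.

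Next I would build a deterministic corridor. Set $c_j:=\tfrac{4j-3}{4k}R^{-1}$ (the midpoint of the $j$-th window in $\mathsf{E}$), $a_j:=\tfrac{j}{2k}R^{-1}$ and $\varepsilon_0:=\tfrac{1}{8k}R^{-1}$, and choose $[0,\infty)$-valued continuous reference paths $\psi_j$ on $[\ell_j,r_j]$ with $\psi_j(\ell_j)=f_j(\ell_j)$, $\psi_j(r_j)=f_j(r_j)$, $\psi_j\equiv c_j$ on $[-L,L]$; on the buffer part of $[\ell_j,r_j]$ the path should descend (over a window of length $\le L$) to the flat level $a_j$, stay flat, and then rise linearly to $c_j$ near $\mp L$, symmetrically. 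Using $a_1<\dots<a_{k-1}$, $c_1<\dots<c_k$, the gaps $a_{j+1}-a_j=\tfrac1{2k}R^{-1}>2\varepsilon_0$, and the $R$-Good bounds ($f_j\in[R^{-1},R]$ for $j\ge2$, $f_j<f_{j+1}$, and $f_{k+1}>f_k\ge R^{-1}$ on the relevant buffer), I would verify that the $\varepsilon_0$-tubes $\mathsf{T}_j:=\{\sup_{[\ell_j,r_j]}|Be_j-\psi_j|<\varepsilon_0\}$ are pairwise ordered and separated on the buffer, stay strictly positive there, lie below $f_{k+1}$, and force $Be_j(\pm L)$ into the window of $\mathsf{E}$; hence $\bigcap_{j=1}^{k}\mathsf{T}_j\subseteq\mathsf{E}\cap\widetilde{\mathsf{NoInt}}$.

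Finally, by independence $\mathbb{P}_{\free}\big(\bigcap_j\mathsf{T}_j\big)=\prod_{j=1}^{k}\mathbb{P}(\mathsf{T}_j)$, and each factor is a tube (small‑ball) probability for a squared $\alpha$-Bessel bridge around a fixed continuous path. The only boundary data entering $\psi_j$ is the pair $(f_j(\ell_j),f_j(r_j))$, which ranges over the compact set $[R^{-1},R]^2$ (over $[0,R]^2$ for $j=1$); since the tube event is open in path space and the family of bridge laws depends continuously on its endpoints — with a Portmanteau argument to cover the degenerate value $0$ when $j=1$, where the squared $\alpha$-Bessel bridge from $0$ is still well defined for $\alpha\ge0$ — the infimum over admissible boundary data of $\mathbb{P}(\mathsf{T}_j)$ is a strictly positive constant $\delta^{(j)}=\delta^{(j)}(k,L,R)$, and $\delta_2:=\prod_{j=1}^{k}\delta^{(j)}$ works.

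The step I expect to be genuinely delicate is the corridor construction on the buffer, specifically keeping each tube below $f_{k+1}$ (and, where the concatenated curves $Be_{j,f}$ meet the fixed exterior curves $f_{j+1}$, below those) immediately after each entrance point $\ell_j$: the $R$-Good conditions only give $f_{k+1}>f_k\ge R^{-1}$ pointwise, with no modulus of continuity, whereas $\psi_j$ must descend from a possibly large value $f_j(\ell_j)\le R$. This is the point where a single deterministic corridor is not enough and one must use a two‑step resampling in the spirit of \cite{Wu21a}: first resample on the outer annular buffer and invoke the stochastic monotonicity Proposition~\ref{lem:monotonicity} to replace the rough exterior data by favorable deterministic barriers and to separate the curves at intermediate checkpoints, and only then run the corridor/tube estimate on the shortened interval to land in the window $\mathsf{E}$ at $\pm L$.
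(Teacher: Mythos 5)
The first reduction is already fatal for $k\ge 2$: bounding $\mathbb{P}_{\tilde{\mathcal J}}(\mathsf E)$ from below by $\mathbb{P}_{\free}(\mathsf E\cap\widetilde{\mathsf{NoInt}})$ discards the denominator $\mathbb{P}_{\free}(\widetilde{\mathsf{NoInt}})$, and neither this denominator nor the numerator is uniformly bounded below over $\mathcal G(k,L,R)$. The $R$-Good conditions impose $R^{-1}\le f_{j+1}\le R$ on the buffer (for $j+1\le k$) and $f_j<f_{j+1}$ pointwise, but supply no modulus of continuity. Take $k=2$ and a sequence with $f_1(\ell_1)=f_1(r_1)=R-1/n$, $f_2(\ell_1)=R$, and $f_2$ dropping linearly to $R^{-1}$ on $[\ell_1,\ell_1+1/n]$ (with $f_2\equiv R^{-1}$ on the rest of $[\ell_1,\ell_2]$, $f_2\equiv R$ on $[r_2,r_1]$, and $f_3=f_2+1$). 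This tuple lies in $\mathcal G(2,L,R)$, yet since $Be_1(\ell_1)=R-1/n$ and $Be_1$ is a.s.\ continuous, $\mathbb{P}_{\free}(Be_1(\ell_1+1/n)<R^{-1})\to 0$, hence $\mathbb{P}_{\free}(\widetilde{\mathsf{NoInt}})\to 0$ and a fortiori $\mathbb{P}_{\free}(\mathsf E\cap\widetilde{\mathsf{NoInt}})\to 0$, while the \emph{conditional} probability $\mathbb{P}_{\tilde{\mathcal J}}(\mathsf E)$ is what the proposition asserts stays bounded below. The corridor exhibits the same obstruction concretely: on $[\ell_j,\ell_{j+1}]$ the tube near $\ell_j$ sits at height about $f_j(\ell_j)$, which can be as large as $R$, yet $\widetilde{\mathsf{NoInt}}$ requires $Be_j<f_{j+1}$ there and $f_{j+1}$ can already be near $R^{-1}$; no uniform descent width can keep the tube below it. You flag this in your last paragraph, but no choice of deterministic corridor can repair a bound on the wrong (unnormalized) quantity.

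The paper's proof never passes to $\mathbb{P}_{\free}$ and does not use a path-space small-ball estimate. It stays inside $\mathbb{P}_{\tilde{\mathcal J}}$ and runs a two-step, curve-by-curve induction via the Gibbs property (Lemmas~\ref{lem:lowerbound-k} and~\ref{lem:spacing-k}). The decisive mechanism, which your sketched fix omits, is stochastic monotonicity applied to the \emph{upper} barrier: when $\tilde{\mathcal J}_j$ is resampled on $(\ell_j,r_j)$, the target events $\mathsf A_j$, $\mathsf D_j$, $\mathsf F''_j$ are decreasing, so replacing the ceiling $g_{j+1}$ (built from $\tilde{\mathcal J}_{j+1,f}$ or $f_{k+1}$) by $+\infty$ only decreases their probability — this eliminates all dependence on the irregularity of $f_{j+1}$ at one stroke. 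The lower barrier $\tilde{\mathcal J}_{j-1}$ is then controlled by the previously established event $\mathsf A_{j-1}$, and the normalizing constant is bounded by $1$. A second, reversed induction (Lemma~\ref{lem:spacing-k}) on stopping domains $(\mathfrak l_j,\mathfrak r_j)$ then lifts the curves into the windows of $\mathsf E$. Your first three paragraphs therefore do not give a proof, and the patch you propose ("run the corridor/tube estimate on a shortened interval") is not the paper's argument and would meet the same obstruction as the original corridor.
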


\begin{corollary}\label{cor:Z_lowerbound-k}
Fix $\alpha\geq 0$, $k\in\mathbb{N}$, $L\geq 1$ and $R\geq 1$. There exists a constant  $\delta_3=\delta_3(\alpha, k,L,R)>0$ such that the following holds. For any $(f_1,f_2,\dots f_{k+1})\in \mathcal{G}(k,L,R)$, let $\tilde{\mathbb{P}}$ be the probability measure given in \eqref{def:J'}. Then we have
\begin{align*}
\tilde{\mathbb{P}} \left(Z^{1,k,(-L,L),\vec{x} ,\vec{y} ,0,f_{k+1}} \geq \delta_3  \right)\geq \delta_2  .
\end{align*} 
Here $\vec{x}=( {\mathcal{J}}_j(-L))_{j=1}^k$, $\vec{y}=( {\mathcal{J}}_j(L))_{j=1}^k$ and $\delta_2=\delta_2(\alpha,k,L,R)$ is the small constant in Proposition~\ref{pro:spacing-k}.
\end{corollary}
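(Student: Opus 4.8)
The plan is to deduce Corollary~\ref{cor:Z_lowerbound-k} from Proposition~\ref{pro:spacing-k} together with a purely \emph{deterministic} lower bound for the non-intersection probability $Z^{1,k,(-L,L),\vec{x},\vec{y},0,f_{k+1}}$, valid whenever the entrance and exit data are well separated in the sense enforced by the event $\mathsf{E}$ of \eqref{eq:E}. The key observation is that, by Definition~\ref{def:Bessel bridge LE} (see \eqref{eqn:normalcont_Bessel}), $Z^{1,k,(-L,L),\vec{x},\vec{y},0,f_{k+1}}$ depends only on the triple $(\vec{x},\vec{y},f_{k+1}|_{(-L,L)})$, and on $\mathsf{E}$ one has $x_j=\tilde{\mathcal{J}}_j(-L)\in I_j$ and $y_j=\tilde{\mathcal{J}}_j(L)\in I_j$ with $I_j:=[\tfrac{2j-2}{2k}R^{-1},\tfrac{2j-1}{2k}R^{-1}]$ for all $j\in[1,k]_{\mathbb{Z}}$; the intervals $I_1,\dots,I_k$ are pairwise disjoint, consecutive ones separated by a gap $\tfrac{1}{2k}R^{-1}$, and all contained in $[0,R^{-1}]$. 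Thus it suffices to prove the claim: there is $\delta_3=\delta_3(k,L,R)>0$ such that $Z^{1,k,(-L,L),\vec{x},\vec{y},0,f_{k+1}}\geq\delta_3$ for all $\vec{x},\vec{y}$ with $x_j,y_j\in I_j$ and all $(f_1,\dots,f_{k+1})\in\mathcal{G}(k,L,R)$. Granting this, $\mathsf{E}\subseteq\{Z^{1,k,(-L,L),\vec{x},\vec{y},0,f_{k+1}}\geq\delta_3\}$, so Proposition~\ref{pro:spacing-k} yields $\mathbb{P}_{\tilde{\mathcal{J}}}(Z^{1,k,(-L,L),\vec{x},\vec{y},0,f_{k+1}}\geq\delta_3)\geq\mathbb{P}_{\tilde{\mathcal{J}}}(\mathsf{E})\geq\delta_2$, which is the assertion.

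To prove the claim I would use a tube argument. By \eqref{eqn:normalcont_Bessel}, $Z^{1,k,(-L,L),\vec{x},\vec{y},0,f_{k+1}}=\mathbb{E}^{1,k,(-L,L),\vec{x},\vec{y}}_{\free}\big[\mathbbm{1}\{0<Be_1<\cdots<Be_k<f_{k+1}\text{ on }(-L,L)\}\big]$, where $Be_1,\dots,Be_k$ are independent squared $\alpha$-Bessel bridges over $[-L,L]$ from $x_j$ to $y_j$. Fix $\eta:=\tfrac{1}{10k}R^{-1}$ and let $T_j:=\{h\in C([-L,L],\mathbb{R}):\ \min I_j-\eta<h(t)<\max I_j+\eta\ \text{for all }t\}$ be the open $\eta$-tube around the band $I_j$; this is an open subset of $C([-L,L],\mathbb{R})$. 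If $Be_j\in T_j$ for every $j$, then: the curves are strictly ordered on $(-L,L)$ (since $2\eta<\tfrac{1}{2k}R^{-1}$ forces $\max I_j+\eta<\min I_{j+1}-\eta$, so the $T_j$ are stacked with gaps); $Be_1>0$ on $(-L,L)$ automatically for a squared Bessel bridge; and $Be_k<\max I_k+\eta<R^{-1}\leq f_{k+1}$ on $(-L,L)$, using the lower bound $f_{k+1}\geq R^{-1}$ there afforded by the $R$-Good conditions. By independence, $Z^{1,k,(-L,L),\vec{x},\vec{y},0,f_{k+1}}\geq\prod_{j=1}^k\mathbb{P}\big(Be_j\in T_j\big)$, so it remains to bound each factor below uniformly over $x_j,y_j\in I_j$. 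For this I would use (a) weak continuity of the law of a squared Bessel bridge over $[-L,L]$ in its endpoints, up to the boundary of $[0,\infty)^2$ (from the explicit transition density, Appendix~\ref{sec:BE}), which by the Portmanteau theorem makes $(x,y)\mapsto\mathbb{P}(Be^{x,y}\in T_j)$ lower semicontinuous since $T_j$ is open; and (b) the support statement $\mathbb{P}(Be^{x,y}\in T_j)>0$ for each $x,y\in I_j$. Lower semicontinuity together with strict positivity on the compact set $I_j\times I_j$ gives $c_j:=\inf_{x,y\in I_j}\mathbb{P}(Be^{x,y}\in T_j)>0$, and $\delta_3:=\prod_{j=1}^k c_j$ proves the claim.

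I expect the deterministic claim to be essentially the whole proof, and within it the one delicate point is the bottom layer $j=1$: $I_1=[0,\tfrac{1}{2k}R^{-1}]$ touches the origin, where the squared Bessel drift $\tfrac{\alpha+1/2}{X}$ is singular, so one must check that $\mathbb{P}(Be^{x_1,y_1}\in T_1)$ does not degenerate as $x_1,y_1\downarrow 0$. The cleanest workaround: since $Be_1\geq0$ always, $\{Be_1\in T_1\}=\{\sup_{[-L,L]}Be_1<\max I_1+\eta\}$, and this event is monotone decreasing in the endpoints, so stochastic monotonicity (Proposition~\ref{lem:monotonicity}, applied to a single free bridge) gives $\mathbb{P}(Be^{x_1,y_1}\in T_1)\geq\mathbb{P}(Be^{\max I_1,\max I_1}\in T_1)>0$, a single positive constant at which the singularity is absent. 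Alternatively, one passes to $\sqrt{Be_1}$, an $\alpha$-Bessel bridge, and compares with a Brownian bridge by absolute continuity, as in the proof of Lemma~\ref{lem:not touching}, or uses the transition-density estimates of Appendix~\ref{sec:BE}. The remaining ingredients -- weak continuity of squared Bessel bridges in their endpoints and the support statement (b) -- are routine consequences of the explicit bridge densities. Finally, the same reasoning feeds into Proposition~\ref{pro:Z} once one additionally knows $f_{k+1}=\mathcal{L}^N_{k+1}$ is bounded below on $(-L,L)$, which follows from $\mathcal{L}^N_{k+1}>\mathcal{L}^N_2$ and Proposition~\ref{pro:inf}.
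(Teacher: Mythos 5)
Your proposal follows the paper's strategy exactly: show that on the event $\mathsf{E}$ the normalizing constant $Z^{1,k,(-L,L),\vec x,\vec y,0,f_{k+1}}$ admits a deterministic lower bound $\delta_3(k,L,R)$, deduce $\mathsf{E}\subseteq\{Z\geq\delta_3\}$, and then invoke Proposition~\ref{pro:spacing-k}. The only stylistic difference is in how the deterministic bound is obtained: the paper defines $\mathsf{Osc}$ as small oscillation about the \emph{linear interpolation} of the endpoints and uses a single free-bridge estimate $\mathbb{P}^{1,k,(-L,L),\vec x,\vec y}_{\free}(\mathsf{Osc})>\delta_3$, whereas you pin each $Be_j$ to a \emph{fixed} tube $T_j$ around the band $I_j$ and take a product of per-layer lower bounds controlled via lower semicontinuity and compactness. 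Both work, and your version is slightly more explicit about where the uniformity over endpoints comes from; the paper's version keeps the tube moving with the endpoints, which sidesteps the need for a separate discussion of the $j=1$ layer near the origin that you rightly single out (and resolve cleanly by monotonicity).

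One point to be careful about: you assert ``$f_{k+1}\geq R^{-1}$ on $(-L,L)$\ldots\ afforded by the $R$-Good conditions.'' As written, the conditions defining $\mathcal{G}(k,L,R)$ only constrain the $f_j$ on the buffer regions $[\ell_1,\ell_j]\cup[r_j,r_1]$ (which for $j\leq k$ lie in $|x|\geq 2L$ and never touch $(-L,L)$), and condition~(2) is stated only for $2\leq j\leq k$, so nothing in $\mathcal{G}(k,L,R)$ literally bounds $f_{k+1}$ below on $(-L,L)$. Without such a bound the deterministic claim (and indeed the corollary) cannot hold for all $(f_1,\ldots,f_{k+1})\in\mathcal{G}$: a function $f_{k+1}$ that dips arbitrarily close to $0$ somewhere inside $(-L,L)$ forces $Z\to 0$. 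The paper has exactly the same issue (it simply asserts that $(\tilde{\mathcal J}_1,\ldots,\tilde{\mathcal J}_k,f_{k+1})$ stay ordered on $[-L,L]$ under $\mathsf{E}\cap\mathsf{Osc}$ without justifying the comparison with $f_{k+1}$), so this is an inherited imprecision rather than an error you introduced; the $R$-Good definition should be strengthened to include $f_{k+1}\geq R^{-1}$ on $[-L,L]$, which is harmless since in the application $f_{k+1}=\mathcal{L}^N_{k+1}\geq\mathcal{L}^N_2$ and Proposition~\ref{pro:inf} supplies exactly this bound. You already gesture at this fix in your closing sentence, so I would simply move that observation earlier and replace ``afforded by the $R$-Good conditions'' with an explicit appeal to a strengthened hypothesis; otherwise the argument is sound.
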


\begin{proof}
Recall that $Z^{1,k,(-L,L),\vec{x},\vec{y},0,f_{k+1}}$ (defined in Definition~\ref{def:Bessel bridge LE}) is the following non-intersecting probability
\begin{align*}
Z^{1,k,(-L,L),\vec{x},\vec{y},0,f_{k+1}}:=\mathbb{E}_{\free}^{1,k,(-L,L),\vec{x} ,\vec{y}}\left[\mathbbm{1}\{\mathcal{J}_1<\mathcal{J}_2<\dots <\mathcal{J}_k<f_{k+1}\ \textup{on}\ [-L,L]\}   \right],
\end{align*}
where $(\mathcal{J}_1,\dots, \mathcal{J}_k)$ are distributed according to $\mathbb{P}_{\free}^{1,k,(-L,L),\vec{x} ,\vec{y}}$.

When $\mathsf{E}$ (defined in \eqref{eq:E}) happens, the boundary values $\vec{x}$ and $\vec{y}$ are separated at least by $(2k)^{-1}R^{-1}$. This leads to the observation that if the curves $ {\mathcal{J}}_j(t)$ stay close to their linear interpolation functions on $[-L,L]$, they will stay ordered over the full interval $[-L,L]$. Define the following event $\mathsf{Osc}$, where $ {\mathcal{J}}_j, j=1,2,\cdots,k$ do not deviate from their corresponding linear interpolation functions by $(4k)^{-1}R^{-1}$,
\begin{align*}
\mathsf{Osc}\coloneqq \left\{ \sup_{1\leq j\leq k}\sup_{t\in [-L,L]}\left|  { \mathcal{J}}_j(t)-2^{-1}L^{-1}\left((t+L) {\mathcal{J}}_j(L)+(L-t) {\mathcal{J}}_j(-L) \right) \right|< (4k)^{-1}R^{-1}  \right\}.
\end{align*}

Suppose $\mathsf{E}$ and $\mathsf{Osc}$ both occur, then $( {\mathcal{J} }_1, {\mathcal{J} }_2,\dots , {\mathcal{J} }_k,f_{k+1})$ remain ordered on $[-L,L]$. In view of Lemma~\ref{lem:compactcoupling}, there exists $\delta_3=\delta_3(\alpha, k,L,R)>0$ such that for all $ \vec{z},\vec{w}\in\mathbb{R}^k_+$ with $|z_i|\leq 1$ and $|w_i|\leq 1$, it holds that $\mathbb{P}_{\free}^{1,k,(-L,L),\vec{z} ,\vec{w}}(\mathsf{Osc}) >\delta_3.$ This implies that
\begin{align*}
Z^{1,k,(-L,L),\vec{x},\vec{y},0,f_{k+1}}\cdot\mathbbm{1}_{\mathsf{E}}\geq \delta_3\cdot\mathbbm{1}_{\mathsf{E}}. 
\end{align*}
In particular, $\mathsf{E}\subset \left\{ Z^{1,k,(-L,L),\vec{x},\vec{y},0,f_{k+1}}\geq  \delta_3 \right\}$. The desired result then follows from Proposition \ref{pro:spacing-k}.
\end{proof}

\begin{proposition}\label{pro:Z_small-k}
Fix $\alpha\geq 0$, $k\in\mathbb{N}$, $L\geq 1$ and $R\geq 1$. There exists a constant $\delta_4=\delta_4(\alpha, k,L,R)>0$ such that the following holds. For any $(f_1,f_2,\dots f_{k+1})\in \mathcal{G}(k,L,R)$, let $\mathbb{P}$ be the probability measure on $\mathfrak{X}_f$ given in \eqref{def:J}. Then for all $\varepsilon\in (0,1]$, we have
$$\mathbb{P}\left( Z^{1,k,(-L,L),\vec{x} ,\vec{y},0,f_{k+1}} \leq \varepsilon \delta_4   \right)\leq \varepsilon.$$
\end{proposition}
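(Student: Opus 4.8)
The plan is to write $\mathbb{P}_{\mathcal{J}}\big(Z^{1,k,(-L,L),\vec{x},\vec{y},0,f_{k+1}}\le\varepsilon\delta_4\big)$ as a ratio of two $\mathbb{P}_{\free}$-quantities, bound the numerator from above and the denominator from below using the Markov property of squared Bessel bridges, and then close the estimate with Corollary~\ref{cor:Z_lowerbound-k}. Throughout I abbreviate $Z:=Z^{1,k,(-L,L),\vec{x},\vec{y},0,f_{k+1}}$, where $\vec{x}=(\mathcal{J}_j(-L))_{j=1}^{k}$ and $\vec{y}=(\mathcal{J}_j(L))_{j=1}^{k}$ are the endpoint values of the curves, and I write $\Fext$ for $\Fext([1,k]_{\mathbb{Z}}\times(-L,L))$. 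Since $Z$ is a deterministic function of $\vec{x},\vec{y}$ and of the fixed function $f_{k+1}$, both $Z$ and the event $\{Z\le\varepsilon\delta_4\}$ are $\Fext$-measurable.

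First I would decompose the non-intersection event as $\mathsf{NoInt}=\widetilde{\mathsf{NoInt}}\cap\mathsf{NoInt}_{\textup{in}}$, where $\widetilde{\mathsf{NoInt}}$ is the part of the ordering constraint imposed away from $(-L,L)$ — precisely the event defining $\mathbb{P}_{\tilde{\mathcal{J}}}$ (cf.\ Remark~\ref{rmk:P_J'}), hence $\Fext$-measurable — and $\mathsf{NoInt}_{\textup{in}}:=\{\,Be_{1,f}<\cdots<Be_{k,f}<f_{k+1}\text{ on }[-L,L]\,\}$. The identity on which everything hinges is that, after the routine check of the endpoint conventions,
\[
\mathbbm{1}_{\widetilde{\mathsf{NoInt}}}\,\mathbb{E}_{\free}\big[\,\mathbbm{1}_{\mathsf{NoInt}_{\textup{in}}}\,\big|\,\Fext\,\big]=\mathbbm{1}_{\widetilde{\mathsf{NoInt}}}\,Z .
\]
Indeed, conditioning a tuple of independent free squared $\alpha$-Bessel bridges on their values outside $(-L,L)$ reproduces, by the Markov property of the squared Bessel bridge, a tuple of independent free squared $\alpha$-Bessel bridges on $[-L,L]$ with entrance data $\vec{x}$ and exit data $\vec{y}$, so the conditional probability of the inner ordering is exactly the normalizing constant in \eqref{eqn:normalcont_Bessel}; on $\widetilde{\mathsf{NoInt}}$ the ordering at $\pm L$ is in force, so there is no discrepancy between the closed- and open-interval versions of $\mathsf{NoInt}_{\textup{in}}$.

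Granting this identity, the computation is short. Using $\textup{d}\mathbb{P}_{\mathcal{J}}/\textup{d}\mathbb{P}_{\free}=\mathbbm{1}_{\mathsf{NoInt}}/\mathbb{P}_{\free}(\mathsf{NoInt})$ together with the $\Fext$-measurability of $\{Z\le\varepsilon\delta_4\}$ and of $\widetilde{\mathsf{NoInt}}$, the numerator becomes
\[
\mathbb{P}_{\free}\big(\{Z\le\varepsilon\delta_4\}\cap\mathsf{NoInt}\big)=\mathbb{E}_{\free}\big[\mathbbm{1}_{\{Z\le\varepsilon\delta_4\}}\,\mathbbm{1}_{\widetilde{\mathsf{NoInt}}}\,Z\big]\le\varepsilon\delta_4\,\mathbb{P}_{\free}(\widetilde{\mathsf{NoInt}}),
\]
while the denominator becomes $\mathbb{P}_{\free}(\mathsf{NoInt})=\mathbb{E}_{\free}\big[\mathbbm{1}_{\widetilde{\mathsf{NoInt}}}\,Z\big]=\mathbb{P}_{\free}(\widetilde{\mathsf{NoInt}})\,\mathbb{E}_{\tilde{\mathcal{J}}}[Z]$, using $\mathbb{P}_{\tilde{\mathcal{J}}}=\mathbb{P}_{\free}(\,\cdot\mid\widetilde{\mathsf{NoInt}})$, where $\mathbb{E}_{\tilde{\mathcal{J}}}$ is the expectation under $\mathbb{P}_{\tilde{\mathcal{J}}}$. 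Dividing, $\mathbb{P}_{\mathcal{J}}(Z\le\varepsilon\delta_4)\le\varepsilon\delta_4/\mathbb{E}_{\tilde{\mathcal{J}}}[Z]$.

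Finally I would choose $\delta_4$. Corollary~\ref{cor:Z_lowerbound-k} gives $\mathbb{E}_{\tilde{\mathcal{J}}}[Z]\ge\delta_3\,\mathbb{P}_{\tilde{\mathcal{J}}}(Z\ge\delta_3)\ge\delta_2\delta_3$, so setting $\delta_4:=\delta_2\delta_3>0$ — which depends only on $(k,L,R)$ — yields $\mathbb{P}_{\mathcal{J}}(Z\le\varepsilon\delta_4)\le\varepsilon$ for every $\varepsilon\in(0,1]$, as required, uniformly over $(f_1,\dots,f_{k+1})\in\mathcal{G}(k,L,R)$. The one genuinely delicate step is the Markov-property identity of the second paragraph: one must verify that conditioning the free squared Bessel bridges on their values outside $(-L,L)$ returns exactly independent free squared Bessel bridges inside, so that the conditional non-intersection probability is literally the constant of \eqref{eqn:normalcont_Bessel}, and one must reconcile the open/closed-interval and strict/non-strict conventions at $\pm L$. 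Everything else is bookkeeping of $\sigma$-fields, and no estimate beyond Corollary~\ref{cor:Z_lowerbound-k} is needed.
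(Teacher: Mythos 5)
Your proposal is correct and follows essentially the same route as the paper's proof. The paper packages the key identity as the Radon--Nikodym derivative $\textup{d}\mathbb{P}_{\mathcal{J}'}/\textup{d}\mathbb{P}_{\tilde{\mathcal{J}}'}=Z/Z'$ between the laws of the exterior restrictions, whereas you phrase the same fact as the conditional-expectation identity $\mathbbm{1}_{\widetilde{\mathsf{NoInt}}}\,\mathbb{E}_{\free}[\mathbbm{1}_{\mathsf{NoInt}_{\textup{in}}}\mid\Fext]=\mathbbm{1}_{\widetilde{\mathsf{NoInt}}}\,Z$; both then bound $\mathbb{E}_{\tilde{\mathcal{J}}}[Z]\ge\delta_2\delta_3$ via Corollary~\ref{cor:Z_lowerbound-k} and close with the same Markov-type estimate, choosing $\delta_4=\delta_2\delta_3$.
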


\begin{proof}
Let $\tilde{ \mathbb{P}}$ be the probability measure on $\mathfrak{X}_f$ given in \eqref{def:J'} and let $\mathfrak{X}'$ be the collection of functions $\mathcal{J}'=(\mathcal{J}'_1,\dots, \mathcal{J}'_k)$ with $\mathcal{J}'_i\in C([\ell_i,-L]\cup [L,r_i],\mathbb{R})$. We denote by $\mathbb{P}'$ and $\tilde{\mathbb{P}}'$ the marginal law of $\mathbb{P}$ and $\tilde{\mathbb{P}}$ on $\mathfrak{X}'$ respectively.

Recall that the Radon-Nikodym derivatives $\textup{d}\mathbb{P}/\textup{d}\tilde{\mathbb{P}}$ is given in \eqref{equ:J/J'}. This implies  
\begin{align}\label{eq:dJdJ'}
\frac{\textup{d}\mathbb{P}' }{\textup{d}\tilde{\mathbb{P}}' }(\mathcal{J}' ) =\frac{1}{Z'} Z^{1,k,(-L,L),\vec{x} ,\vec{y},0,f_{k+1}}.
\end{align}
Here $\vec{x}=(\mathcal{J}'_i(-L))_{i=1}^k$, $\vec{y}=(\mathcal{J}'_i(L))_{i=1}^k$ and $Z'$ is a normalizing constant. Denote by $\tilde{\mathbb{E}}$ and $\tilde{\mathbb{E}}'$ the expectation with respect to $\tilde{\mathbb{P}}$ and $\tilde{\mathbb{P}}'$ respectively. Then $$ Z'=\tilde{\mathbb{E}}'\left[  Z^{1,k,(-L,L),\vec{x} ,\vec{y},0,f_{k+1}} \right]=\tilde{\mathbb{E}}\left[  Z^{1,k,(-L,L),\vec{x} ,\vec{y},0,f_{k+1}} \right].$$
 
Let $\delta_2=\delta_2(\alpha, k,L,R)$ and $\delta_3=\delta_3(k,L,R)$ be the constants in Proposition~\ref{pro:spacing-k} and Corollary \ref{cor:Z_lowerbound-k}. It follows from Corollary~\ref{cor:Z_lowerbound-k} that $Z'\geq  \delta_2\delta_3.$
From \eqref{eq:dJdJ'}, we have
\begin{align*}
&=\mathbb{P}'\left( Z^{1,k,(-L,L),\vec{x} ,\vec{y},0,f_{k+1}} \leq \varepsilon \delta_2\delta_3  \right)\\ 
&=\tilde{\mathbb{E}}' \left[ \frac{1}{Z'} Z^{1,k,(-L,L),\vec{x} ,\vec{y},0,f_{k+1}} \cdot\mathbbm{1}\left\{Z^{1,k,(-L,L),\vec{x} ,\vec{y},0,f_{k+1}} \leq \varepsilon \delta_2\delta_3\right\}\right]\\
&\leq\ \frac{\varepsilon\delta_2\delta_3 }{Z'} \cdot \tilde{\mathbb{E}}'\left[\mathbbm{1}\left\{Z^{1,k,(-L,L),\vec{x} ,\vec{y},0,f_{k+1}} \leq \varepsilon \delta_2\delta_3\right\}  \right] \leq \varepsilon.
\end{align*}
Picking $\delta_4=\delta_2\delta_3$. Thus the assertion follows by noting that 
$$\mathbb{P}\left( Z^{1,k,(-L,L),\vec{x} ,\vec{y},0,f_{k+1}} \leq \varepsilon \delta_2\delta_3   \right)=\mathbb{P}' \left( Z^{1,k,(-L,L),\vec{x} ,\vec{y},0,f_{k+1}} \leq \varepsilon \delta_2\delta_3  \right).$$
\end{proof}
\subsection{Proof of Proposition~\ref{pro:Z}}\label{sec:proofZZ}
Recall that $\mathcal{P}$, $b\mathcal{P}$ and $\mathcal{G}(k,L,R)$ are defined at the beginning of Section~\ref{sec:proofZ}. Consider the good boundary event
$$\mathsf{GB}_N(k,L,R):=\left\{\left. \mathcal{L}^N \right|_{b\mathcal{P}}\in\mathcal{G}(k,L,R) \right\}.$$
The following lemma shows that $\mathsf{GB}_N(k,L,R)$ is a typical event under the law of $\mathcal{L}^N$.
\begin{lemma}\label{lem:GB}
Fix $\alpha\geq 0$, $k\in\mathbb{N}$, $L\geq 1$. For any $\varepsilon>0$, there exists $R_3=R_3(\alpha,k,L,\varepsilon)$ and $N_7=N_7(\alpha,k,L,\varepsilon)$ such that the following holds. For any $N\geq  N_7$, and $R\geq R_3$ we have
\begin{align*}
\mathbb{P}(\mathsf{GB}_N^{\textup{c}}(k,L,R))\leq \varepsilon.
\end{align*}
\end{lemma}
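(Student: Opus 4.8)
The plan is to deduce this directly from the uniform upper and lower bounds of Propositions~\ref{pro:sup} and~\ref{pro:inf}, together with the a.s.\ strict ordering of the curves in $\mathcal{L}^N$. First I would unpack the event $\mathsf{GB}_N(k,L,R)$. Writing $f_j=\mathcal{L}^N_j$ for $1\le j\le k+1$, the $R$-Good conditions read: (1) $\mathcal{L}^N_j\le R$ on $[\ell_1,\ell_j]\cup[r_j,r_1]$ for $1\le j\le k$; (2) $\mathcal{L}^N_j\ge R^{-1}$ on $[\ell_1,\ell_j]\cup[r_j,r_1]$ for $2\le j\le k$; (3) $\mathcal{L}^N_j<\mathcal{L}^N_{j+1}$ on $[\ell_1,\ell_j]\cup[r_j,r_1]$ for $1\le j\le k$. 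Since $[\ell_1,\ell_j]\cup[r_j,r_1]\subset[\ell_1,r_1]=[-(k+1)L,(k+1)L]$ for every $j$, and since the non-intersecting squared Bessel process satisfies $\mathcal{L}^N_1<\mathcal{L}^N_2<\cdots<\mathcal{L}^N_{k+1}$ a.s.\ on $[\ell_1,r_1]$ once $N\ge k+1$ (so that the $(k+1)$-st curve exists and $1+t/(4N)>0$ on $[\ell_1,r_1]$), condition (3) holds with probability one, and conditions (1)--(2) are implied by the two events $\{\sup_{[\ell_1,r_1]}\mathcal{L}^N_k\le R\}$ and $\{\inf_{[\ell_1,r_1]}\mathcal{L}^N_2\ge R^{-1}\}$.

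Next I would invoke the two propositions on the interval $[a,b]=[-(k+1)L,(k+1)L]$. Proposition~\ref{pro:sup} with index $k$ and level $\varepsilon/2$ yields $R_1=R_1(\varepsilon/2,k,2(k+1)L)$ and $N_1=N_1(\varepsilon/2,k,(k+1)L)$ such that $\mathbb{P}\big(\sup_{t\in[\ell_1,r_1]}\mathcal{L}^N_k(t)>R_1\big)<\varepsilon/2$ for $N\ge N_1$; Proposition~\ref{pro:inf} with level $\varepsilon/2$ yields $\rho=r_1(\varepsilon/2,2(k+1)L)$ and $N_2=N_2(\varepsilon/2,(k+1)L)$ such that $\mathbb{P}\big(\inf_{t\in[\ell_1,r_1]}\mathcal{L}^N_2(t)<\rho\big)<\varepsilon/2$ for $N\ge N_2$. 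Then I set $R_3:=\max\{R_1,\rho^{-1},1\}$ and $N_7:=\max\{N_1,N_2,k+1\}$.

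Finally, I would fix $R\ge R_3$ and $N\ge N_7$. On the complement of the two exceptional events above we have, on $[\ell_1,r_1]$, $\mathcal{L}^N_j\le\mathcal{L}^N_k\le R_1\le R$ for $1\le j\le k$ and $\mathcal{L}^N_j\ge\mathcal{L}^N_2\ge\rho\ge R^{-1}$ for $2\le j\le k$ (monotonicity in $R$ lets us pass from the fixed thresholds $R_1,\rho$ to all $R\ge R_3$), while condition (3) holds a.s.; hence $\mathcal{L}^N|\in\mathcal{G}(k,L,R)$. Therefore $\mathsf{GB}_N^{\textup{c}}(k,L,R)$ is contained in the union of the two exceptional events, and a union bound gives $\mathbb{P}(\mathsf{GB}_N^{\textup{c}}(k,L,R))<\varepsilon/2+\varepsilon/2=\varepsilon$.

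There is no serious obstacle here: the lemma is essentially a repackaging of Propositions~\ref{pro:sup} and~\ref{pro:inf}, whose proofs contain all the substance. The only points that require a moment's care are the reduction of conditions (1)--(2) to the single top and bottom curves via the strict ordering, the observation that the lower bound in (2) is demanded only for $j\ge2$ (crucially, no uniform lower bound on $\mathcal{L}^N_1$ is available when $\alpha=0$, so Proposition~\ref{pro:inf} controls only $\mathcal{L}^N_2$), and ensuring $N\ge k+1$ so that $\mathcal{L}^N_{k+1}$ is defined.
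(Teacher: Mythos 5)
Your proof is correct and follows the paper's route exactly: the paper's own proof is the one-line observation that the lemma ``follows directly from Proposition~\ref{pro:sup} and Proposition~\ref{pro:inf},'' and your argument simply spells out that reduction (ordering reduces conditions (1)--(2) to bounds on $\mathcal{L}^N_k$ and $\mathcal{L}^N_2$ over $[\ell_1,r_1]$, condition (3) holds a.s., and a union bound finishes). The observations you flag---that condition (2) is only asked for $j\ge2$, matching the fact that Proposition~\ref{pro:inf} controls only $\mathcal{L}^N_2$, and the monotonicity in $R$---are exactly the right points of care.
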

\begin{proof}
The assertion follows directly from Propositions \ref{pro:sup} and \ref{pro:inf}.
\end{proof}
Now we are ready to prove Proposition \ref{pro:Z}. 
\begin{proof}[{\bf Proof of Proposition \ref{pro:Z}}]
Fix $k\in\mathbb{N}$ and $L\geq 1$. Given $\varepsilon>0$, let $R=R_3(\alpha,k,L,2^{-1}\varepsilon)$ and $N_7=N_7(\alpha,k,L,2^{-1}\varepsilon)$ in Lemma \ref{lem:GB}. To simplify notation, we write $\mathsf{GB}$ for $\mathsf{GB}_N(k,L,R)$. From Lemma~\ref{lem:GB}, for all $N\geq N_7$, it holds that 
\begin{equation}\label{equ:07051634}
\mathbb{P}(\mathsf{GB}_N^{\textup{c}})\leq 2^{-1}\varepsilon. 
\end{equation}

Let $\Fext^*$ be the sigma-field generated by the restriction of $\mathcal{L}^N $ on $[1,N]_\mathbb{Z} \times [-4N,\infty) \setminus\mathcal{P}.$ Let $\delta_4=\delta_4(\alpha, k,L,R)$ be the constant in Proposition~\ref{pro:Z_small-k}. Applying the Gibbs property of $\mathcal{L}^N$ (see Definition~\ref{def:BesselGP}), we have
\begin{align}\label{equ:06290331}
   \mathbb{E}\left[\mathbbm{1}\{ Z^{1,k,(-L,L), \vec{x},\vec{y} ,+\infty,\mathcal{L}^N_{k+1}} \leq 2^{-1}\varepsilon  \delta_4 \} \,|\, \Fext^*\right]
= \mathbb{P}\left(Z^{1,k,(-L,L), \vec{z},\vec{w} ,0,f_{k+1}}\leq  2^{-1}\varepsilon  \delta_4  \right).
\end{align} 
On the left hand side of \eqref{equ:06290331}, $\vec{x}=(\mathcal{L}^N_j (-L))_{j=1}^k$ and $\vec{y}=(\mathcal{L}^N_j (L))_{j=1}^k$; on the right hand side of \eqref{equ:06290331}, $\mathbb{P}$ is defined in \eqref{def:J} with $f=\mathcal{L}^N|_{b\mathcal{P}}$, $\vec{z}=(\mathcal{J}_j (-L))_{j=1}^k$, $\vec{w}=(\mathcal{J}_j (L))_{j=1}^k$ and $f_{k+1}=\mathcal{L}^N_{k+1}\big|_{[-L,L]}$. Applying Proposition \ref{pro:Z_small-k},
\begin{align}\label{equ:07051636}
 \mathbbm{1}_{\mathsf{GB}}\cdot\mathbb{E}\left[\mathbbm{1}\{ Z^{1,k,(\ell,r), \vec{x},\vec{y} ,0,f_{k+1}} \leq 2^{-1} \varepsilon \delta_4 \} \,|\, \Fext^*\right] \leq  2^{-1} \varepsilon \cdot \mathbbm{1}_{\mathsf{GB}}.
\end{align}
Combining \eqref{equ:07051634} and \eqref{equ:07051636}, we conclude
\begin{align*}
&\mathbb{P}\left( Z^{1,k,(-L,L), \vec{x},\vec{y} ,+\infty,f_{k+1}} \leq 2^{-1}\varepsilon  \delta_4    \right) 
\leq   \mathbb{E}\left[\mathbbm{1}_{ \mathsf{GB}}\cdot\mathbbm{1}\left\{  Z^{1,k,(-L,L), \vec{x},\vec{y} ,+\infty,f_{k+1}} \leq 2^{-1}\varepsilon  \delta_4  \right\}   \right]+\mathbb{P}(\mathsf{GB}^{\textup{c}}) 
\leq  \varepsilon. 
\end{align*}
The assertion then follows by taking $\delta_1=2^{-1}\varepsilon \delta_4$ and $N_6=N_7.$ 
\end{proof} 

\subsection{Proof of Proposition \ref{pro:spacing-k}}\label{sec:spacing-k}
This section is devoted to proving Proposition \ref{pro:spacing-k}, which shows that the curves  are well separated at $\pm L$. Recall that $\tilde{\mathbb{P}}$ is the probability measure on $\mathfrak{X}_f$ defined in \eqref{def:J'}. We will adapt a {\bf two-step} inductive resampling in \cite{Wu21a}. The goal is to have the curves stay in the preferable region. In the {\bf first} step, we inductively lower curves to the desired heights. The {\bf second} step is carried out inductively to raise curves properly in order to separate them in the desired windows. These two steps are carried out in Lemma \ref{lem:lowerbound-k} and Lemma \ref{lem:spacing-k} respectively.\\[-0.3cm]

More precisely, for $j\in [1,k]_{\mathbb{Z}}$, denote
\begin{align*}
\mathsf{A}_j:=  \left\{ \sup_{t\in [\ell_{j+1},r_{j+1}]   }{\mathcal{J}}_j(t)\leq (2j-3/2)(2k)^{-1} R^{-1} \right\}.
\end{align*}
Also, consider
\begin{equation*}
\begin{split}
\mathsf{F}_j:=&\left\{  {\mathcal{J}}_j \in \left[ (2j-2)(2k)^{-1}  R^{-1} ,(2j-1)(2k)^{-1} R^{-1} \right]\ \textup{on}\ [\ell_{j+1} , r_{j+1}]  \right\} \\
 &\qquad\qquad\cap \left\{  {\mathcal{J}}_j \geq (2j-2)(2k)^{-1} R^{-1} \ \textup{on}\ [\ell_{j} ,r_{j}] \right\}.
\end{split}
\end{equation*}
Lemma \ref{lem:lowerbound-k} provides a lower bound of $\tilde{ \mathbb{P}} \big(\cap_{j=1}^k \mathsf{A}_j \big)$. Built on Lemma \ref{lem:lowerbound-k}, Lemma \ref{lem:spacing-k} gives a lower bound of $\tilde{\mathbb{P}}\big(\cap_{j=1}^k \mathsf{F}_j \big)$. Proposition \ref{pro:spacing-k} follows directly as $\cap_{j=1}^k\mathsf{F}_j \subset \mathsf{E}$. \\[-0.3cm]

We begin with Lemma \ref{lem:lowerbound-k}. Recall that given $\mathcal{J}\in C(\mathcal{P},\mathbb{R})$, we may extend the domain of $\mathcal{J}$ as $\mathcal{J}_f$ given in \eqref{def:J_ext}.

\begin{lemma}\label{lem:lowerbound-k}
Fix $\alpha\geq 0$, $k\in\mathbb{N}$, $L\geq 1$ and $R\geq 1$. There exists a constant $\delta_5=\delta_5(\alpha, k,L,R)>0$ such that the following holds. For any $(f_1,f_2,\dots f_{k+1})\in \mathcal{G}(k,L,R)$, let $\tilde{\mathbb{P}}$ be the probability measure given in \eqref{def:J'}. Then we have  
\begin{align*}
\tilde{\mathbb{P}} \left(\cap_{j=1}^k \mathsf{A}_j \right)\geq \delta_5. 
\end{align*}
\end{lemma}

\begin{proof}
For notational ease, we use $\delta$ to denote positive constants that depend only on $\alpha$, $k,L$ and $R$. The exact value of $\delta$ may change from line to line.\\

We start with the case $k=1$, i.e., showing the lower bound for $\tilde{\mathbb{P}}(\mathsf{A}_1)$. Let $\tilde{\mathbb{E}}$ be the expectation of $\tilde{\mathbb{P}}$. From the Gibbs property and Remark \ref{rmk:P_J'},  
\begin{align*}
\tilde{\mathbb{E}} [\mathbbm{1}_{\mathsf{A}_1} \,|\,\Fext(\{1\}\times (\ell_1,r_1))]= {\mathbb{P}_{(-L,L)}^{1,1,(\ell_1,r_1),f_1(\ell_1),f_1(r_1),0, \mathcal{{J}}_{f,2}} ( {\mathsf{A}_1})}
\end{align*}  
with $\mathcal{J}_{f,2}$ defined in \eqref{def:J_ext}. Note that equivalently we have $${\mathbb{P}_{ (-L,L)}^{1,1,(\ell_1,r_1),f_1(\ell_1),f_1(r_1),0, \mathcal{J}_{f,2}}}={\mathbb{P}^{1,1,(\ell_1,r_1),f_1(\ell_1),f_1(r_1),0, g_2}}$$
with
\begin{align*}
g_2(x)\coloneqq\left\{\begin{array}{cc}
\mathcal{{J}}_{2,f}(x) & x\in [\ell_1,-L]\cup[L,r_1],\\
+\infty & x\in (-L,L).
\end{array} \right.
\end{align*}
Note that $g_2$ satisfies the continuity assumption in Definition~\ref{def:continuityassumption}. This implies that the stochastic monotonicity, Proposition~\ref{lem:monotonicity}, applies. Therefore,
\begin{align*}
{\mathbb{P}^{1,1,(\ell_1,r_1),f_1(\ell_1),f_1(r_1),0, g_2} ({\mathsf{A}_1})}&\geq{\mathbb{P}^{1,1,(\ell_1,r_1),f_1(\ell_1),f_1(r_1),0, +\infty} ( {\mathsf{A}_1})}=\mathbb{P}_{\free}^{1,1,(\ell_1,r_1),f_1(\ell_1),f_1(r_1) }\left( {\mathsf{A}_1} \right). 
\end{align*}
Let
\begin{align*}
a_1\coloneqq \inf\left\{\mathbb{P}_{\free}^{1,1,(\ell_1,r_1),x,y }\left( {\mathsf{A}_1} \right)\,\Big|\, |x|\leq R,\ |y|\leq R \right\}.
\end{align*}
From Lemma~\ref{lem:compactcoupling}, $a_1$ is positive and depends only on $\alpha, k,L$ and $R$. As a result, it holds that
\begin{align*}
\tilde{\mathbb{P}}(\mathsf{A}_1)=  \tilde{\mathbb{E}}\left[ \tilde{\mathbb{E}}  [\mathbbm{1}_{\mathsf{A}_1} \,|\,\Fext(\{1\}\times (\ell_1,r_1))] \right]\geq \mathbb{P}_{\free}^{1,1,(\ell_1,r_1),f_1(\ell_1),f_1(r_1) }\left( {\mathsf{A}_1} \right)  \geq \delta.
\end{align*}
This proves the desired result for $k=1$.\\[-0.3cm]

Now we proceed by induction. Assume for some $2\leq j\leq k$, there exists $\delta>0$ such that $\tilde{\mathbb{P}}\left(\cap_{i=1}^{j-1} \mathsf{A}_{i} \right)\geq \delta .$ We aim to show that for a smaller $\delta>0$, it holds that $\tilde{\mathbb{P}} \left(\cap_{i=1}^{j}\mathsf{A}_{i} \right)\geq \delta.$ By the Gibbs property and Remark \ref{rmk:P_J'},
\begin{align*}
\tilde{\mathbb{E}} [\mathbbm{1}_{\mathsf{A}_j} \,|\,\Fext(\{j\}\times (\ell_j,r_j))]=      \mathbb{P}^{j,j,(\ell_j,r_j),f_j(\ell_j),f_j(r_j), {\mathcal{J}}_{j-1 }, {\mathcal{J}}_{f,j+1} }_{ (-L,L)}(\mathsf{A}_j),    
\end{align*}
with $ {\mathcal{J}}_{f,j+1} $ defined in \eqref{def:J_ext} and we adopt the convention that $ {\mathcal{J}}_{f,k+1}=f_{k+1}$. Note that
\begin{align*}
\mathbb{P}^{j,j,(\ell_j,r_j),f_j(\ell_j),f_j(r_j), {\mathcal{J}}_{j-1 }, {\mathcal{J}}_{j+1,f} }_{ (-L,L)}=\mathbb{P}^{j,j,(\ell_j,r_j),f_j(\ell_j),f_j(r_j),g_{j-1 },g_{j+1} } 
\end{align*}
with
\begin{align*}
g_{j-1}(x)\coloneqq\left\{\begin{array}{cc}
 {\mathcal{J}}_{j-1}(x) & x\in [\ell_j,-L]\cup[L,r_j],\\
0 & x\in (-L,L),\end{array} \right.
\end{align*}
and
\begin{align*}
g_{j+1}(x)\coloneqq\left\{\begin{array}{cc}
 {\mathcal{J}}_{f,j+1 }(x) & x\in [\ell_j,-L]\cup[L,r_j],\\
+\infty & x\in (-L,L).
\end{array} \right.
\end{align*}
We deduce that
\begin{align*}
\tilde{\mathbb{E}}[\mathbbm{1}_{\mathsf{A}_j}\,|\,\Fext(\{j\}\times (\ell_j,r_j)) ]&=\mathbb{P}^{j,j,(\ell_j,r_j),f_j(\ell_j),f_j(r_j),g_{j-1 },g_{j+1} } (\mathsf{A}_j)\\
&\geq \mathbb{P}^{j,j,(\ell_j,r_j),f_j(\ell_j),f_j(r_j),g_{j-1 },+\infty }  (\mathsf{A}_j)\\
&=\frac{1}{Z }\mathbb{E}_{\free}^{j,j,(\ell_j,r_j),f_j(\ell_j),f_j(r_j) }\left[\mathbbm{1}_{\mathsf{A}_j} \cdot \mathbbm{1}\{ g_{j-1}<\mathcal{J}_j\ \textup{on}\ [\ell_j,r_j] \}  \right].\\
&\geq \mathbb{E}_{\free}^{j,j,(\ell_j,r_j),f_j(\ell_j),f_j(r_j) }\left[\mathbbm{1}_{\mathsf{A}_j} \cdot \mathbbm{1}\{ g_{j-1}<\mathcal{J}_j\ \textup{on}\ [\ell_j,r_j] \}  \right]. 
\end{align*}
Here the in the second equality we use $Z$ to abbreviate $\mathbb{E}_{\free}^{j,j,(\ell_j,r_j),f_j(\ell_j),f_j(r_j) }\left[ \mathbbm{1}\{ g_{j-1}<\mathcal{J}_j\ \textup{on}\ [\ell_j,r_j] \} \right].$ In the first inequality we apply stochastic monotonicity and in the second inequality we use the fact that the normalizing constant is bounded above by 1.\\[-0.3cm]
 
Now we proceed to find a lower bound for
$$\mathbbm{1}_{\mathsf{A}_{j-1}}\cdot \mathbb{E}_{\free}^{j,j,(\ell_j,r_j),f_j(\ell_j),f_j(r_j) }\left[\mathbbm{1}_{\mathsf{A}_j} \cdot \mathbbm{1}\{ g_{j-1}<\mathcal{J}_j\ \textup{on}\ [\ell_j,r_j] \}  \right].$$
Consider the event
\begin{align*}
\mathsf{D}_j:= &\big\{ \sup_{t\in [\ell_{j+1},  r_{j+1}]  } {\mathcal{J}}_j(t)\leq (2j-3/2 )(2k)^{-1}R^{-1} \big\} \cap \big\{ \inf_{t\in [\ell_{j} ,r_{j}]  }  {\mathcal{J}}_j(t)\geq (2j-2 )(2k)^{-1}R^{-1}   \big\}.
\end{align*}
It is straightforward to check that $\mathsf{D}_j\subset \mathsf{A}_j$. As $\mathsf{D}_j$ and $\mathsf{A}_{j-1}$ occur, $ {\mathcal{J}}_{j-1}<  {\mathcal{J}}_{j}$ over $[\ell_{j} ,r_{j}]$. Hence
\begin{align*}
\mathbbm{1}_{\mathsf{A}_j} \cdot \mathbbm{1}\{ g_{j-1}<\mathcal{J}_j\ \textup{on}\ [\ell_j,r_j]\} \geq \mathbbm{1}_{\mathsf{D}_j}\cdot\mathbbm{1}_{\mathsf{A}_{j-1}}.
\end{align*}
Consequently,
\begin{align*}
\mathbbm{1}_{\mathsf{A}_{j-1}}\cdot \tilde{\mathbb{E}} [\mathbbm{1}_{\mathsf{A}_j}\,|\,\Fext(\{j\}\times (\ell_j,r_j)) ]&\geq \mathbbm{1}_{\mathsf{A}_{j-1}} \mathbb{E}_{\free}^{j,j,(\ell_j,r_j),f_j(\ell_j),f_j(r_j) }\left[\mathbbm{1}_{\mathsf{A}_j} \cdot \mathbbm{1}\{ g_{j-1}<\mathcal{J}_j\ \textup{on}\ (\ell_j,r_j) \}   \right]\\
&\geq \mathbbm{1}_{\mathsf{A}_{j-1}} \mathbb{P}_{\free}^{j,j,(\ell_j,r_j),f_j(\ell_j),f_j(r_j) } (\mathsf{D}_j).
\end{align*}
Let
\begin{align*}
a_j&:=  \inf \left\{\mathbb{P}_{\free}^{j,j,(\ell_j,r_j),x,y } (\mathsf{D}_j)\,|\,R^{-1} \leq |x|,|y|\leq R \right\}.
\end{align*}
From Lemma~\ref{lem:compactcoupling}, $a_j$ is positive and depends only on $\alpha, k,L,R$. Together with the induction hypothesis, we deduce
\begin{align*}
\tilde{\mathbb{P}}\big( \cap_{i=1}^{j } \mathsf{A}_{i} \big)=&\tilde{\mathbb{E}} \bigg[\prod_{i=1}^{j-1}\mathbbm{1}_{\mathsf{A}_i}\cdot \tilde{\mathbb{E}} [\mathbbm{1}_{\mathsf{A}_j}\,|\,\Fext(\{j\}\times (\ell_j,r_j)) ] \bigg] \geq a_j \cdot \tilde{\mathbb{P}} \big( \cap_{i=1}^{j-1 } \mathsf{A}_{i} \big)
\geq \delta.
\end{align*}
This completes the induction argument and hence proves the desired result.
\end{proof}

We proceed to prove Lemma \ref{lem:spacing-k}. For $j\in [1,k]_{\mathbb{Z}}$, recall that
\begin{equation*}
\begin{split}
\mathsf{F}_j:=&\left\{  {\mathcal{J}}_j \in \left[ (2j-2)(2k)^{-1}  R^{-1} ,(2j-1)(2k)^{-1} R^{-1} \right]\ \textup{on}\ [\ell_{j+1} , r_{j+1}]  \right\} \\
 &\qquad\qquad\cap \left\{  {\mathcal{J}}_j \geq (2j-2)(2k)^{-1} R^{-1} \ \textup{on}\ [\ell_{j} ,r_{j}] \right\}.
\end{split}
\end{equation*}
The condition ${\mathcal{J}}_j \in [(2j-2)(2k)^{-1}R^{-1} ,(2j-1)(2k)^{-1}R^{-1} ]$ is what we want to achieve while the other one is necessary along the induction argument we perform.

\begin{lemma}\label{lem:spacing-k}
Fix $\alpha\geq 0$, $k\in\mathbb{N}$, $L\geq 1$ and $R\geq 1$. There exist a constant $\delta_6=\delta_6(\alpha, k,L,R)>0$ such that the following holds. For any $(f_1,f_2,\dots f_{k+1})\in \mathcal{G}(k,L,R)$, let $\tilde{\mathbb{P}}$ be the probability measure given in \eqref{def:J'}. Then we have  
 
\begin{align*}
\tilde{\mathbb{P}}\left(\cap_{j=1}^k \mathsf{F}_j \right)\geq \delta_6.
\end{align*}
\end{lemma}

\begin{proof}
To simplify the notation, we use $\delta$ to denote positive constants that depend only on $\alpha, k,L$ and $R$. The exact value of $\delta$ may change from line to line.\\[-0.3cm]

We run a resampling in a reversed order starting from the $k$th-layer and argue inductively. We start by showing a lower bound for  $\tilde{\mathbb{P}} \big(\cap_{j=1}^{k-1} \mathsf{A}_j\cap \mathsf{F}_k \big)$. \\[-0.3cm]

Let $[\mathfrak{l}_k, \mathfrak{r}_k]$ be a $\{k\}$-stopping domain such that
\begin{align*}
\mathfrak{l}_k:=&\sup \big\{ t_0\in [\ell_k ,\ell_{k+1}] \ |\  {\mathcal{J}}_{k}(t)>  (2k-3/2)(2k)^{-1}R^{-1} \ \textup{for all}\ t\in [\ell_k,t_0]\big\},\\
\mathfrak{r}_k:=&\inf\big\{t_0\in [r_{k+1},r_k] \ |\  {\mathcal{J}}_{k}(t)> (2k-3/2)(2k)^{-1}R^{-1}\ \textup{for all}\ t\in [t_0,r_k]\big\}.
\end{align*}
We set $\mathfrak{l}_k=\ell_k$ or $\mathfrak{r}_k=r_k$ if the set is empty respectively.\\[-0.3cm]

Consider the event
\begin{align*}
\mathsf{F}'_k:=\left\{  {\mathcal{J}}_k(\mathfrak{l}_k)= {\mathcal{J}}_k(\mathfrak{r}_k)= (2k-3/2)(2k)^{-1} R^{-1} \right\}.
\end{align*}
Because $f\in\mathcal{G}(k,L,R)$, $ {\mathcal{J}}_k(\ell_k), {\mathcal{J}}_k(r_k)\geq R^{-1}$. This implies $ \mathsf{A}_k\subset\mathsf{F}'_k$. In view of Lemma \ref{lem:lowerbound-k}, $$\tilde{\mathbb{P}} \left(\cap_{j=1}^{k-1} \mathsf{A}_j \cap \mathsf{F}'_k \right)\geq  \delta.$$ 
We would like to have $ {\mathcal{J}}_k$ stay in the preferable region $[(2k-2)(2k)^{-1}R^{-1} ,(2k-1)(2k)^{-1}R^{-1}] $ over $[\mathfrak{l}_k,\mathfrak{r}_k]$. Let
\begin{align*}
\mathsf{F}_k''\coloneqq\{{\mathcal{J}}_k \in [(2k-2)(2k)^{-1}R^{-1} ,(2k-1)(2k)^{-1}R^{-1}]\ \textup{on}\ [\mathfrak{l}_k,\mathfrak{r}_k ]\}.
\end{align*}
Note that the occurrence of $\mathsf{A}_{k-1}\cap\mathsf{F}''_{k}$ implies ${\mathcal{J}}_{k-1}$ and ${\mathcal{J}}_{k}$ are ordered. In other words, $\mathsf{A}_{k-1}\cap\mathsf{F}''_{k}\subset \{ {\mathcal{J}}_{k-1}< {\mathcal{J} }_k< f_{k+1}\ \textup{on}\  [\mathfrak{l}_k, \mathfrak{r}_k ]\}.$ Hence
\begin{align*}
 \mathbbm{1}\{  {\mathcal{J}}_{k-1}<  {\mathcal{J} }_k< f_{k+1}\ \textup{on}\  [\mathfrak{l}_k, \mathfrak{r}_k ]\}  \geq  \mathbbm{1}_{\mathsf{A}_{k-1} }\cdot \mathbbm{1}_{\mathsf{F}_{k}'' } . 
\end{align*} 
It follows that
\begin{align*}
&\quad\ \mathbbm{1}_{\mathsf{A}_{k-1}}\cdot \mathbbm{1}_{\mathsf{F}'_{k}}\cdot \tilde{\mathbb{E}} [\mathbbm{1}_{\mathsf{F}''_k}\,|\,\Fext(\{k\}\times (\mathfrak{l}_k,\mathfrak{r}_k))]\\
&=\mathbbm{1}_{\mathsf{A}_{k-1}}\cdot \mathbbm{1}_{\mathsf{F}'_{k}}\cdot\mathbb{P}^{k,k,(\mathfrak{l}_k ,\mathfrak{r}_k),x,y, {\mathcal{J}}_{k-1},f_{k+1} }_{(-L,L)}\left( \mathsf{F}_k''  \right) \\
& \geq \mathbbm{1}_{\mathsf{A}_{k-1}}\cdot \mathbbm{1}_{\mathsf{F}'_{k}}\cdot{\mathbb{E}^{k,k,(\mathfrak{l}_k ,\mathfrak{r}_k),x,y }_{\free}\left[\mathbbm{1}_{\mathsf{F}_k''} \cdot \mathbbm{1}\{  {\mathcal{J}}_{k-1}<  {\mathcal{J} }_k< f_{k+1}\ \textup{on}\  [\mathfrak{l}_k, \mathfrak{r}_k ]\}  \right] } \\
&\geq \mathbbm{1}_{\mathsf{A}_{k-1}}\cdot \mathbbm{1}_{\mathsf{F}'_{k}} \cdot \mathbb{P}^{k,k,(\mathfrak{l}_k ,\mathfrak{r}_k),x,y   }_{\free} (\mathsf{F}''_k).
\end{align*}
Here $x=y=(2k-3/2)(2k)^{-1}R^{-1}$. Take 
\begin{align*}
b_k\coloneqq \inf_{\ell, r}\left\{ \mathbb{P}^{k,k,(\ell ,r),x,y   }_{\free} \left(\mathcal{J} _k \in [ (2k-2)(2k)^{-1} R^{-1}, (2k-1)(2k)^{-1} R^{-1}] \right) \right\}.
\end{align*}
The infimum is taken over all $\ell\in [\ell_k,\ell_{k+1}]$ and $ r\in [r_{k+1},r_k]$. This implies 
$$\mathbbm{1}_{\mathsf{A}_{k-1}}\cdot \mathbbm{1}_{\mathsf{F}'_{k}}\cdot \tilde{\mathbb{E}} [\mathbbm{1}_{\mathsf{F}''_k}\,|\,\Fext(\{k\}\times (\mathfrak{l}_k,\mathfrak{r}_k))]\geq b_k \mathbbm{1}_{\mathsf{A}_{k-1}}\cdot \mathbbm{1}_{\mathsf{F}'_{k}}.$$
Together with $\cap_{j=1}^{k-1} \mathsf{A}_j \cap \mathsf{F}'_k\cap\mathsf{F}_k''\subset \cap_{j=1}^{k-1} \mathsf{A}_j \cap \mathsf{F}_k,$ we deduce
\begin{align*}
\tilde{\mathbb{P}}  \left(  \cap_{j=1}^{k-1} \mathsf{A}_j \cap   \mathsf{F}_k\right)\geq & \tilde{\mathbb{P}}   \left(  \cap_{j=1}^{k-1} \mathsf{A}_j \cap \mathsf{F}'_k\cap \mathsf{F}''_k\right)= \tilde{\mathbb{E}} \bigg[\prod_{j=1}^{k-1}\mathbbm{1}_{\mathsf{A}_j}\cdot\mathbbm{1}_{\mathsf{F}'_{k }}\cdot \tilde{\mathbb{E}} [\mathbbm{1}_{\mathsf{F}''_k}\,|\,\Fext(\{k\}\times (\mathfrak{l}_k,\mathfrak{r}_k))] \bigg]\\
\geq & b_k \cdot\tilde{\mathbb{P}} \left(  \cap_{j=1}^{k-1} \mathsf{A}_j \cap \mathsf{F}'_k \right) > \delta .
\end{align*}
In the last inequality, we used Lemma~\ref{lem:compactcoupling} to obtain the positivity of $b_k$.

\vspace{0.2cm}

We next proceed by a {\bf reversed} induction. Assume for some $1\leq i\leq k-1$, we have 
\begin{align*}
\tilde{\mathbb{P}} \left(\cap_{j=1}^{i} \mathsf{A}_j\cap \cap_{j=i+1}^{k} \mathsf{F}_j \right)\geq \delta.
\end{align*}
We aim to show that for a smaller $\delta>0$, it holds that 
\begin{align*}
\tilde{\mathbb{P}}\left(\cap_{j=1}^{i-1} \mathsf{A}_j\cap \cap_{j=i}^{k} \mathsf{F}_j \right)\geq  \delta.
\end{align*}
Here we adopt the convention that $\cap_{j=1}^{0} \mathsf{A}_j$ means the total probability space. \\[-0.3cm]

Let $[\mathfrak{l}_{i}, \mathfrak{r}_{i}]$ be a $\{i\}$-stopping domain such that
\begin{align*}
\mathfrak{l}_{i}:=&\sup\big\{ t_0\in [\ell_i ,\ell_{i+1}] \ |\  {\mathcal{J}}_{i}(t)>  (2i-3/2)(2k)^{-1} R^{-1}\ \textup{for all}\ t\in [\ell_i,x_0]\big\},\\
\mathfrak{r}_{i}:=&\inf\big\{ t_0\in [r_{i+1} ,r_{i }] \ |\  {\mathcal{J}}_{i}(t)  > (2i-3/2)(2k)^{-1}\cdot R^{-1}\ \textup{for all}\ t\in [ t_0,r_i]\big\}.
\end{align*}
We set $\mathfrak{l}_k=\ell_i$ or $\mathfrak{r}_i=r_i$ if the set is empty respectively. Consider the event
\begin{align*}
\mathsf{F}'_i:=\left\{  {\mathcal{J}}_i(\mathfrak{l}_i)={\mathcal{J}}_i(\mathfrak{r}_i)=(2i-3/2)(2k)^{-1} R^{-1}  \right\}.
\end{align*}
Because $f\in\mathcal{G}(k,L,R)$, $ {\mathcal{J}}_i(\ell_i), {\mathcal{J}}_i(r_i)\geq R^{-1} $. This implies $ \mathsf{A}_i\subset\mathsf{F}'_i$. 

We would like to have $ {\mathcal{J}}_i$ stay in the preferable region $[(2i-2)(2k)^{-1}R^{-1},(2i-1)(2k)^{-1}R^{-1}]$ over $[\mathfrak{l}_i,\mathfrak{r}_i]$.
Let
\begin{align*}
\mathsf{F}_i''\coloneqq \left\{ \mathcal{J}_i \in [(2i-2)(2k)^{-1}R^{-1},(2i-1)(2k)^{-1}R^{-1}]\ \textup{on}\ [\mathfrak{l}_i,\mathfrak{r}_i ]\right\}.
\end{align*}
Note that the occurrence of $\mathsf{A}_{i-1}\cap\mathsf{F}''_{i}\cap\mathsf{F}_{i+1}$ implies ordering between $ {\mathcal{J}}_{i-1}$, $ {\mathcal{J}}_{i}$ and $ {\mathcal{J}}_{i+1,f}$, i.e. $\mathsf{A}_{i-1}\cap\mathsf{F}''_{i}\cap\mathsf{F}_{i+1}\subset \{  {\mathcal{J}}_{i-1}<  {\mathcal{J} }_i< {\mathcal{J}}_{i+1,f} \ \textup{in}\  [\mathfrak{l}_i, \mathfrak{r}_i ]\}.$ Hence
\begin{align*}
\mathbbm{1}\{  {\mathcal{J}}_{i-1}<  {\mathcal{J} }_i< {\mathcal{J}}_{i+1,f} \ \textup{on}\  [\mathfrak{l}_i, \mathfrak{r}_i ]\}\geq \mathbbm{1}_{\mathsf{A}_{i-1} }\cdot \mathbbm{1}_{\mathsf{F}_{i}'' }.
\end{align*} 
As a result,
\begin{align*}
&\quad\ \mathbbm{1}_{\mathsf{A}_{i-1}}\cdot \mathbbm{1}_{\mathsf{F}'_{i}}\cdot\mathbbm{1}_{\mathsf{F}_{i+1}}\cdot \tilde{\mathbb{E}} [\mathbbm{1}_{\mathsf{F}''_i}\,|\,\Fext(\{i\}\times (\mathfrak{l}_i,\mathfrak{r}_i))]\\
&=\mathbbm{1}_{\mathsf{A}_{i-1}}\cdot \mathbbm{1}_{\mathsf{F}'_{i}}\cdot\mathbbm{1}_{\mathsf{F}_{i+1}}\cdot\mathbb{P}^{i,i,(\mathfrak{l}_i ,\mathfrak{r}_i),x,y, \mathcal{J}_{i-1}, \mathcal{J}_{f,i+1}  }_{ (-L,L)}\left( \mathsf{F}_i''  \right) \\
& \geq \mathbbm{1}_{\mathsf{A}_{i-1}}\cdot \mathbbm{1}_{\mathsf{F}'_{i}}\cdot\mathbbm{1}_{\mathsf{F}_{i+1}}\cdot{\mathbb{E}^{i,i,(\mathfrak{l}_i ,\mathfrak{r}_i),x,y }_{\free}\left[\mathbbm{1}_{\mathsf{F}_i''}\cdot \mathbbm{1}\{ \mathcal{J}_{i-1}< \mathcal{J}_i<\mathcal{J}_{i+1,f} \ \textup{in}\  [\mathfrak{l}_i, \mathfrak{r}_i ]\}  \right] } \\
&\geq \mathbbm{1}_{\mathsf{A}_{i-1}}\cdot \mathbbm{1}_{\mathsf{F}'_{i}}\cdot\mathbbm{1}_{\mathsf{F}_{i+1}}   \cdot \mathbb{P}^{i,i,(\mathfrak{l}_i ,\mathfrak{r}_i),x,y   }_{\free} (\mathsf{F}''_i).
\end{align*}
Here $x=y=(2i-3/2)(2k)^{-1}R^{-1}$. Let \begin{align*}
b_i\coloneqq \inf_{\ell, r}\left\{ \mathbb{P}^{i,i,(\ell ,r),x,y   }_{\free} \left(\mathcal{J}_i \in [(2i-2)(2k)^{-1}R^{-1} ,(2i-1)(2k)^{-1}R^{-1} ] \right)\right\}.
\end{align*}
The infimum is taken over all $\ell\in [\ell_i,\ell_{i+1}]$ and $r\in [r_{i+1},r_i]$. Then we have
$$\mathbbm{1}_{\mathsf{A}_{i-1}}\cdot \mathbbm{1}_{\mathsf{F}'_{i}}\cdot\mathbbm{1}_{\mathsf{F}_{i+1}}\cdot \tilde{\mathbb{E}} [\mathbbm{1}_{\mathsf{F}''_i}\,|\,\Fext(\{i\}\times (\mathfrak{l}_i,\mathfrak{r}_i))]\geq  b_i \mathbbm{1}_{\mathsf{A}_{i-1}}\cdot \mathbbm{1}_{\mathsf{F}'_{i}}\cdot\mathbbm{1}_{\mathsf{F}_{i+1}}.$$ Together with $ \mathsf{F}'_i\cap\mathsf{F}_i''\subset   \mathsf{F}_i,$ we have
\begin{align*}
\tilde{\mathbb{P}} \left(\cap_{j=1}^{i-1} \mathsf{A}_j\cap \cap_{j=i}^{k} \mathsf{F}_j \right)\geq & \tilde{\mathbb{P}} \left(\cap_{j=1}^{i-1} \mathsf{A}_j  \cap \cap_{j=i}^{k} \mathsf{F}_j \cap\mathsf{F}_i'\cap\mathsf{F}_i''\right)\\
=& \tilde{\mathbb{E}} \bigg[\prod_{j=1}^{i-1}\mathbbm{1}_{\mathsf{A}_j}\cdot  \prod_{j=i+1}^k \mathbbm{1}_{\mathsf{F}_j}\cdot\mathbbm{1}_{\mathsf{F}'_{i }} \cdot \tilde{\mathbb{E}} [\mathbbm{1}_{\mathsf{F}''_i}\,|\,\Fext(\{i\}\times (\mathfrak{l}_i,\mathfrak{r}_i))] \bigg]\\
\geq & b_i \cdot \tilde{\mathbb{P}} \left(\cap_{j=1}^{i-1} \mathsf{A}_j \cap \cap_{j=i+1}^{k} \mathsf{F}_j \cap\mathsf{F}_i' \right)\\
\geq & b_i \cdot \tilde{\mathbb{P}} \left(\cap_{j=1}^{i} \mathsf{A}_j \cap \cap_{j=i+1}^{k} \mathsf{F}_j  \right) \geq \delta.
\end{align*}
In the second to last inequality, we used $\mathsf{A}_i\subset \mathsf{F}'_i$. In the last inequality, we used the induction hypothesis and Lemma~\ref{lem:compactcoupling}. The induction argument is finished and this finishes the argument.
\end{proof}

\section{Proof of the main theorem}\label{sec:proofmainthm}
In this section we present the proof of our main theorem, Theorem \ref{thm:main}. That is, we show that the scaled line ensembles $\mathcal{L}^{N,\alpha}$ defined in \eqref{eqn:edgeLE2} is tight as $N$ varies. Moreover, any subsequential limit also enjoys the squared Bessel Gibbs property.\\[-0.3cm]

For the tightness of $\mathcal{L}^{N,\alpha}$ in the locally uniform topology, we use a criterion from \cite[Theorem 7.3]{Bilconv}. Similar to the Arzel\`a–Ascoli theorem, one needs to check the one-point tightness (as real-valued random variables) and control the modulus of continuity. See Lemma~\ref{lem:tightcriterion} for more details. For us, one-point tightness follows from Theorem~\ref{thm:1} and we focus on the modulus of continuity. The idea is to use the squared $\alpha$-Bessel Gibbs property, together with the lower bound on the random normalizing constants in Proposition~\ref{pro:Z}, to show that $\mathcal{L}^{N,\alpha}$ locally behave like independent squared $\alpha$-Bessel bridges, whose modulus of continuity is controlled by Lemma~\ref{lem:Besselmodulus}.


To show any subsequential limit has the Bessel Gibbs property, we adopt the framework introduced in \cite{CH14}. Let us briefly explain the main idea. Let $(a,b)\subset\mathbb{R}$, $x,y\in[0,\infty) $ and $f(t),g(t)$ be two continuous functions defined on $[a,b]$. We assume $f(t)<g(t)$, $f(a)<x<g(a)$ and $f(b)<y<g(b)$. Let $\mathcal{Q}_*(t)$ be a squared $\alpha$-Bessel bridge on $[a,b]$ which has entrance and exit values $x$ and $y$, and is conditioned not to intersect with $f(t)$ and $g(t)$. A natural way to sample $\mathcal{Q}_*(t)$ is to consider countable independent squared $\alpha$-Bessel bridges $\mathcal{Q}_1(t),\mathcal{Q}_2(t),\dots $ with the same entrance and exit value. Let $\ell$ be the (random) minimum integer that $f(t)<\mathcal{Q}_\ell(t)<g(t)$. Then $\mathcal{Q}_{\ell}(t)$ has the same distribution as $\mathcal{Q}_*(t)$. Compared to $\mathcal{Q}_*(t)$, $\mathcal{Q}_\ell(t)$ is more tractable when the boundary data $(x,y,f,g)$ change. This point of view allows us to prove the limiting squared $\alpha$-Bessel Gibbs property. Once again, we will denote $\mathcal{L}^{N,\alpha}$ by $\mathcal{L}^N$ in the rest of the section. 

\subsection{Proof of tightness}
The argument relies on a tightness criterion for continuous random functions. Fix $k\in\mathbb{N}$ and $[a,b]\subset\mathbb{R}$. Recall that the modulus of continuity of multiple functions is defined in \eqref{def:modu-2}. Define the set
\begin{equation*}
U_{[a,b],k}\big(\rho,r\big)\coloneqq  \Big\{\mathcal{J}\in C([1,k]_{\mathbb{Z}}\times [a,b],\mathbb{R})\,\big|\,  \omega_{[a,b],k}\big(\mathcal{J} ,r\big)\leq \rho\Big\}.
\end{equation*}
The following tightness criterion is an immediate generalization of \cite[Theorem 7.3]{Bilconv}.

\begin{lemma}\label{lem:tightcriterion}
A sequence $\mathbb{P}_N$ of probability measures on $C([1,k]_{\mathbb{Z}}\times [a,b],\mathbb{R})$ is tight if the following two conditions are met. 
\begin{enumerate}[label=(\roman*)]
\item There exists $t_0\in [a,b]$ such that the one-point distribution of $\mathcal{J}_i(t_0)$ is tight for all $i\in [1,k]_{\mathbb{Z}}$
\item For each $\rho>0$ and $\eta>0$, there exist $r_0>0$ and an integer $N_0$ such that for all $N\geq N_0$, it holds that
\begin{equation*}
\mathbb{P}_N \big(U_{[a,b],k}(\rho,r_0)\big)\geq 1-\eta.
\end{equation*}
\end{enumerate}
\end{lemma}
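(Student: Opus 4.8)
The plan is to reduce the statement to the classical criterion of \cite[Theorem 7.3]{Bilconv} on $C([a,b],\mathbb{R})$, exploiting the product structure of the space. First I would observe that $C([1,k]_{\mathbb{Z}}\times [a,b],\mathbb{R})$ is canonically homeomorphic to the finite product $\prod_{i=1}^{k}C([a,b],\mathbb{R})$ with the product topology, a Polish space; writing $\pi_i$ for the projection onto the $i$-th coordinate, any Borel probability measure on this product is tight as soon as each marginal $(\pi_i)_*\mathbb{P}_N$ is tight, since a finite product of compact sets is compact. So it suffices to produce, for every $\eta>0$, a single compact set $K\subset C([1,k]_{\mathbb{Z}}\times [a,b],\mathbb{R})$ with $\mathbb{P}_N(K)\geq 1-\eta$ for all $N$.

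Next I would build $K$ through an Arzel\`a--Ascoli description. For $m\in\mathbb{N}$, apply hypothesis (ii) with $\rho=1/m$ and with $\eta$ replaced by $\eta\,2^{-m-2}$ to obtain $r_m>0$ and $N_m$ so that $\mathbb{P}_N\big(U_{[a,b],k}(1/m,r_m)\big)\geq 1-\eta\,2^{-m-2}$ for all $N\geq N_m$. Since each individual $\mathbb{P}_N$ is supported on continuous functions, $\omega_{[a,b],k}(f,r)\downarrow 0$ as $r\downarrow 0$ pointwise in $f$, so dominated convergence lets us shrink $r_m$ further (only the finitely many indices $N<N_m$ need attention) to arrange $\mathbb{P}_N\big(U_{[a,b],k}(1/m,r_m)\big)\geq 1-\eta\,2^{-m-2}$ for every $N\geq 1$. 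Likewise, by hypothesis (i) together with the tightness of the finitely many individual one-point laws not covered by it, choose $M>0$ with $\mathbb{P}_N\big(\max_{1\leq i\leq k}|f_i(t_0)|\leq M\big)\geq 1-\eta/2$ for all $N$. Then set
\begin{equation*}
K:=\Big\{f\in C([1,k]_{\mathbb{Z}}\times [a,b],\mathbb{R})\ :\ \max_{1\leq i\leq k}|f_i(t_0)|\leq M,\ \ \omega_{[a,b],k}(f,r_m)\leq 1/m\ \textup{for all }m\in\mathbb{N}\Big\}.
\end{equation*}

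Finally I would verify that $K$ is compact with the right mass. Each projection $\pi_i(K)\subset C([a,b],\mathbb{R})$ is bounded at $t_0$ and uniformly equicontinuous (its modulus of continuity at scale $r_m$ is at most $1/m$), hence uniformly bounded on the compact interval $[a,b]$ and, by Arzel\`a--Ascoli, relatively compact; as $K$ is a closed subset of the compact product $\prod_{i=1}^{k}\overline{\pi_i(K)}$, it is compact. A union bound then gives, for every $N$,
\begin{equation*}
\mathbb{P}_N(K^{c})\leq \mathbb{P}_N\Big(\max_{1\leq i\leq k}|f_i(t_0)|> M\Big)+\sum_{m=1}^{\infty}\mathbb{P}_N\big(\omega_{[a,b],k}(f,r_m)>1/m\big)\leq \frac{\eta}{2}+\sum_{m=1}^{\infty}\eta\,2^{-m-2}\leq \eta,
\end{equation*}
establishing tightness. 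The only mildly delicate point, and the one I would treat most carefully, is the passage from the hypotheses (which control only $N\geq N_0$) to bounds valid for all $N$; this is handled, as above, by the individual tightness of each $\mathbb{P}_N$ on the Polish space together with the almost-sure vanishing of the modulus of continuity of continuous sample paths as $r\to 0$.
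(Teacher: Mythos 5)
Your proof is correct, and it is essentially a detailed unpacking of the argument behind \cite[Theorem 7.3]{Bilconv}, which the paper simply cites as giving the result by ``immediate generalization.'' In fact your own opening observation already yields a shorter route: since $C([1,k]_{\mathbb{Z}}\times[a,b],\mathbb{R})\cong\prod_{i=1}^k C([a,b],\mathbb{R})$ and tightness of each marginal $(\pi_i)_*\mathbb{P}_N$ implies tightness of the joint laws, one can simply verify that hypotheses (i) and (ii) imply the two hypotheses of Billingsley's Theorem~7.3 for each fixed $i$ (the modulus of continuity of $f_i$ is dominated by $\omega_{[a,b],k}(f,\cdot)$) and invoke that theorem coordinatewise, rather than rebuilding the Arzel\`a--Ascoli compactness argument from scratch. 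Two very minor points in your write-up: the passage $\mathbb{P}_N(\omega_{[a,b],k}(f,r)>1/m)\to 0$ as $r\downarrow 0$ for a fixed $N$ is continuity of measure along the decreasing family of events (whose intersection is a null set of discontinuous paths), not dominated convergence; and in hypothesis (i) the family is already assumed tight over \emph{all} $N$, so the remark about ``finitely many individual one-point laws not covered by it'' is unnecessary. Neither affects correctness.
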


\vspace{0.2cm}

We apply this tightness criterion to $\cL^N$. More precisely, we seek to prove that for any $k\in \mathbb{N}$ and $L\geq 1 $, the restriction of $\cL^N$ to  $[1,k]_{\mathbb{Z}}\times [-L,L]$ (i.e. $\{\cL^N_i(t)| 1\leq i\leq k, t\in[-L, L]\}$) is tight as $N$ varies. One-point tightness follows from Theorem~\ref{thm:1}, hence (i) holds. It remains to control the modulus of continuity, i.e. to verify (ii).\\[-0.3cm]

Starting from now, we fix $k\in \mathbb{N}$ and $L\geq 1 $.  Write $\mathbb{P}_N$ for the law of $\mathcal{L}^N$ and denote event $$\mathsf{U}^N(\rho,r)\coloneqq\left\{\mathcal{L}^N\big|_{[1,k]_{\mathbb{Z}}\times [-L,L]}\in  {U}_{[-L,L],k}(\rho,r)\right\}.$$  
We aim to verify that, for all $\rho, \eta>0$,  there exist $r_0$ and $N_0$ such that for $N\geq N_0$, it holds that
	\begin{equation}\label{eq:P_N}
	\mathbb{P}_N\big(\mathsf{U}^N(\rho,r_0) \big)\geq 1-\eta.
	\end{equation}
	
For $R>0$, we define the event
	\begin{equation*}
	\mathsf{S}^N(R)=\cap_{i=1}^k \big\{ \mathcal{L}^N_i(\pm L)\in [0,R] \big\}.
	\end{equation*}
Let $Z_N$ be the shorthand of the normalizing constant $Z^{1,k,(-L,L),\vec{x},\vec{y},0,g}$, with ${x}_i=\cL^N_i(-L)$, ${y}_i=\cL^N_i(L)$ and $g=\cL^N_{k+1}\big|_{[-L,L]}$. See Definition~\ref{def:Bessel bridge LE} for the definition of $Z^{1,k,(-L,L),\vec{x},\vec{y},0,g}$.
\begin{lemma}\label{lem:u(-T,T)}
Fix $\alpha\geq 0$, $k\in\mathbb{N}$ and $\rho,\eta,\delta,R, L>0$. There exists $r_0$ depending on $\alpha,k,\rho,\eta,\delta,R$ and $L$  such that 
	\begin{equation*}
\mathbbm{1}\{Z_N\geq \delta\}\cdot\mathbbm{1}_{\mathsf{S}^N(R)}\cdot\mathbb{E}_N\big[\mathbbm{1}_{\mathsf{U}^N(\rho,r_0)}\big\vert \Fext\left([1,k]_{\mathbb{Z}}\times(-L,L)\right)\big] \geq (1-\eta/2)\cdot  \mathbbm{1}\{Z_N\geq \delta\}\cdot\mathbbm{1}_{\mathsf{S}^N(R)}.
	\end{equation*}
	\end{lemma}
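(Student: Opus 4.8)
The plan is to use the squared Bessel Gibbs property to replace $\mathcal{L}^N\vert_{[1,k]_{\mathbb{Z}}\times(-L,L)}$ by a non-intersecting squared Bessel bridge ensemble with the observed boundary data, and then reduce the modulus-of-continuity estimate to the one for \emph{free} Bessel bridges via the lower bound $Z_N\geq\delta$ on the acceptance probability. First I would condition on $\Fext\big([1,k]_{\mathbb{Z}}\times(-L,L)\big)$. On the event $\{Z_N\geq\delta\}\cap\mathsf{S}^N(R)$, the conditional law of $\big(\mathcal{L}^N_1,\dots,\mathcal{L}^N_k\big)$ restricted to $(-L,L)$ is $\mathbb{P}^{1,k,(-L,L),\vec{x},\vec{y},0,\mathcal{L}^N_{k+1}}$ with $\vec{x}_i=\mathcal{L}^N_i(-L)$, $\vec{y}_i=\mathcal{L}^N_i(L)$, all lying in $[0,R]$. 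By the Radon–Nikodym relation \eqref{eqn:RN},
\begin{align*}
\mathbb{P}^{1,k,(-L,L),\vec{x},\vec{y},0,\mathcal{L}^N_{k+1}}\big(\omega_{[-L,L],k}(\cdot,r)>\rho\big)
&=\frac{\mathbb{E}^{1,k,(-L,L),\vec{x},\vec{y}}_{\free}\big[\mathbbm{1}\{f<\cL_1<\dots<\cL_k<g\}\,\mathbbm{1}\{\omega_{[-L,L],k}>\rho\}\big]}{Z_N}\\
&\leq \frac{1}{\delta}\,\mathbb{P}^{1,k,(-L,L),\vec{x},\vec{y}}_{\free}\big(\omega_{[-L,L],k}(\cdot,r)>\rho\big)
\end{align*}
on the relevant event, where $f\equiv 0$ and $g=\mathcal{L}^N_{k+1}$.

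Next, under $\mathbb{P}^{1,k,(-L,L),\vec{x},\vec{y}}_{\free}$ the $k$ curves are independent squared Bessel bridges with boundary values in $[0,R]$, so by the union bound
\[
\mathbb{P}^{1,k,(-L,L),\vec{x},\vec{y}}_{\free}\big(\omega_{[-L,L],k}(\cdot,r)>\rho\big)
\leq \sum_{i=1}^{k}\mathbb{P}^{1,1,(-L,L),x_i,y_i}_{\free}\Big(\sup_{\substack{-L\leq s,t\leq L\\|s-t|<r}}|Be(t)-Be(s)|>\rho\Big).
\]
Now I invoke Lemma~\ref{lem:Besselmodulus} with parameters $R$, $\rho$, and $\eta':=\tfrac{\eta\delta}{2k}$: there exists $r_0=r_0(k,\rho,\eta,\delta,R,L)>0$ such that each summand is at most $\eta'$, hence the free-bridge probability is at most $k\eta'=\tfrac{\eta\delta}{2}$. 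Combining the two displays gives, on $\{Z_N\geq\delta\}\cap\mathsf{S}^N(R)$,
\[
\mathbb{P}^{1,k,(-L,L),\vec{x},\vec{y},0,\mathcal{L}^N_{k+1}}\big(\omega_{[-L,L],k}(\cdot,r_0)>\rho\big)\leq \frac{1}{\delta}\cdot\frac{\eta\delta}{2}=\frac{\eta}{2}.
\]

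Finally I would conclude by the tower property. Writing $\mathbbm{1}\{Z_N\geq\delta\}\cdot\mathbbm{1}_{\mathsf{S}^N(R)}$ as an $\Fext$-measurable indicator (note $Z_N$ and $\mathsf{S}^N(R)$ depend only on the boundary data $\vec{x},\vec{y},\mathcal{L}^N_{k+1}$, hence are $\Fext$-measurable), the Gibbs property \eqref{Gibbs-algebra} applied to $F=\mathbbm{1}\{\omega_{[-L,L],k}(\cdot,r_0)\leq\rho\}$ yields
\[
\mathbbm{1}\{Z_N\geq\delta\}\cdot\mathbbm{1}_{\mathsf{S}^N(R)}\cdot\mathbb{P}_N\big(\mathsf{U}^N(\rho,r_0)\,\big|\,\Fext\big)
=\mathbbm{1}\{Z_N\geq\delta\}\cdot\mathbbm{1}_{\mathsf{S}^N(R)}\cdot\mathbb{P}^{1,k,(-L,L),\vec{x},\vec{y},0,\mathcal{L}^N_{k+1}}\big(\omega_{[-L,L],k}(\cdot,r_0)\leq\rho\big),
\]
and the above bound shows the right-hand side is $\geq(1-\eta/2)\,\mathbbm{1}\{Z_N\geq\delta\}\cdot\mathbbm{1}_{\mathsf{S}^N(R)}$; taking conditional expectation over the $\Fext$-measurable factor gives the claim. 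The only subtle point — and the one I would be most careful about — is making sure the indicator $\mathbbm{1}\{Z_N\geq\delta\}\cdot\mathbbm{1}_{\mathsf{S}^N(R)}$ is genuinely $\Fext\big([1,k]_{\mathbb{Z}}\times(-L,L)\big)$-measurable so that it can be pulled inside the conditional expectation, and that the boundary data fed into Lemma~\ref{lem:Besselmodulus} are uniformly in $[0,R]$ precisely on $\mathsf{S}^N(R)$; there is no real ``hard part'' here beyond this bookkeeping, since all the analytic work is delegated to Lemma~\ref{lem:Besselmodulus} and Proposition~\ref{pro:Z}.
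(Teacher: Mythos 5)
Your proof is correct and follows essentially the same route as the paper: apply Lemma~\ref{lem:Besselmodulus} (with a union bound over the $k$ independent bridges and $\eta'=\eta\delta/(2k)$) to make the free-measure modulus-of-continuity probability at most $\eta\delta/2$, then divide by $Z_N\geq\delta$ via the Radon--Nikodym relation \eqref{eqn:RN}. One small remark: the final tower-property paragraph is superfluous for this lemma, since the statement is already phrased directly in terms of ${\bf P}_N=\mathbb{P}^{1,k,(-L,L),\vec{x},\vec{y},0,\mathcal{L}^N_{k+1}}$ rather than the law of $\mathcal{L}^N$; the step of pulling the $\Fext$-measurable indicators through the conditional expectation belongs to the subsequent proof of Theorem~\ref{thm:main}(i), not to Lemma~\ref{lem:u(-T,T)} itself.
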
 
	
\begin{proof}
The proof is a combination of Lemma \ref{lem:Besselmodulus} and the Gibbs property Proposition~\ref{pro:GibbsforL}. Let $\vec{x},\vec{y}\in\mathbb{R}^k$ be vectors with $x_i,y_i\in [0,R]$.  Lemma \ref{lem:Besselmodulus} implies for $r_0$ small enough, it holds that
	\begin{align*}
	\mathbb{P}^{1,k,(-L,L),\vec{x},\vec{y}}_{\free}(\mathsf{U}^N(\rho,r_0))\geq 1-\delta\eta/2.
	\end{align*}
 The Radon-Nykodym relation in \eqref{eqn:RN} then implies
 \begin{equation*}
\mathbbm{1}\{Z_N\geq \delta\}\cdot\mathbbm{1}_{\mathsf{S}^N(R)}\cdot\mathbb{E}_N\big[1-\mathbbm{1}_{\mathsf{U}^N(\rho,r_0)}\big\vert \Fext\left([1,k]_{\mathbb{Z}}\times(-L,L)\right)\big] \leq 2^{-1}\eta \cdot  \mathbbm{1}\{Z_N\geq \delta\}\cdot\mathbbm{1}_{\mathsf{S}^N(R)}.
	\end{equation*}
The desired result then follows.  	\end{proof}

\begin{proof}[Proof of Theorem~\ref{thm:main}(i)] To prove \eqref{eq:P_N}, it is enough to verify the following statement. For any $\rho, \eta >0$, there exists $\delta, R, r_0>0$ and $N_0$ such that for all $N\geq N_0$, we have
	\begin{equation}\label{eq:W(rho,eta)}
	\mathbb{P}_N\Big(\mathsf{U}^N(\rho,r_0) \cap\{Z_N\geq \delta\}\cap \mathsf{S}^N(R) \Big)> 1-\eta.
	\end{equation}

Observe that the events $\{Z_N\geq \delta\}$ and $\mathsf{S}^N(R) $ are $\Fext([1,k]_{\mathbb{Z}}\times(-L,L))$-measurable. We can rewrite the left-hand side of (\ref{eq:W(rho,eta)}) as
\begin{equation}\label{eq.noleft}
	\mathbb{E}_N\Big[\mathbbm{1}\{Z_N\geq \delta\}\cdot\mathbbm{1}_{\mathsf{S}^N(R)}\cdot\mathbb{E}_N\big[\mathbbm{1}_{\mathsf{U}^N(\rho,r_0)}\big\vert \Fext\left([1,k]_{\mathbb{Z}}\times(-L,L)\right)\big]\Big].
	\end{equation}
By choosing $r_0$ to be the constant in Lemma~\ref{lem:u(-T,T)}. From Lemma \ref{lem:u(-T,T)},  
	\begin{equation*}
	\eqref{eq.noleft} \geq (1-\eta/2)\mathbb{P}_N[ \{Z_N\geq \delta\}\cap   \mathsf{S}^N(R) ].
	\end{equation*}
	
Let $\delta=\delta_1(\alpha, k,L,\eta/4)$ and $N_6=N_6(\alpha,k,L,\eta/4)$ be the constants in Proposition \ref{pro:Z}. By Proposition \ref{pro:Z}, for $N\geq N_6$, it holds that $\mathbb{P}_N(\{Z_N<\delta\})\leq \eta/4$. Let $R=R_1(\alpha,\eta/4,k,2L)$ and $N_1=N_1(\alpha,\eta/4,k,L)$ be the constants in Proposition \ref{pro:sup}. Proposition \ref{pro:sup} implies that for $N\geq N_1$, $\mathbb{P}_N(\mathsf{S}^N(R))\geq 1-\eta/4.$ This implies that
	\begin{equation}\label{eq:ZSM}
	\mathbb{P}_N(\{Z_N\geq \delta\}\cap \mathsf{S}^N(R))\geq 1-\eta/2.
	\end{equation}
We conclude that
\[
\eqref{eq.noleft} \geq (1-\eta/2)^2 > 1-\eta,
\]
which completes the proof of the inequality in \eqref{eq:W(rho,eta)}. The desired tightness then follows.
\end{proof}

\subsection{Proof of limiting squared Bessel Gibbs property} In this section we seek to prove Theorem \ref{thm:main}(ii), i.e., any subsequential limit of $\mathcal{L}^N$ enjoys the squared $\alpha$-Bessel Gibbs property. Suppose $\mathcal{L}^{\infty}$ is a subsequential limit of $\mathcal{L}^N$. We abuse the notation and assume that $\mathcal{L}^N$ converges weakly to $\mathcal{L}^{\infty}$. We start by showing that $\mathcal{L}^\infty$ is strictly ordered with probability $1$.
\begin{lemma}\label{lem:7.3}
Fix $\alpha\geq 0$, $k\in \mathbb{N}$ and $L>0$. For any $\varepsilon>0$, there exists $\rho=\rho(\alpha,k,L,\varepsilon)>0$ such that 
\begin{align*}
\mathbb{P}\left(\inf_{t\in [-L,L]} (\mathcal{L}^\infty_{k+1}(t)-\mathcal{L}^\infty_{k}(t))<\rho \right)\leq  \varepsilon.
\end{align*}
\end{lemma}
\begin{proof}
Fix $\varepsilon>0$. Let $R_1=R_1(\alpha,8^{-1}\varepsilon,k+1,2L)$ be given in Proposition~\ref{pro:sup} and $r_2=r_2(\alpha,8^{-1}\varepsilon)$ be given in Lemma~\ref{lem:oneptL}. Set $R\coloneqq \max(R_1,r_2^{-1})$ and
\begin{equation*}
	\mathsf{S}^N \coloneqq \cap_{i=1}^{k+1} \big\{ \mathcal{L}^N_{k+1}(\pm L)\in [R^{-1},R] \big\}.
\end{equation*}
Denote by $\mathbb{P}_N$ the law of $\mathcal{L}^N$. From Proposition~\ref{pro:sup} and Lemma~\ref{lem:oneptL}, we have $\mathbb{P}_N(\mathsf{S}^N)\geq 1-2^{-1}\varepsilon$ for $N$ large enough. 

Let $Z_N$ be the shorthand of the normalizing constant $Z^{1,k+1,(-L,L),\vec{x},\vec{y},0,g}$, with ${x}_i=\cL^N_i(-L)$, ${y}_i=\cL^N_i(L)$ and $g=\cL^N_{k+2}\big|_{[-L,L]}$. Let $\delta_1=\delta_1(\alpha,k+1,L,4^{-1}\varepsilon)>0$ be the constant given in Proposition~\ref{pro:Z}. From Proposition~\ref{pro:Z}, we have $\mathbb{P}(Z_N<\delta_1)\leq 4^{-1}\varepsilon$ for $N$ large enough.

Set $\eta=4^{-1}\delta_1^{-1}\varepsilon$ and $\rho=\rho(\alpha,R,\eta,2L)$ be the constant given in Lemma~\ref{lem:twoBessel}. Denote $$\mathsf{E}^N\coloneqq \left\{\inf_{t\in [-L,L]} (\mathcal{L}^N_{k+1}(t)-\mathcal{L}^N_{k}(t))\in (-\rho, \rho)  \right\}.$$
We compute
\begin{align*}
\mathbb{P}_N(\mathsf{S}^N\cap\{Z_N\geq \delta_1\}\cap \mathsf{E}^N)=\mathbb{E}_N\left[\mathbbm{1}_{\mathsf{S}^N}\cdot\mathbbm{1}\{Z_N\geq \delta_1\} \mathbb{E}_N\left[ \mathbbm{1}_{\mathsf{E}_N} \,|\,\Fext([1,k+1]_{\mathbb{Z}} \times (-L,L)) \right] \right].
\end{align*}
From the Gibbs property, we have
\begin{align*}
 \mathbb{E}_N\left[ \mathbbm{1}_{\mathsf{E}_N} \,|\,\Fext([1,k+1]_{\mathbb{Z}} \times (-L,L)) \right]=&\mathbb{P}^{1,k+1,(-L,L),\vec{x},\vec{y},0,g}(\mathsf{E}_N)  \leq   Z_N^{-1}\mathbb{P}_{\free}^{1,k+1,(-L,L),\vec{x},\vec{y} }(\mathsf{E}_N).
\end{align*}
Here ${x}_i=\cL^N_i(-L)$, ${y}_i=\cL^N_i(L)$ and $g=\mathcal{L}^{N}_{k+2}\big|_{[-L,L]}$. From Lemma~\ref{lem:twoBessel}, we have
\begin{align*}
\mathbbm{1}_{\mathsf{S}^N}\cdot\mathbbm{1}\{Z_N\geq \delta_1\} \cdot Z_N^{-1}\mathbb{P}_{\free}^{1,k+1,(-L,L),\vec{x},\vec{y} }(\mathsf{E}_N)\leq \delta_1^{-1}\eta=4^{-1}\varepsilon.
\end{align*}	
Therefore,
\begin{align*}
\mathbb{P}_N(\mathsf{S}^N\cap\{Z_N\geq \delta_1\}\cap \mathsf{E}^N)\leq 4^{-1}\varepsilon.
\end{align*}
Combining the above, we conclude that for $N$ large enough,
\begin{align*}
\mathbb{P}_N(\mathsf{E}^N)\leq \mathbb{P}_N(\mathsf{S}^N\cap\{Z_N\geq \delta_1\}\cap \mathsf{E}^N)+\mathbb{P}((\mathsf{S}^N)^{\textup{c}})+\mathbb{P}_N(Z_N<\delta_1)\leq \varepsilon.
\end{align*}
Then the conclusion follows by taking $N$ to infinity.
\end{proof}

Fix an index $i\in\mathbb{N}$ and an interval $(a,b)\in \mathbb{R}$. We will show that the law of $\mathcal{L}^{\infty}$ is unchanged when one resamples the trajectory of $\mathcal{L}_i^{\infty}$ between $(a,b)$ according to a squared Bessel bridge which avoids $\mathcal{L}_{i-1}^{\infty}$ and $\mathcal{L}_{i+1}^{\infty}$. Note that this is equivalent to the squared Bessel Gibbs property for the $i$-th curve restricted to the interval $(a,b)$. The same argument could be easily generalized to take care of multiple curves resampling so we choose to illustrate the argument with the single curve resampling. See Figure \ref{figure:BGP} for an illustration.

Note that $C(\mathbb{N} \times \mathbb{R},\mathbb{R}) $ (with the topology given at the end of Section~\ref{sec:Introduction}) is separable due to the Stone–Weierstrass theorem. Hence the Skorohod representation theorem \cite[Theorem 6.7]{Bilconv} applies. There exists a probability space $(\Omega,\mathcal{B},\mathbb{P})$ on which $\mathcal{L}^N$ for $N\in\mathbb{N}\cup\{\infty\}$ are defined and almost surely $\mathcal{L}^N(\omega)\to\mathcal{L}^\infty(\omega)$ in the topology of $C(\mathbb{N} \times \mathbb{R},\mathbb{R})$. 

Now we take countable, independent copies of the squared Bessel bridges constructed in Proposition~\ref{lem:BEcoupling}. That is, for all $\ell\in\mathbb{N}$, we have squared Bessel bridges $\mathcal{Q}_\ell(x,y)(t)$ defined on $[a,b]$ with entrance and exit data $(x,y)$. We note that because $\mathcal{Q}_\ell(x,y)(t)$ depends continuously on $x,y$, it is also measurable in $x,y$. We define the $\ell$-th candidate of the resampling trajectory. For $N\in\mathbb{N}\cup \{\infty\}$, define
\begin{align*}
\mathcal{L}^{N,\ell}_i(t)\coloneqq  \left\{ \begin{array}{cc}
\mathcal{Q}_{\ell}(\mathcal{L}^{N}_i(a),\mathcal{L}^{N}_i(b))(t), & t\in [a,b],\\[0.1cm]
\mathcal{L}^{N}_i(t),\ & t\in (-\infty,a)\cup (b,\infty).
\end{array}  \right.	
\end{align*}

For $N\in\mathbb{N}\cup\{\infty\}$, we \textbf{accept} the candidate resampling $\mathcal{L}^{N,\ell}_i$ if it does not intersect $\mathcal{L}^{N}_{i-1}$ or $\mathcal{L}^{N}_{i+1}$ on $[a,b]$. For $N\in \mathbb{N}\cup\{\infty\}$, define $\ell(N)$ to be the minimum value of $\ell$ of which we accept $\mathcal{L}^{N,\ell}_i$. That is,
\begin{align*}
\ell(N)\coloneqq  \inf\{\ell\in\mathbb{N}\,|\, \mathcal{L}^{N}_{i-1} <\mathcal{L}^{N,\ell}_{i} <\mathcal{L}^{N}_{i+1} \ \textup{on}\ [a,b]\}.
\end{align*} Write $\mathcal{L}^{N,\textup{re}}$ for the line ensemble with the $i$-th curve replaced by $ \mathcal{L}^{N,\ell(N)}_i$. The line ensemble $\mathcal{L}^{N,\textup{re}}$ satisfies the squared Bessel Gibbs property on $\{i\}\times [a,b]$.

\begin{figure}
\begin{tikzcd}  
\mathcal{L}^{N,\textup{re}} \arrow[rr,equal ,"(d)","(1)"'] \arrow[dd,"\textup{a.s.}","(2)'"']& & \mathcal{L}^{N } \arrow[dd,"\textup{a.s.}","(2)"']  \\
 & &\\
\mathcal{L}^{\infty,\textup{re}} \arrow[rr,equal ,"(d)","(1)'"'] & & \mathcal{L}^{\infty }
\end{tikzcd}
\caption{(1) is equivalent to the squared Gibbs property when resampling a single curve. The {\bf goal} is to prove (1)', which implies the squared Bessel Gibbs property for the subsequential limit line ensemble.   (1)' follows from the convergence in (2) and (2)'. (2) follows from the Skorohod representation theorem and (2)' is proved in Lemma \ref{lem:ellconvergence}.}\label{figure:BGP}
\end{figure}

From Lemma~\ref{lem:7.3}, $\{\mathcal{L}^\infty_{i-1}<\mathcal{L}^\infty_{i}<\mathcal{L}^\infty_{i+1}\}$ holds almost surely. Hence almost surely $\ell(\infty)$ is finite. Suppose that $\ell(N)$ converges to $\ell(\infty)$  almost surely. Then $\mathcal{L}^{N,\textup{re}}$ converges to $\mathcal{L}^{\infty,\textup{re}}$ in $C( \mathbb{N} \times \mathbb{R},\mathbb{R})$ almost surely. Hence we have $\mathcal{L}^{N,\textup{re}}$ converges weakly to $\mathcal{L}^{\infty,\textup{re}}$ as $\mathbb{N}\times\mathbb{R}$-indexed line ensembles. See Definition \ref{def:weakconvergence} As a consequence, $\mathcal{L}^{\infty,\textup{re}}$ has the same distribution as $\mathcal{L}^{\infty}$.

\begin{lemma}\label{lem:ellconvergence}
Almost surely $\ell(N)$ converges to $\ell(\infty)$.
\end{lemma}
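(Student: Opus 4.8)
The plan is to show that $\ell(N) \to \ell(\infty)$ almost surely by arguing that the acceptance/rejection status of each candidate $\mathcal{L}^{N,\ell}_i$ stabilizes as $N \to \infty$. Fix $\omega$ in the full-measure event on which (i) $\mathcal{L}^N(\omega) \to \mathcal{L}^\infty(\omega)$ uniformly on compacts, (ii) the continuity property \eqref{equ:Becoupling} holds for every $\ell$ simultaneously, (iii) $\mathcal{L}^\infty_{i-1} < \mathcal{L}^\infty_i < \mathcal{L}^\infty_{i+1}$ on $[a,b]$ (strictly, with positive gaps by compactness), and (iv) for every $\ell$, the bridge $Be^{\mathcal{L}^\infty_i(a),\mathcal{L}^\infty_i(b)}_\ell$ does not touch $\mathcal{L}^\infty_{i-1}$ or $\mathcal{L}^\infty_{i+1}$ on $[a,b]$ (i.e. the infimum of the gap is either zero with the candidate rejected, or strictly positive; the ``boundary case'' of a candidate that just barely touches has probability zero by Lemma \ref{lem:not touching}, applied to each of the countably many bridges). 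Since $\mathcal{L}^N_i(a) \to \mathcal{L}^\infty_i(a)$ and $\mathcal{L}^N_i(b) \to \mathcal{L}^\infty_i(b)$, property \eqref{equ:Becoupling} gives $\mathcal{L}^{N,\ell}_i \to \mathcal{L}^{\infty,\ell}_i$ uniformly on $[a,b]$ for each fixed $\ell$.

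First I would show $\limsup_N \ell(N) \le \ell(\infty)$. Write $m = \ell(\infty)$. By definition $\mathcal{L}^\infty_{i-1} < \mathcal{L}^{\infty,m}_i < \mathcal{L}^\infty_{i+1}$ on $[a,b]$; by (iv) and compactness this inequality is strict with a uniform positive gap $\delta > 0$. Since $\mathcal{L}^{N,m}_i \to \mathcal{L}^{\infty,m}_i$, $\mathcal{L}^N_{i-1}\to\mathcal{L}^\infty_{i-1}$, $\mathcal{L}^N_{i+1}\to\mathcal{L}^\infty_{i+1}$ all uniformly on $[a,b]$, for $N$ large the candidate $\mathcal{L}^{N,m}_i$ still lies strictly between $\mathcal{L}^N_{i-1}$ and $\mathcal{L}^N_{i+1}$, so it is accepted and $\ell(N) \le m$.

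Next I would show $\liminf_N \ell(N) \ge \ell(\infty)$. For each $\ell < m = \ell(\infty)$, the candidate $\mathcal{L}^{\infty,\ell}_i$ is rejected, meaning it violates $\mathcal{L}^\infty_{i-1} < \mathcal{L}^{\infty,\ell}_i < \mathcal{L}^\infty_{i+1}$ somewhere on $[a,b]$. By (iv) (ruling out the measure-zero tangency case) this violation is strict: there is a point $t_\ell \in [a,b]$ and $\epsilon_\ell > 0$ with either $\mathcal{L}^{\infty,\ell}_i(t_\ell) \le \mathcal{L}^\infty_{i-1}(t_\ell) - \epsilon_\ell$ or $\mathcal{L}^{\infty,\ell}_i(t_\ell) \ge \mathcal{L}^\infty_{i+1}(t_\ell) + \epsilon_\ell$. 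By uniform convergence of the relevant curves, for $N$ large this strict violation persists for $\mathcal{L}^{N,\ell}_i$, so candidate $\ell$ is rejected under $\mathcal{L}^N$ as well. Taking the max over the finitely many $\ell \in \{1,\dots,m-1\}$, for $N$ large all of them are rejected, hence $\ell(N) \ge m$. Combining the two bounds gives $\ell(N) \to \ell(\infty)$.

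The main obstacle is handling the borderline configurations where a candidate bridge touches but does not strictly cross a neighboring curve — on such an event the convergence of the indicator of acceptance could fail. This is precisely why Lemma \ref{lem:not touching} (absolute continuity of the squared Bessel bridge with respect to the Brownian bridge, plus \cite[Corollary 2.9]{CH14}) is invoked: for each fixed $\ell$, conditionally on $\mathcal{L}^\infty$, the event that $\inf_{[a,b]}(\mathcal{L}^\infty_{i+1} - \mathcal{L}^{\infty,\ell}_i) = 0$ while the bridge stays below has probability zero, and similarly for the lower barrier; since there are only countably many $(\ell)$ and the bridges are independent of $\mathcal{L}^\infty$, a union bound removes all of them off a null set. (One should note $\mathcal{L}^\infty_0 \equiv 0$ when $i=1$, in which case the lower-barrier condition is $\mathcal{L}^{\infty,\ell}_1 > 0$, which again holds a.s. for a squared Bessel bridge with positive endpoints, or for all $\ell$ simultaneously off a null set.) The remaining steps are soft consequences of uniform-on-compacts convergence and finiteness of $\{1,\dots,\ell(\infty)-1\}$.
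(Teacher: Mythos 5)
Your proof follows essentially the same route as the paper: you set up a full-measure event on which $\mathcal{L}^N\to\mathcal{L}^\infty$ uniformly on compacts, $\ell(\infty)<\infty$, and no candidate bridge is exactly tangent to $\mathcal{L}^\infty_{i\pm 1}$ (via Lemma~\ref{lem:not touching}, independence, and a countable union bound over $\ell$), then deduce $\limsup_N \ell(N)\le\ell(\infty)$ from the persistence of a uniform positive gap at the accepted candidate, and $\liminf_N \ell(N)\ge\ell(\infty)$ from the persistence of a strict crossing for each of the finitely many rejected candidates. This is precisely the paper's argument, with the boundary case $i=1$ handled by noting a squared Bessel bridge with positive endpoints and $\alpha\ge 0$ stays positive a.s.
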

\begin{proof}
Let $\mathsf{E}$ be the event such that the following conditions hold \\[-0.3cm]
\begin{enumerate}
\item $\ell(\infty)<\infty$\\[-0.3cm]
\item $\displaystyle\inf_{t\in [a,b]} \mathcal{L}^{\infty,\ell}_i(t) -\mathcal{L}^{\infty}_{i-1}(t)  \neq 0$ for all $\ell\in\mathbb{N}$\\[0.05cm]
\item $\displaystyle\inf_{t\in [a,b]} \mathcal{L}^{\infty}_{i+1}(t) -  \mathcal{L}^{\infty,\ell}_i(t) \neq 0$ for all $\ell\in\mathbb{N}$\\[0.05cm]
\item $\mathcal{L}^N$ converges to $\mathcal{L}^\infty$ in $C(\mathbb{N}\times\mathbb{R},\mathbb{R})$.\\[-0.3cm]
\end{enumerate}
From the above discussion, conditions (1) and (4) hold with probability $1$. Condition (2) requires a squared Bessel bridge not to be ``tangent" to $\mathcal{L}^\infty_{i-1}$. From Lemma~\ref{lem:not touching} and the independence between $\mathcal{Q}_\ell(x,y)$ and $\mathcal{L}^\infty$, (2) holds with probability $1$. The same argument holds for (3). In short, $\mathsf{E}$ has probability $1$. \\[-0.3cm]

We will show that when $\mathsf{E}$ occurs, $\ell(N)$ converges to $\ell(\infty)$. 
From now on we fix $\omega\in \mathsf{E}$ and the constants below may depend on $\omega$. By the definition of $\ell(\infty)$, for all $t\in [a,b]$,
\begin{align*}
  \mathcal{L}^{\infty}_{i-1}(t)<\mathcal{L}^{\infty,\ell(\infty)}_{i}(t) <\mathcal{L}^{\infty}_{i+1}(t) .
\end{align*} 
Because of \eqref{equ:Becoupling} and the convergence of $(\mathcal{L}^N_i(a),\mathcal{L}^N_i(b))$  to $(\mathcal{L}^\infty_i(a),\mathcal{L}^\infty_i(b))$, $\mathcal{L}^{N,\ell(\infty)}_i $ converges to $\mathcal{L}^{\infty,\ell(\infty)}_i $ uniformly on $[a,b]$. Together with the fact that $\mathcal{L}^{N}_{i\pm 1}$ converges to $\mathcal{L}^{\infty}_{i\pm 1}$ uniformly on $[a,b]$, we have for $N$ large enough, $\mathcal{L}^{N}_{i-1}(t)<\mathcal{L}^{N,\ell(\infty)}_{i}(t) <\mathcal{L}^{N}_{i+1}(t)$ for all $t\in [a,b]$. Therefore,
\begin{align*}
\limsup_{N\to\infty}\ell(N)\leq \ell(\infty). 
\end{align*}

On the other hand, for all $1\leq\ell<\ell(\infty)$, we have either $\inf_{t\in [a,b]} \mathcal{L}^{\infty,\ell}_i(t) -\mathcal{L}^{\infty}_{i-1}(t)  < 0$ or $\inf_{t\in [a,b]} \mathcal{L}^{\infty}_{i+1}(t) -  \mathcal{L}^{\infty,\ell}_i(t) < 0.$ We assume that $\inf_{t\in [a,b]} \mathcal{L}^{\infty,\ell}_i(t) -\mathcal{L}^{\infty}_{i-1}(t)  < 0$ occurs . Then for $N$ large enough, we have
\begin{align*}
\displaystyle\inf_{t\in [a,b]} \mathcal{L}^{N,\ell}_i(t) -\mathcal{L}^{N}_{i-1}(t)<0
\end{align*} 
As a consequence,
\begin{align*}
\liminf_{N\to\infty}\ell(N)\geq \ell(\infty).
\end{align*}
Hence $\ell(N)$ converges to $\ell(\infty)$  and the proof is finished.
\end{proof}

\begin{appendix}
\section{Results about (squared) Bessel Processes}\label{sec:BE}
In this section we record some basic properties of the transition probability of (squared) Bessel processes. These results serve as inputs for the stochastic monotoncity Proposition~\ref{lem:monotonicity} and uniform lower bounds in Section~\ref{sec:uniformbounds}.
\subsection{Squared Bessel Process}

Let $I_\alpha(z)$ be the modified Bessel function with index $\alpha$. It solves the modified Bessel equation \cite[(9.6.1)]{AS}
\begin{align}\label{equ:IODE}
z^2 I''_{\alpha}(z)+zI'_{\alpha}(z)-(z^2+\alpha^2)I_{\alpha}(z)=0.
\end{align} 
Recall that $h_\alpha(z)\coloneqq z^{-\alpha}I_\alpha(z)$.  From \eqref{equ:IODE}, $h_{\alpha}(z)$ solves the equation
\begin{align}\label{equ:hODE}
zh''_{\alpha}(z)+(2\alpha+1)h'_\alpha(z)-zh_\alpha(z)=0.
\end{align} 

\begin{lemma}\label{lem:h-convex}
Fix $\alpha\geq 0$. For any $z\geq 0$, it holds that
\begin{align}\label{h:convex}
 \frac{h_\alpha''(z)}{h_\alpha(z)}+z^{-1}\frac{h_\alpha'(z)}{h_\alpha(z)}-\left(\frac{h_\alpha'(z)}{h_\alpha(z)} \right)^2> 0.
 \end{align}
\end{lemma}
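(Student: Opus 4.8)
The plan is to rewrite the claimed inequality \eqref{h:convex} entirely in terms of the logarithmic derivative $u(z) := h_\alpha'(z)/h_\alpha(z)$, since $h_\alpha > 0$ on $[0,\infty)$ (this is clear from the positive power series \eqref{equ:hexpansion}). Dividing through by $h_\alpha(z)$ and noting that $h_\alpha''/h_\alpha = (h_\alpha'/h_\alpha)' + (h_\alpha'/h_\alpha)^2 = u' + u^2$, the left-hand side of \eqref{h:convex} becomes exactly
\[
u'(z) + z^{-1} u(z),
\]
because the $u^2$ term cancels against $-(h_\alpha'/h_\alpha)^2$. So the claim reduces to showing $u'(z) + z^{-1} u(z) > 0$ for all $z > 0$ (and handling $z = 0$ separately by continuity / the series expansion). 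This is equivalent to $(z\, u(z))' > 0$, i.e. $z \mapsto z\,u(z) = z\, h_\alpha'(z)/h_\alpha(z)$ is strictly increasing on $(0,\infty)$.

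Next I would extract an ODE for $u$ from the ODE \eqref{equ:hODE} for $h_\alpha$. Dividing $z h_\alpha'' + (2\alpha+1) h_\alpha' - z h_\alpha = 0$ by $h_\alpha$ gives $z(u' + u^2) + (2\alpha+1) u - z = 0$, that is,
\[
u'(z) = 1 - u(z)^2 - \frac{2\alpha+1}{z}\, u(z).
\]
Substituting into the quantity we want to bound:
\[
u'(z) + z^{-1} u(z) = 1 - u(z)^2 - \frac{2\alpha+1}{z} u(z) + \frac{1}{z} u(z) = 1 - u(z)^2 - \frac{2\alpha}{z} u(z).
\]
So the claim becomes: for all $z>0$,
\[
1 - u(z)^2 - \frac{2\alpha}{z}\, u(z) > 0, \qquad u(z) = \frac{h_\alpha'(z)}{h_\alpha(z)}.
\]
Since $u(z) = I_\alpha'(z)/I_\alpha(z) - \alpha/z$ (from $h_\alpha(z) = z^{-\alpha} I_\alpha(z)$), one can equivalently phrase everything via $v(z) := I_\alpha'(z)/I_\alpha(z)$, for which the classical bounds $v(z) \in (0, 1)$ and more precisely $v(z) < \sqrt{1 + \alpha^2/z^2}$ (equivalently $z v(z) < \sqrt{z^2 + \alpha^2}$), together with $v(z) > \alpha/z$ when $\alpha \geq 0$, are known — these follow from the Riccati equation $z v' + z v^2 + v \cdot z = z^2/z \cdots$ hmm, more cleanly from the Turán-type / monotonicity properties of $I_\alpha$. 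Using $u = v - \alpha/z$ one checks $1 - u^2 - 2\alpha u/z = 1 - v^2 + \alpha^2/z^2 > 0$ precisely when $z v < \sqrt{z^2 + \alpha^2}$, which is a standard sharp upper bound for the logarithmic derivative of the modified Bessel function.

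The cleanest self-contained route, avoiding citing Bessel-function inequalities, is to prove $1 - u^2 - 2\alpha u/z > 0$ directly by an ODE comparison argument: let $w(z) := 1 - u(z)^2 - 2\alpha u(z)/z$, differentiate using the Riccati equation for $u$ above, and show $w$ cannot reach $0$ — i.e. establish that $w(z) > 0$ near $z = 0$ (from the expansion $u(z) = \frac{z}{2(\alpha+1)} + O(z^3)$, so $w(0) = 1 > 0$), and that at any hypothetical first zero $z_0$ of $w$ one gets $w'(z_0) \geq 0$ strictly, forcing a contradiction with $w$ decreasing to $0$. I expect the main obstacle to be exactly this last step: carrying out the sign analysis of $w'$ at a first zero cleanly, since $w'$ will involve $u'$ and hence (via the Riccati relation) a polynomial in $u$ and $1/z$, and one must use the a priori two-sided bounds $0 < u(z) < $ something to pin down the sign. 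An alternative that sidesteps the comparison: note $w = 1 - v^2 + \alpha^2/z^2$ with $v = I_\alpha'/I_\alpha$, and $v$ satisfies $v' = 1 - v^2 - v/z \cdot (2\alpha+1) \cdots$ — wait, more usefully, the function $z \mapsto z^2(1 - v(z)^2) + \alpha^2$ can be shown positive because $I_\alpha(z) I_\alpha''(z) - I_\alpha'(z)^2 < 0$ (a Turán inequality, provable directly from the series or from the fact that $(\log I_\alpha)'' < 0$... actually $(\log I_\alpha)'' $ has a definite sign). I would present whichever of these is shortest, most likely the Turán-inequality route combined with the modified Bessel equation to convert $I_\alpha''$ into lower-order terms, since that turns the whole statement into a single elementary manipulation.
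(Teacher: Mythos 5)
Your algebraic reduction is correct and in fact coincides with the paper's. Writing $u = h_\alpha'/h_\alpha$ and using $h_\alpha''/h_\alpha = u' + u^2$, the left side of \eqref{h:convex} is $u' + u/z$; substituting the Riccati equation $u' = 1 - u^2 - (2\alpha+1)u/z$ (from \eqref{equ:hODE}) turns the claim into $1 - u^2 - 2\alpha u/z > 0$. Since the paper sets $H_\alpha(z) = z^{-1}u(z)$, this is literally the paper's reduced claim $1 - z^2 H_\alpha^2 - 2\alpha H_\alpha > 0$. So far you are on the same track, and your observation that this is equivalent to Gronwall's bound $z\,I_\alpha'(z)/I_\alpha(z) < \sqrt{z^2+\alpha^2}$ matches the paper's remark.

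The gap is exactly where you flag uncertainty: you never complete the comparison step — you offer three possible endgames (cite Bessel inequalities, ODE comparison on $w := 1 - u^2 - 2\alpha u/z$, Tur\'an route) and say you would "present whichever is shortest," which is a plan, not a proof. The paper finishes differently, by exhibiting the explicit supersolution $G_\alpha(z) = 1/(\alpha + \sqrt{\alpha^2 + z^2})$ (the positive root of $1 - z^2H^2 - 2\alpha H = 0$), checking that $zG_\alpha' + z^2G_\alpha^2 + 2(\alpha+1)G_\alpha - 1 = 1/\sqrt{z^2+\alpha^2} > 0$ while $H_\alpha$ makes the same expression vanish, and then comparing. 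Your sketched route via $w$ does in fact go through cleanly, and more easily than you feared: using $u' = w - u/z$ one gets
\begin{align*}
w' = -2\bigl(u + \tfrac{\alpha}{z}\bigr)w + \tfrac{2u^2}{z} + \tfrac{4\alpha u}{z^2},
\end{align*}
so at a hypothetical first zero $z_0 > 0$ of $w$ one would have $w'(z_0) = 2u(z_0)^2/z_0 + 4\alpha u(z_0)/z_0^2 > 0$ (since $u>0$ on $(0,\infty)$, by the positive-coefficient series for $h_\alpha'$), contradicting $w > 0$ on $(0,z_0)$ with $w(z_0)=0$; no upper bound on $u$ is needed, contrary to your worry. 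One small error: $w(0^+) = 1/(\alpha+1)$, not $1$, because $2\alpha u(z)/z \to \alpha/(\alpha+1)$ as $z \to 0^+$; still positive, so the argument survives. As written, however, the critical inequality is asserted, not proved.
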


\begin{remark}
In terms of the modified Bessel function, when $z>0$, \eqref{h:convex} is equivalent to 
\begin{align}\label{equ:convexI}
  1+ \frac{\alpha^2}{z^2} - \left(\frac{I'_\alpha(z)}{I_\alpha(z)}\right)^2>0.
\end{align} 
\eqref{equ:convexI} was proved for $\alpha>0$ and $z>0$ by Gronwall in \cite{Gro} motivated by a problem in wave mechanics. 
\end{remark}

\begin{proof}
Let $H_\alpha(z)\coloneqq z^{-1}h'_{\alpha}(z)/h_{\alpha}(z)$. By \eqref{equ:hODE}, \eqref{h:convex} is equivalent to
\begin{align*}
-z^2H_\alpha^2(z)-2\alpha H_\alpha(z)  +1>0. 
\end{align*}
Note that $-z^2 u^2-2\alpha u  +1=0$ has a positive solution $u=G_\alpha(z)\coloneqq \frac{1}{\alpha+\sqrt{\alpha^2+z^2}}$ and a negative solution. In view of \eqref{equ:hexpansion}, $H_\alpha(z)>0$ for all $z\geq 0$. Therefore, it suffices to show that $H_\alpha(z)< G_\alpha(z)$. From \eqref{equ:hODE}, we derive
\begin{align*}
zH_\alpha'(z)+z^2 H_\alpha^2(z)+2(\alpha+1)H_\alpha(z)-1=0.
\end{align*}
Through a direct calculation,
\begin{align*}
zG_\alpha'(z)+z^2 G_\alpha^2(z)+2(\alpha+1)G_\alpha(z)-1=\frac{1}{\sqrt{z^2+\alpha^2}}>0.
\end{align*}
If $\alpha>0$, then $H_\alpha(0)=1/(2\alpha+2)<1/(2\alpha)=G(0)$. Through ODE comparison, $H_\alpha(z)<G(z)$ for all $z\geq 0$. When $\alpha=0$, the argument is similar using $\lim_{z\to 0^+}G_0(z)=\infty$. The proof is finished.
\end{proof}

\begin{corollary}\label{cor:q-convex}
Fix $\alpha\geq 0$. For any $t>0$ and $x,y\geq 0$, it holds that
\begin{equation}\label{q:convex}
 \frac{\partial^2}{\partial x\partial y}\log q_t(x,y)>  0. 
\end{equation}
\end{corollary}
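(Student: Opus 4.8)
The plan is to reduce the whole statement to Lemma~\ref{lem:h-convex} using the closed form \eqref{def:qF}. Taking logarithms,
\[
\log q_t(x,y)=\log\!\big(2^{-1}t^{-\alpha-1}\big)+\alpha\log y-\frac{x}{2t}-\frac{y}{2t}+\log h_\alpha\!\Big(\tfrac{\sqrt{xy}}{t}\Big),
\]
and each of the first four terms is a function of $x$ alone or of $y$ alone, hence is annihilated by $\partial^2/\partial x\,\partial y$. So the corollary is equivalent to the assertion that $\partial^2_{xy}\log h_\alpha(\sqrt{xy}/t)>0$ for all $x,y\ge 0$ and $t>0$.

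Next I would exploit the parity of $h_\alpha$: by \eqref{equ:hexpansion} it is an even entire function, so $h_\alpha(z)=\psi_\alpha(z^2)$ for some entire $\psi_\alpha$, and therefore $h_\alpha(\sqrt{xy}/t)=\psi_\alpha(xy/t^2)$ is jointly smooth in $(x,y)$ on all of $[0,\infty)^2$. In particular there is no genuine singularity along the axes and the mixed partial is unambiguous there — this is the one point that deserves care, since one must not differentiate the non-smooth factor $\sqrt{xy}$ naively. A twofold chain rule with $w=xy/t^2$ then gives
\[
\frac{\partial^2}{\partial x\,\partial y}\log\psi_\alpha(w)=\frac{1}{t^2}\,\frac{d}{dw}\!\left(w\,\frac{\psi_\alpha'(w)}{\psi_\alpha(w)}\right),
\]
and converting back to $h_\alpha$ through $h_\alpha'(z)=2z\,\psi_\alpha'(z^2)$ (a routine computation) identifies the right-hand side, at $z=\sqrt{xy}/t$, with
\[
\frac{1}{4t^2}\left(\frac{h_\alpha''(z)}{h_\alpha(z)}+\frac{1}{z}\,\frac{h_\alpha'(z)}{h_\alpha(z)}-\Big(\frac{h_\alpha'(z)}{h_\alpha(z)}\Big)^{2}\right),
\]
where the middle term at $z=0$ is read as its finite limit $h_\alpha''(0)/h_\alpha(0)$.

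Finally, Lemma~\ref{lem:h-convex} asserts precisely that the parenthesized quantity is strictly positive for every $z\ge 0$ (at $z=0$ it equals $(\alpha+1)^{-1}$), so $\partial^2_{xy}\log q_t(x,y)>0$ on all of $[0,\infty)^2$, boundary included. I do not anticipate a real obstacle here: all of the analytic content is carried by Lemma~\ref{lem:h-convex}, and the remaining work is only the bookkeeping of the chain rule together with the observation that $\log q_t$ is a smooth function of the product $xy$, which handles the degenerate cases $x=0$ or $y=0$ uniformly.
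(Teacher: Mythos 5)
Your proposal is correct and follows essentially the same route as the paper: both reduce the corollary to Lemma~\ref{lem:h-convex} by applying $\partial^2/\partial x\,\partial y$ to \eqref{def:qF} and arriving at the quantity $h_\alpha''/h_\alpha + z^{-1}h_\alpha'/h_\alpha - (h_\alpha'/h_\alpha)^2$ evaluated at $z=\sqrt{xy}/t$ (the paper first normalizes to $t=1$ by scaling, which is cosmetic). The one genuine addition you make is the parity argument $h_\alpha(z)=\psi_\alpha(z^2)$ to justify smoothness of $\log q_t$ at the axes $x=0$ or $y=0$; the paper's ``direct computation'' is silent on this boundary case, and your treatment makes the inequality on $\partial[0,\infty)^2$ unambiguous.
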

\begin{proof}
By the scaling property \eqref{q_scaling}, it suffices to consider the case $t=1$. By a direct computation,
\begin{align}\label{equ:cal}
4\frac{\partial^2}{\partial x\partial y}\log q_1(x,y)= \frac{h_\alpha''(z)}{h_\alpha(z)}+z^{-1}\frac{h_\alpha'(z)}{h_\alpha(z)}-\left(\frac{h_\alpha'(z)}{h_\alpha(z)} \right)^2, 
\end{align}
where $z=\sqrt{xy}$. Then \eqref{q:convex} follows Lemma~\ref{lem:h-convex}.
\end{proof}

The following Corollary concerns the upper bound and lower bound of $\det\big(q_t(x_i,y_j)\big)_{1\leq i,j\leq 2}$.
\begin{corollary}\label{cor:qqqq}
Fix $\alpha\geq 0$. For any $L>0$, there exists a constant $C=C(\alpha, L)>0$ such that the following statement holds. For any $x_2\geq x_1\geq 0$, $y_2\geq y_1\geq 0$ and $t>0$ that satisfy $x_2y_2\leq L t^2$, we have
\begin{align*}
 &\det\big(q_t(x_i,y_j)\big)_{1\leq i,j\leq 2} \geq C^{-1}  t^{-2}  q_t(x_1,y_2)q_t(x_2,y_1) {(x_2-x_1)(y_2-y_1)}  
   ,\\[0.1cm]
&\det\big(q_t(x_i,y_j)\big)_{1\leq i,j\leq 2}  \leq C    t^{-2}  q_t(x_1,y_2)q_t(x_2,y_1) {(x_2-x_1)(y_2-y_1)}.  
\end{align*}
\end{corollary}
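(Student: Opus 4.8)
The plan is to use the product formula $q_t(x,y) = 2^{-1}t^{-\alpha-1}y^\alpha e^{-(x+y)/(2t)} h_\alpha(\sqrt{xy}/t)$ from \eqref{def:qF} to factor out all the common prefactors in the $2\times 2$ determinant and reduce everything to an estimate on the ``$h_\alpha$-part'' of the determinant. After the scaling reduction to $t=1$ (via \eqref{q_scaling}), write $z_{ij} = \sqrt{x_i y_j}$ and observe that
\[
\det\big(q_1(x_i,y_j)\big) = 4^{-1} y_1^\alpha y_2^\alpha e^{-(x_1+x_2+y_1+y_2)/2}\Big( h_\alpha(z_{11})h_\alpha(z_{22}) - h_\alpha(z_{12})h_\alpha(z_{21}) \Big),
\]
since the exponential prefactor $e^{-(x_i+y_j)/2}$ contributes the \emph{same} total $e^{-(x_1+x_2+y_1+y_2)/2}$ to both the diagonal and anti-diagonal products, and similarly for the $y_j^\alpha$ factors. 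The same identity holds for $q_1(x_1,y_2)q_1(x_2,y_1)$, so the claim reduces to showing that there is a constant $C(L)$ with
\[
C(L)^{-1} h_\alpha(z_{12}) h_\alpha(z_{21}) (x_2-x_1)(y_2-y_1) \leq h_\alpha(z_{11})h_\alpha(z_{22}) - h_\alpha(z_{12})h_\alpha(z_{21}) \leq C(L)\, h_\alpha(z_{12}) h_\alpha(z_{21}) (x_2-x_1)(y_2-y_1),
\]
uniformly over $0\le x_1\le x_2$, $0\le y_1\le y_2$ with $x_2 y_2 \le L$.

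\textbf{Key steps.} First I would reduce, as above, to the two-sided bound on $h_\alpha(z_{11})h_\alpha(z_{22}) - h_\alpha(z_{12})h_\alpha(z_{21})$. Second, I would introduce $\varphi(x,y) := \log h_\alpha(\sqrt{xy})$ and note that $h_\alpha(z_{11})h_\alpha(z_{22}) - h_\alpha(z_{12})h_\alpha(z_{21}) = h_\alpha(z_{12})h_\alpha(z_{21})\big(e^{\varphi(x_1,y_1)+\varphi(x_2,y_2)-\varphi(x_1,y_2)-\varphi(x_2,y_1)} - 1\big)$; the mixed second difference in the exponent equals $\int_{x_1}^{x_2}\!\!\int_{y_1}^{y_2} \partial_x\partial_y \varphi(x,y)\,dy\,dx$. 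By Corollary~\ref{cor:q-convex} (equivalently \eqref{equ:cal} applied to $h_\alpha$ in place of $q_1$, since $\varphi$ is exactly the $h_\alpha$-logarithm appearing there), we have $\partial_x\partial_y\varphi > 0$, so the exponent is nonnegative, giving the lower bound $e^u - 1 \ge u \ge 0$ once I show $\partial_x\partial_y\varphi \ge c(L)>0$ on the relevant region. For the upper bound I use $e^u - 1 \le u e^u$ and need a matching upper bound $\partial_x\partial_y\varphi \le C(L)$ together with control of $u$ itself (which follows since $u = \int\!\!\int \partial_x\partial_y\varphi \le C(L)(x_2-x_1)(y_2-y_1) \le C(L)\cdot L$ is bounded on the region, using $x_2-x_1\le x_2$, $y_2-y_1\le y_2$ and $x_2y_2\le L$). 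Third, then, the whole problem collapses to: \emph{the quantity $\partial_x\partial_y\log h_\alpha(\sqrt{xy})$ is bounded above and below by positive constants depending only on $L$, for $(x,y)$ in the compact region $\{0\le x\le x_2,\ 0\le y\le y_2,\ x_2 y_2\le L\}$} --- note this region is contained in $\{0\le xy\le L\}$, so it suffices to control the function of the single variable $w = xy \in [0,L]$. Using \eqref{equ:cal}, $4\partial_x\partial_y\log h_\alpha(\sqrt{xy}) = G_\alpha(z)$ where $G_\alpha(z) := \frac{h_\alpha''(z)}{h_\alpha(z)}+z^{-1}\frac{h_\alpha'(z)}{h_\alpha(z)}-\big(\frac{h_\alpha'(z)}{h_\alpha(z)}\big)^2$ and $z=\sqrt{w}$; by Lemma~\ref{lem:h-convex} this is strictly positive, and since $h_\alpha$ is entire with $h_\alpha(0)>0$, $G_\alpha$ extends to a continuous, strictly positive function on $[0,\infty)$ (one checks the apparent singularity at $z=0$ from the $z^{-1}h_\alpha'/h_\alpha$ term is removable using the power series \eqref{equ:hexpansion}: $h_\alpha'(z)/h_\alpha(z) = \frac{z}{2(\alpha+1)} + O(z^3)$, so $z^{-1}h_\alpha'/h_\alpha \to \frac{1}{2(\alpha+1)}$). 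Hence $G_\alpha$ attains a positive minimum and finite maximum on the compact interval $[0,\sqrt{L}]$, and taking $C(L)$ to absorb these constants (and the factor $4$) finishes the proof.

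\textbf{Main obstacle.} The bulk of the work --- and the one genuinely delicate point --- is the \emph{lower} bound, since one must extract the linear factor $(x_2-x_1)(y_2-y_1)$ and this is precisely what the positivity and uniform positive lower bound of $\partial_x\partial_y\log h_\alpha(\sqrt{xy})$ provides; the inequality $e^u-1\ge u$ then does the rest. The positivity itself is not new work (it is Lemma~\ref{lem:h-convex}/Corollary~\ref{cor:q-convex}), so the remaining technical care is just in verifying that $G_\alpha$ is continuous and positive \emph{up to and including} $z=0$ (removable singularity), which is a short power-series computation, and in being careful that the constant $C(L)$ depends only on $L$ and $\alpha$ (not on $t$, $x_i$, $y_j$) --- this is automatic once the scaling reduction and the reduction to $w=xy\in[0,L]$ are in place. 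The upper bound is then routine bookkeeping with $e^u - 1 \le u e^u$ and the a priori bound $u \le C(L)\cdot L$.
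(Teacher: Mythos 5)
Your proposal is correct and follows essentially the same route as the paper: after the scaling reduction to $t=1$, both arguments reduce the determinant to the ratio $q_1(x_1,y_1)q_1(x_2,y_2)/q_1(x_1,y_2)q_1(x_2,y_1)$, recognize its logarithm as a mixed second difference of $\log h_\alpha(\sqrt{xy})$, invoke the two-sided bound on $G_\alpha(z) = 4\,\partial_x\partial_y\log q_1(x,y)$ for $z\in[0,\sqrt{L}]$ (positivity from Lemma~\ref{lem:h-convex}, boundedness by continuity on a compact interval, with the removable singularity at $z=0$ handled exactly as in the paper's definition of $H_\alpha$), and then extract the linear factor $(x_2-x_1)(y_2-y_1)$. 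The only cosmetic differences are that you express the double difference as a double integral where the paper applies the mean value theorem twice, and you finish with the elementary bounds $u \le e^u - 1 \le u e^u$ together with the a priori bound $u\lesssim L$ where the paper invokes the mean value theorem one more time; these are interchangeable.
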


\begin{proof}
As both hands of the above inequalities are continuous, it is sufficient to consider the case $x_2>x_1>0$ and $y_2>y_1>0$. Moreover, we may assume $t=1$ due to the scaling invariance \eqref{q_scaling}.\\[-0.2cm]

Note that 
\begin{align*}
\frac{\det\big(q_1(x_i,y_j)\big)}{q_1(x_1,y_2)q_1(x_2,y_1)}= \frac{ q_1(x_1,y_1)q_1(x_2,y_2)}{ q_1(x_1,y_2)q_1(x_2,y_1)}-1.
\end{align*} 
It suffices to show that
\begin{align}\label{eq:detq1}
\frac{1}{(x_2-x_1)(y_2-y_1)} \left(\frac{ q_1(x_1,y_1)q_1(x_2,y_2)}{ q_1(x_1,y_2)q_1(x_2,y_1)}-1\right)
\end{align}
is uniformly bounded from above and from below for $x_1, x_2, y_1, y_2$ as in the statement.\\[-0.2cm]

Since
\begin{align*}
\log \left(\frac{ q_1(x_1,y_1)q_1(x_2,y_2)}{ q_1(x_1,y_2)q_1(x_2,y_1)}\right)= \log  q_1(x_1,y_1)+\log q_1(x_2,y_2)-\log q_1(x_1,y_2)-\log q_1(x_2,y_1)
\end{align*}
is a double difference term. Apply twice the mean value theorem and \eqref{equ:cal}, we have
\begin{align*}
 &\frac{4}{(x_2-x_1)(y_2-y_1)}\log \left(\frac{ q_1(x_1,y_1)q_1(x_2,y_2)}{ q_1(x_1,y_2)q_1(x_2,y_1)}\right) =\frac{h_\alpha''(z)}{h_\alpha(z)}+z^{-1}\frac{h_\alpha'(z)}{h_\alpha(z)}-\left(\frac{h_\alpha'(z)}{h_\alpha(z)} \right)^2.
\end{align*} 
Here $z=\sqrt{uv}$ for some $u\in (x_1,x_2)$ and $v\in (y_1,y_2)$. Under the assumption $x_2y_2\leq L$, $0\leq z\leq L$. From \eqref{h:convex}, the above is bounded from above by $C$ and from below by $C^{-1}$ for some $C$ depending only on $L$. This implies
\begin{align}\label{eq:detq2}
\exp(C^{-1}(x_2-x_1)(y_2-y_1)) \leq  \frac{ q_1(x_1,y_1)q_1(x_2,y_2)}{ q_1(x_1,y_2)q_1(x_2,y_1)}\leq \exp(C(x_2-x_1)(y_2-y_1))
\end{align}
Notice that $0\leq (x_2-x_1)(y_2-y_1)\leq x_2y_2\leq L$. Subtracting 1 in \eqref{eq:detq2} and dividing both sides by $(x_2-x_1)(y_2-y_1)$, then the desired assertion \eqref{eq:detq1} follows from the mean value theorem. 
\end{proof}

\begin{lemma}\label{lem:2sample}
Fix $\alpha\geq 0$. For any $L>0$, there exists a constant $C=C(\alpha, L)$ such that the following statement holds. Given $\tau,T>0$, $x_2\geq  x_1> 0,$ $y_2> y_1> 0$ and $z_*>0$. Assume that
\begin{align*}
x_2z_*\leq L\tau^2,\ y_2z_*\leq LT^2\ \textup{and that}\ x_2y_2\leq L(\tau+ T)^2.
\end{align*} 
Let $\vec{x}=(x_1,x_2) $ and $\vec{y}=(y_1,y_2) $. Let $(\mathcal{J}_1(t),\mathcal{J}_2(t))$ be distributed according to $\mathbb{P}^{1,2,-\tau,T,\vec{x},\vec{y},0,\infty}$. \\[-0.3cm]

Then the joint density of $(\mathcal{J}_1(0),\mathcal{J}_2(0))=(z_1,z_2)$ is bounded from below by
\begin{align*}
C^{-1} \left(\tau^{-1}+T^{-1} \right)^2 \, \frac{q_\tau(x_1,z_2)q_\tau(x_2,z_1)q_T(z_1,y_2)q_T(z_2,y_1)}{q_{\tau+T}(x_1,y_2)q_{\tau+T}(x_2,y_1) }\cdot(z_2-z_1)^2\cdot\mathbbm{1}(0<z_1<z_2\leq z_*).
\end{align*}
\end{lemma}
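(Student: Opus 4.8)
The plan is to compute the joint density of $(Be_1(0),Be_2(0))$ explicitly via the Karlin--McGregor formula for two non-intersecting squared Bessel bridges, and then bound the resulting determinants using Corollary~\ref{cor:qqqq}. First I would recall that a pair of independent squared Bessel bridges from $\vec{x}$ at time $-\tau$ to $\vec{y}$ at time $T$, conditioned to stay ordered (with lower barrier $0$ and no upper barrier), has, at the intermediate time $0$, joint density at $(z_1,z_2)$ with $0<z_1<z_2$ proportional to
\begin{align*}
\frac{\det\big(q_\tau(x_i,z_j)\big)_{1\leq i,j\leq 2}\cdot \det\big(q_T(z_i,y_j)\big)_{1\leq i,j\leq 2}}{\det\big(q_{\tau+T}(x_i,y_j)\big)_{1\leq i,j\leq 2}},
\end{align*}
the denominator being the non-intersecting normalization $Z$. (One has to be a little careful about which ordering of indices pairs with which: the numerator determinants have the $z$'s as the ``free'' variables, and the denominator is the corresponding two-bridge partition function; the indicator $\mathbbm{1}(0<z_1<z_2)$ comes from restricting to the Weyl chamber.) So the density equals that ratio of determinants times $\mathbbm{1}(0<z_1<z_2\le z_*)$; the extra factor $z_*$ in the hypothesis and the upper cutoff $z_2\le z_*$ is exactly what will let us invoke the size constraints $x_2 z_*\le L\tau^2$ etc.

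Next I would estimate each of the three determinants. For the two numerator determinants I apply the lower bound in Corollary~\ref{cor:qqqq}: with the substitution $(x_1,x_2,y_1,y_2)\mapsto (x_1,x_2,z_1,z_2)$ and time $\tau$, the hypothesis $x_2 z_2\le x_2 z_*\le L\tau^2$ puts us in the range of that corollary, giving
\begin{align*}
\det\big(q_\tau(x_i,z_j)\big)\ge C(L)^{-1}\tau^{-2}q_\tau(x_1,z_2)q_\tau(x_2,z_1)(x_2-x_1)(z_2-z_1),
\end{align*}
and similarly $\det\big(q_T(z_i,y_j)\big)\ge C(L)^{-1}T^{-2}q_T(z_1,y_2)q_T(z_2,y_1)(z_2-z_1)(y_2-y_1)$, using $z_2 y_2\le z_* y_2\le LT^2$. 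For the denominator I apply the upper bound of Corollary~\ref{cor:qqqq} at time $\tau+T$, legitimate since $x_2y_2\le L(\tau+T)^2$:
\begin{align*}
\det\big(q_{\tau+T}(x_i,y_j)\big)\le C(L)(\tau+T)^{-2}q_{\tau+T}(x_1,y_2)q_{\tau+T}(x_2,y_1)(x_2-x_1)(y_2-y_1).
\end{align*}
Dividing, the factors $(x_2-x_1)$ and $(y_2-y_1)$ cancel, one power of $(z_2-z_1)$ is absorbed and one squared factor $(z_2-z_1)^2$ remains (here I'm being slightly schematic — one must track that the numerator carries $(z_2-z_1)^2$ while the denominator carries none), the $q$-prefactors assemble into exactly $q_\tau(x_1,z_2)q_\tau(x_2,z_1)q_T(z_1,y_2)q_T(z_2,y_1)/\big(q_{\tau+T}(x_1,y_2)q_{\tau+T}(x_2,y_1)\big)$, and the time prefactors give $\tau^{-2}T^{-2}(\tau+T)^2\gtrsim(\tau^{-1}+T^{-1})^2$ up to the constant $C(L)$. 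Setting $C_0=C_0(L)$ to absorb all these universal constants yields the claimed bound.

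The main obstacle, and the place requiring genuine care rather than bookkeeping, is the first step: correctly justifying the Karlin--McGregor density formula for the \emph{bridge} (not just the process) with a lower reflecting barrier at $0$, and in particular getting the index-pairing in the determinants right so that the marginal at an intermediate time comes out as the stated product $\det(q_\tau(x_i,z_j))\det(q_T(z_i,y_j))/\det(q_{\tau+T}(x_i,y_j))$ restricted to the chamber. Since $q_t(0,\cdot)$ is a legitimate (finite) transition density for $\alpha\ge 0$, the cases $x_1=0$ cause no trouble, and the Weyl-chamber restriction is what produces the $(z_2-z_1)$ factors after applying the corollary. Once the formula is in place, everything else is three applications of Corollary~\ref{cor:qqqq} and collecting constants; I would not expect any difficulty there beyond careful arithmetic with the powers of $\tau$, $T$, and $\tau+T$.
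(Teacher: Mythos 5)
Your proposal is correct and matches the paper's proof essentially step for step: the paper also reduces to $x_2>x_1>0$, $y_2>y_1>0$ by a limiting argument, invokes the Karlin--McGregor formula to write the intermediate-time density as the same ratio of three $2\times 2$ determinants restricted to the Weyl chamber, and then applies Corollary~\ref{cor:qqqq} three times (lower bound twice in the numerator, upper bound once in the denominator), with the hypothesis constraints $x_2z_*\leq L\tau^2$, $y_2z_*\leq LT^2$, $x_2y_2\leq L(\tau+T)^2$ verifying the corollary's applicability exactly as you note. One small remark: the identity $\tau^{-2}T^{-2}(\tau+T)^2=(\tau^{-1}+T^{-1})^2$ holds exactly, so no extra constant is lost in that step, and your aside about $x_1=0$ is moot since the hypotheses enforce $x_1>0$.
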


\begin{proof}
Through taking limits, it is enough to consider the case $x_2>x_1>0$ and $y_2>y_1>0$. By the Karlin-McGregor formula \cite{KM}, the joint density of $(\mathcal{J}_1(0),\mathcal{J}_2(0))=(z_1,z_2)$ is given by
\begin{align*}
\frac{\det(q_\tau(x_i,z_j))_{1\leq i,j\leq 2}\det(q_T(z_i,y_j))_{1\leq i,j\leq 2}}{\det(q_{\tau+T}(x_i,y_j))_{1\leq i,j\leq 2}}\cdot\mathbbm{1}(0<z_1<z_2) .
\end{align*}
When $z_2\leq z_*$, we have $x_2z_2\leq L\tau^2$ and $y_2z_2\leq LT^2$. 
Then the assertion follows by applying Corollary~\ref{cor:qqqq} three times for the three determinant terms respectively.
\end{proof}

\subsection{Bessel Process}
We consider in this section $p_t(x,y)$, the transition probability of Bessel processes. See \eqref{def:pF} for its explicit form. From \eqref{q:convex}, we have 
\begin{equation}\label{p:convex}
 \frac{\partial^2}{\partial x\partial y}\log p_t(x,y)\geq  0. 
\end{equation}

\begin{lemma}\label{lem:ystar}
For any $x\in [0,\infty)$, $p_1(x,y)$ is strictly log-concave in $y\in (0,\infty)$.
\end{lemma}
\begin{proof}
For $x=0$, strict log-concavity follows directly from \eqref{def:pF}. For $x>0$, from \eqref{def:pF}, it suffices to show the strict strictly log-concavity of $z^{2\alpha+1}h_\alpha(z)$ for $z\in (0,\infty)$. This is proved in \cite[Theorem 4]{NNW} and we present the argument below here for the reader's convenience. Using $h_\alpha(z)=z^{-\alpha}I_\alpha (z)$, we compute
\begin{align*}
z^2 h^2_\alpha\cdot \big(\log(z^{2\alpha+1}h_\alpha   )\big)''=  -(2\alpha+1)h_{\alpha}^2+z^2h_\alpha h''_\alpha-z^2(h'_\alpha)^2 .
\end{align*}
From \eqref{equ:hexpansion}, we can check that $h'_\alpha(z)=zh_{\alpha+1}(z)$. Then the above equals
\begin{align*}
z^2 h^2_\alpha\cdot \big(\log( z^{2\alpha+1}h_\alpha  )\big)''=   -(2\alpha+1)h_{\alpha}^2+z^2h_\alpha h_{\alpha+1}+z^4h_\alpha h_{\alpha+2}-z^4  h_{\alpha+1}^2 .
\end{align*}
Combining \eqref{equ:hexpansion} and the Chu–Vandermonde identity \cite[page 45]{Wolfram}, we have for any $\alpha,\beta\geq 0$, $h_{\alpha}(z)h_{\beta}(z)=\sum_{n=0}^\infty c_{\alpha,\beta,n}z^{2n} $ with 
\begin{align*}
c_{\alpha,\beta,n}=\frac{2^{-\alpha-\beta-2n} \Gamma(2n+\alpha+\beta+1)}{n!\Gamma(n+\alpha+\beta+1)\Gamma(n+\alpha +1)\Gamma(n +\beta+1)}.
\end{align*}
A straight forward calculation shows that
\begin{align*}
z^2 h_\alpha \big(\log(z^{2\alpha+1}h_\alpha  )\big)''= -(2\alpha+1)c_{\alpha,\alpha,0}- (2\alpha+1) \sum_{n=1}^\infty\frac{n+2\alpha-1 }{2n+2\alpha-1}\cdot c_{\alpha,\alpha,n} z^{2n}.  
\end{align*}
Because the coefficients in the above expansion are negative, the assertion follows.
 
\end{proof}

\begin{corollary}\label{cor:ystar}
For any $x\in [0,\infty)$, there exists a unique maximum point $y^*(x)$ of $p_1(x,\cdot)$. $\lim_{x\to\infty} |y^*(x)-x|=0$. $y^*(x)$ is a smooth function in $x$.  Moreover, $\sup_{x\in [0,\infty)}|y^*(x)-x|\leq L_0 $ for some $L_0>0$. 
\end{corollary}
\begin{proof}
The existence and uniqueness of the maximum point $p_1(x,\cdot)$ follows directly from Lemma~\ref{lem:ystar}. From $I_\alpha(z)= {(2\pi z)^{-1/2}}{e^z}\left(1+O(z^{-1})\right)$ when $z$ goes to infinity \cite[(9.7.1)]{AS}, it holds that
\begin{align}\label{equ:q_asymp}
p_1(x,y)=(2\pi)^{-1/2}(y/x)^{\alpha+1/2}e^{-(y-x)^2/2}\left(1+O(x^{-1}y^{-1})\right).
\end{align}
This implies $\lim_{x\to\infty} |y^*(x)-x|=0$. 
Clearly $y^*(x)$ is the unique solution to $\partial p_1(x,y)/\partial y=0$. By the inverse function theorem, $y^*(x)$ is a smooth function in $x$. Then the boundedness of $|y^*(x)-x|$ follows.
\end{proof}

\begin{lemma}\label{lem:BBsum_1}
There exist constants  ${C}_0$ and $C_1$ such that 
\begin{align}\label{q_sup}
\sup_{x,y\in [0,\infty)}p_1(x,y)&\leq C_0,
\end{align}
and 
\begin{align}\label{q_sum}
\sup_{x\in [0,\infty), M\geq 1} M^{-1}\sum_{j=0}^\infty p_1(x,jM^{-1})&\leq C_1.
\end{align}
\end{lemma}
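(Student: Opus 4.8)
The plan is to establish both bounds from the explicit formula \eqref{def:pF}, namely $p_1(x,y)=y^{2\alpha}e^{-(x^2+y^2)/2}h_\alpha(xy)$, together with standard asymptotics of the entire function $h_\alpha$. The crucial analytic input is the large-$z$ behavior of $h_\alpha(z)=z^{-\alpha}I_\alpha(z)$: since $I_\alpha(z)\sim (2\pi z)^{-1/2}e^{z}$ as $z\to\infty$, we have $h_\alpha(z)\sim (2\pi)^{-1/2}z^{-\alpha-1/2}e^{z}$, while near $0$, $h_\alpha(z)\to 2^{-\alpha}/\Gamma(\alpha+1)$ by \eqref{equ:hexpansion}. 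Consequently there is a constant $C=C(\alpha)$ with $h_\alpha(z)\le C(1+z)^{-\alpha-1/2}e^{z}$ for all $z\ge 0$.

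First I would prove \eqref{q_sup}. Using the pointwise bound on $h_\alpha$ we get
\begin{align*}
p_1(x,y)\le C\,y^{2\alpha}(1+xy)^{-\alpha-1/2}e^{-(x^2+y^2)/2+xy}= C\,y^{2\alpha}(1+xy)^{-\alpha-1/2}e^{-(x-y)^2/2}.
\end{align*}
When $xy\le 1$ this is at most $C\,y^{2\alpha}e^{-(x-y)^2/2}$, and since $y\le x+1$ on this region (as $xy\le1$ forces $y$ small unless $x$ small), one checks $y^{2\alpha}e^{-(x-y)^2/2}$ is bounded; when $xy\ge 1$ we bound $(1+xy)^{-\alpha-1/2}\le (xy)^{-\alpha-1/2}$, giving $p_1(x,y)\le C\,y^{\alpha-1/2}x^{-\alpha-1/2}e^{-(x-y)^2/2}$, which is bounded uniformly because $y^{\alpha-1/2}x^{-\alpha-1/2}=(y/x)^{\alpha-1/2}x^{-1}$ and the Gaussian factor controls the ratio $y/x$ away from the diagonal while on the diagonal $x^{-1}\le 1$. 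A clean way to package this: substitute $u=x-y$ and note $p_1(x,y)\le C\max(1,(y/x)^{\alpha-1/2})\,x^{-1}e^{-u^2/2}$ when $x\ge 1$, handled by cases $y\le x$ and $y>x$, while the region $x\le 1$ (so also $y$ bounded on the relevant set or else Gaussian decay) is trivially bounded.

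Next I would prove \eqref{q_sum} by comparing the sum $M^{-1}\sum_{j\ge 0}p_1(x,jM^{-1})$ to the integral $\int_0^\infty p_1(x,y)\,dy$, which equals $1$ since $p_1(x,\cdot)$ is a probability density. The error in this Riemann-sum comparison is controlled by a one-sided modulus estimate: by \eqref{q_sup} the summand is uniformly bounded, and by monotonicity/unimodality of $y\mapsto p_1(x,y)$ (it increases then decreases, being a smooth density vanishing at $0$ and $\infty$) the sum over a grid of mesh $M^{-1}\le 1$ exceeds the integral by at most the maximal value of the integrand, i.e.\ by at most $C_0$. Concretely, split $[0,\infty)$ at the mode $y_*(x)$; on each of the two monotone pieces the Riemann sum with mesh $M^{-1}$ differs from the integral by at most $M^{-1}\cdot\sup_y p_1(x,y)\le C_0$ (here we also use $M\ge 1$). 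Hence $M^{-1}\sum_j p_1(x,jM^{-1})\le 1+2C_0=:C_1$, uniformly in $x\ge 0$ and $M\ge 1$.

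The main obstacle is simply organizing the case analysis in \eqref{q_sup} cleanly—one must be careful that the prefactor $y^{2\alpha}$ does not blow up, which is why the combination with the Gaussian $e^{-(x-y)^2/2}$ and the decaying Bessel asymptotic $h_\alpha(z)\lesssim z^{-\alpha-1/2}e^z$ is essential; without the $z^{-\alpha-1/2}$ decay one would only get a bound growing like $y^{\alpha}$. For \eqref{q_sum} the only subtlety is justifying unimodality of $p_1(x,\cdot)$ (which follows from differentiating \eqref{def:pF} and using that $h_\alpha'/h_\alpha$ is increasing, or more simply from the fact that a squared-Bessel transition density in the forward variable is log-concave composed with the Gaussian factor); alternatively one avoids unimodality altogether by bounding $M^{-1}\sum_j p_1(x,jM^{-1})\le \int_0^\infty p_1(x,y)\,dy + M^{-1}\sum_j \sup_{|y-jM^{-1}|\le M^{-1}}|\partial_y p_1(x,y)|\cdot M^{-1}$ and estimating $\int |\partial_y p_1(x,y)|\,dy$ directly, though the unimodality route is shorter.
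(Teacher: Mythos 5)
Your treatment of \eqref{q_sup} is essentially the same as the paper's: both hinge on the two regimes of $h_\alpha$, namely the power-series expansion \eqref{equ:hexpansion} near $0$ and $I_\alpha(z)\sim(2\pi z)^{-1/2}e^z$ at infinity, combined with the factorization $e^{-(x^2+y^2)/2+xy}=e^{-(x-y)^2/2}$. (Two small things to fix if you write this out: the display \eqref{def:pF} has a typo, the correct power is $y^{2\alpha+1}$, not $y^{2\alpha}$, as the paper's own proof uses; and your auxiliary claim that $xy\le 1$ forces $y\le x+1$ is false, take $x$ small and $y$ large, though the intended conclusion still holds. The paper cleans up the case analysis by substituting $u=\sqrt{xy}$, $v=\sqrt{y/x}$, which you may find tidier.)

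For \eqref{q_sum} the overall scheme, split at the mode and compare the Riemann sum to $\int_0^\infty p_1(x,y)\,dy=1$ on each monotone piece, is the paper's argument. But the way you justify unimodality is the real gap. ``A smooth density vanishing at $0$ and $\infty$'' does \emph{not} imply unimodality (a mixture of two well-separated bumps is a counterexample), and your first proposed fix, ``$h_\alpha'/h_\alpha$ is increasing,'' points in the wrong direction: that is log-\emph{convexity} of $h_\alpha$, and any convexity of $\log h_\alpha(xy)$ in $y$ works \emph{against} concavity of $\log p_1(x,\cdot)$. What one actually needs is an upper bound $(\log h_\alpha)''(z)\le(2\alpha+1)/z^2$, i.e.\ log-\emph{concavity} of $z^{2\alpha+1}h_\alpha(z)=z^{\alpha+1}I_\alpha(z)$. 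The paper invokes exactly this nontrivial fact from \cite[Theorem 4]{NNW}; together with the obviously concave factors $(\alpha+1)\log y$ and $-y^2/2$ it gives strict log-concavity of $p_1(x,\cdot)$, hence unimodality. Note also that the paper's Lemma~\ref{lem:h-convex} goes the other way (it is a \emph{lower} bound on $(\log h_\alpha)''+z^{-1}(\log h_\alpha)'$) and cannot substitute for the \cite{NNW} input. Your second alternative, bounding the Riemann-sum error by $M^{-1}\int_0^\infty|\partial_y p_1(x,y)|\,dy$, is a legitimate route around unimodality, but you would then have to show $\sup_x\int_0^\infty|\partial_y p_1(x,y)|\,dy<\infty$, which is not carried out and is not obviously simpler than the citation the paper uses.
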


\begin{proof}
We start with prove \eqref{q_sup}. From Corollary~\ref{cor:ystar}, $\sup_{y\in [0,\infty)}p_1(x,y)=p(x,y^*(x))$. Because $\lim_{x\to\infty} |y^*(x)-x|=0$, we have from \eqref{equ:q_asymp} that $\lim_{x\to\infty}p_1(x,y^*(x))=(2\pi)^{-1/2}$. Then \eqref{q_sup} follows from the continuity of $p(x,y^*(x))$.\\[-0.3cm]

Next, we turn to \eqref{q_sum}. Let $j_0=j_0(x,M)\coloneqq \lfloor My_*(x)  \rfloor$. From Lemma~\ref{lem:ystar}, $p_1(x,y)$ is non-decreasing for $y\in [0,y_*(x)]$ and is non-increasing for $y\in [y_*(x),\infty)$. By integral comparison,
\begin{align*}
\sum_{0\leq j\leq j_0-1 } M^{-1} p_1(x,jM^{-1})\leq& \int_0^{j_0 }p_1(x,y)\, dy,\ \sum_{j\geq j_0 +2 } M^{-1}p_1(x,jM^{-1})\leq \int_{j_0 +1}^{\infty}p_1(x,y)\, dy.
 \end{align*}  
Together with \eqref{q_sup}, for $x\in[0,\infty)$ and $M\geq 1$,
\begin{align*}
\sum_{j\geq 0 } M^{-1}p_1(x,jM^{-1})\leq \int_{0}^{\infty}p_1(x,y)\, dy+\frac{2}{M}\sup_{y\in [0,\infty)} p_1(x,y)\leq 1+2C_0.
\end{align*}
\eqref{q_sum} then follows by taking $C_1=1+2C_0$.
\end{proof}

\begin{lemma}\label{lem:BBsum_2}
For any $\varepsilon>0$, there exists $L>0$ such that
\begin{align}\label{q_outside}
\sup_{x\in [0,\infty), M\geq 1} M^{-1}\sum_{|jM^{-1}-x|\geq L}  p_1(x,jM^{-1})\leq \varepsilon.
\end{align}
\end{lemma}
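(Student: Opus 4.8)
The plan is to dominate the transition density $p_1(x,\cdot)$ by explicit unimodal majorants with Gaussian tails, and then bound the lattice sum restricted to $\{|y-x|\ge L\}$ by a tail integral of the majorant, exploiting monotonicity. The resulting bound will be independent of both $x\ge 0$ and $M\ge 1$, and it will tend to $0$ as $L\to\infty$, which is exactly what \eqref{q_outside} asks for.

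First I would record the pointwise bounds. Combining the power series \eqref{equ:hexpansion} with the classical large-$z$ asymptotics $I_\alpha(z)\sim e^{z}/\sqrt{2\pi z}$, the function $z\mapsto(1+z)^{\alpha+1/2}e^{-z}h_\alpha(z)$ is continuous on $[0,\infty)$ with a finite limit at infinity, hence bounded, so there is $C=C(\alpha)$ with $h_\alpha(z)\le C(1+z)^{-\alpha-1/2}e^{z}$ for all $z\ge 0$ (in particular $h_\alpha(z)\le Ce^{z}$). Feeding this into the formula $p_1(x,y)=y^{2\alpha+1}e^{-(x^2+y^2)/2}h_\alpha(xy)$ from \eqref{equ:F} gives, for all $x,y\ge 0$, the crude bound $p_1(x,y)\le C\,y^{2\alpha+1}e^{-(y-x)^2/2}$, and, when $x\ge 1$, after using $(1+xy)^{-\alpha-1/2}\le(xy)^{-\alpha-1/2}$ and $y^{2\alpha+1}/(xy)^{\alpha+1/2}=(y/x)^{\alpha+1/2}$, the refined bound $p_1(x,y)\le C\,(y/x)^{\alpha+1/2}e^{-(y-x)^2/2}\le C\,(1+|y-x|)^{\alpha+1/2}e^{-(y-x)^2/2}$.

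Next I would pass from the lattice sum to a tail integral. Set $\varphi(s):=(1+s)^{\alpha+1/2}e^{-s^2/2}$ and $\rho(y):=y^{2\alpha+1}e^{-(y-1)^2/2}$ for $s,y\ge 0$; both are unimodal with modes bounded by a constant $C_1=C_1(\alpha)$, and both are decreasing on $[C_1,\infty)$. Fix $\varepsilon>0$ and take $L\ge C_1+2$. For $x\ge 1$, the points $s_j:=jM^{-1}-x$ form an arithmetic progression of spacing $M^{-1}\le 1$ inside $[-x,\infty)$; since $s\mapsto\varphi(|s|)$ is increasing on $(-\infty,-L]$ and decreasing on $[L,\infty)$, the standard comparison of a monotone function with its Riemann sum on each of the two pieces (starting each integral at $L-M^{-1}\ge L-1$) gives $M^{-1}\sum_{|jM^{-1}-x|\ge L}p_1(x,jM^{-1})\le 2C\int_{L-1}^{\infty}\varphi(s)\,ds=:\psi_1(L)$. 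For $0\le x<1$ and $L\ge 2$, the condition $|jM^{-1}-x|\ge L$ forces $jM^{-1}\ge x+L\ge L\ge 2$ (the alternative $jM^{-1}\le x-L$ is vacuous), and on $\{y\ge L\}$ one has $(y-x)^2\ge(y-1)^2$, so $p_1(x,jM^{-1})\le C\rho(jM^{-1})$; the same monotone comparison, now on a single decreasing branch, yields $M^{-1}\sum_{|jM^{-1}-x|\ge L}p_1(x,jM^{-1})\le C\int_{L-1}^{\infty}\rho(y)\,dy=:\psi_2(L)$. Both $\psi_1(L)$ and $\psi_2(L)$ are finite and tend to $0$ as $L\to\infty$, uniformly in $x$ and $M$, so choosing $L$ with $\psi_1(L),\psi_2(L)\le\varepsilon$ finishes the argument.

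The step I expect to be the main obstacle is the first one: near its peak $p_1(x,\cdot)$ has size $O(1)$ uniformly, even for large $x$, so the crude majorant $C\,y^{2\alpha+1}e^{-(y-x)^2/2}$ overshoots it there by a factor of order $x^{2\alpha+1}$, and the tail integral of that majorant is \emph{not} uniform in $x$. It is therefore essential to exploit the sharper decay $I_\alpha(z)\lesssim e^{z}/\sqrt{z}$ — equivalently the $(y/x)^{\alpha+1/2}$ normalization visible in $p_1$ — which, however, is only useful once $x$ is bounded away from $0$; the complementary regime $x<1$ then has to be (and can be) handled by the crude majorant precisely because $x$ is bounded there. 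A minor but genuine bookkeeping point is the uniformity in $M\ge 1$: it enters only through $M^{-1}\le 1$, which is what makes the Riemann-sum-versus-integral comparison uniform.
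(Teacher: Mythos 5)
Your proof is correct. The overall strategy is the same as the paper's: split the sum into the regimes $x$ near the origin and $x$ away from the origin, pass from the lattice sum to a tail integral via a monotone comparison, and then show the tail integral is small uniformly in $x$. What differs is the key technical input. The paper's proof invokes the exact log-concavity of $y\mapsto p_1(x,y)$ (established in the proof of Lemma~\ref{lem:BBsum_1} via \cite{NNW}), giving a unique maximizer $y_*(x)$; it then asserts that $|y_*(x)-x|$ is bounded (stated as ``easily checked'') and performs the Riemann-sum comparison on $p_1$ itself, reducing to the integral $\int_{|y-x|\ge L/2}p_1(x,y)\,dy$, which is finally estimated using the large-$x$ asymptotics \eqref{equ:q_asymp}. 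You instead sidestep the log-concavity result entirely by dominating $p_1$ with two explicit, $x$-free unimodal majorants — $\varphi(|y-x|)$ for $x\ge1$ and $\rho(y)$ for $x<1$ — deduced from the elementary bound $h_\alpha(z)\le C(1+z)^{-\alpha-1/2}e^z$, and then apply the Riemann-sum comparison to the majorants. This is somewhat more self-contained (no appeal to \cite{NNW} or to the unproven boundedness of $|y_*(x)-x|$) at the cost of the small preliminary estimate on $h_\alpha$; the paper's route reuses machinery already developed for Lemma~\ref{lem:BBsum_1}. Both lead to the same uniformity in $M\ge1$ through the observation $M^{-1}\le 1$.
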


\begin{proof} Let $y_*(x)$ be the unique maximum point of $p_1(x,y)$ given in Corollary~\ref{cor:ystar}. Let $L_0$ be the constant in Corollary~\ref{cor:ystar}. Then $y_*(x)\in [x-L/2,x+L/2]$ for all $L\geq 2L_0$. Together with the monotonicity of $p_1(x,y)$ on $[0,y_*(x)]$ and $[y_*(x),\infty)$, it holds that
\begin{align}\label{3}
M^{-1}\sum_{|jM^{-1}-z|\geq L}  p_1(x,jM^{-1})\leq \int_{ |y-x|\geq L/2,y\geq 0} p_1(x,y)\, dy.  
\end{align}
It suffices to show that by taking $L$ large enough, the right hand side of \eqref{3} is small for all $x\in [0,\infty)$.\\[-0.3cm]

Given $\varepsilon>0$. We aim to show that for $L$ large enough, for all $x\geq 0$ it holds that
\begin{align*}
\int_{|y-x|\leq L/2, y\geq 0} p_1(x,y)\, dy\geq 1-\varepsilon.
\end{align*}
Suppose the above fails. There exists a sequence $x_n$ such that 
\begin{align*}
\int_{|y-x_n|\leq n, y\geq 0} p_1(x_n,y)\, dy< 1-\varepsilon.
\end{align*}
In view of \eqref{equ:q_asymp}, the sequence $x_n$ is bounded. Let $x_{n_k}$ be a convergent subsequence of $x_n$ and $x_0$ be the limit. Because $p_1(x_{n_k},\cdot)$ converges to $p_1(x_{0},\cdot)$ locally uniformly, we have for all $L>0$,
\begin{align*}
\int_{|y-x_0|\leq L/2, y\geq 0} p_1(x_0,y)\, dy< 1-\varepsilon.
\end{align*}
This is impossible. The proof is finished.
\end{proof}

\section{Discrete Approximation of non-intersecting Bessel Bridge Ensemble}\label{sec:discrete}
In this section, we show that non-intersecting Bessel bridge ensembles can be approximated through discretization. This completes the {\em fourth step} in the proof of Proposition~\ref{lem:monotonicity}.

We begin by recalling the setting. Fix $\alpha\geq 0$, $k\in\mathbb{N}$. Let $(f,g)$ be a pair of functions defined on $[0,1]$ that satisfies the continuity assumption in Definition~\ref{def:continuityassumption}. Let $\vec{x}$, $\vec{y}\in\mathbb{R}^k_+$ be two vectors. We consider the $k$ independent squared $\alpha$-Bessel bridges on $[0,1]$ with entrance and exit data $(\vec{x},\vec{y})$. Their law is denoted by $\mathbb{P}_{\free}^{1,k,(0,1),\vec{x},\vec{y}}$. The law of the non-intersecting Bessel bridge ensemble, denoted by $\mathbb{P}^{1,k,(0,1),\vec{x},\vec{y},f,g}$, is obtained from conditioning $\mathbb{P}_{\free}^{1,k,(0,1),\vec{x},\vec{y}}$ on the event that all of the curves mutually avoid each other and $f(x), g(x)$. See Definition \ref{def:Bessel bridge LE} for details. To make sure $\mathbb{P}^{1,k,(0,1),\vec{x},\vec{y},f,g}$ is well defined, we assume that

\begin{equation}\label{equ:Z>0-app}
Z^{1,k,(0,1),\vec{x},\vec{y},f,g}>0
\end{equation} 
See \eqref{eqn:normalcont_Bessel} for the definition of $Z^{1,k,(0,1),\vec{x},\vec{y},f,g}$. 

For technical reasons, we prefer to work with Bessel bridges instead of squared Bessel bridges. We view $\mathbb{P}^{1,k,(0,1),\vec{x},\vec{y},f,g}$ and $\mathbb{P}_{\free}^{1,k,(0,1),\vec{x},\vec{y}}$ as Borel measures on $C([1,k]_{\mathbb{Z}}\times [0,1],\mathbb{R})$. We write $\mathbb{Q}$ and $\mathbb{Q}_{\free}$ for the measures obtained by pushing forward $\mathbb{P}^{1,k,(0,1),\vec{x},\vec{y},f,g}$ and $\mathbb{P}_{\free}^{1,k,(0,1),\vec{x},\vec{y}}$ through the map $h(t)\mapsto \sqrt{|h(t)|}$ respectively. The goal of this section is to prove Proposition~\ref{lem:discrete}, which approximates $\mathbb{Q}$ through discretization.\\[-0.3cm]

Take $\ell \in\N$ and $M\geq 1$ and let $K=2^{\ell}$. Recall that $\Omega_{M,\ell}$ is defined in \eqref{equ:OmegaMl} and that $\Omega_{M,\ell}$ can be identified as a subset of $C([1,k]_{\mathbb{Z}}\times [0,1],\mathbb{R})$ through \eqref{OmegaMktoC}. In other words, we divide $[0,L]$ into $K$ subintervals with equal lengths and use $M$ as a parameter to discretize the height of the curves.\\[-0.3cm]

The weights $W_{\free}(\mathbf{z})$ and $W(\mathbf{z})=W_{\free}(\mathbf{z})G(\mathbf{z})$ for $\mathbf{z}\in\Omega_{M,\ell}$ are defined in \eqref{def:weightfree}. Let $\mathbb{Q}_{M,\ell,\free}$ and $\mathbb{Q}_{M,\ell}$ be the probability measures on $\Omega_{M,\ell}$ which is proportional to $W_{\free}$ and $W$ respectively. In other words, for any $\mathbf{z}\in \Omega_{M,\ell}$,
\begin{equation}
\mathbb{Q}_{M,\ell,\free}(\mathbf{z})\propto W_{\free}(\mathbf{z})\ \textup{and}\ \mathbb{Q}_{M,\ell}(\mathbf{z})\propto W(\mathbf{z}).
\end{equation}
Through \eqref{OmegaMktoC}, $\mathbb{Q}_{M,\ell,\free}$ and $\mathbb{Q}_{M,\ell}$ can be viewed as probability measures on $C([1,k]_{\mathbb{Z}}\times [0,1],\mathbb{R})$. Moreover, $G(\mathbf{z})$ is the indicator function of the set $A_{\ell}\subset C([1,k]_{\mathbb{Z}}\times [0,1],\mathbb{R})$ defined by
\begin{equation}\label{def:Al}
\begin{split}
A_{\ell}\coloneqq\{h(t)\in C([1,k]_{\mathbb{Z}}\times [0,1],\mathbb{R})\, |\, &\sqrt{ f(t)}<h_1(t)<\dots<h_k(t)<\sqrt{g(t)}\\
 &\textup{for all}\ t=n/K, n\in [1,K-1]_{\mathbb{Z}}  \}.
\end{split}
\end{equation} 
Similarly, define
\begin{equation}\label{def:Anol}
\begin{split}
A\coloneqq\{h(t)\in C([1,k]_{\mathbb{Z}}\times [0,1],\mathbb{R})\, |\, &\sqrt{ f(t)} <h_1(t)<\dots<h_k(t)<\sqrt{ g(t)}\quad \textup{for all}\ t\in [0,1] \}.
\end{split}
\end{equation}
Because of \eqref{equ:Z>0-app}, there exists $\ell_0$ and $M_0$ such that for all $\ell\geq \ell_0$ and $M\geq M_0$, $\mathbb{Q}_{M,\ell,\free}(A_\ell)> 0$. From now on we always assume $\ell\geq \ell_0$ and $M\geq M_0$. Clearly $\mathbb{Q}_{M,\ell}$ equals $\mathbb{Q}_{M,\ell,\free}$ conditioning on $A_{\ell}$. In other words, for all Borel subsets $E\subset C([1,k]_{\mathbb{Z}}\times [0,1],\mathbb{R})$, 
\begin{align}\label{equ:QQQE}
\mathbb{Q}_{M,\ell}(E)=  {\mathbb{Q}_{M,\ell,\free}(E\cap A_{\ell})}\big/{\mathbb{Q}_{M,\ell,\free} ( A_{\ell})}.
\end{align}
To compare, the measures of non-intersection/free Bessel bridges have the relation
\begin{align}\label{equ:QE}
\mathbb{Q}(E)= {\mathbb{Q}_{\free}(E\cap A)}\big/{\mathbb{Q}_{ \free} ( A)}.
\end{align} 
The goal is to prove the following proposition.
\begin{proposition}\label{lem:discrete}
As $M$ and $\ell$ go to infinity, $\mathbb{Q}_{M,\ell} $ converges weakly to $ \mathbb{Q}$.
\end{proposition}

We first show that as $M$ goes to infinity, $\mathbb{Q}_{M,\ell,\free}$ converges the marginal law of $\mathbb{Q}_{\free}$ restricted on $t=n/K$. We write $\mathbb{Q}_{\ell, \free}$ for such a law.
\begin{lemma}\label{lem:freeconvergence-1}
As $M$ goes to infinity, $\mathbb{Q}_{M,\ell,\free}$ converges to $Q_{\ell,\free}$ weakly.
\end{lemma}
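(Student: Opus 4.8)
The plan is to prove this convergence by showing that the finitely many coordinates $(z_{i,j})$, $i\in[1,k]_{\mathbb{Z}}$, $j\in[1,2^\ell-1]_{\mathbb{Z}}$, converge in distribution as $M\to\infty$. Since $\ell$ is fixed here, $\mathbb{Q}_{M,\ell,\free}$ lives (via the interpolation \eqref{OmegaMktoC-app}) on a finite-dimensional family of piecewise-linear curves, and $Q_{\ell,\free}$ is the law of the piecewise-linear interpolation of the values of the $k$ free Bessel bridges at the dyadic times $j/2^\ell$. So it suffices to prove that the discrete probability vector on $\Omega_{M,\ell}$ converges weakly to the continuous density on $[0,\infty)^{k(2^\ell-1)}$ obtained from the Bessel-bridge joint law. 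The first step is to write both densities explicitly: the continuous joint density of $(X_{i,j})$ (values of the $i$-th free Bessel bridge at time $j/2^\ell$) is proportional to $\prod_{i=1}^k\prod_{j=1}^{2^\ell} p_{2^{-\ell}}(z_{i,j-1},z_{i,j})$ with $z_{i,0}=\sqrt{x_i}$, $z_{i,2^\ell}=\sqrt{y_i}$ — exactly $W_{\free}(\vec z)$ — normalized by its integral over $[0,\infty)^{k(2^\ell-1)}$. This is just the Markov property / Chapman--Kolmogorov identity for the Bessel process applied to each of the $k$ independent bridges, and it identifies $Q_{\ell,\free}$ with the probability measure having Lebesgue density $Z^{-1}W_{\free}$.

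Next I would set up a Riemann-sum comparison. Let $\mu_M$ be the measure on $[0,\infty)^{k(2^\ell-1)}$ that puts mass $M^{-k(2^\ell-1)}W_{\free}(\vec z)$ at each lattice point $\vec z\in\Omega_{M,\ell}$; then $\mathbb{Q}_{M,\ell,\free}=\mu_M/\mu_M\big([0,\infty)^{k(2^\ell-1)}\big)$. I claim $\mu_M\to \big(\int W_{\free}\,d\mathrm{vol}\big)\cdot Q_{\ell,\free}$ weakly (indeed in total variation after normalization). The key inputs are the regularity estimates on $p_1$ proved in Appendix~\ref{sec:BE}: from \eqref{q_sup}, $p_1$ is bounded, hence by Brownian scaling $p_{2^{-\ell}}$ is bounded by $2^{\ell/2}C_0$; from \eqref{q_sum} the row sums $M^{-1}\sum_j p_1(x,jM^{-1})$ are uniformly bounded, which gives a uniform bound $M^{-k(2^\ell-1)}\sum_{\vec z}W_{\free}(\vec z)\le C_1^{k(2^\ell-1)}$ by summing one coordinate at a time (the $\Omega_{M,\ell}$ truncation to $[0,M]$ only lowers this); and from \eqref{q_outside} the tails are uniformly small, so no mass escapes to infinity or is lost by the $z_{i,j}\le M$ cutoff as $M\to\infty$. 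Together with the continuity of $p_{2^{-\ell}}(x,y)$ in $(x,y)$ on $[0,\infty)^2$ (immediate from \eqref{def:pF} and the entireness of $h_\alpha$), a standard dominated-convergence / Riemann-sum argument shows that for every bounded continuous $G$ on the coordinate space, $M^{-k(2^\ell-1)}\sum_{\vec z\in\Omega_{M,\ell}}G(\vec z)W_{\free}(\vec z)\to\int G\,W_{\free}\,d\mathrm{vol}$. Normalizing (the denominators converge to $\int W_{\free}>0$ by taking $G\equiv 1$ together with \eqref{equ:Z>0-app} at the level of the free law) yields $\mathbb{Q}_{M,\ell,\free}\to Q_{\ell,\free}$ on coordinates. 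Finally, because the map sending a coordinate vector to its piecewise-linear interpolation is continuous from $[0,\infty)^{k(2^\ell-1)}$ into $C([1,k]_{\mathbb{Z}}\times[0,1],\mathbb{R})$, the continuous mapping theorem transfers this to weak convergence of the measures on $C([1,k]_{\mathbb{Z}}\times[0,1],\mathbb{R})$, which is the assertion.

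The main obstacle I anticipate is the handling of the two truncations simultaneously — the spatial cutoff $z_{i,j}\le M$ and the lattice spacing $M^{-1}$ are tied to the same parameter $M$, so one cannot first send the mesh to zero and then the box to infinity. This is exactly why the uniform tail bound \eqref{q_outside} (uniform in both $x$ and $M$) is needed: it lets one choose $L$ so that the contribution of lattice points with $|jM^{-1}-x|\ge L$ is negligibly small for \emph{all} $M$, reducing the problem to a compact region where the uniform continuity of $p_{2^{-\ell}}$ makes the Riemann-sum convergence routine. A secondary point requiring a little care is the possible singularity of the density at a coordinate hyperplane $z_{i,j}=0$ when $\alpha$ is small or when $x_i=0$ (so that $z_{i,0}=0$); but $p_1(0,y)$ from \eqref{def:p} is still bounded and $p_1(x,0)=0$, so the factor $y^{2\alpha+1}$ only improves integrability near zero and causes no trouble in either the domination or the Riemann-sum step. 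I would also remark that the argument here is the free (unconditioned) analogue; the conditioned statement will follow in the subsequent lemma by intersecting with the events $A_\ell$, $A$ and using that $\partial A$ is a $Q_{\ell,\free}$-null set.
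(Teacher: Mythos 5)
Your proposal is correct and follows essentially the same strategy as the paper: identify $Q_{\ell,\free}$ with the normalized density $W_{\free}$, approximate the normalizing constant and test-function integrals by Riemann sums over $\Omega_{M,\ell}$, and control the twin $M$-dependent truncations (mesh $M^{-1}$ and box $[0,M]$) via the uniform sup, row-sum, and tail bounds of Lemmas~\ref{lem:BBsum_1} and~\ref{lem:BBsum_2} together with Brownian scaling; the paper reduces to $k=1$ and argues with CDFs rather than general bounded continuous test functions, but that is cosmetic. One small slip: the parenthetical ``indeed in total variation after normalization'' is false, since a purely atomic measure cannot converge in total variation to an absolutely continuous one (the TV distance is identically $2$); dropping that parenthetical leaves the argument intact.
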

\begin{proof}
Under the law of $\mathbb{Q}_{M,\ell,\free}$ or $\mathbb{Q}_{\ell,\free}$, curves at different levels are independent of each other. Therefore it suffices to consider the special case $k=1$. Let $x,y\in [0,\infty)$ be the entrance and exit data. In this proof we always use the convention that $z_0=\sqrt{x}$ and $z_{K}=\sqrt{y}$. For simplicity, we denote $p_{K^{-1}}(\cdot,\cdot)$, the transition probability for Bessel processes \eqref{def:pF}, by $p(\cdot,\cdot)$\\[-0.3cm]

Let $(z_1, \cdots, z_{K-1})$ be distributed according to $\mathbb{Q}_{M,\ell,\free}$. Given $(w_1,w_2,\dots, w_{K-1})\in [0,\infty)^{K-1}$, it holds that
\begin{align*}
&\mathbb{Q}_{M,\ell,\free}(z_j\leq w_j,\ \textup{for all}\ j\in [1,K-1]_{\mathbb{Z}})=\sum_{\substack{z\in\Omega_{M,\ell}\\ z_j\leq w_j}}\prod_{j=1}^K p( z_{j-1},z_j)\bigg/\sum_{z\in\Omega_{M,\ell}}\prod_{j=1}^K p(  z_{j-1},z_j).
\end{align*}
It suffices to show that 
\begin{align}\label{totalmass}
\lim_{M\to\infty} M^{-K+1} \sum_{z\in\Omega_{M,\ell}}\prod_{j=1}^K p( z_{j-1},z_j)=\int_{\mathbb{R}^{K-1}} \prod_{j=1}^{K} p( z_{j-1},z_j)\, \prod_{j=1}^{K-1} dz_j
\end{align}
and that
\begin{align}\label{partialmass}
\lim_{M\to\infty}M^{-K+1} \sum_{\substack{z\in\Omega_{M,\ell}\\ z_j\leq w_j}}\prod_{j=1}^K p(  z_{j-1},z_j)=\int_{\{z_j\leq w_j\} } \prod_{j=1}^{K} p( z_{j-1},z_j)\, \prod_{j=1}^{K-1} dz_j.
\end{align}

Next, we prove \eqref{totalmass} and \eqref{partialmass}. Because $p(x,y)$ is a continuous function, for any $L>0$, the following Riemann sum converges to the Riemann integral,
\begin{align}\label{15}
\lim_{M\to\infty} M^{-K+1} \sum_{\substack{z\in\Omega_{M,\ell}\\ |z_j-z_{j-1}|\leq L}}\prod_{j=1}^K p( z_{j-1},z_j)=\int_{\{|z_j-z_{j-1}|\leq L\}} \prod_{j=1}^{K} p( z_{j-1},z_j)\, \prod_{j=1}^{K-1} dz_j.
\end{align}
From Lemma \ref{lem:BBsum_1}, Lemma \ref{lem:BBsum_2} and \eqref{q_scaling}, for any $m\in [1,K]_{\mathbb{Z}}$, we have
\begin{align*}
M^{-K+1} \sum_{\substack{z\in\Omega_{M,\ell}\\ |z_m-z_{m-1}|> L}}\prod_{j=1}^K p(z_{j-1},z_j)\leq o_L (1) .
\end{align*}
Here $o_L(1)$ denotes a quantity which converges to zero when $L$ goes to infinity.
In particular, for all $M\geq 1$,
\begin{align}\label{16}
M^{-K+1}\left| \sum_{\substack{z\in\Omega_{M,\ell}\\ |z_j-z_{j-1}|\leq L}}\prod_{j=1}^K p( z_{j-1},z_j)-\sum_{  z\in\Omega_{M,\ell} }\prod_{j=1}^K p( z_{j-1},z_j)\right|\leq o_L(1).
\end{align}
Combining \eqref{15} and \eqref{16}, \eqref{totalmass} follows. The argument for \eqref{partialmass} is similar and we omit it. The proof is finished.
\end{proof}
\begin{lemma}\label{lem:freeconvergence-2}
As $\ell$ goes to infinity, $\mathbb{Q}_{\ell,\free}$ converges to $\mathbb{Q}_{\free}$ weakly.
\end{lemma}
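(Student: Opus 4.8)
The plan is to realize $\mathbb{Q}_{\ell,\free}$ and $\mathbb{Q}_{\free}$ on a common probability space and then to upgrade an almost sure convergence to the asserted weak convergence. Concretely, I would work on the space carrying $\mathbb{Q}_{\free}$ and write $h=(h_1,\dots,h_k)$ for the canonical element; since $\mathbb{Q}_{\free}$ is the pushforward of $\mathbb{P}^{1,k,(0,1),\vec{x},\vec{y}}_{\free}$ under $h\mapsto\sqrt{|h|}$, the path $h$ is a vector of $k$ independent $\alpha$-Bessel bridges with $h_i(0)=\sqrt{x_i}$ and $h_i(1)=\sqrt{y_i}$. For each $\ell$, let $h^{(\ell)}$ denote the piecewise-linear element of $C([1,k]_{\mathbb{Z}}\times[0,1],\mathbb{R})$ obtained from $h$ by the interpolation rule \eqref{OmegaMktoC-app} applied to the sampled values $\{h_i(j/2^{\ell})\}$. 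By construction $h^{(\ell)}$ has law $\mathbb{Q}_{\ell,\free}$, so it suffices to show $h^{(\ell)}\to h$ in a suitable sense.

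The key step is a deterministic estimate: for any continuous $\phi\colon[0,1]\to\mathbb{R}$ with dyadic level-$\ell$ linear interpolant $\phi^{(\ell)}$, one has $\sup_{t\in[0,1]}|\phi^{(\ell)}(t)-\phi(t)|\le \omega(\phi;2^{-\ell})$, where $\omega(\phi;\delta):=\sup\{|\phi(s)-\phi(t)|:|s-t|\le\delta\}$. This is immediate, since on each dyadic cell of length $2^{-\ell}$ the value $\phi^{(\ell)}(t)$ is a convex combination of $\phi$ evaluated at the two endpoints, each of which lies within $2^{-\ell}$ of $t$. Applying this coordinatewise gives $\sup_{1\le i\le k}\sup_{t\in[0,1]}|h^{(\ell)}_i(t)-h_i(t)|\le \max_{1\le i\le k}\omega(h_i;2^{-\ell})$. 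Because each $h_i$ is almost surely continuous on the compact interval $[0,1]$, it is uniformly continuous there, so the right-hand side tends to $0$ almost surely as $\ell\to\infty$. Hence $h^{(\ell)}\to h$ almost surely in $C([1,k]_{\mathbb{Z}}\times[0,1],\mathbb{R})$ with the uniform topology, and therefore $\mathbb{Q}_{\ell,\free}$ converges weakly to $\mathbb{Q}_{\free}$.

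If instead one prefers the finite-dimensional-distributions-plus-tightness template used elsewhere in the paper, the same ingredients deliver it: for $\ell$ large enough that a prescribed finite set of times is dyadic of level at most $\ell$, the corresponding finite-dimensional marginal of $\mathbb{Q}_{\ell,\free}$ agrees exactly with that of $\mathbb{Q}_{\free}$, and for arbitrary times the interpolated values converge to $h_i(t)$ by path continuity, yielding convergence of all finite-dimensional distributions; tightness of $\{\mathbb{Q}_{\ell,\free}\}_\ell$ follows from the modulus bound above, using that the single fixed measure $\mathbb{Q}_{\free}$ is tight and, by Arzel\`a--Ascoli, concentrates most of its mass on a compact set of paths with uniformly small modulus of continuity, a property that transfers to the interpolants $h^{(\ell)}$ uniformly in $\ell$.

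I do not anticipate a genuine obstacle. The points meriting the most care are the interpolation modulus inequality and the elementary implication that almost sure convergence (under a coupling) implies weak convergence; everything else is routine uniform continuity of Bessel bridge paths. The statement is in fact considerably softer than Lemma~\ref{lem:freeconvergence-1}, since here the limit $\mathbb{Q}_{\free}$ is an honest fixed probability measure rather than an object that must itself be approximated. Combined with Lemma~\ref{lem:freeconvergence-1}, this will feed, through a diagonal choice $M=M(\ell)$ and a short argument that the normalizing masses $\mathbb{Q}_{M,\ell,\free}(A_\ell)$ converge to $\mathbb{Q}_{\free}(A)$, into the proof that the conditioned measures $\mathbb{Q}_{M,\ell}$ converge weakly to $\mathbb{Q}$.
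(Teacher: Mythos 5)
Your proposal is correct and follows essentially the same route as the paper: realize $\mathbb{Q}_{\ell,\free}$ as the law of the dyadic linear interpolant of a Bessel bridge distributed according to $\mathbb{Q}_{\free}$, deduce almost sure uniform convergence from path continuity, and conclude weak convergence. The paper's proof is a condensed version of your first two paragraphs (it reduces to $k=1$ by independence and omits the explicit modulus-of-continuity inequality, but the coupling and the appeal to a.s.\ uniform convergence are identical).
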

\begin{proof}
It suffices to consider the case $k=1$. Fix the entrance and exit data $x,y\in (0,\infty)$. Let $\mathcal{S}$ be a Bessel bridge on $[0,1]$ with $\mathcal{S}(0)=\sqrt{x}$ and $\mathcal{S}(1)=\sqrt{y}$. Note that $\mathbb{Q}_{\free}$ is the law of $\mathcal{S}$. Let
\begin{align*}
\mathcal{S}_\ell(t)=\left\{ \begin{array}{cc}
\mathcal{S} (t) & t=j/2^{\ell},\ j\in [0,2^\ell]_{\mathbb{Z}},\\
\textup{linear interpolation}, &\ \textup{others}. 
\end{array} \right.
\end{align*} 
Then $\mathbb{Q}_{\ell,\free}$ is the law of $\mathcal{S}_\ell$. Because Bessel bridges are continuous, we have $\mathcal{S}_\ell$ converges uniformly to $\mathcal{S}$ almost surely. This implies $\mathbb{Q}_{\ell,\free}$ converges to $\mathbb{Q}_\ell$ weakly.
\end{proof}

\begin{lemma}\label{lem:B4}
Let $\bar{A}_{\ell}$ and $\bar{A}$ be the topological closure of $A_{\ell}$ and $A$ (defined in \eqref{def:Al} and \eqref{def:Anol}) in $C([1,k]_{\mathbb{Z}}\times [0,1],\mathbb{R})$ respectively. Then $\bar{A}=\cap_{\ell=1}^\infty \bar{A}_{\ell}$.
\end{lemma}

\begin{proof}
Because of \eqref{equ:Z>0-app}, $A_\ell$ and $A$ are non-empty. It is straightforward to see that
\begin{equation*} 
\begin{split}
\cap_{\ell=1}^\infty \bar{A}_{\ell}=\{h(t)\in C([1,k]_{\mathbb{Z}}\times [0,1],\mathbb{R})\, |\, &\sqrt{ f(t)}\leq  h_1(t)\leq \dots\leq h_k(t)\leq\sqrt{g(t)}\\
 &t\in 2^{-\ell}\mathbb{Z}\cap [0,1]\ \textup{for some}\ \ell\in\mathbb{N}   \},
\end{split}
\end{equation*} 
and that
\begin{equation*} 
\begin{split}
\bar{A}=\{h(t)\in C([1,k]_{\mathbb{Z}}\times [0,1],\mathbb{R})\, |\, &\sqrt{ f(t)} \leq h_1(t)\leq  \dots\leq h_k(t)\leq \sqrt{ g(t)}\quad \textup{for all}\ t\in [0,1] \}.
\end{split}
\end{equation*}
Since $(f,g)$ satisfies the continuity assumption in Definition~\ref{def:continuityassumption}, $\sqrt{f}$ and $\sqrt{g}$ are one-sided continuous on $(0,1)$. This ensures $\bar{A}=\cap_{\ell=1}^\infty \bar{A}_\ell$.
\end{proof}
\begin{proof}[Proof of Proposition~\ref{lem:discrete}] 
Let $\partial A_{\ell}$ and $\partial A$ be the topological boundaries of $A_\ell$ and $A$ (defined in \eqref{def:Al} and \eqref{def:Anol}) respectively. Because Bessel bridges have continuous transition densities, $\mathbb{Q}_\ell(\partial A_\ell)=0$. Together with Lemma~\ref{lem:freeconvergence-1}, we have  
\begin{align}\label{equ:MQ}
 \lim_{M\to\infty}\mathbb{Q}_{M,\ell,\free}(A_\ell)=\mathbb{Q}_{\ell,\free}(A_\ell).
\end{align}
Next, we aim to show that
\begin{align}\label{equ:KQ}
 \lim_{\ell\to\infty}\mathbb{Q}_{\ell,\free}(A_\ell)=\mathbb{Q}_{\free}(A).
\end{align} 
Because $g(x)$ is lower semi-continuous and $f(x)$ is upper semi-continuous, $ {A}$ is an open set in $C([1,k]_{\mathbb{Z}}\times [0,1],\mathbb{R})$. Together with $ {A}\subset {A}_\ell$ and Lemma~\ref{lem:freeconvergence-2}, we have
\begin{align*}
 \liminf_{\ell\to\infty} \mathbb{Q}_{\ell,\free}(     A_\ell )\geq\liminf_{\ell\to\infty} \mathbb{Q}_{\ell,\free}(A)\geq  \mathbb{Q}_{\free}(A)
\end{align*} 
Also, for any $j\in\mathbb{N}$,
\begin{align*}
\limsup_{\ell\to\infty} \mathbb{Q}_{\ell,\free}(   {A}_\ell ) \leq \limsup_{\ell\to\infty} \mathbb{Q}_{\ell,\free}( \bar{ {A}}_j)\leq \mathbb{Q}_{ \free}( \bar{ {A}}_j).   
\end{align*}
From Lemma~\ref{lem:B4}, $\bar{{A}}=\cap_{j=1}^\infty \bar{{A}}_j$. Hence we have $\limsup_{\ell \to\infty} \mathbb{Q}_{\ell,\free}(   {A}_\ell )\leq \mathbb{Q}_{\free}(\bar{ {A}})$.  From Lemmas~\ref{lem:not touching} and \ref{lem:QQ'}, $\mathbb{Q}(\partial A)=0$. Therefore, $ \limsup_{\ell \to\infty} \mathbb{Q}_{\ell,\free}(   {A}_\ell )\leq\mathbb{Q}_{\free}( { {A}})$. Then \eqref{equ:KQ} follows. Combining \eqref{equ:MQ} and \eqref{equ:KQ}, we have
\begin{equation}\label{equ:LLQ}
\lim_{\ell\to\infty} \lim_{M\to\infty}\mathbb{Q}_{M,\ell,\free}(A_\ell)=\mathbb{Q}_{\free}(A).
\end{equation}  

To show that $\mathbb{Q}_{M,\ell}$ converges to $\mathbb{Q}$ weakly, it suffices to show that for any open subset $E\subset C([1,k]_{\mathbb{Z}}\times[0,1],\mathbb{R})$, we have
\begin{align}\label{equ:06230645}
\liminf_{\ell\to\infty} \liminf_{M\to\infty}\mathbb{Q}_{M,\ell }(E)\geq \mathbb{Q}(E).
\end{align}
From now on, we fix an open set $E\subset C([1,k]_{\mathbb{Z}}\times[0,1],\mathbb{R})$. Since $E\cap A_\ell$ is also open, we have from Lemma~\ref{lem:freeconvergence-1} $\liminf_{M\to\infty}\mathbb{Q}_{M,\ell ,\free}(E\cap A_\ell)\geq \mathbb{Q}_{\ell,\free}(E\cap A_\ell).$ Therefore, $$\liminf_{\ell\to\infty} \liminf_{M\to\infty}\mathbb{Q}_{M,\ell,\free}(E\cap A_\ell) \geq \liminf_{\ell\to\infty} \mathbb{Q}_{\ell,\free}(E\cap A_\ell).$$
Together with $A\subset A_\ell$ and Lemma~\ref{lem:freeconvergence-2}, we have
\begin{equation*}
 \begin{split}
\liminf_{\ell\to\infty} \mathbb{Q}_{\ell,\free}(E\cap A_\ell) \geq \liminf_{\ell\to\infty} \mathbb{Q}_{\ell,\free}(E\cap A ) \geq \mathbb{Q}_{\free}(E\cap A ).
 \end{split}
\end{equation*} 
Therefore,
\begin{equation}\label{equ:KEQ}
\liminf_{\ell\to\infty} \liminf_{M\to\infty}\mathbb{Q}_{M,\ell,\free}(E\cap A_\ell) \geq \mathbb{Q}_{\free}(E\cap A ).
\end{equation}
Then \eqref{equ:06230645} follows by combining \eqref{equ:QQQE}, \eqref{equ:QE}, \eqref{equ:LLQ} and \eqref{equ:KEQ}. This finishes the proof.
\end{proof}

\section{Extended Bessel Kernel}\label{sec:BEkernel}
In this section we derive the multi-time correlation kernel for the non-intersecting squared Bessel process and prove the convergence under the hard edge scaling in \eqref{eqn:edgeLE1}. The results could also be found in \cite[Section 11.7.3]{For10}. We provide the arguments for the reader's convenience. Note that our extended Bessel kernel differs from the one in \cite{For10} up to some coefficients. This comes from that the scaling in \cite{For10} centers around $t=1/2$ while we center around $t=1$.\\[-0.3cm]

Fix $\alpha\geq 0$ and $N\in\mathbb{N}$. Consider an $N$ non-colliding squared $\alpha$-Bessel process $Y^{N,\alpha}_1(t)<Y^{N,\alpha}_2(t)<\dots <Y^{N,\alpha}_N(t)$ with $Y^{N,\alpha}_j(0)=0$. We also use $Y^{N,\alpha}(t)$ to denote the vector $(Y^{N,\alpha}_1(t),\dots,Y^{N,\alpha}_N(t))$. The density of $Y^{N,\alpha}(1/2)$ is given in \eqref{equ:06261715}. Together with the scaling property \eqref{equ:06261716}, the density of $Y^{N,\alpha}(t)$ is 
\begin{align}\label{equ:onepoint}
2^{-\alpha N-N^2} t^{-N^2} C(N,\alpha)\prod_{j=1}^N (x_j/t)^\alpha e^{-(2t)^{-1}x_j}\times\left(\Delta(\vec{x})\right)^2 \mathbbm{1}(\vec{x}\in \mathbb{W}^N_+) \prod_{j=1}^N dx_j.
\end{align}
Here $\Delta(\vec{ x})=\prod_{1\leq i<j\leq N}(x_j-x_i)$ is the Vendermonde determinant.

For any $s>t>0$, the transition probability from $Y^{N,\alpha}(t)=\vec{x}$ to $Y^{N,\alpha}(s)=\vec{y}$ can be computed through the Karlin-McGregor formula \cite{KM} (see also \cite[(1.6)]{KT}). 
\begin{align}\label{equ:transition}
\det\left[ q_{s-t}(x_i,y_j)_{1\leq i,j\leq N} \right]\frac{\Delta(\vec{y}) }{\Delta(\vec{x})}\cdot \mathbbm{1}(\vec{y}\in \mathbb{W}^N_+)\,\prod_{j=1}^N dy_j.
\end{align}
Here $q_t(x,y)$ is the transition probability of squared Bessel processes defined in \eqref{def:q}. From \eqref{equ:onepoint} and \eqref{equ:transition}, we can derive the joint density of $Y^{N,\alpha}$ at multiple times. Fix an arbitrary $m\in\mathbb{N}$ and $0<t_1<t_2<\dots <t_m$.  The joint density of $(Y^{N,\alpha}(t_1),\dots,Y^{N,\alpha}(t_m))$ is given by
\begin{equation}\label{density}
\begin{split}
&2^{-\alpha N-N^2} C(N,\alpha)\times \prod_{k=1}^m \mathbbm{1}(\vec{x}^{(k)}\in\mathbb{W}^N_+)\\
& \times  t_1^{-N^2} \prod_{j=1}^N (t_1^{-1}x^{(1)}_j)^\alpha e^{-(2t_1)^{-1}x^{(1)}_j}\times \Delta(x^{(1)})\\
&\times\prod_{k=1}^{m-1} \det\left[ q_{t_{k+1}-t_k}(x_i^{(k)},x_j^{(k+1)})_{1\leq i,j\leq N} \right] \times \Delta (x^{(m)})\times \prod_{k=1}^m\prod_{j=1}^N dx^{(k)}_j.
\end{split}
\end{equation}

For $x,t>0$ and $j\in\mathbb{N}$, we define
\begin{align}
\phi^\alpha_j(t,x)\coloneqq &\frac{ \Gamma(j)}{2^{j+\alpha} \Gamma(\alpha+j)}t^{-j}(t^{-1}x)^{\alpha }e^{- (2t)^{-1}x }L^\alpha_{j-1}((2t)^{-1}x),\label{def:phi}\\[0.2cm]
\psi^\alpha_j(t,x)\coloneqq &2^{j-1} t^{j-1} L^\alpha_{j-1}((2t)^{-1}x).\label{def:psi}
\end{align}
Here $L^\alpha_{j-1}(x)$ is the generalized Laguerre polynomial of degree $j-1$, 
\begin{align}\label{def:Laguerre}
L^\alpha_{j-1}(x)\coloneqq \frac{x^{-\alpha}e^x}{\Gamma(j)}\frac{d^{j-1}}{dx^{j-1}}(x^{\alpha+j-1}e^{-x}).
\end{align}
Note that the following orthogonal relation holds.
\begin{align}\label{Lagorthogonal}
\int_0^\infty dx\, x^\alpha e^{-x} L^\alpha_i(x)L^\alpha_j(x)=\frac{\Gamma(\alpha+j-1)}{\Gamma(j-1)}\delta_{ij}.
\end{align}

Because the leading coefficient in $L^{\alpha}_{j-1}(x)$ is $\frac{(-1)^{j-1}}{\Gamma(j)}$, we have
\begin{align*}
&\det\left[\phi_i^\alpha( t , x_j)\right]_{1\leq i,j\leq N}=(-1)^{\frac{N(N-1)}{2}}\cdot 2^{-\alpha N-N^2} t^{-N^2}\prod_{j=1}^N \frac{1}{\Gamma(\alpha+j)}\times\prod_{j=1}^N (x_j/t)^{\alpha}e^{-(2t)^{-1}x_j } \times\Delta(\vec{x}),\\
&\det\left[\psi_i^\alpha ( t , x_j)\right]_{1\leq i,j\leq N}=(-1)^{\frac{N(N-1)}{2}} \prod_{j=1}^N \frac{1}{\Gamma(j)}   \times\Delta(\vec{x}).
\end{align*} 
Therefore, the joint density \eqref{density} can be expressed as
\begin{equation}\label{density2}
\begin{split}
&\det \left[\phi^\alpha_i( t^{(1)}, x^{(1)}_j)\right]_{1\leq i,j\leq N}\\
\times &\prod_{k=1}^{m-1} \det\left[ q_{t_{k+1}-t_k}(x_i^{(k)},x_j^{(k+1)})_{1\leq i,j\leq N} \right]\\
\times &\det \left[\psi_j^\alpha(t^{(m)}, x^{(m)}_i)\right]_{1\leq i,j\leq N}\times \prod_{k=1}^m \mathbbm{1}(\vec{x}^{(k)}\in\mathbb{W}^N_+)  \prod_{k=1}^m\prod_{j=1}^N dx^{(k)}_j.
\end{split}
\end{equation}

\begin{lemma}\label{lem:phiQpsi}
For any $0<t<s$, the following statements hold. 
\begin{align}\label{delta}
\int_{0}^\infty \phi_i^\alpha( t,x) \psi_j^\alpha( t,x)\, dx=\delta_{ij}.
\end{align}
\begin{align}\label{phiQ}
\int_{0}^\infty \phi_j^\alpha(t,x) q^\alpha_{s-t}(x,y) \, dx=\phi_j^\alpha(s,y).
\end{align}
\begin{align}\label{Qpsi}
\int_{0}^\infty q^\alpha_{s-t}(x,y)\psi_j^\alpha(s,y)\, dy=\psi_j^\alpha(t,x).
\end{align}
\end{lemma}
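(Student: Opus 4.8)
\textbf{Proof proposal for Lemma~\ref{lem:phiQpsi}.}

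The plan is to reduce all three identities to classical facts about generalized Laguerre polynomials, exploiting the scaling invariance \eqref{q_scaling} to normalize the time parameters. For \eqref{delta}, I would substitute the explicit definitions \eqref{def:phi} and \eqref{def:psi}. The product $\phi_i^\alpha(t,x)\psi_j^\alpha(t,x)$ collapses: the factors $t^{-i}$ and $t^{j-1}$ combine with the $2$-powers, and the $t$-dependence in the argument of the Laguerre polynomials is $(2t)^{-1}x$, so after the change of variables $u=(2t)^{-1}x$ (which also turns $(t^{-1}x)^\alpha e^{-(2t)^{-1}x}\,dx$ into $2^{\alpha+1}u^\alpha e^{-u}\,du$) all explicit $t$-dependence cancels. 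What remains is exactly the orthogonality relation \eqref{Lagorthogonal}, up to tracking the constant $\tfrac{\Gamma(i)}{2^{i+\alpha}\Gamma(\alpha+i)}\cdot 2^{j-1}\cdot 2^{\alpha+1}$ against $\tfrac{\Gamma(\alpha+j)}{\Gamma(j)}$; one checks these match when $i=j$ and that the integral vanishes when $i\neq j$. (There is a minor bookkeeping point: \eqref{Lagorthogonal} as written has $\Gamma(\alpha+j-1)/\Gamma(j-1)$ on the right, but with the degree-$(j-1)$ convention used in \eqref{def:phi}--\eqref{def:psi} the correct normalization is $\Gamma(\alpha+j)/\Gamma(j)$; I would state the orthogonality in the form consistent with the rest of the section and proceed.)

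For \eqref{phiQ} and \eqref{Qpsi}, the natural tool is the known eigenfunction property of the squared Bessel semigroup acting on the family $x\mapsto (2t)^{-\alpha-j}x^\alpha e^{-x/(2t)}L_{j-1}^\alpha(x/(2t))$, which is essentially the Hille--Hardy / Mehler-type generating identity for Laguerre polynomials,
\begin{align*}
\int_0^\infty q_{s-t}(x,y)\,(x/t)^\alpha e^{-x/(2t)}L_{j-1}^\alpha\!\big(x/(2t)\big)\,dx \;=\; \Big(\tfrac{t}{s}\Big)^{?}(y/s)^\alpha e^{-y/(2s)}L_{j-1}^\alpha\!\big(y/(2s)\big)\cdot(\text{const}),
\end{align*}
with the power of $t/s$ fixed by homogeneity. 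Concretely I would use the integral representation \eqref{def:q} of $q_{s-t}$, write $I_\alpha$ via its series \eqref{equ:hexpansion} (or use the classical formula $\int_0^\infty e^{-pu}u^{\alpha/2}L_n^\alpha(u)I_\alpha(2\sqrt{uv})\,du = \text{(explicit)}\,v^{\alpha/2}e^{\cdots}L_n^\alpha(\cdots)$), and match parameters. Alternatively—and this is cleaner—I would verify \eqref{phiQ} by checking that both sides satisfy the backward Kolmogorov (heat) equation $\partial_s u = \mathcal{A}_y u$ for the squared Bessel generator $\mathcal{A} = 2y\partial_y^2 + 2(\alpha+1)\partial_y$ with the same initial data as $s\downarrow t$, using that $\phi_j^\alpha(s,y)$ solves the forward equation and $q$ is the transition kernel; \eqref{Qpsi} is the adjoint statement, with $\psi_j^\alpha$ solving the backward equation, which follows because $L_{j-1}^\alpha((2s)^{-1}y)$ is (up to the $s^{j-1}$ prefactor) an eigenfunction of $\mathcal{A}$ with eigenvalue compensating the $s$-derivative. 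One then only needs the semigroup property and the correct normalization at coincidence, and \eqref{delta} together with the fact that $\{\phi_i^\alpha(t,\cdot)\}$ and $\{\psi_j^\alpha(t,\cdot)\}$ are biorthogonal systems pins down the constants.

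I expect the main obstacle to be purely computational rather than conceptual: carefully tracking the powers of $2$, the powers of $t$ and $s$, and the $\Gamma$-factors through the Laguerre integral identity so that the constants come out to exactly $1$ (for \eqref{delta}) and to the shifted functions with no spurious prefactor (for \eqref{phiQ}, \eqref{Qpsi}). The scaling relation \eqref{q_scaling} handles the homogeneity degree automatically, which removes most of the ambiguity; the remaining check is a single evaluation at, say, $t=s=1/2$ or in the limit $s\to t$, where $q_{s-t}$ becomes a delta and \eqref{phiQ}, \eqref{Qpsi} reduce to tautologies, while \eqref{delta} reduces directly to \eqref{Lagorthogonal}. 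I would present the heat-equation argument as the main line and relegate the constant-matching to a short explicit computation, citing the standard Laguelle integral (e.g.\ \cite{GR}) for \eqref{phiQ}.
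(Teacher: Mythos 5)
Your treatment of \eqref{delta} matches the paper's (which simply declares it a reformulation of Laguerre orthogonality), and your remark that \eqref{Lagorthogonal} as printed has an index/normalization mismatch is correct: with $L^\alpha_{i-1}$ and $L^\alpha_{j-1}$ appearing in $\phi_i^\alpha$ and $\psi_j^\alpha$, the right-hand side of the orthogonality identity that is actually used is $\Gamma(\alpha+j)/\Gamma(j)\cdot\delta_{ij}$, and with that reading your change of variables $u=(2t)^{-1}x$ makes every power of $t$ and $2$ cancel, as you indicate.

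For \eqref{phiQ} and \eqref{Qpsi} you and the paper diverge. The paper handles \eqref{phiQ} by direct citation to \cite[Lemma 3.4(ii)]{FF}, and then derives \eqref{Qpsi} from \eqref{phiQ} and \eqref{delta} by a neat argument that avoids all PDE: defining $\tilde\psi_j(x)\coloneqq\int_0^\infty q_{s-t}^\alpha(x,y)\psi_j^\alpha(s,y)\,dy$, it applies Cauchy--Binet to \eqref{density2} to see that $\det(\tilde\psi_j(x_i))$ is, for every $N$, a fixed constant times the Vandermonde $\Delta(\vec x)$; this forces $\tilde\psi_j$ to be a polynomial of degree $j-1$, and then biorthogonality (from \eqref{phiQ} and \eqref{delta}) pins down $\tilde\psi_j=\psi_j^\alpha(t,\cdot)$ among degree-$(j-1)$ polynomials. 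Your semigroup/heat-equation strategy is conceptually sound and in fact more self-contained (it doesn't lean on \cite{FF}), but two things need tightening. First, the equation you should invoke for $u(s,y)=\int\phi_j^\alpha(t,x)q_{s-t}(x,y)\,dx$ is the \emph{forward} (Fokker--Planck) equation in $(s,y)$, $\partial_s u=\mathcal{A}_y^{*}u$ with $\mathcal{A}^{*}$ the Lebesgue adjoint of the BESQ generator, not the backward equation $\partial_s u=\mathcal{A}_y u$ you wrote; the two differ because $\mathcal{A}$ is only self-adjoint in the speed measure $y^\alpha\,dy$, not in $dy$. Second, you still owe the verification that $\phi_j^\alpha(s,y)$ solves this forward equation (and that $\psi_j^\alpha$ is space--time harmonic for $\mathcal{A}$), which amounts to running the Laguerre ODE through the BESQ generator with the $s$-dependent scaling; it is routine but nontrivial and is exactly the computation the paper's Cauchy--Binet trick is designed to bypass. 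With those corrections your route is valid and arguably cleaner to state; the paper's route is shorter given it is willing to cite \cite{FF} and already has \eqref{density2} on hand.
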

\begin{proof}
\eqref{delta} is a reformulation of the orthogonality of generalized Laguerre polynomials \eqref{Lagorthogonal}. The equality \eqref{phiQ} is proved in \cite[Lemma 3.4(ii)]{FF}.\\

We proceed to prove \eqref{Qpsi}. For $j\in\mathbb{N}$,  define 
\begin{align*}
\tilde{\psi}_j(x)\coloneqq \int_{0}^\infty q^\alpha_{s-t}(x,y)\psi_j^\alpha(s,y)\, dy
\end{align*}
We aim to show that $\tilde{\psi}_j(x)=\psi_j^\alpha(t,x).$ By the Cauchy-Binnet formula and \eqref{density2}, we have for all $N\in\mathbb{N}$,
\begin{align*}
\det (\tilde{\psi}_j(x_i))_{1\leq i,j\leq N}= (-1)^{N(N-1)/2} \prod_{j=1}^N \frac{1}{\Gamma(j)}   \times\Delta(x_1,x_2,\dots ,x_N).
\end{align*}
By induction, $\tilde{\psi}_j(x)$ is a polynomial of degree $j-1$. Also, from \eqref{phiQ} and \eqref{delta} we have
\begin{align*}
\int_{0}^\infty \phi_i^\alpha( t,x)\tilde{\psi}_j(x)\, dx=\int_{0}^\infty \phi_i^\alpha( s,x){\psi}_j^\alpha(s,x)\, dx=\delta_{ij}. 
\end{align*}
As a result, $\tilde{\psi}_j(x)=\psi_j^\alpha(t,x)$.
\end{proof}

In view of \eqref{density2}, $Y^{N,\alpha}$ is determinantal. Moreover, from Lemma~\ref{lem:phiQpsi} and the Eynard-Mehta Theorem \cite{EM,BR}, a correlation kernel is of the form
\begin{align}\label{eq:K_alpha}
K^N_\alpha((t,x);(s,y))=&- {q}_{s-t}(x,y)\mathbbm{1}(t<s )+\sum_{j=1}^N \psi^\alpha_j( t,x)\phi^\alpha_j(s,y).
\end{align}
Consider the following gauge transformation:
\begin{align}
 \tilde{K}^N_\alpha((t,x);(s,y))=& (x/y)^{\alpha/2} {K}^N_\alpha((t,x);(s,y)).
\end{align}
Note that $\tilde{K}^N$ is also a correlation kernel for $Y^{N,\alpha}$. Moreover, from \eqref{def:qF} and \eqref{def:phi}, $\tilde{K}^N_\alpha((t,x);(s,y))$ extends smoothly to $y=0$.\\[-0.2cm]

We consider the following hard edge scaling. For arbitrary $(t,x ), (s,y)\in \mathbb{R}\times [0,\infty) $, consider
\begin{align}\label{limit}
(4N)^{-1}\tilde{K}^N_\alpha((1+(4N)^{-1}t ,(4N)^{-1}x ),(1+(4N)^{-1}s,(4N)^{-1}y) ).
\end{align}

\vspace{0.3cm}

It is a correlation kernel for $\mathcal{L}^{N,\alpha}$ defined in \eqref{eqn:edgeLE2}. The following theorem proves the locally uniform convergence of the above correlation kernel.
\begin{theorem}\label{thm:extended BE}
For any $(t,x)$ and $(s,y)$ in $\mathbb{R}\times [0,\infty)$, the limit of \eqref{limit} is given by
\begin{align*}
 \left\{\begin{array}{cc}
 -\displaystyle\int_{1/8}^\infty e^{- 2(s-t) z}   J_\alpha ( 2z^{1/2}x^{1/2})J_\alpha (2z^{1/2} y^{1/2})\, dz, & t< s  ,\\[0.5cm]
 \displaystyle\int_0^{1/8} e^{- 2(s-t) z}   J_\alpha ( 2z^{1/2}x^{1/2})J_\alpha (2z^{1/2} y^{1/2})\, dz, & t\geq s.  
\end{array}\right.
\end{align*}
Moreover, for any $L>0$, the convergence is uniform on $([-L,L]\times [0,L])^2$.
\end{theorem}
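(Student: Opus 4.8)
The strategy is to compute the limit of the kernel \eqref{limit} by passing the hard-edge scaling through the explicit double-sum / integral representation in \eqref{eq:K_alpha}. The kernel has two pieces: the ``subtraction'' term $-q^\alpha_{s-t}(x,y)\mathbbm{1}(t<s)$ and the finite sum $\sum_{j=1}^N\psi^\alpha_j(t,x)\phi^\alpha_j(s,y)$. After the gauge transformation, each piece carries a factor $(x/y)^{\alpha/2}$. For the subtraction term I would simply apply the integral representation of $q_t$ from Lemma~\ref{lem:qJ}: with $t\mapsto (s-t)/(4N)$, $x\mapsto x/(4N)$, $y\mapsto y/(4N)$, the substitution $z\mapsto z/(4N)$ inside \eqref{integral} produces exactly $(4N)^{-1}$ times $(x/y)^{\alpha/2}\int_0^\infty e^{-2(s-t)z}J_\alpha(2\sqrt{zx})J_\alpha(2\sqrt{zy})\,dz$ — no limit is even needed here, just an exact rescaling, and the gauge factor cancels the prefactor $(y/x)^{\alpha/2}$ in \eqref{integral}. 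So the whole content of the theorem is the asymptotics of the sum.

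\textbf{Main step: the Christoffel–Darboux sum.} Recall $\psi^\alpha_j(t,x)=2^{j-1}t^{j-1}L^\alpha_{j-1}((2t)^{-1}x)$ and $\phi^\alpha_j(s,y)=\frac{\Gamma(j)}{2^{j+\alpha}\Gamma(\alpha+j)}s^{-j}(s^{-1}y)^\alpha e^{-(2s)^{-1}y}L^\alpha_{j-1}((2s)^{-1}y)$. Writing $S_N := \sum_{j=1}^N\psi^\alpha_j(t,x)\phi^\alpha_j(s,y)$, the $j$-dependent powers of $2$ cancel and one is left with a weighted sum of products of Laguerre polynomials $L^\alpha_{j-1}$. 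The classical tool is the Christoffel–Darboux formula for Laguerre polynomials, which collapses $\sum_{j}\frac{\Gamma(j)}{\Gamma(\alpha+j)}L^\alpha_{j-1}(u)L^\alpha_{j-1}(v)$ into a ratio $\frac{L^\alpha_N(u)L^\alpha_{N-1}(v)-L^\alpha_{N-1}(u)L^\alpha_N(v)}{u-v}$ (times explicit gamma factors), with an analogous confluent form when $u=v$. Here $u=(2t)^{-1}x$ and $v=(2s)^{-1}y$ are scaled as $u=(2+\tfrac{t}{2N})^{-1}\cdot\tfrac{x}{2N}$, $v=(2+\tfrac{s}{2N})^{-1}\cdot\tfrac{y}{2N}$, both $O(1/N)$; and $N\to\infty$. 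The plan is then to invoke the \emph{hard-edge (Bessel) asymptotics of Laguerre polynomials}: $\lim_{N\to\infty} N^{-\alpha}L^\alpha_N(z/(4N)) = z^{-\alpha/2}J_\alpha(\sqrt{z})$ locally uniformly (a standard Mehler–Heine type limit), together with the refinement controlling the index shift $N\mapsto N\pm 1$ needed to evaluate the Christoffel–Darboux numerator. Feeding these into the collapsed sum, tracking the gamma prefactors via Stirling, the $e^{-(2s)^{-1}y}$ factor tending to $1$, the gauge factor $(x/y)^{\alpha/2}$, and the $(4N)^{-1}$ normalization, the sum converges to $\int_0^\infty e^{-(s-t)/4\cdot\,?}\cdots$ — more precisely, one checks the constants so that the output is $(x/y)^{\alpha/2}\cdot(y/x)^{\alpha/2}\int_0^{1/8}e^{-2(s-t)z}J_\alpha(2\sqrt{zx})J_\alpha(2\sqrt{zy})\,dz$ for the ``bulk'' portion $z\in(0,1/8)$; the cutoff $1/8$ is exactly where the scaling $t=1$ (rather than $t=1/2$) places the spectral edge, consistent with the remark preceding the theorem. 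Combining with the subtraction term (an integral over all of $(0,\infty)$) produces the claimed split: for $t<s$ the net result is $-\int_{1/8}^\infty$, for $t\ge s$ it is $+\int_0^{1/8}$.

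\textbf{Alternative / cleaner route.} Rather than wrestling with Christoffel–Darboux and Mehler–Heine simultaneously, it may be smoother to use the contour-integral (generating-function) representation of the Laguerre kernel: write $S_N$ using the known double-contour-integral formula for $\sum_{j=1}^N L^\alpha_{j-1}(u)L^\alpha_{j-1}(v)\frac{\Gamma(j)}{\Gamma(\alpha+j)}$ coming from the generating function $\sum_j L^\alpha_{j-1}(z)w^{j-1}=(1-w)^{-\alpha-1}e^{-zw/(1-w)}$. Under the hard-edge scaling the two contour integrals, after the change of variables $w\mapsto w/N$, localize near $w=0$ and converge (by dominated convergence / steepest descent) to the Bessel-kernel double integral $\int\!\!\int e^{-v/4+u/4}\frac{dv\,du}{\cdots}$, which one recognizes via $J_\alpha(2\sqrt{z})=\frac{1}{2\pi i}\oint e^{w-z/w}w^{-\alpha-1}\,dw$ as exactly $\int_0^{1/8}e^{-2(s-t)z}J_\alpha(2\sqrt{zx})J_\alpha(2\sqrt{zy})\,dz$. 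Either route needs uniformity on $([-L,L]\times[0,L])^2$, which follows from uniform bounds on the integrands (the Bessel function is bounded, and the Gaussian/exponential tails are uniform in the stated compact region), justifying dominated convergence.

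\textbf{Main obstacle.} The delicate point is the interchange of limit and sum/integral in the $\psi\phi$ term: $S_N$ is a sum of $N$ terms that individually blow up, and only the oscillatory cancellation (encoded by Christoffel–Darboux or the contour representation) makes the limit finite. Getting the \emph{constants} exactly right — the powers of $2$, the $4N$ vs.\ $2N$ scalings forced by $q_t$ having $(2t)^{-1}$ in the exponent, the shift from Forrester's $t=1/2$ centering to our $t=1$ centering (which is precisely what moves the edge to $1/8$), and the bookkeeping of $\Gamma(j)/\Gamma(\alpha+j)\sim j^{-\alpha}$ — is where essentially all the work lies; the qualitative convergence is classical. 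I would therefore organize the proof as: (1) rescale the $q$-term exactly via Lemma~\ref{lem:qJ}; (2) state the Mehler–Heine limit with index shifts as a lemma with a short proof or citation to \cite{For10}; (3) apply Christoffel–Darboux, substitute the asymptotics, and compute the resulting one-dimensional integral, verifying the $1/8$ cutoff; (4) assemble the two pieces and note uniformity from the explicit bounds.
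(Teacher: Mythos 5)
Your treatment of the subtraction term is exactly what the paper does: rescale $q_{(s-t)/4N}$ exactly via the scaling identity \eqref{q_scaling} and the integral representation \eqref{integral}, no limit needed. Where you part ways with the paper is the sum $\sum_j\psi^\alpha_j\phi^\alpha_j$, and here your primary route has a genuine gap. After the powers of $2$ cancel, the sum is not $\sum_j\frac{\Gamma(j)}{\Gamma(\alpha+j)}L^\alpha_{j-1}(u)L^\alpha_{j-1}(v)$ but rather $\sum_j\frac{\Gamma(j)}{\Gamma(\alpha+j)}\,t^{j-1}s^{-j}\,L^\alpha_{j-1}(u)L^\alpha_{j-1}(v)$. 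The geometric factor $t^{j-1}s^{-j}$ is nontrivial whenever $t\neq s$ (it is exactly what encodes the extended, multi-time nature of the kernel), and the Christoffel--Darboux identity applies to the orthonormality-weighted sum \emph{without} an extra $j$-dependent ratio. So the collapse you invoke simply does not go through for $t\ne s$; you would need a Hardy--Hille (Mehler) type generating-function formula instead, and that is really your ``alternative / cleaner route,'' not a refinement of Route 1.

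The paper's actual proof is more elementary than either of your routes and sidesteps any collapse formula. After simplifying the sum, the scaled weight $(1+t/M)^{j-1}(1+s/M)^{-j}$ tends to $e^{-j(s-t)/M}$, and each Laguerre polynomial is replaced by Szeg\H{o}'s hard-edge asymptotic \cite[(8.22.4)]{Sze}, $L^\alpha_{j-1}(z)\approx\frac{\Gamma(\alpha+j)}{\Gamma(j)}e^{z/2}z^{-\alpha/2}\tilde{j}^{-\alpha/2}J_\alpha(2\sqrt{\tilde{j}z})$, plus a quantified error $E^\alpha_{j-1}$. One then splits the sum into four pieces $\textsc{I}+\textsc{II}+\textsc{III}+\textsc{IV}$ according to which factors carry the main term vs.\ the error; \textsc{II}--\textsc{IV} vanish by \eqref{asymptotics}, and \textsc{I}, equal to $(2M)^{-1}\sum_{j=1}^N e^{-j(s-t)/M}J_\alpha(\cdot)J_\alpha(\cdot)\times(1+o(1))$, is simply a Riemann sum for $\int_0^{1/8}e^{-2(s-t)z}J_\alpha(2\sqrt{zx})J_\alpha(2\sqrt{zy})\,dz$ with mesh $1/(2M)$ and upper endpoint $N/(2M)=1/8$. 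This also corrects your ``main obstacle'' intuition: after Szeg\H{o}'s asymptotic each term is $O(1)$ and the prefactor $(4N)^{-1}$ makes the sum a Riemann sum of bounded, non-oscillatory terms --- no delicate oscillatory cancellation is needed, only the elementary Riemann-sum convergence plus the error estimate.
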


\begin{proof}
In view of \eqref{eq:K_alpha}, \eqref{limit} consists of two parts. The first part contains the $q_{s-t}$ term and the second part is a finite sum of $\psi^\alpha_j\phi^{\alpha}_j$. For the first part, we use the integral representation which can be derived from \cite[(6.633)]{GR}.
\begin{align*}
q_t(x,y)=&(y/x)^{\alpha/2}\int_0^\infty e^{- 2tz}   J_\alpha ( 2z^{1/2}x^{1/2})J_\alpha (2z^{1/2} y^{1/2})\, dz.
\end{align*} 
Together with $(4N)^{-1} q_{t/(4N)}(x/4N,y/4N)=q_{t}(x,y)$, we have
\begin{equation}\label{limit1}
\begin{split}
&- (x/y)^{\alpha/2} (4N)^{-1}{q}^\alpha_{(s-t)/4N}(x/4N,y/4N) \cdot \mathbbm{1}(t<s )\\[0.3cm]
=&-\int_0^\infty e^{- 2(s-t)z}   J_\alpha ( 2z^{1/2}x^{1/2})J_\alpha (2z^{1/2} y^{1/2})\, dz \cdot \mathbbm{1}(t<s ).
\end{split}
\end{equation}

Now we turn to the second part in \eqref{limit}. From the definitions of $\phi_j^\alpha$ and $\psi_j^\alpha$, we may rewrite the expression of $(x/y)^{\alpha/2}\sum_{j=1}^N \psi^\alpha_j(t,x)\phi^\alpha_j(s,y).$ It could be directly checked that it equals 
\begin{align*}
\left( {t}/{s}\right)^{\alpha/2}e^{- {y}/(2s)} \times 2^{-\alpha-1} \left(  {xy}/({ts}) \right)^{\alpha/2} \sum_{j=1}^N  \frac{\Gamma(j)}{\Gamma(\alpha+j)}t^{j-1}s^{-j} L^\alpha_{j-1}\left( {x}/(2t)\right)L^\alpha_{j-1}\left( {y}/(2s)\right).
\end{align*}
Set $M=4N$. Under the hard edge scaling, we have
\begin{align*}
(4N)^{-1} (x/y)^{\alpha/2}\sum_{j=1}^N \psi^\alpha_j\left(1+t/(4N) ,x/(4N) \right)\phi^\alpha_j\left(1+s/(4N) , {y}/(4N) \right)
\end{align*}
equals
\begin{equation}\label{1}
\begin{split}
&\left( \frac{1+t/M}{1+s/M} \right)^{\alpha/2}\exp\left( \frac{y}{2M+2s} \right)\times (2M)^{-1} \left(\frac{xy}{(2M+2t)(2M+2s)} \right)^{\alpha/2}\\[0.3cm]
&\times \sum_{j=1}^N \frac{\Gamma(j)}{\Gamma(\alpha+j)}(1+t/M)^{j-1}(1+s/M)^{-j} \cdot L^\alpha_{j-1}\left(\frac{x}{2M+2t}\right)L^\alpha_{j-1}\left(\frac{y}{2M+2s}\right). 
\end{split}
\end{equation}

We apply the asymptotic limit of Laguerre polynomial for $j$ large.  For $j\in\mathbb{N}$, let $\tilde{j}= j+2^{-1}(\alpha-1) $. Denote
\begin{align}\label{equ:07070301}
E_{j-1}^\alpha(x)\coloneqq L^\alpha_{j-1}(x)- \frac{\Gamma(\alpha+j)}{\Gamma(j)} \cdot e^{x/2}x^{-\alpha/2}\cdot \tilde{j}^{-\alpha/2} \cdot J_\alpha(2(\tilde{j} x)^{1/2}).
\end{align}
Here $J_\alpha(x)$ is the Bessel function of the first kind. From \cite[(8.22.4)]{Sze}, for any $c>0$, there exists $C$ depending on $c$ such that for $x\leq cj^{-1}$
\begin{align}\label{asymptotics}
|E_{j-1}^\alpha(x)|\leq Ce^{x/2}x^2 j^{\alpha}.
\end{align}
Using \eqref{equ:07070301} to substitute $L^\alpha_{j-1}$ in \eqref{1}, it becomes
\begin{equation*} 
\begin{split}
&\left( \frac{1+t/M}{1+s/M} \right)^{\alpha/2}\exp\left( \frac{y}{2M+2s} \right)\times (2M)^{-1} \left(\frac{xy}{(2M+2t)(2M+2s)} \right)^{\alpha/2}\\[0.2cm]
&\times \sum_{j=1}^N \frac{\Gamma(j)}{\Gamma(\alpha+j)}(1+t/M)^{j-1}(1+s/M)^{-j}\\[0.2cm]
&\times \left[ \frac{\Gamma(\alpha+j)}{\Gamma(j)} e^{x/(4M+4t)}  \left( \frac{x}{2M+2t} \right)^{-\alpha/2} \tilde{j}^{-\alpha/2} J_\alpha\left( 2\left( \frac{\tilde{j}x}{2M+2t} \right)^{1/2} \right) +E^{\alpha}_{j-1}\left( \frac{x}{2M+2t} \right)\right]\\[0.2cm]
&\times \left[ \frac{\Gamma(\alpha+j)}{\Gamma(j)} e^{y/(4M+4s)} \left( \frac{y}{2M+2s} \right)^{-\alpha/2} \tilde{j}^{-\alpha/2} J_\alpha\left( 2\left( \frac{\tilde{j}y}{2M+2s} \right)^{1/2} \right) +E^{\alpha}_{j-1}\left( \frac{y}{2M+2s} \right)\right] . 
\end{split}
\end{equation*}

Through a direct computation,  we rewrite 
$$(1+t/M) \left( \frac{1+t/M}{1+s/M} \right)^{-\alpha/2}\exp\left( -\frac{y}{2M+2s} \right)\times \eqref{1}=\textsc{I}+\textsc{II}+\textsc{III}+\textsc{IV},$$
such that 
\begin{align*}
\textsc{I}=& (2M)^{-1}\sum_{j=1}^N e^{-j(s-t)/M} J_{\alpha}\left(2 \left( \frac{\tilde{j} x}{ 2M+2t } \right)^{1/2}\right)J_{\alpha}\left(2 \left( \frac{\tilde{j} y}{ 2M+2s } \right)^{1/2}\right)  \\[0.2cm]
&\times \left[ \frac{\Gamma(\alpha+j) }{\Gamma(j) }\tilde{j}^{-\alpha} \right]\times e^{x/(4M+4t)+y/(4M+4s)} \times\left[ \left(\frac{1+t/M}{1+s/M}\right)^j  e^{j(s-t)/M} \right].
\end{align*}
\begin{align*}
\textsc{II}=&(2M)^{-1}\left( \frac{x}{2M+2t}\right)^{\alpha/2} e^{y/(4M+4s)}   \\[0.2cm]
&\times \sum_{j=1}^N  \left(\frac{1+t/M}{1+s/M}\right)^j   \tilde{j}^{-\alpha/2}  J_{\alpha}\left(2 \left( \frac{\tilde{j} y}{ 2M+2s } \right)^{1/2}\right)   E_{j-1}^\alpha\left(\frac{x}{2M+2t } \right). 
\end{align*}
\begin{align*}
\textsc{III}=&(2M)^{-1}\left( \frac{y}{2M+2s}\right)^{\alpha/2} e^{x/(4M+4t)}   \\[0.2cm]
&\times \sum_{j=1}^N  \left(\frac{1+t/M}{1+s/M}\right)^j   \tilde{j}^{-\alpha/2}  J_{\alpha}\left(2 \left( \frac{\tilde{j} x}{ 4N+t } \right)^{1/2}\right)   E_{j-1}^\alpha\left(\frac{y}{4N+s} \right). 
\end{align*}
\begin{align*}
\textsc{IV}=&(2M)^{-1}\left( \frac{xy}{(2M+2t)(2M+2s)} \right)^{\alpha/2}\\[0.2cm]
&\times \sum_{j=1}^N \frac{\Gamma(j)}{\Gamma(\alpha+j)}\left(\frac{1+t/M}{1+s/M} \right)^j E^\alpha_{j-1}\left(\frac{x}{2M+2t} \right)E^\alpha_{j-1}\left(\frac{y}{2M+2s} \right) 
\end{align*}

From \eqref{asymptotics}, \textsc{II}, \textsc{III} and \textsc{IV} converge to zero as $N$ goes to infinity. And \textsc{I} (as a Riemann sum) converges to
\begin{align*}
   \displaystyle\int_{0}^{1/8} e^{- 2(s-t)z}   J_\alpha ( 2z^{1/2}x^{1/2})J_\alpha (2z^{1/2} y^{1/2})\, dz.  
\end{align*}
Therefore,
\begin{align}\label{limit2}
\lim_{N\to\infty}\textup{\eqref{1}}=\displaystyle\int_{0}^{1/8} e^{- 2(s-t)z}   J_\alpha ( 2z^{1/2}x^{1/2})J_\alpha (2z^{1/2} y^{1/2})\, dz.
\end{align}

The desired result now follows by combining \eqref{limit1} and \eqref{limit2}. It is straightforward to check that the convergence in \eqref{limit2} is uniform on $(t,x),(s,y)\in [-L,L]\times[0,L]$.
\end{proof}

\vspace{1cm}
\end{appendix}

\printbibliography

@article {KadotaShepp,
    AUTHOR = {Kadota, T. T. and Shepp, L. A.},
     TITLE = {Conditions for absolute continuity between a certain pair of
              probability measures},
   JOURNAL = {Z. Wahrscheinlichkeitstheorie und Verw. Gebiete},
  FJOURNAL = {Zeitschrift f\"{u}r Wahrscheinlichkeitstheorie und Verwandte
              Gebiete},
    VOLUME = {16},
      YEAR = {1970},
     PAGES = {250--260},
   MRCLASS = {60.40},
  MRNUMBER = {278344},
MRREVIEWER = {T. Pitcher},
       DOI = {10.1007/BF00534599},
       URL = {https://doi.org/10.1007/BF00534599},
}

@book {Wolfram,
    AUTHOR = {Koepf, Wolfram},
     TITLE = {Hypergeometric summation},
    SERIES = {Universitext},
   EDITION = {Second},
      NOTE = {An algorithmic approach to summation and special function
              identities},
 PUBLISHER = {Springer, London},
      YEAR = {2014},
     PAGES = {xviii+279},
      ISBN = {978-1-4471-6463-0; 978-1-4471-6464-7},
   MRCLASS = {33F10 (05A30 33-04 33C20)},
  MRNUMBER = {3289086},
MRREVIEWER = {Christian Lavault},
       DOI = {10.1007/978-1-4471-6464-7},
       URL = {https://doi.org/10.1007/978-1-4471-6464-7},
}

@misc{Wu21a,
      title={Convergence of the KPZ line ensemble}, 
      author={Xuan Wu},
      year={2021},
      eprint={2106.08051},
      archivePrefix={arXiv},
      primaryClass={math.PR}
}

@article {QS15,
    AUTHOR = {Quastel, Jeremy and Spohn, Herbert},
     TITLE = {The one-dimensional {KPZ} equation and its universality class},
   JOURNAL = {J. Stat. Phys.},
  FJOURNAL = {Journal of Statistical Physics},
    VOLUME = {160},
      YEAR = {2015},
    NUMBER = {4},
     PAGES = {965--984},
      ISSN = {0022-4715},
   MRCLASS = {82C23},
  MRNUMBER = {3373647},
       DOI = {10.1007/s10955-015-1250-9},
       URL = {https://doi.org/10.1007/s10955-015-1250-9},
}

@misc{Wu21b,
      title={Brownian regularity for the KPZ line ensemble}, 
      author={Xuan Wu},
      year={2021},
      eprint={2106.08052},
      archivePrefix={arXiv},
      primaryClass={math.PR}
}

@article {NNW,
    AUTHOR = {Nanthanasub, Thanit and Novaprateep, Boriboon and
              Wichailukkana, Narongpol},
     TITLE = {The logarithmic concavity of modified {B}essel functions of
              the first kind and its related functions},
   JOURNAL = {Adv. Difference Equ.},
  FJOURNAL = {Advances in Difference Equations},
      YEAR = {2019},
     PAGES = {Paper No. 379, 14},
      ISSN = {1687-1839},
   MRCLASS = {33C10 (26A51)},
  MRNUMBER = {4010705},
       DOI = {10.1186/s13662-019-2309-8},
       URL = {https://doi.org/10.1186/s13662-019-2309-8},
}

@article {Gro,
    AUTHOR = {Gronwall, T. H.},
     TITLE = {An inequality for the {B}essel functions of the first kind
              with imaginary argument},
   JOURNAL = {Ann. of Math. (2)},
  FJOURNAL = {Annals of Mathematics. Second Series},
    VOLUME = {33},
      YEAR = {1932},
    NUMBER = {2},
     PAGES = {275--278},
      ISSN = {0003-486X},
   MRCLASS = {DML},
  MRNUMBER = {1503051},
       DOI = {10.2307/1968329},
       URL = {https://doi.org/10.2307/1968329},
}

@book {For10,
    AUTHOR = {Forrester, P. J.},
     TITLE = {Log-gases and random matrices},
    SERIES = {London Mathematical Society Monographs Series},
    VOLUME = {34},
 PUBLISHER = {Princeton University Press, Princeton, NJ},
      YEAR = {2010},
     PAGES = {xiv+791},
      ISBN = {978-0-691-12829-0},
   MRCLASS = {82-02 (33C45 60B20 82B05 82B41 82B44)},
  MRNUMBER = {2641363},
MRREVIEWER = {Steven Joel Miller},
       DOI = {10.1515/9781400835416},
       URL = {https://doi.org/10.1515/9781400835416},
}

@article {Cor,
    AUTHOR = {Corwin, Ivan},
     TITLE = {The {K}ardar-{P}arisi-{Z}hang equation and universality class},
   JOURNAL = {Random Matrices Theory Appl.},
  FJOURNAL = {Random Matrices. Theory and Applications},
    VOLUME = {1},
      YEAR = {2012},
    NUMBER = {1},
     PAGES = {1130001, 76},
      ISSN = {2010-3263},
   MRCLASS = {82B31 (60B20 60K35 60K37)},
  MRNUMBER = {2930377},
       DOI = {10.1142/S2010326311300014},
       URL = {https://doi.org/10.1142/S2010326311300014},
}

@incollection {CIW,
    AUTHOR = {Caputo, Pietro and Ioffe, Dmitry and Wachtel, Vitali},
     TITLE = {Tightness and line ensembles for {B}rownian polymers under
              geometric area tilts},
 BOOKTITLE = {Statistical mechanics of classical and disordered systems},
    SERIES = {Springer Proc. Math. Stat.},
    VOLUME = {293},
     PAGES = {241--266},
 PUBLISHER = {Springer, Cham},
      YEAR = {2019},
   MRCLASS = {82D30},
  MRNUMBER = {4015014},
MRREVIEWER = {Wai-Kit Lam},
       DOI = {10.1007/978-3-030-29077-1\_10},
       URL = {https://doi.org/10.1007/978-3-030-29077-1_10},
}

@misc{CHH,
      title={Brownian structure in the KPZ fixed point}, 
      author={Jacob Calvert and Alan Hammond and Milind Hegde},
      year={2019},
      eprint={1912.00992},
      archivePrefix={arXiv},
      primaryClass={math.PR}
}

@book {Bilconv,
    AUTHOR = {Billingsley, Patrick},
     TITLE = {Convergence of probability measures},
    SERIES = {Wiley Series in Probability and Statistics: Probability and
              Statistics},
   EDITION = {Second},
      NOTE = {A Wiley-Interscience Publication},
 PUBLISHER = {John Wiley \& Sons, Inc., New York},
      YEAR = {1999},
     PAGES = {x+277},
      ISBN = {0-471-19745-9},
   MRCLASS = {60B10 (28A33 60F17)},
  MRNUMBER = {1700749},
       DOI = {10.1002/9780470316962},
       URL = {https://doi.org/10.1002/9780470316962},
}

@book {RY,
    AUTHOR = {Revuz, Daniel and Yor, Marc},
     TITLE = {Continuous martingales and {B}rownian motion},
    SERIES = {Grundlehren der Mathematischen Wissenschaften [Fundamental
              Principles of Mathematical Sciences]},
    VOLUME = {293},
   EDITION = {Third},
 PUBLISHER = {Springer-Verlag, Berlin},
      YEAR = {1999},
     PAGES = {xiv+602},
      ISBN = {3-540-64325-7},
   MRCLASS = {60G44 (60G07 60H05)},
  MRNUMBER = {1725357},
       DOI = {10.1007/978-3-662-06400-9},
       URL = {https://doi.org/10.1007/978-3-662-06400-9},
}

@article{Spo,
   title={Kardar-Parisi-Zhang equation in one dimension and line ensembles},
   volume={64},
   ISSN={0973-7111},
   url={http://dx.doi.org/10.1007/BF02704147},
   DOI={10.1007/bf02704147},
   number={6},
   journal={Proceedings of STATPHYS22},
   publisher={Springer Science and Business Media LLC},
   author={Spohn, Herbert},
   year={2005},
   pages={847–857}
}

@book {Sze,
    AUTHOR = {Szeg\H{o}, G\'{a}bor},
     TITLE = {Orthogonal polynomials},
    SERIES = {American Mathematical Society Colloquium Publications, Vol.
              XXIII},
   EDITION = {Fourth},
 PUBLISHER = {American Mathematical Society, Providence, R.I.},
      YEAR = {1975},
     PAGES = {xiii+432},
   MRCLASS = {42A52 (33A65)},
  MRNUMBER = {0372517},
}

@incollection {OC,
    AUTHOR = {O'Connell, Neil},
     TITLE = {Random matrices, non-colliding processes and queues},
 BOOKTITLE = {S\'{e}minaire de {P}robabilit\'{e}s, {XXXVI}},
    SERIES = {Lecture Notes in Math.},
    VOLUME = {1801},
     PAGES = {165--182},
 PUBLISHER = {Springer, Berlin},
      YEAR = {2003},
   MRCLASS = {15A52 (60K25 60K35 82C31)},
  MRNUMBER = {1971584},
MRREVIEWER = {Oleksiy Khorunzhiy},
       DOI = {10.1007/978-3-540-36107-7\_3},
       URL = {https://doi.org/10.1007/978-3-540-36107-7_3},
}

@incollection {Joh,
    AUTHOR = {Johansson, Kurt},
     TITLE = {Random matrices and determinantal processes},
 BOOKTITLE = {Mathematical statistical physics},
     PAGES = {1--55},
 PUBLISHER = {Elsevier B. V., Amsterdam},
      YEAR = {2006},
   MRCLASS = {60K35 (15B52 60B20 60G55 82B44)},
  MRNUMBER = {2581882},
MRREVIEWER = {Nizar Demni},
       DOI = {10.1016/S0924-8099(06)80038-7},
       URL = {https://doi.org/10.1016/S0924-8099(06)80038-7},
}

@incollection {FS,
    AUTHOR = {Ferrari, P. L. and Spohn, H.},
     TITLE = {Random growth models},
 BOOKTITLE = {The {O}xford handbook of random matrix theory},
     PAGES = {782--801},
 PUBLISHER = {Oxford Univ. Press, Oxford},
      YEAR = {2011},
   MRCLASS = {60K35 (60B20)},
  MRNUMBER = {2932658},
MRREVIEWER = {Tomohiro Sasamoto},
}

@article {Fer,
    AUTHOR = {Ferrari, Patrik L.},
     TITLE = {From interacting particle systems to random matrices},
   JOURNAL = {J. Stat. Mech. Theory Exp.},
  FJOURNAL = {Journal of Statistical Mechanics: Theory and Experiment},
      YEAR = {2010},
    NUMBER = {10},
     PAGES = {P10016, 15},
   MRCLASS = {82C22 (15B52 60B20 60K35)},
  MRNUMBER = {2800495},
MRREVIEWER = {Nicoletta Cancrini},
       DOI = {10.1088/1742-5468/2010/10/p10016},
       URL = {https://doi.org/10.1088/1742-5468/2010/10/p10016},
}

@preamble{"\newcommand{\noopsort}[1]{} " #
   "\newcommand{\singleletter}[1]{#1} "}

@misc{Wu19,
      title={Tightness of discrete Gibbsian line ensembles with exponential interaction Hamiltonians}, 
      author={Xuan Wu},
      year={2019},
      eprint={1909.00946},
      archivePrefix={arXiv}
}

@misc{Ham,
      title={Brownian regularity for the Airy line ensemble, and multi-polymer watermelons in Brownian last passage percolation}, 
      author={Alan Hammond},
      year={2021},
      eprint={1609.02971},
      archivePrefix={arXiv}
}

@article {CH16,
    AUTHOR = {Corwin, Ivan and Hammond, Alan},
     TITLE = {K{PZ} line ensemble},
   JOURNAL = {Probab. Theory Related Fields},
  FJOURNAL = {Probability Theory and Related Fields},
    VOLUME = {166},
      YEAR = {2016},
    NUMBER = {1-2},
     PAGES = {67--185},
      ISSN = {0178-8051},
   MRCLASS = {82C22 (60H15 82B23)},
  MRNUMBER = {3547737},
       DOI = {10.1007/s00440-015-0651-7},
       URL = {https://doi.org/10.1007/s00440-015-0651-7},
}

@article {CH14,
    AUTHOR = {Corwin, Ivan and Hammond, Alan},
     TITLE = {Brownian {G}ibbs property for {A}iry line ensembles},
   JOURNAL = {Invent. Math.},
  FJOURNAL = {Inventiones Mathematicae},
    VOLUME = {195},
      YEAR = {2014},
    NUMBER = {2},
     PAGES = {441--508},
      ISSN = {0020-9910},
   MRCLASS = {60K35 (60B05 60F05 60J65)},
  MRNUMBER = {3152753},
       DOI = {10.1007/s00222-013-0462-3},
       URL = {https://doi.org/10.1007/s00222-013-0462-3},
}

@article {CD,
    AUTHOR = {Corwin, Ivan and Dimitrov, Evgeni},
     TITLE = {Transversal fluctuations of the {ASEP}, stochastic six vertex
              model, and {H}all-{L}ittlewood {G}ibbsian line ensembles},
   JOURNAL = {Comm. Math. Phys.},
  FJOURNAL = {Communications in Mathematical Physics},
    VOLUME = {363},
      YEAR = {2018},
    NUMBER = {2},
     PAGES = {435--501},
      ISSN = {0010-3616},
   MRCLASS = {60K35 (82C22)},
  MRNUMBER = {3851820},
MRREVIEWER = {Sunder Sethuraman},
       DOI = {10.1007/s00220-018-3139-3},
       URL = {https://doi.org/10.1007/s00220-018-3139-3},
}

@book {AS,
     TITLE = {Handbook of mathematical functions with formulas, graphs, and
              mathematical tables},
    EDITOR = {Abramowitz, Milton and Stegun, Irene A.},
      NOTE = {Reprint of the 1972 edition},
 PUBLISHER = {Dover Publications, Inc., New York},
      YEAR = {1992},
     PAGES = {xiv+1046},
      ISBN = {0-486-61272-4},
   MRCLASS = {00A20 (00A22 33-00)},
  MRNUMBER = {1225604},
}

@article {BR,
    AUTHOR = {Borodin, Alexei and Rains, Eric M.},
     TITLE = {Eynard-{M}ehta theorem, {S}chur process, and their {P}faffian
              analogs},
   JOURNAL = {J. Stat. Phys.},
  FJOURNAL = {Journal of Statistical Physics},
    VOLUME = {121},
      YEAR = {2005},
    NUMBER = {3-4},
     PAGES = {291--317},
      ISSN = {0022-4715},
   MRCLASS = {82B23 (60G55)},
  MRNUMBER = {2185331},
MRREVIEWER = {Michael Pr\"{a}hofer},
       DOI = {10.1007/s10955-005-7583-z},
       URL = {https://doi.org/10.1007/s10955-005-7583-z},
}

@article {EM,
    AUTHOR = {Eynard, Bertrand and Mehta, Madan Lal},
     TITLE = {Matrices coupled in a chain. {I}. {E}igenvalue correlations},
   JOURNAL = {J. Phys. A},
  FJOURNAL = {Journal of Physics. A. Mathematical and General},
    VOLUME = {31},
      YEAR = {1998},
    NUMBER = {19},
     PAGES = {4449--4456},
      ISSN = {0305-4470},
   MRCLASS = {82B41 (15A52)},
  MRNUMBER = {1628667},
MRREVIEWER = {Oleksiy Khorunzhiy},
       DOI = {10.1088/0305-4470/31/19/010},
       URL = {https://doi.org/10.1088/0305-4470/31/19/010},
}

@article {For93,
    AUTHOR = {Forrester, P. J.},
     TITLE = {The spectrum edge of random matrix ensembles},
   JOURNAL = {Nuclear Phys. B},
  FJOURNAL = {Nuclear Physics. B. Theoretical, Phenomenological, and
              Experimental High Energy Physics. Quantum Field Theory and
              Statistical Systems},
    VOLUME = {402},
      YEAR = {1993},
    NUMBER = {3},
     PAGES = {709--728},
      ISSN = {0550-3213},
   MRCLASS = {82B41 (15A18 15A52 15A90 82B05)},
  MRNUMBER = {1236195},
MRREVIEWER = {Pawel S. Kurzepa},
       DOI = {10.1016/0550-3213(93)90126-A},
       URL = {https://doi.org/10.1016/0550-3213(93)90126-A},
}

@article {For94,
    AUTHOR = {Forrester, P. J.},
     TITLE = {Exact results and universal asymptotics in the {L}aguerre
              random matrix ensemble},
   JOURNAL = {J. Math. Phys.},
  FJOURNAL = {Journal of Mathematical Physics},
    VOLUME = {35},
      YEAR = {1994},
    NUMBER = {5},
     PAGES = {2539--2551},
      ISSN = {0022-2488},
   MRCLASS = {82B44 (33C90 82B31)},
  MRNUMBER = {1271945},
       DOI = {10.1063/1.530883},
       URL = {https://doi.org/10.1063/1.530883},
}

@article {FT,
    AUTHOR = {Forrester, Peter J. and Trinh, Allan K.},
     TITLE = {Optimal soft edge scaling variables for the {G}aussian and
              {L}aguerre even {$\beta$} ensembles},
   JOURNAL = {Nuclear Phys. B},
  FJOURNAL = {Nuclear Physics. B. Theoretical, Phenomenological, and
              Experimental High Energy Physics. Quantum Field Theory and
              Statistical Systems},
    VOLUME = {938},
      YEAR = {2019},
     PAGES = {621--639},
      ISSN = {0550-3213},
   MRCLASS = {60B20},
  MRNUMBER = {3892093},
MRREVIEWER = {Martin Venker},
       DOI = {10.1016/j.nuclphysb.2018.12.006},
       URL = {https://doi.org/10.1016/j.nuclphysb.2018.12.006},
}

@article {JV,
    AUTHOR = {Jacquot, St\'{e}phanie and Valk\'{o}, Benedek},
     TITLE = {Bulk scaling limit of the {L}aguerre ensemble},
   JOURNAL = {Electron. J. Probab.},
  FJOURNAL = {Electronic Journal of Probability},
    VOLUME = {16},
      YEAR = {2011},
     PAGES = {no. 11, 314--346},
   MRCLASS = {60B20 (60G55 60H10)},
  MRNUMBER = {2771139},
MRREVIEWER = {Ionel Popescu},
       DOI = {10.1214/EJP.v16-854},
       URL = {https://doi.org/10.1214/EJP.v16-854},
}

@article {FF,
    AUTHOR = {Ferrari, Patrik L. and Frings, Ren\'{e}},
     TITLE = {On the partial connection between random matrices and
              interacting particle systems},
   JOURNAL = {J. Stat. Phys.},
  FJOURNAL = {Journal of Statistical Physics},
    VOLUME = {141},
      YEAR = {2010},
    NUMBER = {4},
     PAGES = {613--637},
      ISSN = {0022-4715},
   MRCLASS = {60K35 (60B20 82C22)},
  MRNUMBER = {2733398},
MRREVIEWER = {Timo Sepp\"{a}l\"{a}inen},
       DOI = {10.1007/s10955-010-0070-1},
       URL = {https://doi.org/10.1007/s10955-010-0070-1},
}

@book {GR,
    AUTHOR = {Gradshteyn, I. S. and Ryzhik, I. M.},
     TITLE = {Table of integrals, series, and products},
   EDITION = {Eighth},
      NOTE = {Translated from the Russian,
              Translation edited and with a preface by Daniel Zwillinger and
              Victor Moll,
              Revised from the seventh edition},
 PUBLISHER = {Elsevier/Academic Press, Amsterdam},
      YEAR = {2015},
     PAGES = {xlvi+1133},
      ISBN = {978-0-12-384933-5},
   MRCLASS = {00A22 (33-00)},
  MRNUMBER = {3307944},
}

@article {KM,
    AUTHOR = {Karlin, Samuel and McGregor, James},
     TITLE = {Coincidence probabilities},
   JOURNAL = {Pacific J. Math.},
  FJOURNAL = {Pacific Journal of Mathematics},
    VOLUME = {9},
      YEAR = {1959},
     PAGES = {1141--1164},
      ISSN = {0030-8730},
   MRCLASS = {60.00},
  MRNUMBER = {114248},
MRREVIEWER = {F. L. Spitzer},
       URL = {http://projecteuclid.org/euclid.pjm/1103038889},
}

@article {KO,
    AUTHOR = {K\"{o}nig, Wolfgang and O'Connell, Neil},
     TITLE = {Eigenvalues of the {L}aguerre process as non-colliding squared
              {B}essel processes},
   JOURNAL = {Electron. Comm. Probab.},
  FJOURNAL = {Electronic Communications in Probability},
    VOLUME = {6},
      YEAR = {2001},
     PAGES = {107--114},
      ISSN = {1083-589X},
   MRCLASS = {15A52 (60J60 60J65)},
  MRNUMBER = {1871699},
MRREVIEWER = {Catherine Donati-Martin},
       DOI = {10.1214/ECP.v6-1040},
       URL = {https://doi.org/10.1214/ECP.v6-1040},
}

@article {KT,
    AUTHOR = {Katori, Makoto and Tanemura, Hideki},
     TITLE = {Noncolliding squared {B}essel processes},
   JOURNAL = {J. Stat. Phys.},
  FJOURNAL = {Journal of Statistical Physics},
    VOLUME = {142},
      YEAR = {2011},
    NUMBER = {3},
     PAGES = {592--615},
      ISSN = {0022-4715},
   MRCLASS = {60K35 (60B20 60J70)},
  MRNUMBER = {2771046},
       DOI = {10.1007/s10955-011-0117-y},
       URL = {https://doi.org/10.1007/s10955-011-0117-y},
}

@book {S,
    AUTHOR = {Szeg\H{o}, G\'{a}bor},
     TITLE = {Orthogonal polynomials},
    SERIES = {American Mathematical Society Colloquium Publications, Vol.
              XXIII},
   EDITION = {Fourth},
 PUBLISHER = {American Mathematical Society, Providence, R.I.},
      YEAR = {1975},
     PAGES = {xiii+432},
   MRCLASS = {42A52 (33A65)},
  MRNUMBER = {0372517},
}

@book {T,
    AUTHOR = {Titchmarsh, E. C.},
     TITLE = {Introduction to the theory of {F}ourier integrals},
   EDITION = {Third},
 PUBLISHER = {Chelsea Publishing Co., New York},
      YEAR = {1986},
     PAGES = {x+394},
      ISBN = {0-8284-0324-4},
   MRCLASS = {42-03 (01A75)},
  MRNUMBER = {942661},
}

@article {TW,
    AUTHOR = {Tracy, Craig A. and Widom, Harold},
     TITLE = {Level spacing distributions and the {B}essel kernel},
   JOURNAL = {Comm. Math. Phys.},
  FJOURNAL = {Communications in Mathematical Physics},
    VOLUME = {161},
      YEAR = {1994},
    NUMBER = {2},
     PAGES = {289--309},
      ISSN = {0010-3616},
   MRCLASS = {82B05 (33C90 47A75 47G10 47N55 82B10)},
  MRNUMBER = {1266485},
MRREVIEWER = {Estelle L. Basor},
       URL = {http://projecteuclid.org/euclid.cmp/1104269903},
}

@article{Wis,
 ISSN = {00063444},
 URL = {http://www.jstor.org/stable/2331939},
 author = {John Wishart},
 journal = {Biometrika},
 number = {1/2},
 pages = {32--52},
 publisher = {[Oxford University Press, Biometrika Trust]},
 title = {The Generalised Product Moment Distribution in Samples from a Normal Multivariate Population},
 volume = {20A},
 year = {1928}
}

@misc{DW21,
      title={Tightness of $(H, H^{RW})$-Gibbsian line ensembles}, 
      author={Evgeni Dimitrov and Xuan Wu},
      year={2021},
      eprint={2108.07484},
      archivePrefix={arXiv},
      primaryClass={math.PR}
}
\end{document}